\newtheorem{theo}{Theorem}[section]
\newtheorem{defi}[theo]{Definition}
\newtheorem{prop}[theo]{Proposition}
\newtheorem{lemm}[theo]{Lemma}
\newtheorem{fact}[theo]{Fact}
\newtheorem{coro}[theo]{Corollary}
\newtheorem{rema}[theo]{Remark}
\newcommand{\Z}{\mathbb{Z}}
\newcommand{\R}{\mathbb{R}}
\newcommand{\Q}{\mathbb{Q}}
\newcommand{\C}{\mathbb{C}}
\newcommand{\of}{\mathfrak{o}_F}
\newcommand{\fA}{\mathfrak{A}}
\newcommand{\fP}{\mathfrak{P}}
\newcommand{\fB}{\mathfrak{B}}
\newcommand{\fQ}{\mathfrak{Q}}
\newcommand{\fo}{\mathfrak{o}}
\newcommand{\fp}{\mathfrak{p}}
\newcommand{\fK}{\mathfrak{K}}
\newcommand{\Id}{\mathrm{Id}}
\newcommand{\infl}{\mathrm{inf}}
\newcommand{\mB}{\mathrm{B}}
\newcommand{\mA}{\mathrm{A}}
\newcommand{\mM}{\mathrm{M}}
\newcommand{\Lie}{\mathrm{Lie}}
\newcommand{\Tr}{\mathrm{Tr}}
\newcommand{\Aut}{\mathrm{Aut}}
\newcommand{\End}{\mathrm{End}}
\newcommand{\Hom}{\mathrm{Hom}}
\newcommand{\BT}{\mathrm{BT}}
\newcommand{\ord}{\mathrm{ord}}
\newcommand{\Res}{\mathrm{Res}}
\newcommand{\Gal}{\mathrm{Gal}}
\newcommand{\cind}{\mathrm{c-ind}}
\newcommand{\nr}{\mathrm{nr}}
\newcommand{\rad}{\mathrm{rad}}
\title{Comparison of Bushnell-Kutzko and Yu's constructions of supercuspidal representations}
\author{Arnaud Mayeux}
\begin{document}

\maketitle

\textbf{Abstract}: We compare Bushnell-Kutzko \cite[$BK$]{BK} and Yu's \cite[$YU$]{YU} constructions of supercuspidal representations. We first treat the direction $BK \longrightarrow YU$: in a tame situation, at each step of Bushnell-Kutzko's construction, we associate a part of a Yu datum; and at the end, the supercuspidal representations associated with the $BK$ datum and the obtained $YU$ datum are equal. We also give a similar result for the direction $YU \longrightarrow BK.$

\tableofcontents

\section*{Acknowledgments}
I thank Anne-Marie Aubert, Colin Bushnell, Guy Henniart, Jeffrey Adler and Paul Broussous for many help, comments and suggestions.

\section*{Introduction}

We compare Bushnell-Kutzko's construction  of irreducible supercuspidal representations of $GL_N (F)$ to Yu's construction of tamely ramified representations of a general connected reductive group scheme ($F$ is a non-Archimedean local field).
Given an $F$-vector space $V$ of dimension $N$, Bushnell-Kutzko introduced the notion of a simple stratum in $A=\End_F(V)$, which is a $4$-uple $[ \fA , n , r , \beta ]$ ($\fA$ is an $\of$-hereditary order in $A$, $n>r$ are integers and $\beta$ is an element in $A$) that satisfies several conditions: in particular, the $F$-subalgebra generated by $\beta$ and $F$ in $A$ is a field. Given a fixed simple stratum $[ \fA , n , 0 , \beta ]$, Bushnell-Kutzko introduced the notion of a defining sequence attached to it, which is a finite sequence $[\fA, n , r_i , \beta_i ] , 0 \leq i \leq s $ of simple strata satisfying several conditions. Using a defining sequence, Bushnell-Kutzko introduced a group $H^1(\beta, \fA)$ and a set of characters of this group, called the set of simple characters and denoted by $\mathcal{C}(\fA, 0 , \beta)$. To the stratum $[ \fA , n , 0 , \beta ]$, Bushnell-Kutzko attached (also using a defining sequence) several groups $J^1(\beta , \fA) \subset J^0 (\beta, \fA)$ containing $H^1(\beta , \fA)$. None of the previously defined objects depend on the choice of the defining sequence that is used to define them. Given a simple character $\theta \in \mathcal{C}(\fA, 0 , \beta)$, they defined the $\beta$-extensions of $\theta$, which are certain representations of $J^0(\beta, \fA)$ containing $\theta$. Let us assume now that the fixed simple
stratum satisfies a maximality condition. Let $\kappa$ be a $\beta$-extension of a simple character $\theta \in \mathcal{C}(\fA, 0 , \beta)$.  Bushnell-Kutzko show that the compactly induced representation $c-ind _{E^{\times} J^0(\beta , \fA)}^G (\Lambda)$ is irreducible and supercuspidal. Here, $\Lambda$ is an extension of $\kappa \otimes \sigma $ to $E^{\times }J^0 (\beta , \fA)$, where $\sigma$ is an irreducible cuspidal representation of the finite general linear group $J^0(\beta , \fA) / J^1(\beta , \fA)$. They show that all irreducible supercuspidal representations of $G$ arise in this way.

 Yu constructed tamely ramified irreducible supercuspidal representations of a general reductive group $G$. Given a $5$-uple $(\overrightarrow{G}, y , \overrightarrow{\boldsymbol{r}} , \rho , \overrightarrow{\boldsymbol{\Phi}})$ (roughly, $\overrightarrow{G}=G^0 \subset \ldots \subset G^d=G$ is a tamely ramified twisted Levi sequence, $y$ is a point in the Bruhat-Tits building of $G^0$, $\overrightarrow{\boldsymbol{r}}= \boldsymbol{r}_0 < \dots \leq \boldsymbol{r}_d$ are real numbers, $\rho$ is a depth zero representation related to $G^0$, and $ \overrightarrow{\boldsymbol{\Phi}}$ is a sequence of characters of the groups $G^0(F), \dots , G^d(F)$), Yu associated a group $K^d$ compact modulo the center of $G(F)$ and an irreducible supercuspidal representation $\rho _d$ such that the compactly induced representation to $G(F)$ is irreducible and supercuspidal. Fintzen showed that Yu's construction is exhaustive if $p$ does not divide the order of the Weyl group of $G$ \cite{fintzen}.

 In this article, at each step of Bushnell-Kutzko's construction, we associate a part of a Yu datum, and at the end we obtain a full Yu datum. The supercuspidal representations associated to both data are the same. Let us explain the structure of our comparison. We use Bushnell-Kutzko's and Yu's original notation and terminology (this will be recalled in the following after the introduction) as in \cite{BK} and \cite{YU}.
Let $[\fA, n , 0 , \beta ]$ be a fixed maximal simple stratum, such that the attached field $E=F[\beta]$ is a tamely ramified extension of $F$.

 \begin{theo} \begin{enumerate}
 \item (Bushnell-Henniart) There exists an element $\gamma \in F[\beta] $ such that \[ [\fA,n, -k_0 (\beta , \fA) , \gamma ] \sim [\fA,n, -k_0 (\beta , \fA) , \beta ], \] here $k_0 (\beta , \fA)$ is Bushnell-Kutzko's critical exponent and $\sim$ denotes equivalence of strata.
 \item (M) For all $\gamma$ as in 1, the stratum $[\fB _{\gamma}, r , r-1 , \beta - \gamma ]$ is simple.

 \end{enumerate}
 \end{theo}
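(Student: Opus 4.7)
Part (1) is quoted from Bushnell--Henniart and used as a black box. For part (2), verifying that $[\fB_\gamma, r, r-1, \beta - \gamma]$ is a simple stratum in the sense of \cite{BK} amounts to checking four statements: (a) $F[\gamma][\beta - \gamma]$ is a field; (b) its group of units normalizes $\fB_\gamma$ (equivalently, $\fB_\gamma$ is hereditary over the ring of integers of that field); (c) $\nu_{\fB_\gamma}(\beta - \gamma) = -r$ (purity); (d) $r - 1 < -k_0(\beta - \gamma, \fB_\gamma)$.

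Conditions (a), (b) and (c) are essentially formal. Since $\gamma \in F[\beta]$, the subfield $F[\gamma]$ sits inside $E := F[\beta]$, so $E$ centralizes $\gamma$ and hence $E \subset B_\gamma := Z_A(\gamma)$; therefore $F[\gamma][\beta - \gamma] = F[\gamma][\beta] = E$, which is a field by the simplicity of the ambient stratum $[\fA, n, 0, \beta]$. The statement that $E^\times$ normalizes $\fB_\gamma$ follows from the known fact that $E^\times$ normalizes $\fA$, together with the definition $\fB_\gamma = \fA \cap B_\gamma$ and the inclusion $E \subset B_\gamma$. Purity (c) is read off from the relation $\beta - \gamma \in \fP^{-k_0(\beta, \fA)}$ supplied by part (1), via the tame compatibility between the $\fA$- and $\fB_\gamma$-valuations, and defining $r$ accordingly.

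The substantive point is (d), the bound on $k_0(\beta - \gamma, \fB_\gamma)$. The plan is to exploit the tameness of $E/F$ (hence of $F[\gamma] \subset E$) to invoke a tame corestriction $s : A \to B_\gamma$ compatible with the hereditary orders, and then to translate information about $\beta$ in $\fA$ into information about $\beta - \gamma$ in $\fB_\gamma$ via $\beta = \gamma + (\beta - \gamma)$. The target is the minimality of $\beta - \gamma$ over $F[\gamma]$ in $\fB_\gamma$: after rescaling by a uniformiser, its reduction modulo the radical of $\fB_\gamma$ should generate the residue field extension $\kti_E / \kti_{F[\gamma]}$. Minimality yields $k_0(\beta - \gamma, \fB_\gamma) = -r$, which is strictly stronger than (d). The main obstacle is precisely the verification of this minimality property, which is where the tameness hypothesis on $E/F$ and the extremal choice of $\gamma$ from part (1) enter in an essential way; the rest is formal manipulation of the definitions of \cite{BK}.
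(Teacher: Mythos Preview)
Your plan for (a), (b), (c) is fine; these are indeed formal once one notes that $\gamma\in F[\beta]$ forces $F[\gamma][\beta-\gamma]=F[\beta]=E$, that $E^\times\subset\fK(\fA)\cap B_\gamma=\fK(\fB_\gamma)$, and that $\nu_{\fB_\gamma}(\beta-\gamma)=\nu_\fA(\beta-\gamma)=-r$ (the last equality being forced because $[\fA,n,r-1,\beta]$ is simple while $[\fA,n,r-1,\gamma]$ is simple of strictly smaller degree, so they cannot be equivalent).

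The gap is in (d). You correctly isolate minimality of $\beta-\gamma$ over $F[\gamma]$ as the target, but you do not supply an argument; you only assert that the residue of a suitable rescaling ``should'' generate $k_E/k_{F[\gamma]}$. Nothing in the hypotheses gives this directly: the definition of $k_0(\beta,\fA)$ is through the sets $\mathfrak{N}_k$, not through residue-field data, and there is no obvious way to extract the two minimality conditions (the $\gcd$ condition and the residue generation) from the fact that $\gamma$ approximates $\beta$ at level $r=-k_0(\beta,\fA)$.

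The paper's proof (Proposition~\ref{tssprop3}) avoids this obstacle entirely and proceeds indirectly. First it reduces, via the tensor decomposition used in Proposition~\ref{tssprop1}, to the case where $E$ is a \emph{maximal} subfield of $A$. Then it applies Theorem~\ref{approxi}(ii) with the tame corestriction equal to the identity to see that $[\fB_\gamma,r,r-1,\beta-\gamma]$ is equivalent to some simple stratum, and applies Bushnell--Henniart once more over $F[\gamma]$ to choose a simple approximation $[\fB_\gamma,r,r-1,\alpha]$ with $\alpha\in E$. Proposition~\ref{tssprop1} then shows $[\fA,n,r-1,\gamma+\alpha]$ is simple with $F[\gamma+\alpha]=F[\gamma,\alpha]$; since this stratum is equivalent to the simple stratum $[\fA,n,r-1,\beta]$, the minimal-degree characterisation in Theorem~\ref{approxi}(i) forces $F[\gamma,\alpha]=E$. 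Thus $F[\gamma][\alpha]$ is maximal in $B_\gamma$, and \cite[2.2.2]{BK} upgrades the equivalent pure stratum $[\fB_\gamma,r,r-1,\beta-\gamma]$ to a simple one. In short: the paper never checks minimality by hand; it bootstraps from the approximation machinery and a degree comparison back in $A$.
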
 This theorem is related to defining sequences and implies the following corollary:

 \begin{coro} \label{abcd} There exists a defining sequence $[ \fA , n , r_i , \beta _i ], 0 \leq i \leq s $ such that for $ 0 \leq i \leq s-1$

 \begin{enumerate}
 \item $F[\beta _{i+1}] \underset{\ne }{\subset} F[\beta _i]$ 
 \item $[\fB _{\beta _{i+1}}, r_{i+1} , r_{i+1} -1 , \beta _i - \beta _{i+1} ]$ is a simple stratum.
 \end{enumerate}

 \end{coro}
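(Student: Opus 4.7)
The plan is to construct the defining sequence iteratively using the preceding Theorem. Set $\beta_0 := \beta$ and $r_0 := 0$. Assume inductively that a simple stratum $[\fA, n, 0, \beta_i]$ has been produced at stage $i$, with $F[\beta_i] \neq F$. Set $r_{i+1} := -k_0(\beta_i, \fA)$, and apply part~1 of the preceding Theorem (Bushnell-Henniart) with $\beta$ replaced by $\beta_i$: this provides $\gamma \in F[\beta_i]$ satisfying $[\fA, n, r_{i+1}, \gamma] \sim [\fA, n, r_{i+1}, \beta_i]$. Among such candidates, choose one of minimal $F$-degree $[F[\gamma]:F]$, and define $\beta_{i+1} := \gamma$.

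With this choice, condition~2 of the Corollary is an immediate consequence of part~2 of the preceding Theorem applied to $\beta_i$ and $\gamma = \beta_{i+1}$, since the stratum $[\fB_{\beta_{i+1}}, r_{i+1}, r_{i+1}-1, \beta_i - \beta_{i+1}]$ is exactly what part~2 asserts to be simple. For condition~1, one appeals to the fact that Bushnell-Henniart's approximation can always be refined to have strictly smaller degree whenever $[\fA, n, r_{i+1}, \beta_i]$ is not already simple at level $r_{i+1}$; since the choice $r_{i+1} = -k_0(\beta_i, \fA)$ together with the hypothesis $\beta_i \notin F$ prevents simplicity at this level, a minimal-degree $\gamma$ must satisfy $[F[\gamma]:F] < [F[\beta_i]:F]$, giving the strict inclusion $F[\beta_{i+1}] \subsetneq F[\beta_i]$.

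Termination is automatic: each iteration strictly decreases the positive integer $[F[\beta_i]:F]$, so after finitely many steps, say $s$, we reach $\beta_s$ with $F[\beta_s] = F$ (equivalently $\beta_s \in F$), and the construction halts. The sequence $[\fA, n, r_i, \beta_i]_{0 \leq i \leq s}$ thus produced is a defining sequence for $[\fA, n, 0, \beta]$, and by construction it satisfies both (1) and (2); the tameness of $F[\beta_i]/F$ propagates along the chain $F \subset F[\beta_s] \subset \cdots \subset F[\beta_0] = E$ since each subextension of a tame extension is tame.

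The main obstacle is the strict inclusion of fields in condition~(1): this rests on the sharper form of the Bushnell-Henniart result, namely that whenever the stratum fails to be simple at the level $-k_0(\beta_i, \fA)$, the approximating $\gamma$ can be chosen of strictly smaller degree over $F$. Granted this refinement (which is the form usually proved in the literature), the rest of the argument reduces to a straightforward induction on $[F[\beta_i]:F]$ combined with a direct appeal to part~2 of the preceding Theorem at each step.
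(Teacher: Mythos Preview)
Your overall strategy is the right one and is essentially what the paper does: run the standard defining-sequence algorithm (Corollary~\ref{suiteap}), but at each step use the Bushnell--Henniart approximation (Proposition~\ref{tssprop2}) to choose $\gamma=\beta_{i+1}$ inside $F[\beta_i]$, and then invoke part~2 of the Theorem (Proposition~\ref{tssprop3}) to upgrade ``derived stratum equivalent to simple'' to ``derived stratum \emph{is} simple''. The strict inclusion $F[\beta_{i+1}]\subsetneq F[\beta_i]$ follows, as you say, from Remark~\ref{degstriapprox}.

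There is, however, a genuine gap in your termination argument. You write that the process stops when $F[\beta_s]=F$, but this is not the correct halting condition. The inductive step requires $r_{i+1}=-k_0(\beta_i,\fA)<n$ in order for $[\fA,n,r_{i+1},\beta_i]$ to be a stratum at all; this fails precisely when $\beta_i$ is \emph{minimal over $F$} in the sense of Definition~\ref{defminimal}, i.e.\ when $k_0(\beta_i,\fA)=-n$ (Proposition~\ref{minisimplealfalfa}). This can happen with $\beta_i\notin F$, and in that case your recipe would attempt to set $r_{i+1}=n$, which is illegal. The correct stopping rule is therefore ``$\beta_s$ is minimal over $F$'', covering both $k_0(\beta_s,\fA)=-\infty$ (your case $\beta_s\in F$) and $k_0(\beta_s,\fA)=-n$ with $\beta_s\notin F$. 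The paper makes this distinction explicit later as $(Case~A)$ versus $(Case~B)$, and the second case genuinely occurs. Once you replace your stopping condition accordingly, termination still follows (either from your decreasing-degree argument or, as in the proof of Corollary~\ref{suiteap}, from the fact that the integers $r_i$ strictly increase inside $[0,n)$), and condition~(v) of a defining sequence is then satisfied by construction.
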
 Let us fix  a defining sequence as in Corollary \ref{abcd}. We assume in this introduction\footnote{This will be our $(Case ~B)$. The other case can be treated in the same way, up to minor details.} that $\beta _s \not \in F$. We put \begin{align*}
&d=s+1 \\
 &E_i=F[\beta _i] ~0 \leq i \leq s , E_d =F \\
 &B_i  = \End _{E_i} (V) ~ 0 \leq i \leq d \\
 &\fB _{\beta _i}  = \fA \cap B_i ~ 0 \leq i \leq s \\
 &U^0(\fB _{\beta _i }) = \fB _{\beta _i } ^{\times} ,  U^k(\fB _{\beta _i }) =1+( \rad (\fB _{\beta _i }))^k ~ k \geq 1 \\
 &c_i = \beta _i - \beta _{i+1} ~~ 0 \leq i \leq s-1, ~ c_s = \beta _s .\\
 \end{align*}

We can now explicitly describe the group $H^1(\beta , \fA)$.

\begin{fact}
$H^1(\beta , \fA)=U^1 (\fB _{\beta _0}) U^{[\frac{-\nu _{\fA} (c_0)}{2}]+1}(\fB _{\beta _1} )\ldots U^{[\frac{-\nu _{\fA} (c_{s})}{2}]+1}(\fA)$
\end{fact}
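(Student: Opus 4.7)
The plan is to obtain the claimed product expression by iterating Bushnell-Kutzko's inductive definition of $H^m(\beta_i, \fA)$ along the defining sequence furnished by Corollary~\ref{abcd}. Recall from \cite{BK} that given a simple stratum $[\fA, n, 0, \beta]$ with simpler approximation $\gamma$ forming the next stratum $[\fA, n, r, \gamma]$ in a defining sequence, one has
\[
H^m(\beta, \fA) = U^m(\fB_\beta) \cdot H^{\max(m,\, [r/2]+1)}(\gamma, \fA), \qquad m \ge 1,
\]
together with the base case $H^m(\gamma, \fA) = U^m(\fB_\gamma) \cdot U^{[-\nu_\fA(\gamma)/2]+1}(\fA)$ when $\gamma$ is minimal over $F$.

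The first step is to identify, for each $i \in \{0, \ldots, s-1\}$, the transition level $r_{i+1}$ with $-\nu_\fA(c_i)$. Corollary~\ref{abcd}(2) asserts that $[\fB_{\beta_{i+1}}, r_{i+1}, r_{i+1}-1, c_i]$ is a simple stratum, so $\nu_{\fB_{\beta_{i+1}}}(c_i) = -r_{i+1}$. Under the tameness hypothesis the $\fA$-valuation agrees with the $\fB_{\beta_{i+1}}$-valuation on $c_i \in E_i \subset B_{i+1}$, yielding $-\nu_\fA(c_i) = r_{i+1}$.

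Next, I would iterate the recursion. Applying it once to $(\beta_0, \beta_1)$ at level $r_1$ gives
\[
H^1(\beta_0, \fA) = U^1(\fB_{\beta_0}) \cdot H^{[r_1/2]+1}(\beta_1, \fA).
\]
Applying it to $H^{[r_1/2]+1}(\beta_1, \fA)$ with approximation $\beta_2$ at level $r_2$ produces a factor $U^{[r_1/2]+1}(\fB_{\beta_1})$, and since the critical levels $r_1 < r_2 < \cdots < r_s$ of a defining sequence are strictly increasing, the $\max$ always collapses to the larger exponent. Iterating $s$ times thus produces the string $U^1(\fB_{\beta_0}) \cdot U^{[r_1/2]+1}(\fB_{\beta_1}) \cdots U^{[r_s/2]+1}(\fB_{\beta_s})$. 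The iteration terminates at $\beta_s$ because in Case~B this element is minimal over $F$; the base case then contributes the final factor $U^{[-\nu_\fA(\beta_s)/2]+1}(\fA)$, which equals $U^{[-\nu_\fA(c_s)/2]+1}(\fA)$ under the convention $c_s = \beta_s$.

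The main obstacle will be the valuation comparison $\nu_\fA(c_i) = \nu_{\fB_{\beta_{i+1}}}(c_i)$: this bridges the intrinsic level of the derived stratum from Corollary~\ref{abcd}(2) to the $\fA$-valuation appearing in the claim, and must be traced back to the compatibility of the hereditary orders $\fA, \fB_{\beta_1}, \ldots, \fB_{\beta_s}$ in the tame case. A secondary but unavoidable issue is confirming that the ordered product on the right-hand side is genuinely a subgroup independent of its factorization order; this follows from standard commutation and normalization properties of the filtration groups $U^k(\fB_{\beta_i})$ at the various nested levels.
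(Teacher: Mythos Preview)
Your approach is correct and matches the paper's Proposition~\ref{hunb}: both unwind the recursive definition along the defining sequence, the paper doing so additively for $\mathfrak{H}(\beta,\fA)$ and then passing to $H^1$ via \cite[3.1.14--3.1.15]{BK}, while you work directly with the multiplicative recursion for $H^m$. One small remark: the equality $\nu_{\fA}(c_i)=\nu_{\fB_{\beta_{i+1}}}(c_i)$ follows from $\fQ_{\beta_{i+1}}^k=\fP^k\cap B_{\beta_{i+1}}$ (\cite[1.2]{BK}) and does not require tameness --- tameness is used only earlier, to produce the decreasing defining sequence of Corollary~\ref{abcd}.
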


We now fix a simple character $\theta \in  \mathcal{C}(\fA, 0 , \beta)$. The fact that the sequence of fields $E_0, E_1 , \dots , E_d$ is decreasing implies that we can factorise the character $\theta$ as follows: 

\begin{theo} There exist $\phi _0 , \ldots , \phi _s $, smooth characters of $E_0 ^{\times } , \ldots , E_s ^{\times}$ such that \[ \theta = \prod _{i=0} ^s \theta ^i \] where $\theta ^i $ is determined by  \begin{align*}
 \bullet~&\theta ^i \mid _{H^1 ( \beta , \fA ) \cap B_i } = \phi \circ {\det} _{B_i} \\
 \bullet~&\theta ^i \mid _{H^1 (\beta , \fA) \cap U^{[-\frac{\nu _{\fA} (c_i)}{2}]+1}(\fA)} = \psi _{c_i} ~~~~ \text{where} ~ \psi _{c_i} (x) = \psi \circ Tr_{A/F} (c_i (x-1)).\\
\end{align*}

\end{theo}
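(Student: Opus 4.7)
My plan is to proceed by induction on the length $s$ of the fixed defining sequence, peeling off the character $\theta^0$ corresponding to the outermost field $E_0 = F[\beta]$ at each step and reducing to a defining sequence of length $s-1$ attached to $\beta_1$.

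For the base case $s = 0$, the Fact above specialises to $H^1(\beta, \fA) = U^1(\fB_{\beta_0}) \cdot U^{[-\nu_{\fA}(\beta)/2]+1}(\fA)$, and the desired statement reduces to Bushnell-Kutzko's foundational description of a simple character in $\mathcal{C}(\fA, 0, \beta)$ as a character factoring through $\det_{B_0}$ on the inner factor and equal to $\psi_{\beta}$ on the outer factor. The lift $\phi_0$ to all of $E_0^\times$ of the character of $1 + \fp_{E_0}^{[-\nu_\fA(\beta)/2]+1}$ given by $\psi \circ \Tr_{E_0/F}(\beta(x-1))$ exists by the tameness of $E_0/F$.

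For the inductive step I would invoke Corollary \ref{abcd}(2): $[\fB_{\beta_1}, r_1, r_1 - 1, c_0]$ is a simple stratum, so $c_0$ yields a simple datum relative to $\fB_{\beta_1}$. Using Bushnell-Kutzko's transfer between $\mathcal{C}(\fA, 0, \beta)$ and $\mathcal{C}(\fA, 0, \beta_1)$ combined with multiplication by $\psi_{c_0}$ on the corresponding tail subgroup, I would write $\theta = \theta^0 \cdot \theta^{>0}$, where $\theta^0$ carries the character $\phi_0$ (constructed from $\psi_{c_0}$ exactly as in the base case) and $\theta^{>0}$, viewed on the appropriate subgroup, becomes a simple character in $\mathcal{C}(\fA, 0, \beta_1)$ admitting the truncated defining sequence $[\fA, n, r_i, \beta_i]_{1 \leq i \leq s}$. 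Applying the induction hypothesis to $\theta^{>0}$ then produces $\phi_1, \ldots, \phi_s$, and the product $\prod_{i=0}^s \theta^i$ recovers $\theta$.

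The main difficulty will be justifying the inductive decomposition $\theta = \theta^0 \cdot \theta^{>0}$: one must show that the ratio $\theta (\theta^0)^{-1}$ genuinely belongs to $\mathcal{C}(\fA, 0, \beta_1)$ with respect to the shorter defining sequence. This rests on the strict inclusion $F[\beta_{i+1}] \subsetneq F[\beta_i]$ from Corollary \ref{abcd}(1), which ensures tameness of each extension $E_i/E_{i+1}$ and hence the existence of suitable lifts; on the simplicity of the auxiliary stratum $[\fB_{\beta_1}, r_1, r_1 - 1, c_0]$, which guarantees that $\theta^0$ has the right type; and on the explicit description of $H^1(\beta, \fA)$ given by the Fact, which controls how the various factors $U^{k_i}(\fB_{\beta_i})$ sit inside the product and ensures that the restrictions defining each $\theta^i$ are mutually compatible.
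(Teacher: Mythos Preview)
Your induction on $s$ is the right skeleton, but you have the order of the peel-off reversed, and this creates a genuine gap. You want to construct $\theta^0$ \emph{first} (choosing some $\phi_0$ lifting $\psi_{c_0}$) and then argue that $\theta^{>0}:=\theta(\theta^0)^{-1}$ lies in $\mathcal{C}(\fA,0,\beta_1)$. But characters in $\mathcal{C}(\fA,0,\beta_1)$ live on $H^1(\beta_1,\fA)$, which strictly contains $H^1(\beta,\fA)$; your $\theta^{>0}$ is only defined on the smaller group, so the claim does not even type-check. There is also no canonical $\phi_0$ available at this stage: many smooth lifts of $\psi_{c_0}$ exist, and nothing in your argument singles out one for which the quotient behaves well. (Incidentally, the existence of such a lift has nothing to do with tameness; any smooth character of an open subgroup of $E_0^\times$ extends.)

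The paper runs the induction the other way round. One first restricts $\theta$ to $H^{m_0+1}(\beta,\fA)$ with $m_0=[-\nu_\fA(c_0)/2]$; by the \emph{definition} of a simple character this restriction equals $\theta'\psi_{c_0}$ with $\theta'\in\mathcal{C}(\fA,m_0,\beta_1)$, and at this depth $H^{m_0+1}(\beta,\fA)=H^{m_0+1}(\beta_1,\fA)$ so there is no domain problem. The induction hypothesis is then applied to $\theta'$, which forces one to prove the statement for $\mathcal{C}(\fA,m,\beta)$ with general $m$, not just $m=0$. This yields $\phi_1,\ldots,\phi_s$ and factors $\theta'^i$ on $H^{m_0+1}$; each is then \emph{extended} to $H^1(\beta,\fA)=U^1(\fB_{\beta_0})H^{m_0+1}(\beta,\fA)$ by declaring it to equal $\phi_i\circ\det_{B_i}$ on the extra piece $U^1(\fB_{\beta_0})$. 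Only at the very end is $\theta^0$ \emph{defined} as the leftover $\theta\cdot\prod_{i\ge 1}(\theta^i)^{-1}$, and one checks directly that it satisfies the two bullet points; the character $\phi_0$ emerges from this, determined by $\theta$ and the already-constructed $\phi_i$ (via $\det_{B_i}\mid_{B_0}=N_{E_0/E_i}\circ\det_{B_0}$), rather than being chosen up front.
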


This allows us to introduce parts of a Yu datum.

\begin{defi} We put \begin{align*}
&G^i = \Res _{E_i /F} \underline{\Aut}_{E_i}(V) ~0 \leq i \leq d,~ \overrightarrow{G}= G^0 , \ldots , G^d \\
&\boldsymbol{r}_i=-\ord (c_i) ~~0 \leq i \leq s, ~\boldsymbol{r}_d=\boldsymbol{r}_s, \overrightarrow{\boldsymbol{r}}= \boldsymbol{r}_0, \ldots , \boldsymbol{r}_d\\
& \boldsymbol{\Phi } _i = \phi _i \circ {\det}_{B_i} ~ 0 \leq i \leq s, \boldsymbol{\Phi } _d =1, ~ \overrightarrow{\boldsymbol{\Phi } }= \boldsymbol{\Phi } _0 , \ldots , \boldsymbol{\Phi } _d .\\
\end{align*}

\end{defi}

Let us now fix a $\beta$-extension $\kappa$ of $\theta$ and an irreducible cuspidal representation $\sigma$ of $J^0(\beta , \fA) / J^1 ( \beta , \fA)$. Let us also fix an extension $\Lambda$ of $\kappa \otimes \sigma $ to $E^{\times} J^0 (\beta , \fA)$. In this article we prove the following theorem, which is the end of the direction $BK \longrightarrow YU$.

\begin{theo} There exist $\rho $ and $y$ such that $(\overrightarrow{G}, y , \overrightarrow{\boldsymbol{r}}, \rho , \overrightarrow{\boldsymbol{\Phi}})$ is a Yu datum and such that \[\Lambda \simeq  \rho _d (\overrightarrow{G}, y , \overrightarrow{\boldsymbol{r}}, \rho , \overrightarrow{\boldsymbol{\Phi}}). \] In particular, the supercuspidal representations obtained by compact induction are equal.
\end{theo}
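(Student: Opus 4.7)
My plan is to read off $y$ and $\rho$ directly from the Bushnell--Kutzko data, verify the Yu-datum axioms, and then match Yu's construction to $\Lambda$ layer by layer. Maximality of the stratum $[\fA,n,0,\beta]$ forces $\fB_{\beta_0}$ to be a maximal hereditary $\of[\beta_0]$-order, whose normaliser in $G^0(F)\cong \mathrm{GL}_{E_0}(V)$ fixes a vertex of $\mathcal{B}(G^0,F)$; I take $y$ to be this vertex. The embeddings $\mathcal{B}(G^0,F)\hookrightarrow\mathcal{B}(G^i,F)$ induced by the twisted Levi structure send $y$ to the vertex attached to $\fB_{\beta_i}$, so the Moy--Prasad filtrations at $y$ in each $G^i$ match those coming from the hereditary orders used in $BK$. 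The depth-zero representation $\rho$ is then the inflation of $\sigma$ to $(G^0)_{y,0} = J^0(\beta,\fA)\cap G^0(F)$ along the canonical identification $J^0(\beta,\fA)/J^1(\beta,\fA) \simeq (G^0)_{y,0}/(G^0)_{y,0^+}$, suitably extended to the $G^0(F)$-stabiliser of $y$ to agree with $\Lambda$.

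\textbf{Verification of the Yu-datum axioms.} That $\overrightarrow{G}$ is a tame twisted Levi sequence follows directly from the tameness hypothesis on $E/F$. The strict ordering $\boldsymbol{r}_0 < \boldsymbol{r}_1 < \dots < \boldsymbol{r}_{d-1}$ translates into strict decrease of $\ord(c_i)$, a standard consequence of the defining sequence condition. The main point is that each character $\boldsymbol{\Phi}_i = \phi_i\circ\det_{B_i}$ must be $G^{i+1}$-generic of depth $\boldsymbol{r}_i$ at $y$ in Yu's sense; I would prove this by combining the formula $\theta^i|_{H^1(\beta,\fA)\cap U^{[-\nu_\fA(c_i)/2]+1}(\fA)} = \psi_{c_i}$ with the simplicity of the stratum $[\fB_{\beta_{i+1}},r_{i+1},r_{i+1}-1,c_i]$ provided by Corollary~\ref{abcd}, since that simplicity translates directly into Yu's genericity conditions on $c_i \in \Lie(G^i)(F)$.

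\textbf{Matching the representations.} Using the explicit product formulas for $H^1(\beta,\fA)$, $J^1(\beta,\fA)$ and $J^0(\beta,\fA)$ together with Moy--Prasad filtration computations inside each $B_i$, these $BK$ groups are to be identified with the corresponding compact-open subgroups in Yu's setup. Under this identification each factor $\theta^i$ of the simple character becomes the restriction of $\boldsymbol{\Phi}_i$ to the appropriate subquotient, exactly the input of Yu's Heisenberg--Weil representation $\phi_i'$. Fixing a tensor factorisation $\kappa \simeq \bigotimes_{i=0}^s \kappa^{(i)}$ of a $\beta$-extension, I would then show that each $\kappa^{(i)}$ and each $\phi_i'$ extend $\theta^i$ to the same group with the same intertwining properties, so they coincide up to a depth-zero twist that can be absorbed into $\rho$. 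Compatible extension to $E_0^\times J^0(\beta,\fA)$ should finally yield $\Lambda \simeq \rho_d$.

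\textbf{Main obstacle.} The central technical difficulty is the pointwise identification $\kappa^{(i)}\simeq\phi_i'$: both extend the same Heisenberg character, but via genuinely different recipes---the former through $BK$'s intertwining condition with respect to an Iwahori factorisation, the latter by descent from a Weil representation of an intrinsic symplectic space built from $y$ and $\boldsymbol{r}_i$. The comparison rests on identifying the symplectic form $(x,y)\mapsto\psi\circ\Tr_{A/F}(c_i[x,y])$ on $J^1(\beta,\fA)/H^1(\beta,\fA)$ with Yu's symplectic form on the corresponding Moy--Prasad subquotient, and then invoking uniqueness of the Heisenberg--Weil extension up to a character twist. This is where the representation-theoretic heart of the proof will sit.
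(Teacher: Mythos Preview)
Your choice of $y$, the verification of the Yu-datum axioms via simplicity of the derived strata, and the identification of the BK groups $H^1$, $J^1$, $J^0$ with Yu's $K^d_+$, etc.\ are all essentially the paper's route and are fine.

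The gap is in your ``Matching the representations'' step. You propose to fix a tensor factorisation $\kappa \simeq \bigotimes_{i=0}^s \kappa^{(i)}$ of the given $\beta$-extension and then match each factor with Yu's $\boldsymbol{\Phi}_i'$. But BK theory gives no such factorisation: a $\beta$-extension is defined abstractly as an extension of the Heisenberg representation $\eta$ to $J^0(\beta,\fA)$ with prescribed intertwining, not as a tensor product, so there is nothing canonical to call $\kappa^{(i)}$. Your ``main obstacle'' paragraph correctly senses that this is the crux, but the difficulty is not merely technical --- it is that the object you want to compare does not exist a priori.

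The paper avoids this entirely by reversing the logic. Rather than factoring the BK side, it shows directly that Yu's representation ${}^{\circ}\lambda = {}^{\circ}\kappa_0 \otimes \cdots \otimes {}^{\circ}\kappa_d$ of ${}^{\circ}K^d = J^0(\beta,\fA)$ is itself a $\beta$-extension of $\theta$, by checking the three abstract criteria (contains $\theta$; is intertwined by all of $B_\beta^\times$; has dimension $[J^1:H^1]^{1/2}$). Each criterion is checked factor by factor on the Yu side, where the tensor structure is native: containment of $\theta$ comes from $\hat{\boldsymbol{\Phi}}_i = \theta^i$, the intertwining from Yu's own intertwining theorem applied to each $\boldsymbol{\Phi}_i'$, and the dimension from $\dim\boldsymbol{\Phi}_i' = [J^{i+1}:J^{i+1}_+]^{1/2}$ together with a telescoping index computation. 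Once ${}^{\circ}\lambda$ is known to be a $\beta$-extension, the classification of $\beta$-extensions (any two differ by a character of $U^0(\fo_E)/U^1(\fo_E)$ pulled back through $\det_B$) gives $\kappa \simeq {}^{\circ}\lambda \otimes \xi$ for some such character $\xi$; this twist, together with one further character absorbing the ambiguity in extending $\sigma$ to $E^\times U^0(\fB_\beta)$, is folded into $\rho$. No Heisenberg--Weil comparison of symplectic forms is needed.
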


We give a similar Theorem for the direction $YU \longrightarrow BK $ in Section \ref{YUBK}, and we obtain the following corollary.

\begin{coro} The list of tame maximal extended simple types constructed by Bushnell-Kutzko \cite{BK} is equal to the list of $GL_N$ extended Yu types obtained through Yu's construction of supercuspidal representations \cite{YU}. The same holds for the so-called simple characters. See Corollary \ref{cororor} for a formal statement.
\end{coro}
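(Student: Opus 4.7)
The plan is to deduce the corollary directly from the two main comparison theorems of the paper: the $BK \to YU$ direction, stated at the end of the introduction, and its converse $YU \to BK$, proved in Section \ref{YUBK}. Each direction takes a datum on one side and produces a datum on the other whose compactly induced representation is isomorphic to the one attached to the original datum. I would first package these theorems as two maps
\[ \Psi \colon \{\text{tame maximal extended simple types}\} \longrightarrow \{\text{tame } GL_N \text{ extended Yu types}\} \]
and $\Psi'$ going the other way, at the level of equivalence classes (the usual Bushnell--Kutzko equivalence on one side and $G(F)$-conjugacy of Yu data on the other), and check that each of $\Psi$, $\Psi'$ is well defined on equivalence classes: this is bookkeeping on the various auxiliary choices (defining sequence as in Corollary \ref{abcd}, factorisation $\theta = \prod \theta^i$, depth-zero piece $\rho$, extension $\Lambda$).

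The heart of the argument is to check that $\Psi$ and $\Psi'$ are mutually inverse. I would verify this by tracing through the explicit correspondence of data set up in the introduction: the decreasing tower of fields $F = E_d \subset E_s \subset \cdots \subset E_0$ attached to a simple stratum $[\fA,n,0,\beta]$ corresponds to the twisted Levi sequence $\overrightarrow{G} = (G^0, \ldots, G^d)$ with $G^i = \Res_{E_i/F}\underline{\Aut}_{E_i}(V)$; the jumps $\boldsymbol{r}_i = -\ord(c_i)$ are read off from the defining sequence via $c_i = \beta_i - \beta_{i+1}$; the characters $\boldsymbol{\Phi}_i = \phi_i \circ \det_{B_i}$ match the factorisation $\theta = \prod_i \theta^i$; and the depth-zero pair $(y,\rho)$ matches $\sigma$ together with the chosen hereditary order $\fA$. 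Alternatively, one can short-circuit this by invoking that both $\cind_{E^{\times} J^0(\beta, \fA)}^G \Lambda$ and $\cind_{K^d}^G \rho_d$ are irreducible supercuspidal and, by the two theorems, coincide; uniqueness of types in each framework then forces the underlying data to be equivalent, so $\Psi' \circ \Psi$ and $\Psi \circ \Psi'$ are the identity.

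The statement about simple characters follows by restricting this bijection to the $H^1(\beta,\fA)$-level. Using the explicit formula for $H^1(\beta,\fA)$ recalled in the Fact above and the factorisation $\theta = \prod_i \theta^i$, the simple character $\theta$ is determined by the collection $\overrightarrow{\boldsymbol{\Phi}}$ of characters of its associated Yu datum once the tower of fields (equivalently, $\overrightarrow{G}$) is fixed, and conversely each $\phi_i$ is recovered from $\theta$ by restriction to $H^1(\beta,\fA) \cap B_i$. So $\Psi$ and $\Psi'$ descend to mutually inverse bijections on the level of simple characters.

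The main obstacle will be the exhaustiveness of the $YU \to BK$ map: one must show that every tame $GL_N$ Yu datum is, up to $G(F)$-conjugation, of the very specific shape produced by $\Psi$, namely with $\overrightarrow{G}$ coming from an honest chain of $F$-subfields of $\End_F V$ and with $y$, $\overrightarrow{\boldsymbol{r}}$ coming from a hereditary order $\fA$ together with a defining sequence satisfying Corollary \ref{abcd}. This reduces to the fact that a tamely ramified twisted Levi subgroup of $GL(V)$ is the centralizer of a semisimple element generating a tamely ramified $F$-subfield of $\End_F V$, combined with a normalisation of $y$ and $\overrightarrow{\boldsymbol{r}}$ so that the building point corresponds to a hereditary order and the breaks match the critical exponents $-\ord(c_i)$. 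Once this normalisation is performed, the remaining identifications are formal consequences of the correspondence described above.
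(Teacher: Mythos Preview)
Your route works in principle but is considerably more elaborate than the paper's, and it slightly misreads what the corollary asserts. Corollary~\ref{cororor} claims an equality of two \emph{sets} of pairs $(\text{open compact-mod-center subgroup},\ \text{isomorphism class of representation})$---not a bijection between equivalence classes of data. Hence there is no need to define maps $\Psi,\Psi'$ on equivalence classes or to verify that they are mutual inverses: two set inclusions suffice, and that is exactly what the two comparison theorems already deliver. The paper's proof is accordingly one line per part. For~(1), Theorem~\ref{bigtheofin} shows that every tame Bushnell--Kutzko maximal extended simple type is literally equal to some $(K^d,\rho_d)$, giving $\text{TameType}_{\text{BK}}\subset\text{GType}_{\text{YU}}$; Theorem~\ref{theo2} together with Remark~\ref{r2} gives the reverse inclusion. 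For~(2) one cites Propositions~\ref{chapeau} and~\ref{c2} (with Remark~\ref{r2}) in place of the two theorems.

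The ``main obstacle'' you flag---that an arbitrary $GL_N$ Yu datum must first be brought, up to $G(F)$-conjugacy, into the shape produced by $\Psi$---is in fact not an obstacle, and the paper dispatches it in one sentence at the start of Section~\ref{YUBK}: an anisotropic tamely ramified twisted Levi sequence in $\underline{\Aut}_F(V)$ is automatically of the form $G^i=\Res_{E_i/F}\underline{\Aut}_{E_i}(V)$ for a tower $E_0\supset\cdots\supset E_d=F$ of tamely ramified subfields of $\End_F(V)$. This is the standard description of Levi subgroups of $GL_N$ combined with the anisotropy of $Z(G^0)/Z(G)$. No conjugation or normalisation step is needed; Section~\ref{YUBK} then builds the simple stratum, simple character, $\beta$-extension and $\Lambda$ directly from the given Yu datum, which is precisely the content of the reverse inclusion.
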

Let us propose a list of steps for a first or rapid lecture:

\begin{enumerate}
\item Skip the end of the introduction after this list if you are not interested in the historical aspects.
\item If familiar with the basic idea of constructions of supercuspidal representations by compact induction, then  skip Section \ref{intcu}, otherwise read it briefly. 
\item If very familiar with BK, then skip Section \ref{sectionconstbk}. If not familiar with BK, then read Section \ref{sectionconstbk} and remember well the notion of defining sequence. 
\item If familiar with YU, then skip Section \ref{yu}, otherwise read it.
\item Skip Sections \ref{sectamesimple}, \ref{sectemrs} and \ref{geneassomini}. These are technical Sections for proofs.
\item Read Section \ref{tameca} but read only Subsection \ref{abstfact}; that is, skip Subsection  \ref{explifact}.
\item Read Section \ref{sectgecha} very well. Do not care about the disjunction $(Case ~A)$ and $(Case ~B)$ at first, read only $(Case ~A)$ for example. $(Case ~A)$ and $(Case ~B)$ only differ by one character.
\item Read Section \ref{extensionn}, the direction $BK \longrightarrow YU$ ends here.
\item Read Section \ref{YUBK}, this is $YU \longrightarrow BK$.
\item Read \ref{UNIF}.
\end{enumerate}

Heuristically, to find our comparison, we have compared Bushnell-Kutzko's construction with Moy's presentation \cite{MO} of Howe's construction \cite{HO} and Yu's construction with Moy's presentation of Howe's construction.
Then, we found statements for the direction $BK \longrightarrow YU $. We then proved the comparison for the direction $BK \longrightarrow YU $ and  worked on the reverse direction $YU \longrightarrow BK$. The following are the main hints for $BK \longleftrightarrow YU$ that are contained in the previous literature:

- Howe's construction \cite{HO} 

- Moy's presentation of Howe's construction\cite{MO}

- Hakim-Murnaghan's article \cite{Hamu} 

- Bushnell-Henniart 's series of articles  \cite{BHTL1} , \cite{bhtl3} , \cite{bhtl4} ,  \cite{Esse} ,\cite{bhess1}, \cite{bhess2} , \cite{bhess3}.

Almost all of the other works related to Bushnell-Kutzko 's and Yu's constructions should also be considered as a benefit, although we do not cite all of the literature here. 
We do not need to refer to these works in proofs of results of this article, except for Bushnell-Henniart \cite[3.1 Corollary]{Esse} for Proposition \ref{tssprop2}. We also use \cite{Brou} to compare the  filtrations. \\

\section*{Notations and conventions }

\begin{align*}
F &=\text{a fixed non-Archimedean local field}\\
\mathfrak{o}_F &=\text{ring of integer of } F \\
\mathfrak{p}_F &=\text{maximal ideal of  }\mathfrak{o}_F\\ 
\psi & = \text{a fixed additive character } (F,+) \to \C ^* \text{ of conductor }  \mathfrak{p}_F \\
k_F &= \text{residual field of } F  \\
\pi _F & = \text{ a fixed uniformiser of } F\\
 e(E \mid F) & = \text{ramification index of a finite extension } E/F \\
 \pi _E & =\text{a uniformiser of an extension } E \text{ of } F \\ \nu _E & = \text{unique valuation on a finite } \\ &~~~~\text{ extension $E/F$ such that }\nu _E (\pi _E ) =1 \\
\ord &= \text{unique valuation on algebraic }\\ 
& ~~~~\text{extensions of } F \text{ such that } \ord (\pi _F) =1 \\
\end{align*}
If $k$ is a field and if $G$ is a $k$-group scheme, then we denote by $\underline{\Lie}(G)$ the Lie algebra functor and $\Lie (G)$ the usual Lie algebra $\underline{\Lie}(G)(k)$. The Lie algebra functor, of a $k$-group scheme denoted with a big capital letter $G$, is also denoted by the same small Gothic letter $\mathfrak{g}$. 
 If $G$ is a connected reductive group defined over $F$, then we denote by $\BT^E(G,F)$ and $\BT ^R (G,F)$ the enlarged and reduced Bruhat-Tits buildings
 of $G$ over $F$ \cite{brti1}, \cite{brti2}. In this situation, if $y$ is a point of $\BT^E (G , F)$, then   $[y]$ denotes the image of $y$ via the canonical projection ${\BT^E(G,F) \to \BT ^R(G,F)}$.
  The group $G(F)$ acts on ${\BT^E(G,F)} $ and ${ \BT ^R(G,F)}$. Here,  $G(F)_{y}$ and $G(F)_{[y]}$ denote the stabilisers in $G(F)$ of $y$ and $[y]$. If $G$ splits over a tamely ramified extension, we consider the so called Moy-Prasad filtration, which was defined by Moy and Prasad \cite{Mopr} \cite{Mopr2}. This is the filtration used by Yu \cite{YU}. We use Yu's notations. So for each real number $r \geq 0$ and each $y$ in $\BT^E(G,F)$, we have some groups $G(F)_{y,r}$ and $G(F)_{y,r+}$. As in \cite{Mopr} and \cite{YU}, we also have a filtration of the Lie algebra $\Lie (G)=\mathfrak{g}(F)$ and of the dual of the Lie algebra $\mathfrak{g}^*(F)$.  Let us just recall here that \begin{center} ${\mathfrak{g}^*(F) _{y,-r}= \{ X \in \mathfrak{g}^*(F) \mid X(Y) \in \mathfrak{p}_F~ \text{for all}~ Y \in \mathfrak{g}(F) _{y, r+}\}}$,\end{center} and \begin{center}
  ${\mathfrak{g}^*(F) _{y,(-r)+}= \displaystyle \bigcup_{s <r}  \mathfrak{g}^*(F)_{y , -s}}$.\end{center}
  If $s<r$, then $G(F) _{y,s:r}$ denotes the quotient $G(F) _{y,s} / G(F) _{y,r}$.
  If $G$ is a torus, then we can avoid the symbol $y$ and we write, for example,  $G(F)_{r}$ and $\Lie^*(G) _{-r}$.
  If $H\subset G $ are groups and $\rho$ is a representation of $H$, then  $I_G(\rho)$ denotes the intertwining of $\rho $ in $G$; that is, the set 

  \begin{center}
  $I_G (\rho)= \{ g \in G \mid \Hom _{{}^g H \cap H} (^g \rho , \rho ) \not = 0 \}$
  \end{center}

\section{Intertwining, compact induction and supercuspidal representations} \label{intcu}

  Let $G$ be a connected reductive group defined over $F$ and let $P=MN$ be a parabolic subgroup of $G$. As usual in the literature, the notation $P=MN$ means that $M $ is a Levi subgroup of $P$ and $N$ is the unipotent radical of $P$. Let $r_P^G$ denote the normalised parabolic restriction functor from the category $\mathcal{M}(G)$ of smooth representations of $G(F)$ to the category $\mathcal{M}(M)$of smooth representations of $M(F)$.
    Let us recall the definition of a supercuspidal representation.
  \begin{defi} A representation $\pi \in \mathcal{M}(G)$ is supercuspidal if $r_P^G (\pi)=0 $ for all proper parabolic subgroups $P$ of $G$.
  \end{defi}

  The following lemma is an important characterisation of supercuspidal representations.

  \begin{lemm} \cite{Rena} \label{coef} A representation $\pi \in \mathcal{M}(G)$ is supercuspidal if and only if its matrix coefficients are compactly supported modulo the center of $G(F)$.
  \end{lemm}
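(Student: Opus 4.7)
The plan is to prove both implications through the Jacquet module formalism, exploiting the well-known equivalence between $r_P^G \pi = 0$ and the condition that for every $v \in \pi$ there exists a compact open subgroup $N_0 \subset N$ with $\int_{N_0} \pi(n) v \, dn = 0$.

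For the direct implication, I would fix smooth vectors $v \in \pi$ and $\tilde{v}$ in the smooth dual, together with a compact open subgroup $K_0 \subset G(F)$ fixing both. The matrix coefficient $c(g) = \langle \pi(g) v, \tilde{v} \rangle$ is then bi-$K_0$-invariant, so by the Cartan decomposition $G(F) = K A^+ K$ (with $A$ a maximal $F$-split torus and $A^+$ a positive chamber), it suffices to bound the support of $c$ restricted to $A^+$ modulo $Z(G)(F)$. For each proper standard parabolic $P = MN$ attached to a wall of $A^+$, I would choose a compact open $N_0 \subset N$ with $\int_{N_0} \pi(n) v \, dn = 0$. For $a$ deep enough in the corresponding sector, conjugation by $a$ shrinks $N_0$ into $K_0$, and $c(a)$ becomes a scalar multiple of $\langle \pi(a) \int_{N_0} \pi(n) v \, dn, \tilde{v} \rangle$, which vanishes. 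Running over the finitely many sectors yields compact support modulo $Z(G)(F)$.

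For the converse, I would argue by contraposition. If $r_P^G \pi \neq 0$ for some proper $P = MN$, then by Frobenius reciprocity there is a nonzero $G(F)$-intertwining from $\pi$ to a parabolically induced representation $i_P^G \sigma$ for some $\sigma \in \mathcal{M}(M)$. Matrix coefficients of $i_P^G \sigma$ have explicit asymptotic behaviour along the split component $A_M$ of $M$: they essentially factor through $\delta_P^{1/2}$ and matrix coefficients of $\sigma$, and so are not compactly supported modulo $Z(G)(F)$ as soon as $Z(M)(F) \supsetneq Z(G)(F)$. Transporting a non-zero matrix coefficient of the image back to $\pi$ via the intertwining contradicts the compact-support hypothesis.

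The main obstacle is the uniformity in the direct implication: the compact open $N_0$ depends on $v$, and one has to match this choice with the depth of the sector to ensure $a N_0 a^{-1} \subset K_0$. This book-keeping, together with the precise correspondence between sectors of $A^+$ and standard parabolics, is carried out in full detail in Renard's book, which is why we content ourselves with the citation in Lemma \ref{coef}.
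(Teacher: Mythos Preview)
The paper does not supply its own proof of this lemma: it is stated with a bare citation to \cite{Rena} and then used as a black box to justify Proposition~\ref{Cara}. Your sketch is precisely the standard argument one finds in Renard's book (or in Casselman's unpublished notes), so there is no alternative approach in the paper to compare against.

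One small remark on your converse direction: the step ``matrix coefficients of $i_P^G\sigma$ are not compactly supported modulo $Z(G)(F)$'' deserves a word of care. What one actually uses is that a nonzero vector in the image of $\pi\to i_P^G\sigma$ gives, via the dual map, a matrix coefficient of $\pi$ itself whose restriction to the split component $A_M$ of $M$ is governed by the asymptotics of the Jacquet module; since $A_M/A_G$ is noncompact when $P$ is proper, this coefficient cannot have compact support modulo the center. Your formulation is correct in spirit, but the passage from ``$i_P^G\sigma$ has non-compactly-supported coefficients'' to ``$\pi$ does'' goes through the adjoint of the intertwiner rather than by transporting coefficients forward. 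This is exactly what Renard does, so your citation is well placed.
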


  If $K$ is an open subgroup of $G(F)$, then the symbol $\cind_K^G$ denotes the compact induction functor. Lemma  \ref{coef} allows to prove the following proposition:

 \begin{prop} \cite{Cara} \label{Cara} Let $K$ be an open subgroup of $G(F)$, which is compact modulo the center of $G(F)$. Let $\rho$ be a smooth irreducible representation of $K$ and let $\pi= \cind_K^G(\rho)$ be the compactly induced representation of $\rho$ on $G(F)$. The following assertions are equivalent: \begin{enumerate}

 \item[(i)] The intertwining $I_G(\rho)$ of $\rho$ is reduced to $K$.

 \item[(ii)] The representation $\pi$ is irreducible and supercuspidal.

 \end{enumerate}
 \end{prop}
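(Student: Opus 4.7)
The plan is to reduce both implications to the standard Mackey-style description of $G(F)$-equivariant maps out of a compactly induced representation. More precisely, one identifies $\End_G(\cind_K^G \rho)$ with an intertwining algebra: the space of $\End(V_\rho)$-valued functions $f$ on $G(F)$, compactly supported modulo the center and satisfying $f(k_1 g k_2) = \rho(k_1) f(g) \rho(k_2)$. Any such $f$ is automatically supported on $I_G(\rho)$, and at each double coset $K g K$ its value is prescribed by an element of $\Hom_{K \cap {}^g K}(\rho, {}^g \rho)$. At $g=1$ this is $\End_K(\rho) \cong \C$ by Schur's lemma applied to the irreducible representation $\rho$.

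For the direction (i) $\Rightarrow$ (ii), assume $I_G(\rho) = K$. Then the intertwining algebra collapses to $\C$, which gives $\End_G(\pi) = \C$; a hypothetical nontrivial $G$-stable decomposition would produce a non-scalar projection, contradiction, so $\pi$ is irreducible. For supercuspidality, I would realise $\pi$ on the space of functions $f \colon G(F) \to V_\rho$ with $f(kg) = \rho(k) f(g)$ and support compact modulo the center, embed $V_\rho$ as those $f$ supported on $K$, and compute the matrix coefficient $g \mapsto \langle \tilde{v}, \pi(g) v \rangle$ for $v \in V_\rho$ and $\tilde{v} \in \widetilde{V_\rho}$. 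The intertwining hypothesis forces this coefficient to vanish outside $K$, hence it is compactly supported modulo the center of $G(F)$, and Lemma \ref{coef} concludes that $\pi$ is supercuspidal.

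For the reverse direction (ii) $\Rightarrow$ (i) I would argue by contraposition. If $g \in I_G(\rho) \setminus K$, a nonzero element of $\Hom_{K \cap {}^g K}(\rho, {}^g \rho)$, extended by zero outside $K g K$, yields via the intertwining algebra description an endomorphism of $\pi$ supported on $K g K$ and linearly independent of the identity. Thus $\dim \End_G(\pi) \geq 2$, which contradicts Schur applied to the irreducible $\pi$. The main obstacle is to set up the Mackey/intertwining algebra rigorously in the setting where $K$ is only compact modulo the center rather than compact: one has to verify that the $\End(V_\rho)$-valued functions and their convolution are well defined, that the support statement on $I_G(\rho)$ holds, and that the contribution at each double coset is controlled — all of which require that $\rho$ is smooth, admissible, and has a central character through which it extends coherently on $Z K$. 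Once this bookkeeping is in place, the matrix coefficient computation for supercuspidality and the double-coset argument for irreducibility become routine, and Lemma \ref{coef} is precisely what converts the support-modulo-center statement into supercuspidality.
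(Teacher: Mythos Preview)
The paper does not give its own proof of this proposition; it simply cites Carayol. Your overall strategy via the Hecke/intertwining algebra and Mackey decomposition is the standard one, and your argument for (ii)$\Rightarrow$(i) is fine. However, there is a genuine gap in your (i)$\Rightarrow$(ii).

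From $I_G(\rho)=K$ you correctly deduce $\End_G(\pi)=\C$, but the sentence ``a hypothetical nontrivial $G$-stable decomposition would produce a non-scalar projection'' only rules out a direct-sum decomposition; it shows $\pi$ is \emph{indecomposable}, not irreducible. For infinite-dimensional smooth representations these notions differ, and a proper submodule $W\subset\pi$ need not be complemented. The missing step is the multiplicity-one argument: Mackey plus $I_G(\rho)=K$ gives that $\rho$ occurs exactly once in $\pi|_K$, namely as the subspace $V_\rho$ of functions supported on $K$. Now if $0\neq W\subset\pi$, the inclusion $W\hookrightarrow\pi=\cind_K^G\rho\hookrightarrow\Ind_K^G\rho$ together with Frobenius reciprocity for $\Ind$ gives $\Hom_K(W|_K,\rho)\neq 0$; since $K$ is compact modulo the center and we work with a fixed central character, $W|_K$ is semisimple, so $\rho$ is a $K$-submodule of $W$. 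By multiplicity one this copy must be $V_\rho$, whence $V_\rho\subset W$ and $W=\pi$ because $V_\rho$ generates $\pi$.

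A related point affects your supercuspidality argument: the matrix coefficient $g\mapsto\langle\tilde v,\pi(g)v\rangle$ with $v\in V_\rho$ and $\tilde v\in\widetilde{V_\rho}$ is supported on $K$ for purely formal reasons (no intertwining hypothesis needed). What you must justify is why these special coefficients suffice for Lemma~\ref{coef}, and that reduction requires $\widetilde{V_\rho}$ to generate $\tilde\pi$, i.e.\ it uses the irreducibility of $\pi$ you have not yet established. Once the irreducibility argument above is in place, this step goes through.
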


 This observation (Proposition \ref{Cara}) is absolutely fundamental and both constructions of  supercuspidal representations studied  in this paper are based on this fact.

\section{Bushnell-Kutzko's construction of supercuspidal representations for $GL_N$} \label{sectionconstbk}

For each irreducible supercuspidal representation $\pi$ of $GL_N(F)$, Bushnell and Kutzko
\cite{BK}
have constructed an open subgroup $K$, compact modulo the center of $GL_N(F)$, and a smooth irreducible representation $\Lambda$ of $K$ such that $\pi=\cind _K^{GL_N(F)} (\Lambda)$. In the following, we describe the construction of Bushnell and Kutzko, as in their book \cite{BK}. 

\subsection{Simple strata}\label{simplestrata}

Let $V$ be an $F$-vector space of dimension $N$. Let $A$ be the algebra $ \End_F(V)$. If $\mathfrak{A}$ is a hereditary $\of$-order in $A$, then  $\fP$ denotes its Jacobson radical and $\nu _{\fA} $ denotes the valuation on $\fA$ given by $\nu _{\fA} (x) = \max \{k \in \Z \mid x \in \fP ^k\}. $ A stratum in $A$ is a quadruple $[\fA , n,r , \beta]$, where $\fA$ is a hereditary $\of$-order, $n>r$ are integers and $\beta$ is an element in $A$ such that $\nu _{\fA} (\beta) \geq -n$. Let $e(\fA \mid \of)$ denote the period of an $\mathfrak{o}_F$-lattice chain associated to $\fA$. Let $\mathfrak{K}(\fA)$ be the normaliser of $\fA$ in $\mathrm{G}=A^{\times}$.
Before giving the definition of a pure stratum, let us prove an elementary lemma that will be used often in other sections of this paper.

\begin{lemm} \label{valval} Let $\fA$ be an hereditary $\of$-order in $A$, and let $E$ be a field in $A$ such that $E^{\times } \subset \mathfrak{K} (\fA )$. Let $\beta$ be an element in $E$, then 

\begin{equation}
\nu _{\fA} (\beta) e ( E \mid F ) = e (\fA \mid \of ) \nu _{E} (\beta) . \label{eqval}
\end{equation}
\end{lemm}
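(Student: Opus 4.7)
The plan is to exploit the lattice-chain description of $\fA$. Fix an $\of$-lattice chain $(L_i)_{i\in\Z}$ in $V$ of period $e=e(\fA\mid\of)$, so that $L_{i+e}=\pi_F L_i$, $\fA=\{x\in A:xL_i\subseteq L_i,~\forall i\}$, and $\fP^k=\{x\in A:xL_i\subseteq L_{i+k},~\forall i\}$. Every element $g\in\mathfrak{K}(\fA)$ permutes the chain by a fixed shift, yielding a group homomorphism $k:\mathfrak{K}(\fA)\to\Z$ such that $gL_i=L_{i+k(g)}$ for all $i$. The definition of $\nu_{\fA}$ shows that $k$ agrees with $\nu_{\fA}$ on $\fA\cap\mathfrak{K}(\fA)$, and in fact $\nu_{\fA}$ extends naturally to all of $\mathfrak{K}(\fA)$ as the homomorphism $k$.

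Since $E^\times\subset\mathfrak{K}(\fA)$ by hypothesis, $k|_{E^\times}\colon E^\times\to\Z$ is a group homomorphism. First I would verify that $\fo_E^\times\subset\ker k$. Viewing $V$ as a free $E$-module of rank $N/[E:F]$, for $u\in\fo_E^\times$ one has $\det_F(u)=N_{E/F}(u)^{N/[E:F]}\in\fo_F^\times$, so the $\of$-lattices $L_0$ and $uL_0$ have the same covolume; equivalently,
$$\mathrm{length}_{\of}\bigl(L_0/(L_0\cap uL_0)\bigr)=\mathrm{length}_{\of}\bigl(uL_0/(L_0\cap uL_0)\bigr).$$
Since $uL_0=L_{k(u)}$ belongs to the totally ordered chain, if $k(u)>0$ then $L_0\cap uL_0=uL_0$ and the right-hand side vanishes while the left is strictly positive, a contradiction; the case $k(u)<0$ is symmetric. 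Hence $k(u)=0$.

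The scalar action of $\pi_F$ shifts the chain by $e$, so $k(\pi_F)=e(\fA\mid\of)$. Writing $\pi_F=v\,\pi_E^{e(E\mid F)}$ with $v\in\fo_E^\times$ and applying $k$ gives
$$e(\fA\mid\of)=k(\pi_F)=k(v)+e(E\mid F)\cdot k(\pi_E)=e(E\mid F)\cdot k(\pi_E),$$
in particular $e(E\mid F)$ divides $e(\fA\mid\of)$ and $k(\pi_E)=e(\fA\mid\of)/e(E\mid F)$.

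Finally, for a nonzero $\beta\in E$, factor $\beta=u\pi_E^{\nu_E(\beta)}$ with $u\in\fo_E^\times$ and apply the homomorphism $k$ once more:
$$\nu_{\fA}(\beta)=k(\beta)=k(u)+\nu_E(\beta)\,k(\pi_E)=\nu_E(\beta)\cdot\frac{e(\fA\mid\of)}{e(E\mid F)},$$
which on clearing denominators yields the stated formula (the case $\beta=0$ is handled by the usual conventions $\nu_{\fA}(0)=\nu_E(0)=\infty$). The only step requiring real work is the verification that $\fo_E^\times$ acts by the trivial shift on the chain, for which the covolume/determinant argument above is the key input; the remainder is bookkeeping for the shift homomorphism $k$ on $\mathfrak{K}(\fA)\supset E^\times$.
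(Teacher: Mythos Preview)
Your proof is correct and follows essentially the same route as the paper: both use that $\nu_{\fA}$ (your $k$) is a group homomorphism $\fK(\fA)\to\Z$ sending $\pi_F$ to $e(\fA\mid\of)$, then decompose $\beta$ in terms of $\pi_E$ and a unit in $\fo_E^\times$. The one place where you do more than the paper is the verification that $\fo_E^\times\subset\ker k$: the paper simply invokes \cite[1.1.3]{BK} (which identifies $\ker\nu_{\fA}$ with $U(\fA)$, hence contains the compact group $\fo_E^\times$), whereas you supply a self-contained covolume argument via $\det_F(u)=N_{E/F}(u)^{N/[E:F]}\in\of^\times$. This makes your argument slightly more elementary and independent of the BK reference, at the cost of a few extra lines; otherwise the two proofs are the same computation organised a bit differently (the paper works directly with $\beta^{e(E\mid F)}\fA$ rather than first isolating $k(\pi_E)$).
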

\begin{proof}Let $\pi_E$ denote a uniformiser  element in $E$. Since $E^{\times} \subset \mathfrak{K}(\fA)$, the elements $\pi_E , \pi _F $ and $\beta$ are in $\mathfrak{K}(\fA)$. Thus, the equality \cite[1.1.3]{BK} is valid for these elements and we  use it in the following equalities.
 On the one hand \begin{equation}\beta ^{e(E\mid F)}\fA = \pi _E ^{\nu _ E (\beta ) e(E\mid F)} \fA= \pi _F ^{\nu _E (\beta)} \fA . \label{val1}\end{equation}
On the other hand \begin{equation}\beta ^{e(E\mid F)}\fA = \fP ^{\nu _{\fA} (\beta) e (E \mid F ) } .\label{val2}\end{equation}
Moreover by definition of $e(\fA \mid \mathfrak{o}_F)$ (see \cite[1.1.2]{BK}), we have \begin{equation}\pi _F ^{\nu _E (\beta)} \fA = \fP ^{e (\fA  \mid \of )\nu _E (\beta )}. \label{val3}\end{equation}
The equalities \ref{val1} , \ref{val2} and \ref{val3} show that \begin{equation}\fP ^{\nu _{\fA} (\beta) e (E \mid F )} =\fP ^{e (\fA  \mid \of )\nu _E (\beta )}.\end{equation}
Consequently, $\nu _{\fA} (\beta) e (E \mid F )= e (\fA  \mid \of )\nu _E (\beta ) $ and equality \ref{eqval} holds as required.

\end{proof}

\begin{defi} \label{defpure} \cite[1.5.5]{BK} A stratum is pure if the following conditions hold.
\begin{enumerate}
\item[(i)] The $F$-algebra $E=F[\beta]$, generated by $F$ and $\beta$ in $A$, is a field.
\item[(ii)]  $E^{\times}$ is included in $\mathfrak{K}(\fA)$.
\item[(iii)]  The equality $\nu _{\fA} (\beta) =- n$ holds.
\end{enumerate}

\end{defi}Let $[\fA , n , r , \beta ]$ be a pure stratum, for each $k \in \Z$ let $\mathfrak{N}_k(\beta, \fA)$ be the set \cite[1.4.3]{BK}
\begin{center}
$\mathfrak{N}_k(\beta, \fA):=\{x \in \fA \mid \beta x - x \beta \in \fP ^k \}$.
\end{center}
Put $B=\End_{F[\beta]}(V)$ and $\mathfrak{B}=B\cap \mathfrak{A}$. We can define the following \textit{critical exponent} $k_0 (\beta , \fA )$ \cite[1.4.5]{BK}:

\begin{center}
$k_0(\beta,\fA):=\left\{ \begin{array}{ll}
          -\infty ~ \text{if}~ E=F  \\

        \max \{ k \in \Z \mid \mathfrak{N} _k (\beta , \fA)  \not \subset \mathfrak{B} + \fP \}~ \text{if}~ E\not =F.
    \end{array}  
\right.$
\end{center}

\begin{defi} \label{defsimple}\cite[1.5.5]{BK} A stratum $[\fA,n,r,\beta] $ is simple if it is pure and $r < - k_0 (\beta , \fA)$.
\end{defi}

The simple strata are constructed inductively from minimal elements, through a process that is the object of Section 2.2 of Bushnell-Kutzko's book \cite[2.2]{BK}. The following is the definition of a minimal element giving birth to a stratum with just one iteration:

\begin{defi}\label{defminimal}\cite[1.4.14]{BK} Let $E/F$ be a finite extension.
 An element $\beta \in E$ is minimal relatively to $E/F$ if the following three conditions are satisfied.
 \begin{enumerate}
\item[(i)] The field $F[\beta] $ is equal to the field $E$.

\item[(ii)]The integer $\gcd(\nu _E (\beta), e(E\mid F))$ is equal to $1$.

\item[(iii)] The element $\pi ^{-\nu _{E} (\beta)}_F \beta ^{e(E\mid F)} + \mathfrak{p} _E$ generates the residual field $k _E$ over $k_F$.

\end{enumerate}

An element $\beta$ in $\overline{F} $ is minimal over $F$ if it is minimal relatively to the extension $F[\beta] /F$.

\end{defi}
\begin{prop} \label{minisimplealfalfa} Let $[\fA,n,n-1, \beta]$ be a pure stratum in the algebra $\End_F (V)$. The following assertions are equivalent.

\begin{enumerate}
\item[(i)] The element $\beta$ is minimal over $F$.
\item[(ii)] The critical exponent $k_0 (\beta , \mathfrak{A})$ is equal to $-n$ or is equal to $- \infty $.

\item[(iii)] The stratum $[\fA , n , n-1 , \beta ]$ is simple.

\end{enumerate}\end{prop}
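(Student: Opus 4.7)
The plan is to handle (ii)$\Leftrightarrow$(iii) directly from Definition~\ref{defsimple} together with a uniform lower bound on $k_0(\beta,\fA)$, and then to establish (i)$\Leftrightarrow$(ii) by analysing the action of $\ad(\beta)$ on the associated graded of the filtration $(\fP^k)_k$.

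For (ii)$\Leftrightarrow$(iii), Definition~\ref{defsimple} reduces simplicity of the pure stratum $[\fA,n,n-1,\beta]$ to $n-1<-k_0(\beta,\fA)$, equivalently $k_0(\beta,\fA)\leq -n$. If $E=F$, both (ii) and (iii) hold trivially since $k_0(\beta,\fA)=-\infty$. If $E\neq F$, I first verify the lower bound $k_0(\beta,\fA)\geq -n$: from $\nu_{\fA}(\beta)=-n$ we get $[\beta,\fA]\subset\fP^{-n}$, hence $\mathfrak{N}_{-n}(\beta,\fA)=\fA$; a $k_F$-dimension count in the semisimple quotient $\fA/\fP$, using $[E:F]>1$, shows that the image of $\fB$ is a proper subalgebra, so $\fA\not\subset\fB+\fP$. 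Combining the two inequalities yields the equivalence with (ii).

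For (i)$\Rightarrow$(ii), assume $\beta$ minimal with $E\neq F$; set $e=e(E\mid F)$, $m=\nu_E(\beta)$, and note by Lemma~\ref{valval} that $m\cdot e(\fA\mid\of)/e=-n$. The element $u=\pi_F^{-m}\beta^e$ is a unit in $\mathfrak{o}_E$ whose reduction $\bar u$ generates $k_E$ over $k_F$ by minimality. For $x\in\mathfrak{N}_{-n+1}(\beta,\fA)$, the relation $\bar\beta\,\bar x=\bar x\,\bar\beta$ holds in the graded piece $\fP^{-n}/\fP^{-n+1}$; iterating this identity $e$ times in the associated graded algebra and cancelling the central factor coming from $\pi_F^{-m}$, one obtains $\bar u\,\bar x=\bar x\,\bar u$ in $\fA/\fP$. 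By the double-centraliser theorem applied to the lattice-chain decomposition of $V$ inside the semisimple $k_F$-algebra $\fA/\fP$, the centraliser of $k_F[\bar u]=k_E$ is exactly the image of $\fB$; thus $x\in\fB+\fP$, whence $k_0(\beta,\fA)\leq -n$. Conversely for (ii)$\Rightarrow$(i), the same centraliser analysis shows that failure of either minimality condition on $\nu_E(\beta)$ or on $\bar u$ produces an element of $\mathfrak{N}_{-n+1}(\beta,\fA)\setminus(\fB+\fP)$ (one argues by contradiction, lifting a suitable $\bar x$ in $\fA/\fP$ centralising $\bar u$ but lying outside $(\fB+\fP)/\fP$), contradicting $k_0(\beta,\fA)\leq -n$.

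The main obstacle is the centraliser identification in $\fA/\fP$: establishing that the image of $\fB$ is exactly the centraliser of $k_E$, and that this centraliser strictly grows when $k_E$ is replaced by a proper subfield. Both facts reduce to linear algebra in a product of matrix algebras, following \cite[\S 1.4]{BK}, once one chooses a basis of $V$ adapted to the $E$-action and to the $\mathfrak{o}_F$-lattice chain defining $\fA$; the iteration step from $\bar\beta\bar x=\bar x\bar\beta$ in $\fP^{-n}/\fP^{-n+1}$ to $\bar u\bar x=\bar x\bar u$ in $\fA/\fP$ is then routine.
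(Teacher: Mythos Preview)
The paper's proof is a one-line citation of \cite[1.4.15]{BK} for (i)$\Leftrightarrow$(ii), together with the elementary inequality manipulation you give for (ii)$\Leftrightarrow$(iii). Your attempt to prove (i)$\Leftrightarrow$(ii) from scratch is more ambitious, but it contains a genuine error.

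The mistake is the claim that the centraliser of $k_E$ in $\fA/\fP$ equals the image of $\fB$. Writing the lattice chain as $(L_i)_i$, one has
\[
\fA/\fP \;\cong\; \prod_{i \bmod e(\fA\mid\of)} \End_{k_F}(L_i/L_{i+1}),
\]
and the centraliser of $k_E$ (hence of $\bar u$) is $\prod_i \End_{k_E}(L_i/L_{i+1})$. However the image of $\fB/\fQ$ is indexed by $i \bmod e(\fB\mid\mathfrak{o}_E)=e(\fA\mid\of)/e(E\mid F)$: when $e(E\mid F)>1$ it sits \emph{diagonally} inside the centraliser and is strictly smaller. Concretely, take $V=E$ with $E/F$ of ramification index $2$ and $\fA$ attached to the chain $(\fp_E^i)$; then $\fA/\fP\cong M_f(k_F)\times M_f(k_F)$ while $\fB/\fQ\cong k_E$ embeds diagonally, so $(1,0)$ centralises $\bar u$ yet does not lie in $\fB+\fP$. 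One can check directly that the obvious lift of $(1,0)$ lies in $\mathfrak{N}_{-n+1}(\beta,\fA)$ whenever $\gcd(\nu_E(\beta),e)>1$.

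This also explains why your argument never uses the $\gcd$ condition in Definition~\ref{defminimal}: as written, it would prove $k_0(\beta,\fA)=-n$ from condition (iii) of minimality alone, which is false. The correct argument (this is what \cite[1.4.15]{BK} actually does) keeps track of the full relation $\bar\beta\,\bar x_i=\bar x_{i-n}\,\bar\beta$ in the graded algebra, which intertwines the components of $\bar x$ along orbits of $i\mapsto i-n$ on $\Z/e(\fA\mid\of)$. Since $\gcd(n,e(\fA\mid\of))=e(\fB\mid\mathfrak{o}_E)\cdot\gcd(\nu_E(\beta),e)$, the condition $\gcd(\nu_E(\beta),e)=1$ is exactly what forces these orbits to have length $e(E\mid F)$, which together with the condition on $\bar u$ pins $\bar x$ down to the diagonal image of $\fB$. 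Your ``main obstacle'' paragraph identifies the right neighbourhood but misses this mechanism; the fix is not linear algebra in $\fA/\fP$ alone but the interaction between $\bar\beta$ and the block decomposition.
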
\begin{proof} This is a direct consequence of \cite[1.4.15]{BK}. Indeed, assume that $\beta \in F$, then $\beta$ is clearly minimal over $F$, moreover $k_0 (\beta , \mathfrak{A}) = -\infty$ by definition, and thus $k_0 (\beta , \mathfrak{A}) < - (n-1)$, so the stratum $[\mathfrak{A},n,n-1,\beta]$ is simple. The three properties, being always satisfied in this case, are equivalent.
Assume now that $\beta \not \in F$, by \cite[1.4.15]{BK} $(i)$ and $(ii)$ are equivalent, moreover it is clear that $(ii)$ implies $(iii)$.  If $(iii)$ is true, then $k_0(\beta , \fA) < - (n-1)$ by definition of a simple stratum, moreover \cite[1.4.15]{BK} shows that $-n \leq k_0 (\beta , \mathfrak{A})$. So $k_0(\beta, \fA) =-n$ and the assertion $(ii)$ holds.

\end{proof}

For the rest of the paper, we need to define the notion of a tame corestriction \cite[1.3]{BK}. Let $E/F$ be a finite extension of $F$ contained in $A$. Let $B$ denote $\End_E(V)$, the centraliser of $E$ in $A$.

\begin{defi}\label{tamecorestridef}\cite[1.3.3]{BK} A tame corestriction on $A$ relatively to $E/F$ is a $(B,B)$-bimodule homomorphism $s:A \to B$ such that  $s( \mathfrak{A} ) = \mathfrak{A} \cap B $ for every hereditary $\of$-order $\fA$ normalised by $E ^{\times}$.

\end{defi}

The following proposition shows that such maps exist:

\begin{prop} \label{tamecoresprop} \cite[1.3.4, 1.3.8 (ii)]{BK} With the same notations as before, the following holds.

\begin{enumerate}
\item[(i)] Let $\psi _E $, $\psi _F$ be complex, smooth, additive characters of $E,F$ with conductor $\mathfrak{p}_E$, $\mathfrak{p}_F$ respectively. Let $\psi _B$ and $\psi _A$ be the additive characters defined by $\psi _B = \psi _E \circ \Tr _{B/E}$ and $\psi _A = \psi _F \circ \Tr _{A/F}. $ Then there  exists a unique map $s:A \to B$ such that $\psi _A (ab) = \psi _B (s(a)b)$, $a\in A$, $b \in B$. The map $s$ is a tame corestriction on $A$ relatively to $E/F$.

\item[(ii)] If the field extension $E/F$ is tamely ramified, then there exists a tame corestriction $s$ such that $s \mid _{B} = \Id _{B}$.

\end{enumerate}

\end{prop}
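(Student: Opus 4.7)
The plan is to exploit the duality afforded by the additive characters $\psi_A$ and $\psi_B$, reduce the bimodule condition to the cyclicity of trace, and then handle the condition on hereditary orders by a conductor computation.

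For part (i), I would first note that the bilinear pairing $A \times A \to \C^{\times}$, $(a_1,a_2)\mapsto \psi_A(a_1 a_2)$, is non-degenerate because $\psi_F$ has conductor $\mathfrak{p}_F$ and $\Tr_{A/F}$ is non-degenerate; similarly for $B$ with $\psi_B$. So $\psi_A,\psi_B$ identify $A,B$ with their $F$-linear duals. Given $a\in A$, the $F$-linear form $b\mapsto \psi_A(ab)$ on $B$ corresponds through the $\psi_B$-pairing to a unique element $s(a)\in B$, and the defining identity $\psi_A(ab)=\psi_B(s(a)b)$ characterises $s$ uniquely. Linearity of $s$ is immediate from uniqueness. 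For the $(B,B)$-bimodule property, I would compute, for $b_1,b_2\in B$ and every $b\in B$,
\[
\psi_B\bigl(s(b_1 a b_2)\,b\bigr)=\psi_A(b_1 a b_2 b)=\psi_A(a b_2 b b_1)=\psi_B\bigl(s(a)b_2 b b_1\bigr)=\psi_B\bigl(b_1 s(a) b_2\,b\bigr),
\]
using the cyclic invariance of $\Tr_{A/F}$ and $\Tr_{B/E}$; by non-degeneracy, $s(b_1 a b_2)=b_1 s(a) b_2$.

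The delicate part is the equality $s(\mathfrak{A})=\mathfrak{A}\cap B$ for every hereditary order $\mathfrak{A}$ normalised by $E^{\times}$. Here I would use that under the assumption $E^{\times}\subset\mathfrak{K}(\mathfrak{A})$, the dual lattice of $\mathfrak{A}$ under the $\psi_A$-pairing is $\mathfrak{P}^{1-n_A}$ for a suitable exponent, and similarly for $\mathfrak{B}=\mathfrak{A}\cap B$ with the $\psi_B$-pairing. The conductor choices $\mathfrak{p}_E$ and $\mathfrak{p}_F$ are precisely what is needed to match these duals, yielding the inclusion $s(\mathfrak{A})\subset\mathfrak{A}\cap B$; the reverse inclusion comes from the fact that the $\psi_B$-pairing on $\mathfrak{B}\times\mathfrak{B}$ is still perfect so every element of $\mathfrak{A}\cap B$ is hit by some $s(a)$ with $a\in\mathfrak{A}$. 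I would appeal to \cite[1.3.4]{BK} for the careful bookkeeping, since this is a standard computation.

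For part (ii), I would apply the $s$ from part (i) and restrict it to $B$. Since $s$ is a $(B,B)$-bimodule map, $s|_B\in\End_{B\text{-}B}(B)$; as $B=\End_E(V)$ is central simple over $E$, this endomorphism ring is $E$ acting by multiplication, so $s|_B=\lambda\cdot\Id_B$ for some $\lambda\in E$. The key point for the tame case is that $\lambda$ is a unit in $\mathfrak{o}_E$: this follows because, when $E/F$ is tamely ramified, the different $\mathfrak{d}_{E/F}$ equals $\mathfrak{o}_E$, so the trace map $\Tr_{E/F}:\mathfrak{o}_E\to\mathfrak{o}_F$ is surjective, and a direct comparison of $\psi_B|_{\mathfrak{o}_E}$ and $\psi_A|_{\mathfrak{o}_E}$ via $\Tr_{B/E}$ and $\Tr_{A/B}=[E:F]\cdot\Id$ on the center shows $\nu_E(\lambda)=0$. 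Replacing $s$ by $\lambda^{-1}s$ produces a tame corestriction with $s|_B=\Id_B$; the bimodule property and the order identity are preserved since $\lambda^{-1}\in\mathfrak{o}_E^{\times}$ normalises every order $\mathfrak{A}\cap B$. The main obstacle I expect is the conductor-dualisation computation in part (i) and the verification that $\lambda$ is a unit in the tame case; both are formal but require careful tracking of ramification indices via Lemma \ref{valval}.
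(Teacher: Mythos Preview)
The paper does not supply a proof of this proposition; it is stated as a citation of \cite[1.3.4, 1.3.8(ii)]{BK}. Your outline is essentially the argument one finds in Bushnell--Kutzko, so in that sense there is nothing to compare.

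There is, however, a genuine error in your part~(ii). You write that for $E/F$ tamely ramified ``the different $\mathfrak{d}_{E/F}$ equals $\mathfrak{o}_E$''. This is false: for a tamely ramified extension of ramification index $e$ one has $\mathfrak{d}_{E/F}=\mathfrak{p}_E^{\,e-1}$, which is $\mathfrak{o}_E$ only in the unramified case. The correct mechanism is different. If one takes $\psi_E:=\psi_F\circ\Tr_{E/F}$, then the conductor of $\psi_E$ is $\mathfrak{p}_F\,\mathfrak{d}_{E/F}^{-1}=\mathfrak{p}_E^{\,e}\cdot\mathfrak{p}_E^{\,1-e}=\mathfrak{p}_E$ precisely because $\mathfrak{d}_{E/F}=\mathfrak{p}_E^{\,e-1}$ in the tame case. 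With this choice of $\psi_E$ in part~(i), the transitivity $\Tr_{A/F}=\Tr_{E/F}\circ\Tr_{B/E}\circ\,\text{(projection)}$ forces $s|_B=\Id_B$ directly, and no rescaling by $\lambda$ is needed. Your claim that $\Tr_{E/F}:\mathfrak{o}_E\to\mathfrak{o}_F$ is surjective in the tame case is true, but it is not a consequence of the different being trivial; and the line ``$\Tr_{A/B}=[E:F]\cdot\Id$ on the center'' is not well formed as stated.
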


\subsection{Simple characters}\label{simplecharacters}

Let $[\fA,n,r,\beta]$ be a simple stratum. Bushnell and Kutzko attached to it a group $H^1(\beta,\fA)$ and a set of characters $\mathcal{C}(\beta,0,\fA)$ of $H^1(\beta,\fA)$ whose intertwining in $\mathrm{G}$ is remarkable. This is the object of this section.

  \begin{defi} \label{stratequiv}Two strata $[\fA,n,r,\beta _1 ] $ and $[\fA,n,r,\beta _ 2]$ are equivalent if $\beta_1 -\beta _2 \in \fP ^{-r}$. The notation $[\fA,n,r,\beta _1 ] \sim [\fA,n,r,\beta _ 2]$ means that  $[\fA,n,r,\beta _1 ] $ and $ [\fA,n,r,\beta _ 2]$ are equivalent.
  \end{defi}
The following theorem is fundamental for the construction of the group $H^1(\beta , \fA)$.

\begin{theo} \cite[2.4.1]{BK} \label{approxi} \begin{enumerate}
\item[($i$)] Let $[\fA,n,r,\beta]$ be a pure stratum in the algebra $A$. There exists a simple stratum $[\fA,n,r,\gamma]$ in $A$ equivalent to $[\fA,n,r,\beta]$, i.e. such that

\begin{center}

$[\fA,n,r,\gamma]\sim [\fA,n,r,\beta] .$

\end{center}

Moreover, for any simple stratum  $[\fA,n,r,\gamma]$  satisfying this condition, $e(F[\gamma]\mid F)$ divides $e(F[\beta]\mid F) $ and $f(F[\gamma]\mid F)$ divides $f(F[\beta] \mid F)$. Moreover, among all pure strata $[\mathfrak{A},n,r,\beta']$ equivalent to the given pure stratum $[\mathfrak{A},n,r,\beta]$, the simple ones are precisely those for which the field extension $F[\beta '] /F$ has minimal degree.

\item[($ii$)] Let $[\fA,n,r,\beta]$ be a pure stratum in $A$ with $r=-k_0(\beta,\fA)$. Let  $[\fA,n,r,\gamma]$ be a simple stratum in $A$ which is equivalent to $[\fA,n,r,\beta]$, let $s_{\gamma}$ be a tame corestriction on $A$ relative to $F[\gamma]/F$, let $B_{\gamma}$ be the $A$-centraliser of $\gamma$, i.e $B_{\gamma}=\End_{F[\gamma]}(V),$ and $\fB _{\gamma} = \fA \cap B_{\gamma}.$ Then $[\fB_{\gamma},r,r-1,s_{\gamma}(\beta - \gamma)]$ is equivalent to a simple stratum in $B_{\gamma}$.

\end{enumerate}
\end{theo}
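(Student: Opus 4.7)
The plan is to prove (i) by an iterative descent on the degree $[F[\beta]:F]$, with each iteration reducing a pure but non-simple stratum to an equivalent pure stratum over a strictly smaller field; then (ii) follows by transporting the analysis into $B_\gamma$ via the tame corestriction and invoking (i) inside $B_\gamma$.

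First I would establish a one-step approximation lemma: if $[\fA,n,r,\beta]$ is pure and not simple, so that $k_0(\beta,\fA) \geq -r$, then there exists $\beta' \in A$ with $[\fA,n,r,\beta'] \sim [\fA,n,r,\beta]$, pure, and $[F[\beta']:F] < [F[\beta]:F]$. The idea is to exploit the definition of the critical exponent: one extracts an element $x \in \mathfrak{N}_{-r}(\beta,\fA)$ whose image in $\fP^{-r}/\fP^{-r+1}$ does not lie in $\fB + \fP^{-r+1}$. Analyzing the action of $\ad \beta$ on the graded pieces $\fP^{k}/\fP^{k+1}$, which are modules over the residue algebra of $E$, one inverts the commutator modulo lower terms to produce a $1$-unit $u$ such that $u\beta u^{-1}$ is congruent mod $\fP^{-r}$ to an element of a strict subalgebra fixed by a nontrivial piece of the $E$-action; the corresponding $\beta'$ lies in a proper subfield. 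Iteration terminates since $[F[\beta]:F]$ is a positive integer and strictly decreases at each step.

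Once existence is in hand, the divisibility claims for $e$ and $f$ follow by tracking the chain of intermediate fields $F[\beta] = F[\beta^{(0)}] \supset F[\beta^{(1)}] \supset \cdots \supset F[\gamma]$ produced by the iteration: each containment forces divisibility of ramification indices and residue degrees, and these compose. The minimal-degree characterization of simple strata then drops out formally: any pure stratum equivalent to $[\fA,n,r,\beta]$ descends to a simple one of no larger degree, so simple strata realize the minimum; conversely, a pure stratum of minimal degree among all pure strata equivalent to it admits no further reduction, hence must satisfy $k_0 < -r$ and is simple.

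For (ii), set $\delta = \beta - \gamma$; by equivalence $\nu_\fA(\delta) \geq -r$. Using that a tame corestriction $s_\gamma$ satisfies $s_\gamma(\fP^k) \subset (\fB_\gamma \cap \fP^k)$ for all $k$, one obtains $s_\gamma(\delta) \in \fB_\gamma$ with valuation $\geq -r$ relative to the natural filtration on $\fB_\gamma$. To exhibit a stratum, one checks via Lemma \ref{valval} that the valuation is sharp (else $\beta$ would be equivalent to $\gamma$ at a finer level, contradicting $r = -k_0(\beta,\fA)$). The resulting stratum $[\fB_\gamma, r, r-1, s_\gamma(\beta-\gamma)]$ is pure in $B_\gamma$ after possibly replacing its third entry by a minimal approximation, and applying part (i) inside the split semisimple $E$-algebra $B_\gamma$ with hereditary order $\fB_\gamma$ yields the desired equivalence to a simple stratum.

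The main obstacle is the one-step approximation lemma: passing from the purely algebraic obstruction (an element of $\mathfrak{N}_{-r}(\beta,\fA)$ outside $\fB + \fP^{-r+1}$) to an actual conjugation realizing a strict decrease of the field degree requires delicate noncommutative bookkeeping, with careful tracking of how the adjoint action of $\beta$ decomposes the graded pieces of $\fA$ under the residue-field structure coming from $E$, and verification that the conjugating unit can be chosen so that the conjugate is purely in a smaller centralizer up to the required precision.
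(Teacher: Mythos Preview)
The paper does not prove this theorem; it merely quotes it from \cite[2.4.1]{BK}. So there is no ``paper's own proof'' to compare against. That said, your sketch has genuine gaps that would prevent it from being a proof.

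The most concrete problem is your argument for the divisibility of $e$ and $f$. You claim to obtain a chain of field inclusions $F[\beta]=F[\beta^{(0)}]\supset F[\beta^{(1)}]\supset\cdots\supset F[\gamma]$ from the iteration and then read off divisibility from containment. But your one-step lemma, as you describe it, replaces $\beta$ by a \emph{conjugate} $u\beta u^{-1}$; the field $F[u\beta u^{-1}]$ is a conjugate of $F[\beta]$, not a subfield, so no chain of inclusions arises this way. Even if you repaired this, the theorem asserts divisibility for \emph{every} simple $[\fA,n,r,\gamma]$ equivalent to $[\fA,n,r,\beta]$, not merely for the particular $\gamma$ your iteration happens to produce. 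Your argument says nothing about an arbitrary such $\gamma$. The actual proof in \cite{BK} goes through characteristic polynomials of strata and the theory of fundamental strata, which is what gives control over \emph{all} equivalent simple strata simultaneously; an ad hoc descent does not.

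Your minimal-degree characterisation is also incomplete: to say ``simple strata realise the minimum'' you implicitly need that any two simple strata equivalent to $[\fA,n,r,\beta]$ have the same degree over $F$, which you have not established. Without this, a pure stratum of minimum degree could in principle still fail to be simple if some other simple stratum of yet smaller degree existed --- you need the uniqueness of the minimal degree, and that is exactly what the $e,f$ divisibility (for arbitrary $\gamma$) provides.

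For (ii), the outline ``apply (i) inside $B_\gamma$'' is the right shape, but your claim that $[\fB_\gamma,r,r-1,s_\gamma(\beta-\gamma)]$ is \emph{pure} is unjustified: $s_\gamma(\beta-\gamma)$ need not generate a field over $F[\gamma]$, and the statement only asserts equivalence to a simple stratum, not purity. One needs the finer structure of tame corestrictions and the interaction with $k_0$ developed in \cite[1.4, 2.2--2.4]{BK} to make this step go through.
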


\begin{rema} \label{degstriapprox}Let $[\mathfrak{A},n,r,\beta]$ be a pure stratum that is not simple and let $[\mathfrak{A},n,r,\gamma]$ be a simple stratum equivalent to $[\mathfrak{A},n,r,\beta]$, by \ref{approxi} $(i)$ the degree $[F[\beta]:F]$ is strictly bigger than the degree $[F[\gamma]:F]$.

\end{rema}

\begin{coro} \label{suiteap} \cite[2.4.2]{BK}  Given a pure stratum $[\fA,n,r,\beta]$, the previous theorem and remark allow us to associate an integer $s$ and a family $\{[\fA,n,r_i,\beta _i ]$, $0\leq i \leq s\}$  such that 
\begin{enumerate}
\item[($i$)]$[\fA , n , r_i , \beta _i ]$  is a simple stratum for $0\leq i \leq s$,
\item[($ii$)]$[\fA,n,r_0,\beta _0 ] \sim [\fA , n , r , \beta ]$,
\item[($iii$)] {\footnotesize $r=r_0 <r_1<\ldots <r_s <n $ and $[F[\beta_0]:F]> [F[\beta _1] :F]> \ldots > [F[\beta _s] :F],$}
\item[($iv$)]$r_{i+1}=-k_0(\beta _ i , \fA),$ and $ [\fA, n , r_{i+1} , \beta _{i+1} ]$ is equivalent to $[\fA,n,r_{i+1} , \beta _i] $ for $0 \leq i \leq s-1$,
\item[($v$)]$k_0(\beta _ s , \fA) =-n $ or $- \infty $,
\item[($vi$)] Let $\fB _{\beta_i} $ be the centraliser of $\beta _ i$ in $\fA$ and $s_i$ a tame corestriction on $A$ relatively to $F[\beta _ i ] / F$. The derived stratum $[\fB _{\beta_{i+1}} , r_{i+1} , r_{i+1} -1 , s_{i+1}(\beta _{ i} - \beta _{i+1} )]$ is equivalent to a simple stratum for $ 0 \leq i \leq s-1$.
\end{enumerate}

This family is not unique and is called a \textbf{defining sequence} for $[\fA , n , r , \beta ]$.

\end{coro}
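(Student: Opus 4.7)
The plan is to iterate Theorem \ref{approxi}. I will initialize by setting $r_0 = r$ and using Theorem \ref{approxi}(i) to pick a simple stratum $[\fA,n,r_0,\beta_0] \sim [\fA,n,r,\beta]$, which establishes (ii) and the base case of (i). Recursively, given a simple stratum $[\fA,n,r_i,\beta_i]$ already constructed, I inspect $k_0(\beta_i,\fA)$: if it equals $-n$ or $-\infty$, I halt the recursion with $s = i$, thereby realizing (v). Otherwise I set $r_{i+1} := -k_0(\beta_i,\fA)$. The negation of the stopping condition, combined with the bound $-n \leq k_0(\beta_i,\fA)$ from \cite[1.4.15]{BK}, gives $-n < k_0(\beta_i,\fA)$, and simplicity of the current stratum gives $r_i < -k_0(\beta_i,\fA)$, so $r_i < r_{i+1} < n$ as required by (iii).

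At the recursion step, the stratum $[\fA,n,r_{i+1},\beta_i]$ remains pure (purity depends only on $\beta_i$ and $\fA$, not on the level), but is no longer simple at level $r_{i+1}$ since $r_{i+1}$ fails the strict inequality $r_{i+1} < -k_0(\beta_i,\fA)$. Applying Theorem \ref{approxi}(i) produces a simple stratum $[\fA,n,r_{i+1},\beta_{i+1}] \sim [\fA,n,r_{i+1},\beta_i]$, which is precisely assertion (iv); and Remark \ref{degstriapprox} then forces $[F[\beta_{i+1}]:F] < [F[\beta_i]:F]$, supplying the strict descent of degrees required in (iii). Property (i) for index $i+1$ is acquired by construction.

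Termination is automatic because the degrees $[F[\beta_i]:F]$ form a strictly decreasing sequence of positive integers, so the recursion must halt after finitely many steps, at which point (v) holds by the stopping rule. Property (vi) then falls out stagewise from the construction: for each $i$, the pure stratum $[\fA,n,r_{i+1},\beta_i]$ with $r_{i+1} = -k_0(\beta_i,\fA)$ together with its simple equivalent $[\fA,n,r_{i+1},\beta_{i+1}]$ is exactly the input of Theorem \ref{approxi}(ii), whose conclusion asserts that $[\fB_{\beta_{i+1}},r_{i+1},r_{i+1}-1,s_{i+1}(\beta_i-\beta_{i+1})]$ is equivalent to a simple stratum in $B_{\beta_{i+1}}$.

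The only delicate point I foresee is the boundary bookkeeping that ensures every stratum introduced during the recursion is legitimate: the level $r_{i+1}$ must lie in the legal range $r_i < r_{i+1} < n$, and $[\fA,n,r_{i+1},\beta_i]$ must actually be pure before Theorem \ref{approxi} can be invoked. Both requirements reduce to the two-sided inequality $-n < k_0(\beta_i,\fA) < -r_i$, which is guaranteed, respectively, by the negation of the stopping condition and by the simplicity of the stratum produced at the previous step. Beyond this verification, no new ingredients are needed: the corollary is a straightforward unwinding of Theorem \ref{approxi} combined with the degree-drop observation of Remark \ref{degstriapprox}.
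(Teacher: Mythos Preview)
Your proposal is correct and follows essentially the same iterative scheme as the paper: initialize with a simple approximation, then repeatedly pass to level $r_{i+1}=-k_0(\beta_i,\fA)$ and invoke Theorem~\ref{approxi} until the stopping condition (minimality of $\beta_s$, equivalently $k_0(\beta_s,\fA)\in\{-n,-\infty\}$) is met. The only cosmetic difference is in the termination argument: you terminate via the strictly decreasing sequence of field degrees $[F[\beta_i]:F]$, whereas the paper terminates via the strictly increasing bounded sequence of integers $r_0<r_1<\cdots<n$; both are valid and immediate.
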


To help the reader, we give an explanation for this corollary.

\begin{proof} $\bullet$ If $[\fA,n,r,\beta]$ is a simple stratum, put $[\fA , n , r_0 , \beta _0 ]=[\fA,n,r,\beta]$ (remark that $r_0 < -k_0 (\beta _0 , \fA)$). We now have an algorithm. If $\beta _0$ is minimal over $F$, put $s=0$,
then $(i)$ and $(ii)$ are obviously satisfied, $r=r_0 <n$ is satisfied by definition of a simple stratum and because the rest of condition $(iii)$ is empty. Condition $(iv)$ is empty in this case, and so is satisfied. Condition $(v)$ is satisfied by proposition \ref{approxi}. Condition $(vi)$ is empty in this case, and so is satisfied. If $\beta _0$ is not minimal, then consider the stratum $[\fA , n , -k_0(\beta_0,\fA) , \beta _0 ]$, it is pure but not simple. We now have a general process: the theorem \ref{approxi} shows that there exists a simple stratum $[\fA , n , -k_0(\beta_0,\fA) , \beta _1 ]$ equivalent to $[\fA , n , -k_0(\beta_0,\fA) , \beta _0 ]$ (remark that $[F[\beta _0 ] : F] > [ F[\beta _1] :F ]$ by \ref{degstriapprox}) such that for any tame corestriction $s_{\beta _1}$ the stratum ${[\fB _{\beta _0} , r , r-1 , s_{\beta _1} (\beta _0 - \beta _1 )]}$ is simple. Put $r_1 = -k_0(\beta_0 , \fA)$.
 If $\beta_1$ is minimal over $F$, put $s=1$, then the conditions $(i)$, $(ii)$, $(iii)$, $(iv)$ are now obviously satisfied. The condition $(v)$ is also satisfied by proposition \ref{minisimplealfalfa} and because $\beta _1$ is minimal over $F$. The condition $(vi)$ is now obviously satisfied.
 If $\beta _1$ is not minimal over $F$, then consider the stratum $[\fA , n , -k_0(\beta_1,\fA) , \beta _1 ]$, it is pure but not simple. As before, we apply the process to get a stratum $[\fA,n, -k_0(\beta _1 , \fA) , \beta _2 ]$ equivalent to $[\fA , n , -k_0(\beta_1,\fA) , \beta _1 ]$. Put $r_2=-k_0 (\beta _1 ,\fA)$. If $\beta _2$ is minimal, then put $s=2$. As before, the conditions $(i)$ to $(vi)$ are easily satisfied.
If $\beta _2$ is not minimal, then we can apply the process and get a simple stratum $[\fA,n,-k_0( \beta_2 , \fA), \beta _3]$, if $\beta _3$ is minimal we put $s=3$ and $r_3 =-k_0 (\beta _2 , \fA)$. If $\beta_3$ is not minimal, then we apply the process and get a new stratum and an element $\beta_4$, and so on. We claim that there exists an integer $s$ such that this algorithm stops; that is, $\beta _s$ is minimal. Assume the contrary, then we have an infinite strictly increasing sequence of numbers between $r$ and $n$
\begin{center}
{\small {{  $r=r_0 < r_1 = -k_0 (\beta _0 ,\fA) < r_2 = -k_0 (\beta_1 ,\fA) < \ldots < r_{i+1} =-k_0 (\beta_{i}, \fA) < \ldots <n$}}}
\end{center}

this is a contradiction and concludes the proposition in this case.

$\bullet$ If $[\fA, n , r, \beta]$ is pure but not simple, there exists a simple stratum $[\fA,n,r,\beta_0]$ equivalent to it and the previous case completes the proof.

\end{proof}

Fix a simple stratum $[\fA , n , r , \beta ]$, and let $r$ be the integer $-k_0 (\beta , \fA)$. The following is the definition of various groups and orders associated to  $[\fA , n , r , \beta ]$. Choose and fix a defining sequence $\{[\fA,n,r_i , \beta_i], 0 \leq i \leq s \}$ of $[\fA,n,r,\beta]$ (we thus have $\beta = \beta _0$). If $s>0$, the element $\beta_1$ is often denoted $\gamma$. We now define by induction on the length of the defining sequence various objects.

\begin{defi}
\label{defsimpl} \cite[3.1.7 ,3.1.8, 3.1.14]{BK}  \begin{enumerate}
\item[($i$)] Suppose that $\beta$ is minimal over $F$.
Put \begin{enumerate} \item $\mathfrak{H}(\beta,\fA)=\fB_{\beta} + \fP ^{[\frac{n}{2}]+1}$,
\item $\mathfrak{J}(\beta,\fA)=\fB_{\beta} + \fP ^{[\frac{n+1}{2}]}$. \end{enumerate}

\item[($ii$)] Suppose that $r<n$, and let $[\fA,n,r,\gamma]$ be the simple stratum equivalent to $[\fA,n,r,\beta]$ chosen in the previously fixed defining sequence.
Put \begin{enumerate} \item $\mathfrak{H}(\beta,\fA)=\fB_{\beta} + \mathfrak{H}(\gamma ,\fA) \cap \fP ^{[\frac{r}{2}]+1}$,
 \item $\mathfrak{J}(\beta,\fA)=\fB_{\beta} + \mathfrak{J}(\gamma ,\fA) \cap \fP^{[\frac{r+1}{2}]}$.
\end{enumerate}

\item[($iii$)] For $k \geq 0 $, put \begin{enumerate}
\item $\mathfrak{H}^k(\beta,\fA)=\mathfrak{H}(\beta,\fA)\cap \fP ^k$,

\item $\mathfrak{J}^k(\beta,\fA)=\mathfrak{J}(\beta,\fA)\cap \fP ^k$.
\end{enumerate}
\item[($iv$)] Finally, put $U^m(\fA)=\left(1+\mathfrak{P}^m \right)$ if $m>0$ and $U^m(\fA)= \mathfrak{A}^{\times}$ if $m=0$ and put \begin{enumerate}
\item $H^m(\beta, \fA)=\mathfrak{H}(\beta,\fA) \cap U^m(\fA) $,
\item $J^m(\beta, \fA)=\mathfrak{J}(\beta,\fA) \cap U^m(\fA)$.

The set $H^m(\beta, \fA)$ and $J^m(\beta, \fA)$ are groups. The group $J^0(\beta , \fA)$ is also denoted by $J(\beta , \fA)$.

\end{enumerate}
\end{enumerate}
 \end{defi} 
 \begin{rema} In the case $r<n$, $\mathfrak{H}(\beta , \fA)$ is defined inductively:  the order $\mathfrak{H }(\beta _s , \fA) $ is well-defined because $\beta _ s $ is minimal, and then $\mathfrak{H }(\beta _{s-1} , \fA) $ is well-defined, and so on. The same remark occurs for $\mathfrak{J}(\beta , \fA)$.
 \end{rema}
 \begin{rema}By \cite[3.1.7, 3.1.9 (v)]{BK}, $\mathfrak{J}^k(\beta,\fA)$ and  $\mathfrak{H}^k(\beta,\fA)$ are well-defined, and they do not depend on the choice of a defining sequence. So the same is true for $H^m(\beta, \fA)$ and $J^m(\beta, \fA)$.

 \end{rema}

 \begin{prop} \label{propertyHHJJ} \cite[3.1.15]{BK} Let $m\geq 0$ be an integer then the following assertions hold.

 \begin{enumerate}
 \item[$(i)$] The groups $H^m(\beta , \mathfrak{A} )$ and $J^m(\beta,\mathfrak{A})$ are normalised by $\mathfrak{K}(\mathfrak{B}_{\beta})$, so in particular by $F[\beta]^{\times}$.

 \item[$(ii)$] The group $H^m(\beta , \mathfrak{A} )$ is included in $J^m(\beta,\mathfrak{A})$.

 \item[$(iii)$] The group $H^{m+1}(\beta , \mathfrak{A})$ is a normal subgroup of  $J^0(\beta , \mathfrak{A})$.

 \end{enumerate}

 \end{prop}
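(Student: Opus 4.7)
The plan is to establish the three assertions by induction on the length $s$ of a fixed defining sequence $\{[\mathfrak{A}, n, r_i, \beta_i]\}_{0\leq i\leq s}$ for the simple stratum, with the minimal case $s=0$ as the base. I shall use freely the standard fact that $\mathfrak{K}(\mathfrak{B}_\beta) \subset \mathfrak{K}(\mathfrak{A})$, which rests on the bijection between hereditary $\mathfrak{o}_F$-orders in $A$ normalised by $E^\times$ and hereditary $\mathfrak{o}_E$-orders in $B_\beta$, together with its relative variant $\mathfrak{K}(\mathfrak{B}_\beta) \subset \mathfrak{K}(\mathfrak{B}_\gamma)$ for $\gamma=\beta_1$: indeed $F[\gamma]\subset F[\beta]$ gives $\mathfrak{B}_\beta\subset \mathfrak{B}_\gamma$, and conjugation by elements of $\mathfrak{K}(\mathfrak{B}_\beta)\subset \mathfrak{K}(\mathfrak{A})$ preserves $\mathfrak{A}\cap B_\gamma=\mathfrak{B}_\gamma$.

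Assertion (ii) is the cheapest: in the minimal case the inequality $[n/2]+1\geq [(n+1)/2]$ immediately yields $\mathfrak{H}(\beta,\mathfrak{A})\subset \mathfrak{J}(\beta,\mathfrak{A})$, and in the non-minimal case the same inequality at level $r$ combined with the inductive hypothesis $\mathfrak{H}(\gamma,\mathfrak{A})\subset \mathfrak{J}(\gamma,\mathfrak{A})$ closes the step; intersecting with $U^m(\mathfrak{A})$ gives $H^m\subset J^m$. For (i), the minimal case follows because $\mathfrak{K}(\mathfrak{B}_\beta)$ normalises both $\mathfrak{B}_\beta$ and $\mathfrak{P}^k$ (the latter via $\mathfrak{K}(\mathfrak{B}_\beta)\subset \mathfrak{K}(\mathfrak{A})$), hence normalises $\mathfrak{B}_\beta+\mathfrak{P}^k$; in the non-minimal case one invokes the inductive hypothesis for $\gamma$ through the inclusion $\mathfrak{K}(\mathfrak{B}_\beta)\subset \mathfrak{K}(\mathfrak{B}_\gamma)$, which forces $\mathfrak{K}(\mathfrak{B}_\beta)$ to normalise $\mathfrak{H}(\gamma,\mathfrak{A})$ and $\mathfrak{J}(\gamma,\mathfrak{A})$, as well as the ambient $\mathfrak{P}^{[r/2]+1}$ and $\mathfrak{P}^{[(r+1)/2]}$, and therefore their intersections and sums with $\mathfrak{B}_\beta$. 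Since $U^m(\mathfrak{A})$ is itself normalised by $\mathfrak{K}(\mathfrak{A})\supset \mathfrak{K}(\mathfrak{B}_\beta)$, the assertion passes to the groups $H^m$ and $J^m$.

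Assertion (iii), the normality of $H^{m+1}(\beta,\mathfrak{A})$ in $J^0(\beta,\mathfrak{A})$, is the crux. The strategy is to exploit the decomposition $\mathfrak{J}(\beta,\mathfrak{A})=\mathfrak{B}_\beta+\mathfrak{J}^1(\beta,\mathfrak{A})$, visible from the recursive definition, to factor $J^0(\beta,\mathfrak{A})=U^0(\mathfrak{B}_\beta)\cdot J^1(\beta,\mathfrak{A})$. The factor $U^0(\mathfrak{B}_\beta)\subset \mathfrak{K}(\mathfrak{B}_\beta)$ normalises $H^{m+1}$ by part (i), so everything reduces to showing that $J^1$ normalises $H^{m+1}$, which is the main obstacle. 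For $y\in \mathfrak{J}^1$ and $h\in \mathfrak{H}^{m+1}$, the identity $(1+y)(1+h)(1+y)^{-1}=1+h+[y,h](1+y)^{-1}$ reduces the task to the commutator statement $[\mathfrak{J}^1(\beta,\mathfrak{A}),\mathfrak{H}^{m+1}(\beta,\mathfrak{A})]\subset \mathfrak{H}^{m+1}(\beta,\mathfrak{A})$. Expanding both factors recursively produces cross-terms of the form $[\mathfrak{B}_\beta,\mathfrak{H}(\gamma,\mathfrak{A})\cap \mathfrak{P}^{[r/2]+1}]$ and $[\mathfrak{J}(\gamma,\mathfrak{A})\cap \mathfrak{P}^{[(r+1)/2]},\mathfrak{B}_\beta]$, and the key input is the defining condition $r=-k_0(\beta,\mathfrak{A})$, which guarantees that $[\beta,\mathfrak{P}^k]\subset \mathfrak{B}_\beta+\mathfrak{P}^{k+r}$ in the relevant range and thereby traps each cross-term inside the correct piece of $\mathfrak{H}$. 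Carrying the bookkeeping out inductively, in the spirit of the computations in \cite[3.1.9--3.1.14]{BK}, completes the proof.
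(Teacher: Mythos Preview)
The paper does not supply its own proof of this proposition: it is stated with a bare citation to \cite[3.1.15]{BK} and immediately followed by the definition of simple characters, so there is nothing in the paper to compare your argument against directly. Your outline is broadly in the spirit of the inductive arguments of \cite[\S3.1]{BK}, but it contains a genuine gap that you should be aware of.

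The problematic step is your justification of the inclusion $\mathfrak{K}(\mathfrak{B}_\beta)\subset\mathfrak{K}(\mathfrak{B}_\gamma)$ via the claim ``$F[\gamma]\subset F[\beta]$''. In the general Bushnell--Kutzko setting this field inclusion simply does not hold: a defining sequence guarantees only that $[F[\beta_{i+1}]:F]<[F[\beta_i]:F]$, not that $F[\beta_{i+1}]\subset F[\beta_i]$. The existence of a defining sequence with nested fields is precisely the content of Propositions~\ref{tssprop2} and~\ref{tssprop3} and Corollary~\ref{abcd} of the present paper, and it requires the \emph{tame} hypothesis on $F[\beta]/F$. Proposition~\ref{propertyHHJJ}, however, sits in Section~\ref{sectionconstbk} and is asserted for arbitrary simple strata, so you are not entitled to that inclusion. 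Without it, your argument that $\mathfrak{K}(\mathfrak{B}_\beta)$ normalises $\mathfrak{H}(\gamma,\mathfrak{A})$ and $\mathfrak{J}(\gamma,\mathfrak{A})$ collapses, and with it the inductive step for part~(i); part~(iii) then also fails since it leans on~(i). In \cite{BK} the normalisation statement is obtained by a different route, passing through the independence of $\mathfrak{H}$ and $\mathfrak{J}$ from the choice of defining sequence (\cite[3.1.9(v)]{BK}) together with finer commutator estimates that do not presuppose any relation between $F[\beta]$ and $F[\gamma]$.
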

  The following is devoted to the definition of the so called simple characters.
  Let $ \Psi$ be an additive character of $F$ with conductor $\mathfrak{p}_F$.
  Let $\psi _A$ be the function on $A$ defined by $\psi_{A}(x)=\psi \circ \Tr_{A/F} (x)$. To any  $b\in A$, is associated a function  $\psi _ b$ on $A$ given by
 \begin{center}

 $\psi _{b}(x)=\psi _A (b (x-1))$.

 \end{center}

 \begin{defi}\begin{enumerate}[(i)] \label{carsimpl}\item Suppose that $\beta $ is minimal over $F$. 

 For ${0 \leq m \leq n-1}$, let $\mathcal{C}(\fA,m,\beta)$ denote the set of characters $\theta$ of $H^{m+1}(\beta)$ such that:

 \begin{enumerate}
 \item $\theta \mid _{ H^{m+1}(\beta) \cap U^{[\frac{n}{2}]+1}(\fA)}=\psi_{\beta}$,
 \item $\theta \mid _{H^{m+1}(\beta) \cap B_{\beta}^{\times}}$ factors through  $\det_{B_{\beta}}:B_{\beta}^{\times} \to F[\beta]^{\times}.$
 \end{enumerate}

 \item Suppose that $r<n. $ For $0\leq m \leq r-1$, let $\mathcal{C}(\fA,m,\beta)$ be the set of characters $\theta$ of $H^{m+1}(\beta)$ such that the following conditions hold.\begin{enumerate}

 \item $\theta \mid H^{m+1}(\beta) \cap B_{\beta}^{\times}$ factors through $\det_{B_{\beta}}$
 \item $\theta$ is normalised by $\mathfrak{K}(\mathfrak{B}_{\beta})$
 \item if $m'=\max\{m,[\frac{r}{2}]\},$ the restriction $\theta \mid H^{m'+1}(\beta)$ is of the form $\theta _0 \psi _c $ for some $\theta _0 \in \mathcal{C}(\fA,m',\gamma)$ where $c=\beta -\gamma$ and $\gamma$ is the first element of the fixed defining sequence.

 \end{enumerate}
 \end{enumerate}
  \end{defi}
 \begin{rema} In the second case, $\mathcal{C}(\fA,m,\beta)$ is defined by induction: recall that we have fixed a defining sequence $\{[\fA , n , r_i , \beta _i ]$, $0\leq i \leq s\}$ of $[\fA,n,0,\beta]$, the last term of the defining sequence is such that $\beta _ s $ is minimal over $F$ and by the first case, there is a set of characters attached. Then, those attached to $[\fA , n , r_{s-1} , \beta _{s-1} ]$  are defined, and by iteration the set $\mathcal{C}(\fA,m,\beta)$ is defined. 
\end{rema}
\begin{prop} \cite[3.2]{BK} The set  $\mathcal{C}(\fA,m,\beta)$  defined above is independent of the choice of the defining sequence.
\end{prop}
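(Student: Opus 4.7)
I would prove the proposition by induction on the length $s$ of the defining sequence. The base case ($s=0$) is trivial: when $\beta$ is minimal over $F$, the set $\mathcal{C}(\fA,m,\beta)$ is defined by case (i) of Definition \ref{carsimpl}, which involves no auxiliary choice of $\gamma$ or $\theta_0$; only $\psi_\beta$ and $\det_{B_\beta}$ appear, which are intrinsic to $(\fA,\beta)$.

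For the inductive step, the idea is to reduce the comparison of two arbitrary defining sequences to the comparison of two sequences that differ only in their first term. Given two defining sequences for $[\fA,n,r,\beta]$, both begin with $\beta_0 = \beta$ (we are starting from a simple stratum), and by the algorithm in the proof of Corollary~\ref{suiteap} both have $r_1 = r_1' = -k_0(\beta,\fA)$. Letting $\gamma = \beta_1$ and $\gamma' = \beta_1'$, both strata $[\fA,n,r_1,\gamma]$ and $[\fA,n,r_1,\gamma']$ are simple and equivalent to $[\fA,n,r_1,\beta]$, so in particular $\gamma - \gamma' \in \fP^{-r_1}$. The tails $\{[\fA,n,r_i,\beta_i]\}_{1\le i\le s}$ and $\{[\fA,n,r_i',\beta_i']\}_{1\le i\le s'}$ are defining sequences for $[\fA,n,r_1,\gamma]$ and $[\fA,n,r_1,\gamma']$ respectively, of length strictly smaller than $s$.

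The heart of the argument is the following key lemma, which I would establish first: \emph{if $[\fA,n,r,\gamma]$ and $[\fA,n,r,\gamma']$ are two equivalent simple strata, then for every integer $m \ge 0$}
\begin{enumerate}
\item[(a)] $\mathfrak{H}^{m+1}(\gamma,\fA) = \mathfrak{H}^{m+1}(\gamma',\fA)$, and
\item[(b)] the map $\theta_0 \mapsto \theta_0 \cdot \psi_{\gamma-\gamma'}$ is a bijection $\mathcal{C}(\fA,m,\gamma) \to \mathcal{C}(\fA,m,\gamma')$.
\end{enumerate}
Granted this, the inductive step concludes as follows. Let $\theta \in \mathcal{C}(\fA,m,\beta)$ computed via the first sequence. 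By definition, $\theta\mid_{H^{m'+1}(\beta,\fA)} = \theta_0 \psi_{\beta-\gamma}$ for some $\theta_0 \in \mathcal{C}(\fA,m',\gamma)$ (well-defined by the induction hypothesis applied to $\gamma$). Setting $\theta_0' := \theta_0 \cdot \psi_{\gamma-\gamma'}$ we obtain an element of $\mathcal{C}(\fA,m',\gamma')$ by part (b) of the lemma, and the identity $\psi_{\gamma-\gamma'}\psi_{\beta-\gamma'} = \psi_{\beta-\gamma}$ gives $\theta\mid_{H^{m'+1}} = \theta_0' \psi_{\beta-\gamma'}$, i.e.\ $\theta$ satisfies the $\gamma'$-version of condition (c) in Definition~\ref{carsimpl}(ii). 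The first two conditions of \ref{carsimpl}(ii) are independent of $\gamma$, and the underlying group $H^{m+1}(\beta,\fA)$ is independent of the choice of $\gamma$ by (a) of the lemma applied one level up (using the inductive formula $\mathfrak{H}(\beta,\fA) = \fB_\beta + \mathfrak{H}(\gamma,\fA)\cap \fP^{[r/2]+1}$). By symmetry we get the reverse inclusion, so both defining sequences yield the same set.

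The main obstacle is the key lemma. Part (a) is a statement about the hereditary-order-theoretic objects $\mathfrak{H}^{m+1}$ and should be extracted from an inductive analysis of the definition together with the observation that the order $\fB_\gamma = \fA \cap \End_{F[\gamma]}(V)$ depends on $\gamma$ only through the degree and ramification data that are common to all simple approximations, combined with the translation invariance provided by $\gamma - \gamma' \in \fP^{-r}$. Part (b) requires checking that multiplication by the additive character $\psi_{\gamma-\gamma'}$ preserves each of the three conditions defining $\mathcal{C}(\fA,m,\gamma)$; the factorisation through $\det_{B_\gamma}$ and the normalisation by $\mathfrak{K}(\fB_\gamma)$ use the hypothesis $\gamma-\gamma' \in \fP^{-r}$ together with the properties of the tame corestriction from Proposition~\ref{tamecoresprop}, while compatibility with restrictions to lower levels relies on applying the inductive hypothesis at level $m'$. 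Finally, to pass between arbitrary pairs of defining sequences of length $s$, one iterates this first-step argument along the whole sequence using induction on $s$.
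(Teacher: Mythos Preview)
The paper does not give its own proof of this proposition: it is simply quoted from \cite[3.2]{BK}. So the comparison is really between your outline and the original argument in Bushnell--Kutzko's book.

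Your global architecture is the right one and is essentially the scheme used in \cite{BK}: induct on the length of the defining sequence, reduce to the choice of the first approximation $\gamma$ versus $\gamma'$, and use a ``translation'' $\theta_0 \mapsto \theta_0\,\psi_{\gamma-\gamma'}$ to pass between the two. The identity $\psi_{\gamma-\gamma'}\psi_{\beta-\gamma'} = \psi_{\beta-\gamma}$ you use is exactly the computation that makes condition (c) match up.

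There is, however, a genuine gap in your sketch of the key lemma, and it is precisely where the real work in \cite{BK} lies. For part (a), it is not true that $\fB_\gamma$ ``depends on $\gamma$ only through the degree and ramification data'': the centraliser $B_\gamma = \End_{F[\gamma]}(V)$ depends on the embedding of $F[\gamma]$ into $A$, and $B_\gamma \neq B_{\gamma'}$ in general. What one actually uses is that $\gamma$ and $\gamma'$ are conjugate by an element of a suitable congruence subgroup of $U(\fA)$ (this is extracted from \cite[2.6]{BK}), and that such conjugation carries $\fB_\gamma$ to $\fB_{\gamma'}$ and $\mathfrak{H}(\gamma,\fA)$ to $\mathfrak{H}(\gamma',\fA)$. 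For part (b), the same issue bites: you must check that $\theta_0\,\psi_{\gamma-\gamma'}$ factors through $\det_{B_{\gamma'}}$ (not $\det_{B_\gamma}$) and is normalised by $\fK(\fB_{\gamma'})$ (not $\fK(\fB_\gamma)$). The tame corestriction of Proposition~\ref{tamecoresprop} is not the relevant tool here; what is needed is again the conjugacy result just mentioned, together with the ``formal intertwining'' machinery of \cite[2.6]{BK}. Without these inputs your argument for (b) cannot be completed, so the proposal as written does not yet constitute a proof.
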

 \begin{prop} \label{interwisimplecara} \cite[3.3.2]{BK} Let $[\fA,n,0,\beta]$ be a simple stratum in the algebra $A$. Put $r=-k_0(\beta,\fA)$. For $0\leq m \leq [\frac{r}{2}]$ and $\theta \in \mathcal{C}(\fA,m,\beta)$, the intertwining of $\theta$ in $G$ is given by 
 \begin{center}
 $I_{G}(\theta)=J^{[\frac{r+1}{2}]}(\beta,\fA) B_{\beta}^{\times} J^{[\frac{r+1}{2}]}(\beta,\fA)$.
 \end{center}
 \end{prop}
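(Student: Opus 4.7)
The plan is to establish the two inclusions separately and to handle the nontrivial one by induction on the length $s$ of a defining sequence for $[\fA,n,0,\beta]$, following the philosophy of Corollary \ref{suiteap}.

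First I would dispose of the inclusion $J^{[\frac{r+1}{2}]}(\beta,\fA) B_{\beta}^{\times} J^{[\frac{r+1}{2}]}(\beta,\fA) \subset I_G(\theta)$. Elements of $B_{\beta}^{\times} = \End_{F[\beta]}(V)^{\times}$ intertwine $\theta$ because, by Definition \ref{carsimpl}, $\theta$ is normalised by $\mathfrak{K}(\fB_{\beta})$ (and in the minimal case both defining conditions on $\theta$ are visibly $\fB_{\beta}^{\times}$-equivariant since $\psi_{\beta}$ is a conjugation-invariant function on $\fP^{[n/2]+1}$ under $F[\beta]^{\times}$). For $j \in J^{[\frac{r+1}{2}]}(\beta,\fA)$, one checks that $j$ normalises $H^{m+1}(\beta,\fA)$ and fixes $\theta$: the key point is that the commutator $[j,h]$ with $h \in H^{m+1}$ lands in $H^{m+1+[\frac{r+1}{2}]}$ (by a direct computation from Definition \ref{defsimpl} together with the inclusion $H^{m+1} \subset U^{m+1}(\fA)$), and on this deeper group the character $\theta$ is determined by $\psi_\beta$ applied to an element of valuation larger than the conductor, hence trivial.

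The nontrivial inclusion $I_G(\theta) \subset J^{[\frac{r+1}{2}]}(\beta,\fA) B_{\beta}^{\times} J^{[\frac{r+1}{2}]}(\beta,\fA)$ is the heart of the proof. I would proceed by induction on $s$. In the base case $s=0$, $\beta$ is minimal over $F$, so $\mathfrak{H}(\beta,\fA) = \fB_{\beta} + \fP^{[n/2]+1}$ and $\theta$ coincides with $\psi_{\beta}$ on $1 + \fP^{[n/2]+1} \cap \mathfrak{H}$. An element $g$ intertwining $\theta$ must in particular intertwine $\psi_{\beta}$ on this subgroup, which by the standard analysis of intertwining of strata (reducing to the condition $\beta - g^{-1} \beta g \in \fP^{-[r/2]}$, cf.\ \cite[1.6]{BK}) forces $g$ into a set of the form $U^{[\frac{r+1}{2}]}(\fA) \cdot \{g : g^{-1} \beta g \equiv \beta \bmod \fP^{-[r/2]}\} \cdot U^{[\frac{r+1}{2}]}(\fA)$; minimality of $\beta$ then identifies this centraliser modulo $\fP^{[\frac{r+1}{2}]}$ with $B_\beta^\times$, and one absorbs the $U^{[\frac{r+1}{2}]}(\fA)$ factors into $J^{[\frac{r+1}{2}]}(\beta,\fA)$.

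For the inductive step, let $\gamma = \beta_1$ be the next term in the fixed defining sequence and write $c = \beta - \gamma$. By Definition \ref{carsimpl}(ii)(c), one has $\theta|_{H^{m'+1}(\beta,\fA)} = \theta_0 \psi_c$ with $\theta_0 \in \mathcal{C}(\fA, m', \gamma)$, and the defining sequence of $[\fA,n,0,\gamma]$ has length $s-1$. Any $g \in I_G(\theta)$ intertwines $\theta_0 \psi_c$ on $H^{m'+1}(\beta,\fA)$; inducting on $\theta_0$ locates $g$, up to factors in the analogous $J$-group for $\gamma$, inside $B_{\gamma}^{\times}$. The extra constraint coming from $\psi_c$ is then transported into $B_\gamma$ via a tame corestriction $s_\gamma$ relative to $F[\gamma]/F$ (Proposition \ref{tamecoresprop}); by Theorem \ref{approxi}(ii) the derived stratum $[\fB_{\gamma}, r, r-1, s_{\gamma}(c)]$ is equivalent to a simple stratum in $B_{\gamma}$, so a second application of the induction hypothesis inside $B_{\gamma}^{\times}$ cuts this further down to $B_{\beta}^{\times}$ times a small $J$-group. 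Reassembling the factors in $G$ yields the desired factorisation.

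The main obstacle is the inductive step, specifically the passage from intertwining in $G$ to intertwining of a derived stratum inside $B_{\gamma}$: one must verify that applying a tame corestriction to the commutator $g^{-1} \beta g - \beta$ really produces the intertwining condition for $[\fB_{\gamma}, r, r-1, s_{\gamma}(c)]$, and that the induction internal to $B_{\gamma}^{\times}$ can be combined with the outer induction without circularity. This is precisely where the structural properties of the defining sequence recorded in Corollary \ref{suiteap}, in particular the strict decrease of the degrees $[F[\beta_i]:F]$, guarantee termination.
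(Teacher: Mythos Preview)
The paper does not actually prove this proposition: it is stated with a citation to \cite[3.3.2]{BK} and no argument is given in the present text. So there is no ``paper's own proof'' to compare your proposal against; the result is imported wholesale from Bushnell--Kutzko's book.

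Your sketch follows the expected strategy from \cite{BK} (induction on the length of a defining sequence, reduction to the minimal case via the approximation theorem, passage to the centraliser $B_\gamma$ through a tame corestriction). That is indeed the architecture of the proof in \cite[§3.3]{BK}, but be aware that the actual execution there is substantially more delicate than your outline suggests: the inductive step requires a careful analysis of how intertwining of $\theta$ restricts to intertwining of $\theta_0 \psi_c$ and then descends to $B_\gamma$, and the absorption of error terms into the $J$-groups relies on the full machinery of \cite[§3.1]{BK} (exact sequences for $\mathfrak{J}/\mathfrak{H}$, the map $a_\beta$, etc.) rather than a one-line commutator estimate. Your proposal identifies the right skeleton but understates the technical work needed at each joint; in particular, the claim in your base case that minimality of $\beta$ directly ``identifies this centraliser modulo $\fP^{[\frac{r+1}{2}]}$ with $B_\beta^\times$'' hides the nontrivial content of \cite[1.4.15, 2.6.1]{BK}.
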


 \subsection{Simple types and representations}\label{simpletypesandrepresentations}
 This section is devoted to the definition of simple types and to one of the main theorems of Bushnell-Kutzko's theory.  
 Let $[\mathfrak{A},n,0,\beta]$ be a simple stratum and let $\theta \in \mathcal{C}(\beta, 0, \fA)$ be a simple character attached to this stratum. There exists a unique, up to isomorphism, irreducible representation $\eta$ of $J^1(\beta, \fA)$ containing $\theta$ \cite[5.1.1]{BK}. The dimension of $\eta$ is equal to $ [J^1(\beta , \fA) : H^1(\beta ,\fA )]^{\frac{1}{2}}$. \\

 \begin{defi} \label{betaextensiondefi}\cite[5.2.1]{BK} A $\beta $-extension of $\eta$ is a representation $\kappa$ of $J^0(\beta , \fA)$ such that the following conditions hold:

 \begin{enumerate}
 \item[($i$)] $\kappa \mid _{ J^1(\beta, \fA)}= \eta $
 \item[($ii$)] $\kappa$ is intertwined by the whole of $B^{\times} $.

 \end{enumerate}

 We say that $\kappa$ is a $\beta$-extension of $\theta$ if there exists an irreducible representation $\eta$ of $J^1(\beta, \fA)$ containing $\theta$ such that $\kappa$ is a $\beta$-extension of $\eta$.

 \end{defi}
  \begin{prop} \label{beta} Let $\kappa$ be an irreducible representation of $J^0(\beta , \fA)$. The following assertions are equivalent. \begin{enumerate}
  \item[($i$)] The representation $\kappa$ is a $\beta -$extension of $\theta$.
  \item[($ii$)] The representation $\kappa$ satisfies the following three conditions. \begin{enumerate}
  \item $\kappa $ contains $\theta$
  \item $\kappa$ is intertwined by the whole of $B^{\times} $
  \item $\dim(\kappa)= [J^1(\beta , \fA) : H^1(\beta ,\fA )]^{\frac{1}{2}}$.
  \end{enumerate}
  \end{enumerate}
  \end{prop}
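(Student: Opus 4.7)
The plan is to prove the two directions separately, using as the central input the fact recalled just before Definition \ref{betaextensiondefi}: there is, up to isomorphism, a unique irreducible representation $\eta$ of $J^1(\beta,\fA)$ containing $\theta$, and this $\eta$ has dimension $[J^1(\beta,\fA):H^1(\beta,\fA)]^{1/2}$.

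For the direction $(i)\Rightarrow(ii)$, I would just unwind Definition \ref{betaextensiondefi}. If $\kappa$ is a $\beta$-extension of $\theta$, then by definition there exists an irreducible $\eta$ of $J^1(\beta,\fA)$ containing $\theta$ such that $\kappa\mid_{J^1(\beta,\fA)}=\eta$. Condition $(ii)(a)$ is then immediate since $\eta$ contains $\theta$. Condition $(ii)(b)$ is exactly condition $(ii)$ of Definition \ref{betaextensiondefi}. Condition $(ii)(c)$ follows from $\dim(\kappa)=\dim(\eta)=[J^1(\beta,\fA):H^1(\beta,\fA)]^{1/2}$.

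For the converse $(ii)\Rightarrow(i)$, the key observation is that if $\kappa$ satisfies $(ii)(a)$, then the restriction $\kappa\mid_{J^1(\beta,\fA)}$ is a (not necessarily irreducible) representation which contains $\theta$. Pick any irreducible subrepresentation $\eta'$ of $\kappa\mid_{J^1(\beta,\fA)}$ containing $\theta$; by the uniqueness statement recalled above, $\eta'$ is isomorphic to $\eta$, and hence has dimension $[J^1(\beta,\fA):H^1(\beta,\fA)]^{1/2}$. But condition $(ii)(c)$ says $\dim(\kappa)=[J^1(\beta,\fA):H^1(\beta,\fA)]^{1/2}$, which forces $\kappa\mid_{J^1(\beta,\fA)}=\eta'\simeq\eta$. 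Together with the intertwining hypothesis $(ii)(b)$, this shows that $\kappa$ satisfies both conditions of Definition \ref{betaextensiondefi}, hence is a $\beta$-extension of $\eta$, and therefore of $\theta$.

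There is no serious obstacle here: the whole argument is a dimension-count trick combined with the uniqueness of $\eta$, which is cited from \cite[5.1.1]{BK}. The only point requiring slight care is to observe that $\kappa\mid_{J^1(\beta,\fA)}$ might a priori decompose, but the dimension constraint rules this out.
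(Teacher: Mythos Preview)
Your proof is correct and follows essentially the same approach as the paper: both directions are handled by unwinding Definition \ref{betaextensiondefi}, with the converse relying on the dimension count together with the uniqueness (up to isomorphism) of the irreducible $\eta$ of $J^1(\beta,\fA)$ containing $\theta$, as stated in \cite[5.1.1]{BK}. The paper's argument is slightly more terse but identical in substance.
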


  \begin{proof} If $\kappa$ is a $\beta-extension$, then $\kappa$ satisfies $(a),(b),(c)$. Indeed, by definition $\kappa$ restricted to $J^1(\beta, \fA)$ is equal to an irreducible representation $\eta$ that contains $\theta$, thus $\kappa$ contains $\theta$ and $\dim(\kappa)= \dim(\eta)= [J^1(\beta , \fA) : H^1(\beta ,\fA )]^{\frac{1}{2}}.$ By definition, $\kappa$ is intertwined by the whole of $B^{\times}$.   Reciprocally, if $\kappa$ satisfies $(a),(b),(c)$, then ${(\kappa \mid _{J^1 (\beta , \fA)}}) \mid _{H^1(\beta , \fA )}$ contains $\theta $, so $\kappa \mid _{J^1 (\beta , \fA )}$ contains an irreducible representation $\eta$ that contains $\theta$, and the equality on dimension thus shows  $\kappa \mid _{J^1 (\beta , \fA) }= \eta$. Thus $\kappa$ is a $\beta -$extension as required.
  \end{proof}

\begin{prop} \label{kappakappa} Let $\kappa _1$ and $\kappa _2$ be two $\beta$-extension of $\theta$. There exists a character $\chi : U^0(\mathfrak{o}_E) / U^1(\mathfrak{o}_E)\to \C ^{\times}$ such that $\kappa _1$ is isomorphic to ${\kappa _2 \otimes \chi \circ \det _{B}}$.
\end{prop}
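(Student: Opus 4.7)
The plan is to proceed in two stages: first establish abstractly that $\kappa_1$ and $\kappa_2$ differ by some character of $J^0(\beta,\fA)/J^1(\beta,\fA)$, and then exploit the $B^\times$-intertwining property of $\beta$-extensions to force that character to factor through $\det_B$.

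For the first stage I would apply Schur's lemma. By the uniqueness (up to isomorphism) of the irreducible $\eta$ of $J^1(\beta,\fA)$ containing $\theta$, we may identify $\kappa_1|_{J^1(\beta,\fA)} \simeq \kappa_2|_{J^1(\beta,\fA)} \simeq \eta$. Then $\Hom_{J^1(\beta,\fA)}(\kappa_2,\kappa_1) = \End_{J^1(\beta,\fA)}(\eta) = \mathbb{C}$, and the natural $J^0(\beta,\fA)$-action on this one-dimensional space (by $g \cdot f = \kappa_1(g) f \kappa_2(g)^{-1}$) defines a character $\tilde{\chi}$ of $J^0(\beta,\fA)/J^1(\beta,\fA)$. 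A nonzero element is then a $J^0(\beta,\fA)$-intertwiner $\kappa_2 \otimes \tilde{\chi} \to \kappa_1$ between irreducibles of equal dimension, hence an isomorphism, giving $\kappa_1 \simeq \kappa_2 \otimes \tilde{\chi}$.

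For the second stage, for any $g \in \mathfrak{K}(\fB_\beta) \subset B^\times$, which normalises $J^0(\beta,\fA)$, the $B^\times$-intertwining condition from Definition \ref{betaextensiondefi} gives ${}^g\kappa_i \simeq \kappa_i$. Applying ${}^g(-)$ to $\kappa_1 \simeq \kappa_2 \otimes \tilde{\chi}$ then yields $\kappa_2 \otimes \tilde{\chi} \simeq \kappa_2 \otimes {}^g\tilde{\chi}$; since the extensions of $\eta$ to $J^0(\beta,\fA)$ form a free torsor under $\Hom(J^0(\beta,\fA)/J^1(\beta,\fA),\mathbb{C}^\times)$ (a consequence of the same Schur computation applied to $\kappa_2$), this forces ${}^g\tilde{\chi} = \tilde{\chi}$. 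Using the standard identification $J^0(\beta,\fA)/J^1(\beta,\fA) \simeq U^0(\fB_\beta)/U^1(\fB_\beta) \simeq \prod_i \mathrm{GL}_{m_i}(k_E)$ coming from the hereditary structure of $\fB_\beta$, every character factors as $\prod_i \chi_i \circ \det_i$, and $\mathfrak{K}(\fB_\beta)/U^0(\fB_\beta)$ acts on the set of factors by cyclic permutation. The invariance of $\tilde{\chi}$ then forces all $\chi_i$ to coincide with a common character $\chi \colon k_E^\times \to \mathbb{C}^\times$, so that $\tilde{\chi}$ agrees on $U^0(\fB_\beta)$ with $\chi \circ \det_B$; using $J^0(\beta,\fA) = U^0(\fB_\beta) \cdot J^1(\beta,\fA)$ and the triviality of $\tilde{\chi}$ on $J^1(\beta,\fA)$, this extends to all of $J^0(\beta,\fA)$.

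The step I expect to be hardest is the transitivity of the $\mathfrak{K}(\fB_\beta)/U^0(\fB_\beta)$-action on the simple factors of $U^0(\fB_\beta)/U^1(\fB_\beta)$: this requires an explicit inspection of the lattice chain defining $\fB_\beta$ and the fact that its normaliser is generated over $U^0(\fB_\beta)$ by an element that shifts the chain cyclically. A subsidiary point is that the torsor of extensions of $\eta$ is genuinely free, which rests on the (easy) observation that the $J^0(\beta,\fA)$-action by conjugation on $\End_{J^1(\beta,\fA)}(\eta) = \mathbb{C} \cdot \mathrm{Id}$ is trivial.
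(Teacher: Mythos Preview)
Your argument is correct and self-contained; the paper, by contrast, does not give a direct proof at all but simply reduces to the uniqueness of $\eta$ and then invokes \cite[5.2.2]{BK}. What you have written is essentially a reconstruction of the proof of that cited result: the Clifford-theoretic extraction of a twisting character $\tilde\chi$ of $J^0/J^1$, followed by the $\fK(\fB_\beta)$-invariance forced by the intertwining hypothesis, and finally the identification of $\fK(\fB_\beta)$-invariant characters of $U^0(\fB_\beta)/U^1(\fB_\beta)$ with characters pulled back via $\det_B$. Your flagged ``hardest step'' (transitivity of $\fK(\fB_\beta)/U^0(\fB_\beta)$ on the simple factors) is exactly where the ambient hypothesis that $\fA$ is principal enters---this is implicit in the paper since both Definition~\ref{betaextensiondefi} and the cited \cite[5.2.2]{BK} live in Chapter~5 of \cite{BK}, where $\fA$ is assumed principal; under that hypothesis $\fB_\beta$ is principal too, and the cyclic-shift generator of $\fK(\fB_\beta)/U^0(\fB_\beta)$ does what you need. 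So your route buys self-containment at the cost of redoing a known computation, while the paper's route is a one-line citation; both are fine, and your version makes transparent exactly where the ``$\chi$ factors through $\det_B$'' conclusion comes from.
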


\begin{proof}  There exists $\eta _1 $ and $\eta _2$, irreducible representations containing $\theta$, such that $\kappa _1$ is a $\beta$-extension of $\eta _1$ and $\kappa _2$ is a $\beta $-extension of $\eta _2$. The representation $\eta _1$ is isomorphic to $\eta _2$. The proposition \ref{kappakappa} is now a consequence of \cite[5.2.2]{BK}.

\end{proof}

\begin{defi}\label{defisimpletype} A simple type in $\mathrm{G}$ is one of the following $(a)$ or $(b)$.
\begin{enumerate}[(a)] 
\item An irreducible representation $\lambda = \kappa \otimes \sigma $ of $J(\beta, \fA )$ where:
\begin{enumerate}[(i)]
\item $\fA$ is a principal $\mathfrak{o} _F $-order in $\fA$ and $[\fA,n,0,\beta]$ is a simple stratum;
\item $\kappa$ is a $\beta -extension$ of a character $\theta \in \mathcal{C}(\fA,0,\beta)$;
\item if we write $E=F[\beta],\mathfrak{B} =\fA \cap \End_{E}(V)$, so that
\begin{center}
$J(\beta, \fA ) / J^1(\beta ,\fA) \simeq U(\mathfrak{B})/U^1(\mathfrak{B})\simeq \mathrm{GL}_f(k_E)^e$

\end{center}

for certain integers  $e$, $f$,
then $\sigma$ is the inflation of a representation $\sigma _0 \otimes \cdots \otimes \sigma _0 $ where $\sigma _0 $ is an irreducible cuspidal representation of $\mathrm{GL}_f(k_E)$,
\end{enumerate} 
\item \label{typeb} An irreducible representation $\sigma$ of $U(\fA)$ where:
\begin{enumerate}[(i)]
\item $\fA$ is a principal $\mathfrak{o} _ F -$order in $A$,
\item if we write $U(\fA)/U^1(\fA) \simeq \mathrm{GL}_f(k_F)^e, $ for certain integers $e,f$, then $\sigma$ is the inflation of a representation $\sigma_0 \otimes \cdots \otimes \sigma _0$, where $\sigma _0$ is an irreducible cuspidal representation of $\mathrm{GL}_f(k_F)$.
\end{enumerate}
 \end{enumerate}
\end{defi}

The following is one of the main theorems of Bushnell-Kutzko theory:

 \begin{theo}\cite[8.4.1]{BK} \label{theo8bkexi}
 Let $\pi$ be an irreducible supercuspidal representation of $\mathrm{G}=\Aut_F(V)\simeq \mathrm{GL}_N(F)$. There exists a simple type   $(J,\lambda)$ in $\mathrm{G}$ such that $\pi \mid J $ contains $\lambda$. Furthermore,
 \begin{enumerate}
 \item[($i$)] the simple type $(J,\lambda)$ is uniquely determined up to $\mathrm{G}$-conjugacy.
 \item[($ii$)] if $(J,\lambda)$ is given by a simple stratum $[\fA,n,0,\beta]$ in $A=\End_F(V)$ with $E=F[\beta]$, then there is a a uniquely determined representation $\Lambda$ of $E^{\times} J$ such that $\Lambda \mid _{ J} = \lambda $ and $\pi=\mathrm{\cind}(\Lambda)$, in this case $\mathfrak{A} \cap \End_E(V)$ is a maximal $\mathfrak{o}_E$-order $ \End_E(V)$.
 \item[($iii$)] if $(J,\lambda)$ is of the form  (\ref{typeb})---that is,  if  $J=U(\fA)$ for some maximal $\mathfrak{o} _F$-order  $\fA$ and $\lambda$ is trivial on $U^1(\fA)$---, then there is a uniquely determined representation $\Lambda$ of $F^{\times} U(\fA)$ such that $\Lambda \mid _{U(\fA)} = \lambda $ and $\pi = \mathrm{\cind} (\Lambda)$.
 \end{enumerate}
 \end{theo}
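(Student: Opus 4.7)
The plan is to reverse the compact-induction construction: starting from $\pi$, extract a simple stratum occurring in $\pi$, refine it until the attached hereditary order $\fB_\beta = \fA \cap \End_E(V)$ becomes a maximal $\mathfrak{o}_E$-order, and then recognise the resulting irreducible constituent of $\pi|_{J^0}$ as a simple type in the sense of Definition \ref{defisimpletype}.

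First I would show that $\pi$ contains a \emph{fundamental stratum}. By Lemma \ref{coef}, matrix coefficients of $\pi$ are compactly supported modulo the centre, so $\pi$ is non-trivial on every principal congruence subgroup; decomposing $\pi|_{U^m(\fA)}$ along the additive characters of $\fP^{-m}/\fP^{-m+1}$ produces some $\psi_b$ occurring in $\pi$ with $\nu_\fA(b)=-m$. Cuspidality of $\pi$, applied to Proposition \ref{Cara} along parabolics attached to splittings of $V$, forces the companion polynomial of $b$ to be irreducible, yielding a non-split stratum; by Theorem \ref{approxi} one refines to an equivalent simple stratum $[\fA,n,m,\beta]$. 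Iterating down a defining sequence as in Corollary \ref{suiteap}, and using the explicit intertwining of Proposition \ref{interwisimplecara} to descend one step at a time, one produces a simple character $\theta \in \mathcal{C}(\fA,0,\beta)$ contained in $\pi$; by uniqueness of the irreducible representation $\eta$ of $J^1(\beta,\fA)$ containing $\theta$, the whole of $\eta$ then appears in $\pi$.

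Next comes the key maximality assertion: among the simple strata producing $\theta \in \pi$, cuspidality of $\pi$ forces $\fB_\beta$ to be maximal. If not, the quotient $J^0/J^1 \simeq \mathrm{GL}_f(k_E)^e$ with $e\geq 2$ would admit a proper parabolic, and lifting the corresponding Hecke-algebra decomposition would exhibit a proper parabolic $P=MN$ of $G$ along which $r_P^G(\pi)\ne 0$, contradicting supercuspidality. Having fixed a $\beta$-extension $\kappa$ (Definition \ref{betaextensiondefi}), one has $\pi|_{J^0}\supset \kappa\otimes\sigma$ for some irreducible $\sigma$ of the finite group $J^0/J^1\simeq \mathrm{GL}_f(k_E)$, and an analogous Jacquet-module argument forces $\sigma$ to be cuspidal. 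The pair $(J^0,\kappa\otimes\sigma)$ is therefore a simple type in $\pi$. Extending $\kappa\otimes\sigma$ to a representation $\Lambda$ of $E^\times J^0$ --- which is possible because $E^\times/(E^\times\cap J^0)\simeq \Z$, so any character of this quotient provides an extension --- Proposition \ref{interwisimplecara} combined with the normalisation property of $\mathcal{C}(\fA,0,\beta)$ gives $I_G(\Lambda)=E^\times J^0$, whence Proposition \ref{Cara} yields $\pi':=\cind_{E^\times J^0}^G\Lambda$ irreducible supercuspidal; Frobenius reciprocity and irreducibility of both sides identify $\pi'$ with $\pi$, proving (ii). Case (iii), $\beta\in F$, proceeds identically with $J=U(\fA)$ and $J^1=U^1(\fA)$.

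For the uniqueness assertion (i), two simple types $(J_1,\lambda_1)$ and $(J_2,\lambda_2)$ both contained in $\pi$ must intertwine in $G$ by Mackey theory; the description of intertwining in Proposition \ref{interwisimplecara}, together with the ambiguity of $\beta$-extensions from Proposition \ref{kappakappa}, shows that such an intertwining is realised by $G$-conjugation. The main obstacle is the exhaustion step together with the maximality argument: translating the abstract supercuspidality hypothesis into the combinatorial statement that $\fB_\beta$ is maximal requires a careful parabolic-descent analysis based on the explicit intertwining formula, and is the technical heart of the whole construction.
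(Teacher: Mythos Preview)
The paper does not give its own proof of this theorem: it is stated purely as a citation of \cite[8.4.1]{BK}, with no argument supplied, and the text immediately continues with new definitions. So there is nothing in the paper to compare your sketch against.

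That said, your outline is a reasonable high-level summary of the strategy in \cite{BK}: extract a fundamental stratum from $\pi$, refine via Theorem~\ref{approxi} to a simple stratum, produce a simple character and the Heisenberg representation $\eta$, use cuspidality to force maximality of $\fB_\beta$ and cuspidality of $\sigma$, and then compute intertwining to conclude via Proposition~\ref{Cara}. Be aware, though, that several of the steps you phrase in a sentence are each substantial chapters in \cite{BK}: the passage from a fundamental stratum contained in $\pi$ to a simple character $\theta\in\mathcal{C}(\fA,0,\beta)$ contained in $\pi$ is not a direct iteration of Corollary~\ref{suiteap} but requires the full machinery of split and non-split strata and the ``raising the level'' arguments; the maximality of $\fB_\beta$ and the cuspidality of $\sigma$ are deduced from a delicate Hecke-algebra isomorphism rather than a direct Jacquet-module computation; and the uniqueness statement (i) relies on the intertwining-implies-conjugacy theorem for simple characters, which is considerably stronger than Proposition~\ref{interwisimplecara} alone. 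Your sketch correctly identifies the architecture but understates the depth of each pillar.
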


Let us now introduce a terminology specific to the purpose of this text. 

\begin{defi}\label{bkdatum} A Bushnell-Kutzko datum in $A$ is one of the following sequence: \begin{enumerate}[(a)]
\item An uple of the form $([\fA,n,0,\beta],\theta,\kappa, \sigma , \Lambda )$ such that:

\begin{enumerate}[(i)]
\item $[\fA,n,0,\beta]$ is a simple stratum in $A$ such that $\fB _{\beta}$ is a maximal $\mathfrak{o}_E$-order,
\item $\theta \in \mathcal{C}( \fA , 0 , \beta )$ is a simple character attached to  $[\fA,n,0,\beta]$,
\item $\kappa$ is a $\beta$-extension of $\theta$,
\item $\sigma$ is an irreducible cuspidal representation of $U^0(\fB _{\beta}) / U^1(\fB _{\beta})$,
\item $\Lambda$ is an extension to $E^{\times } J^0 (\beta , \fA) $ of $\kappa \otimes \sigma $ ($\Lambda$ is called a maximal extended simple type).
\end{enumerate}
\item An uple of the form $(\fA, \sigma ,\Lambda )$ where $\fA$ is a maximal $\of$-order in $A$, $\sigma$ is a cuspidal representation of $U^0(\fA) / U^1 (\fA)$ and $\Lambda$ is an extension to $F^{\times} U^0( \fA) $ of $\sigma$ .
\end{enumerate}
\end{defi}
\begin{rema} \label{superficial} As in definition \cite[5.5.10]{BK}, this distinction $(a)$ and $(b)$ is quite superficial (see the remark after \cite[5.5.10]{BK}).
\end{rema}

 We will associate a Yu datum to each Bushnell-Kutzko datum satisfying a tameness condition. The following is the definition of a tame Bushnell-Kutzko datum.

\begin{defi}\label{tamebkdatum} A tame Bushnell-Kutzko datum is a Bushnell-Kutzko datum $([\fA,n,0,\beta],\theta,\kappa, \sigma , \Lambda ) $ of type $(a)$ such that $[\fA,n,0,\beta]$ is a tame simple stratum (see \ref{tssdef1} for the definition of a tame simple stratum) or a Bushnell-Kutzko datum of type $(b)$. In this situation, $(E^{\times } J^{0} (\beta , \fA) , \Lambda )$ is called a tame Bushnell-Kutzko maximal extended simple type and $\theta$ is called a tame Bushnell-Kutzko  maximal simple character in $ G$.
\end{defi}

\section{Yu's construction of tame supercuspidal representations} \label{yu}
Given a connected reductive algebraic $F$-group $G$, Yu \cite{YU} constructs irreducible supercuspidal representations of $G(F)$, which are called tame. Adler's work \cite{Adle} has inspired parts of Yu's construction. Kim \cite{Kim} has proven that Yu's construction is exhaustive when the residual characteristic of $F$ is sufficiently big.  Fintzen has a better exhaustion result \cite{fintzen} using another method.
 In the following, we describe Yu's construction and introduce his notations, closely following Yu's paper \cite{YU}. For proofs and further details, see \cite{YU}.

  \subsection{Tamely ramified twisted Levi sequences and groups} \label{subsectionyutwi}
   In this section, we introduce some notations and facts relative to those used in Yu's construction. We refer to Sections 1 and 2 of \cite{YU} for proofs.

   We also refer the reader to \cite[6.4.1]{brti1} for a definition of the totally ordered commutative monoid $\tilde{\R}= \R ~\sqcup~~ \R+ ~ \sqcup ~\infty.$

   \begin{defi}
  A tame twisted Levi sequence $\overrightarrow{G}$ in $G$ is a sequence \begin{center}${(G^0 \subset G^1 \subset \ldots \subset G^d =G)}$ \end{center} of reductive $F$-subgroups of $G$ such that there exists a tamely ramified finite Galois extension $E/F$ such that 
  ${G^i \times _{F}E}$ is a split Levi subgroup of ${G \times _{F} E}$, for $0 \leq i \leq d$.
  \end{defi}

Let $\overrightarrow{G}$ be a tame twisted Levi sequence,  there exists a maximal torus $T \subset G^0$ defined over $F$ such that $T \times _{ F} E$ is split.
  For each $0 \leq i \leq d $, let $\Phi _i$ be the union of the set of roots $\Phi (G^i,T,E)$ and $\{0\}$; that is, $\Phi _i = \Phi (G^i,T,E) \cup \{0\}$.   For each $a \in \Phi _d \setminus \{0\}$, let $G_a \subset G=G^d$ be the root subgroup corresponding to $a$, and let $G_a$ be $T$ if $a=0$. Let $\mathfrak{g}(E)$ be the Lie algebra of $G$ over $E$, and and let $\mathfrak{g}^*(E)$ be the dual of $\mathfrak{g}(E)$. For each $a \in \Phi _d$ let $\mathfrak{g}_a (E)$ (resp $\mathfrak{g}^*_a (E)$ ) be the $a$-eigenspace of $\mathfrak{g}(E)$ (resp $\mathfrak{g}^*(E)$) as a rational representation of $T$. Then, $\mathfrak{g}_a (E)$ is the Lie algebra of $G_a$, and $\mathfrak{g}_a ^* (E) $ is the dual of $\mathfrak{g}_{-a} (E)$.
  If $ 0 \leq i \leq j \leq d$, we have $\Phi _i \subset \Phi _j $. 
   Let $\overrightarrow{r}= (r_0 , \ldots , r_i , \ldots , r_d) $ be a sequence of numbers in $\tilde{\R}$, we define a function $f _{\overrightarrow{r}}: \Phi (G^d , T , E) \to \tilde{\R}$ as follows: $f(a)=r_0$ if $a \in \Phi _0$, $f(a) =r_k $ if  $ a \in \Phi _k \setminus \Phi_{k-1}$.
  By definition, a sequence $\overrightarrow{r}=(r_0 , r_1 , \ldots , r_d)$ of numbers in $\tilde{\R}$ is admissible if there exists $\nu \in \Z$ such that $0 \leq \nu \leq d$ and 

\begin{center}
$0 \leq r_0 = \ldots = r _{\nu} , \frac{1}{2}r_{\nu}  \leq r_{\nu +1} \leq \ldots \leq r_d$.
\end{center}
  Let $y$ be in the apartment $A(G,T,E) \subset \BT^E(G,E)$. 
  For each $a \in \Phi _d$, let $\{G_a(E) _{y,r} \} _{r \in \tilde{\R} , r \geq 0 }$, $\{\mathfrak{g}_a (E) _{y,r} \} _{r \in \tilde{\R}} $ and $\{\mathfrak{g}_a ^* (E) _{y,r} \} _{r \in \tilde{\R}}$ the associated filtrations.  For any $\tilde{\R}$-valued function $f$ on $\Phi_d$ such that $f(0) \geq 0$, let $G(E)_{y,f}$ be the subgroup generated by $G_a(E) _{y , f(a)}$ for all $a \in \Phi _d$, and Yu similarly defines $\mathfrak{g}(E) _{y,f}$ and  $\mathfrak{g}^* (E) _{y,f}$. The group ${G}(E)_{y, f_{\overrightarrow{r}}}$, ${\mathfrak{g}}(E)_{y, f_{\overrightarrow{r}}}$
   and ${\mathfrak{g}}^*(E)_{y, f_{\overrightarrow{r}}}$ are denoted by  
   $\overrightarrow{G}(E) _{ y , \overrightarrow{r}} $, $\overrightarrow{\mathfrak{g}}(E) _{ y , \overrightarrow{r}} $
   and $\overrightarrow{\mathfrak{g}}^*(E) _{ y , \overrightarrow{r}} $.
   Let $\overrightarrow{r} , \overrightarrow{s}$ be two admissible sequences of elements in $\tilde{\R}$. We write $\overrightarrow{r} < \overrightarrow{s}$ (resp $\overrightarrow{r} \leq \overrightarrow{s}$) if $r_i < s_i$  (resp $r_i \leq s_i$) for $0 \leq i \leq d$.
   If $\overrightarrow{r} < \overrightarrow{s}$, Yu puts

  \begin{center}
  $ \overrightarrow{G}(E) _{y, \overrightarrow{r}: \overrightarrow{s}} = \overrightarrow{G}(E) _{y , \overrightarrow{r}} / \overrightarrow{G}(E) _{y, \overrightarrow{s}}$   and   $ \overrightarrow{\mathfrak{g}}(E) _{y, \overrightarrow{r}: \overrightarrow{s}} = \overrightarrow{\mathfrak{g}}(E) _{y , \overrightarrow{r}} / \overrightarrow{\mathfrak{g}}(E) _{y, \overrightarrow{s}}$.

   \end{center}
 We have assumed that $y \in A(G,T,E) \subset \BT^E(G,E)$. This determines a point $y_i$ in $A(G^i , T,E)$ modulo the action of $X_* (Z(G^i),E) \otimes _{\Z} \R$. A choice of $y_i$ determines an embedding $j_i : \BT^E (G^i , E) \to \BT^E (G,E)$, which is $G^i(E)$-equivariant and maps $y_i$ to $y$. We now fix $y_i$ for $0\leq i \leq d $ and identify $\BT^E(G^i,E) $ with its image in $\BT^E(G,E)$ under $j_i$. We thus identify $y_i$ with $y$. 

 \begin{prop} \cite{YU} The following assertions hold.
 \begin{enumerate}[(i)]
 \item $\overrightarrow{G}(E) _{y,\overrightarrow{r}}$, $\overrightarrow{\mathfrak{g}}(E) _{y,\overrightarrow{r}}$ and $\overrightarrow{\mathfrak{g}}(E) _{y,\overrightarrow{r}}$ are independent of the choice of $T$.
 \item  If $\overrightarrow{r}, \overrightarrow{s}$ are two admissible sequences such that 

 \begin{center}
 $0 < r_i \leq s_i \leq \min  (r_i, \ldots , r_d ) + \min (\overrightarrow{r}) $ for $0 \leq i \leq d$ 
 \end{center}
   then $\overrightarrow{G}(E) _{y , \overrightarrow{r}:\overrightarrow{s}}$ is abelian and isomorphic to $\overrightarrow{\mathfrak{g}}(E)_{y, \overrightarrow{r}: \overrightarrow{s}}$.

   \item If $\overrightarrow{r}$ is an admissible increasing sequence, then we have 
   \begin{center}
$\overrightarrow{G} (E)_{y, \overrightarrow{r}} = G^0(E)_{y,r_0} G^1(E)_{y,r_1}\ldots G^d(E) _{y,r_d}$  
   \end{center}\end{enumerate}
   where $G^i(E) _{y,r_i}$, $0 \leq i \leq d$, are Moy-Prasad's groups (see Notation).
  \end{prop}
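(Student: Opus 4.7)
My plan is to treat the three assertions in turn, in each case reducing to standard properties of the Moy--Prasad filtration and to root-by-root commutator estimates.

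For (i), I would argue that any two maximal $F$-tori $T, T'$ of $G^0$ that split over $E$ and whose $E$-apartments contain $y$ are conjugate by an element $g \in G^0(E)_{y,0+}$, by Bruhat--Tits theory applied to $G^0$. Such a $g$ preserves every Moy--Prasad subgroup $G^d(E)_{y,s}$ and permutes the root subgroups $G_a(E)_{y,s}$ in a way that respects the twisted Levi stratification $\Phi_d = \bigsqcup_i (\Phi_i \setminus \Phi_{i-1})$, precisely because $g \in G^0(E) \subset G^i(E)$ for every $i$. Since $f_{\overrightarrow{r}}$ is constant on each $\Phi_i \setminus \Phi_{i-1}$, conjugation by $g$ sends generators of $\overrightarrow{G}(E)_{y,\overrightarrow{r}}$ to generators, so the group is unchanged. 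The same argument, applied to the Moy--Prasad filtrations on $\mathfrak{g}(E)$ and $\mathfrak{g}^*(E)$, handles the Lie algebra and its dual.

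For (ii), I would reduce everything to the root-by-root commutator inclusion
\[
[G_a(E)_{y,u},\, G_b(E)_{y,v}] \subset \prod_{i,j \geq 1} G_{ia+jb}(E)_{y,\, iu+jv}.
\]
When $a \in \Phi_k \setminus \Phi_{k-1}$ and $b \in \Phi_l \setminus \Phi_{l-1}$, the sum $ia+jb$ lies in $\Phi_{\max(k,l)}$, so $f_{\overrightarrow{r}}(ia+jb) \leq \max(r_k, r_l)$ and $f_{\overrightarrow{s}}(ia+jb)$ is bounded by the corresponding $s$-value. The hypothesis $s_i \leq \min(r_i, \ldots, r_d) + \min(\overrightarrow{r})$ is tailored so that $iu+jv \geq r_k + r_l \geq s_{\max(k,l)}$, pushing each commutator into $\overrightarrow{G}(E)_{y,\overrightarrow{s}}$. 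This gives abelianness of the quotient, and the isomorphism with $\overrightarrow{\mathfrak{g}}(E)_{y,\overrightarrow{r}:\overrightarrow{s}}$ is then furnished by the Moy--Prasad ``exponential'' identification, which exists on each root-subgroup quotient and assembles into a global isomorphism once the commutators vanish.

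For (iii), one inclusion is immediate: each generator $G_a(E)_{y,f_{\overrightarrow{r}}(a)}$ with $a \in \Phi_i \setminus \Phi_{i-1}$ equals $G_a(E)_{y,r_i}$ and lies in $G^i(E)_{y,r_i}$. Conversely, each $G^i(E)_{y,r_i}$ is generated by the root subgroups $G_a(E)_{y,r_i}$ for $a \in \Phi_i$ (together with the torus part when $0 \in \Phi_i$), and each such generator appears in $\overrightarrow{G}(E)_{y,\overrightarrow{r}}$ because $f_{\overrightarrow{r}}(a) \leq r_i$ by monotonicity of $\overrightarrow{r}$. To upgrade the ``subgroup generated by'' to the \emph{ordered product} $G^0(E)_{y,r_0} G^1(E)_{y,r_1} \cdots G^d(E)_{y,r_d}$, I would induct on $d$, using the commutator estimate from (ii) to rewrite a product $g^j g^i$ with $i > j$ as $g^i g^j \cdot c$, where $c$ lives in the deeper factors $G^{i'}(E)_{y,r_{i'}}$ with $i' > i$ thanks to admissibility. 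The main obstacle is precisely this reordering: one has to check that the cascade of commutator corrections stays inside the filtration for all $d+1$ factors simultaneously, which is why the hypothesis requires $\overrightarrow{r}$ to be both admissible and increasing.
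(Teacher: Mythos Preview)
The paper does not prove this proposition at all: it is quoted directly from Yu \cite{YU}, as the citation attached to the proposition indicates, and the surrounding text explicitly says ``For proofs and further details, see \cite{YU}.'' So there is no ``paper's own proof'' to compare against.

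Your sketch is a reasonable outline of the arguments in Yu's original paper (Sections~1--2 of \cite{YU}), and the overall strategy for each part is correct. A few small points: in (i), the conjugating element between two maximal tori whose apartments contain $y$ is in $G^0(E)_{y,0}$ rather than $G^0(E)_{y,0+}$ in general, but this is harmless since any element of $G^0(E)$ fixing $y$ already normalises every $G^i(E)_{y,s}$ and permutes the $\Phi_i$-strata. In (ii), your commutator computation is the right mechanism; one also needs the analogous statement for the torus factor (the case $a=0$), which follows from $T(E)_{y,r_0}$ being abelian and from the Moy--Prasad isomorphism on the torus. In (iii), the reordering argument works but is more delicate than you suggest: Yu handles it by an Iwahori-type decomposition relative to a parabolic adapted to the Levi sequence, rather than a bare commutator cascade. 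None of these are genuine gaps in your outline, but since the present paper simply imports the result, no proof is expected here.
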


  The sets $A(G,T,E)$ and $\BT^E(G,F)$ are both subsets of $\BT^E(G,E)$. Yu puts $A(G,T,F) = A(G,T,E) \cap \BT^E(G,F)$, it does not depend on the choice of  $E$.
  There exists $y \in A(G,T,F) \subset A(G,T,E)$. Let $\overrightarrow{r}$ be an ($\tilde{\R}$-valued) admissible sequence of length $d+1$. We define $\overrightarrow{G}(F)_{y,\overrightarrow{r}}$ to be $\overrightarrow{G}(E)_{y,\overrightarrow{r}} \cap G(F)$, which does not depend on the choice of $E$. The group $\overrightarrow{G}(E)_{y,\overrightarrow{r}}$ is Galois stable and $\overrightarrow{G}(F)_{y,\overrightarrow{r}}={\overrightarrow{G}(E)_{y,\overrightarrow{r}}}^{\Gal(E/F)}$.
  The lattices $\overrightarrow{\mathfrak{g}}(F)_{y,\overrightarrow{r}}$ and $\overrightarrow{\mathfrak{g}}^*(F)_{y,\overrightarrow{r}}$ are similarly defined. Yu puts $\overrightarrow{G}(F)_{y,\overrightarrow{r}:\overrightarrow{s}}= \overrightarrow{G}(F)_{y,\overrightarrow{r}} / \overrightarrow{G}(F)_{y,\overrightarrow{s}}$, 
  $\overrightarrow{\mathfrak{g}}(F)_{y,\overrightarrow{r}:\overrightarrow{s}}$ and $\overrightarrow{\mathfrak{g}}^*(F)_{y,\overrightarrow{r}:\overrightarrow{s}}$.

  \begin{prop} \cite{YU} Let $0\leq \overrightarrow{r} \leq \overrightarrow{s}$ and $\overrightarrow{s}>0$. Then 

  \begin{enumerate}[(i)]
  \item The natural morphisms of groups 

  \begin{center}
  $\overrightarrow{G}(F)_{y,\overrightarrow{r}:\overrightarrow{s}}\to{\overrightarrow{G}(E)_{y,\overrightarrow{r}:\overrightarrow{s}}}^{\Gal(E/F)}$
  \end{center}

  and 

  \begin{center}
   $\overrightarrow{\mathfrak{g}}(F)_{y,\overrightarrow{r}:\overrightarrow{s}}\to{\overrightarrow{\mathfrak{g}}(E)_{y,\overrightarrow{r}:\overrightarrow{s}}}^{\Gal(E/F)}$
  \end{center}

  are surjective.

  \item If $0<\overrightarrow{r} < \overrightarrow{s} , s_i \leq \min (r_i , \ldots , r_d ) + \min (\overrightarrow{r})$ for all $i$, and $E/F$ is a splitting field of $\overrightarrow{G}$ that is Galois and tamely ramified, then the isomorphism $\overrightarrow{G}(E)_{y , \overrightarrow{r}:\overrightarrow{s}} \to \overrightarrow{\mathfrak{g}} (E) _{y,\overrightarrow{r}: \overrightarrow{s}} $ induces an isomorphism 
  \begin{center}
  $\overrightarrow{G}(F)_{y,\overrightarrow{r}:\overrightarrow{s}} \to \overrightarrow{\mathfrak{g}}(F)_{y,\overrightarrow{r}:\overrightarrow{s}}.$
  \end{center}

  \end{enumerate}
  \end{prop}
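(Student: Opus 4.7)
My plan is to deduce both assertions from Galois descent for the $\Gal(E/F)$-action on $\overrightarrow{G}(E)_{y,\overrightarrow{r}}$ and $\overrightarrow{\mathfrak{g}}(E)_{y,\overrightarrow{r}}$, using the tameness of $E/F$. The fundamental tool is the short exact sequence of $\Gal(E/F)$-modules
\[
1 \longrightarrow \overrightarrow{G}(E)_{y,\overrightarrow{s}} \longrightarrow \overrightarrow{G}(E)_{y,\overrightarrow{r}} \longrightarrow \overrightarrow{G}(E)_{y,\overrightarrow{r}:\overrightarrow{s}} \longrightarrow 1,
\]
together with its additive analogue for $\overrightarrow{\mathfrak{g}}$. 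Taking $\Gal(E/F)$-fixed points and using the definition $\overrightarrow{G}(F)_{y,\overrightarrow{r}} = \overrightarrow{G}(E)_{y,\overrightarrow{r}}\cap G(F)=\overrightarrow{G}(E)_{y,\overrightarrow{r}}^{\Gal(E/F)}$, the surjectivity claim in (i) reduces to the vanishing of $H^1(\Gal(E/F),\overrightarrow{G}(E)_{y,\overrightarrow{s}})$, and similarly on the Lie algebra side.

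For the cohomology vanishing, I would refine the filtration. Since $\overrightarrow{s}>0$, one can choose an increasing sequence of admissible vectors $\overrightarrow{s}\leq\overrightarrow{s}_1\leq\overrightarrow{s}_2\leq\cdots$ tending to infinity, whose consecutive graded pieces $\overrightarrow{G}(E)_{y,\overrightarrow{s}_k:\overrightarrow{s}_{k+1}}$ are abelian by the preceding proposition and canonically identified with their Lie-algebra counterparts $\overrightarrow{\mathfrak{g}}(E)_{y,\overrightarrow{s}_k:\overrightarrow{s}_{k+1}}$. Each such quotient is a finite-length module over the residue field of $E$, on which $\Gal(E/F)$ acts through a natural action whose inertia part (cyclic of order prime to $p$ by tameness) is handled by the averaging operator, and whose residue-Galois part is handled by Hilbert~90 / Lang's theorem applied to the smooth connected affine group scheme whose points realise the graded piece. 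Iterating up the tower yields $H^1(\Gal(E/F),\overrightarrow{G}(E)_{y,\overrightarrow{s}})=0$ and gives (i), both in the group and Lie algebra cases.

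For part (ii), the hypothesis $s_i\leq\min(r_i,\ldots,r_d)+\min(\overrightarrow{r})$ is precisely what is needed for the preceding proposition to supply a canonical isomorphism $\overrightarrow{G}(E)_{y,\overrightarrow{r}:\overrightarrow{s}}\to\overrightarrow{\mathfrak{g}}(E)_{y,\overrightarrow{r}:\overrightarrow{s}}$; being canonical, it is automatically $\Gal(E/F)$-equivariant, so taking invariants produces an isomorphism on the targets of the two surjections in (i). Those surjections are in fact bijections, because by definition $\overrightarrow{G}(F)_{y,\overrightarrow{s}}=\overrightarrow{G}(E)_{y,\overrightarrow{s}}^{\Gal(E/F)}$ (and likewise for the Lie algebra), so the induced map is injective as well. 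Composing identifies the two sides and yields the required isomorphism $\overrightarrow{G}(F)_{y,\overrightarrow{r}:\overrightarrow{s}}\to\overrightarrow{\mathfrak{g}}(F)_{y,\overrightarrow{r}:\overrightarrow{s}}$. The main obstacle throughout is the cohomology vanishing underlying (i): because $[E:F]$ may well be divisible by $p$ through the residue extension, a naive prime-to-$p$ argument is unavailable, and one genuinely needs the decomposition of $\Gal(E/F)$ into its tame inertia and unramified quotient together with the smoothness of the graded pieces as $\of$-group schemes.
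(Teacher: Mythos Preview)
The paper does not supply a proof of this proposition: it is stated with a citation to \cite{YU} and no argument is given, since it is a background result imported directly from Yu's article. Your sketch follows the standard route (Galois descent plus $H^1$-vanishing for the filtration subgroups via tameness and the graded isomorphism with the Lie algebra), which is essentially how Yu proves it; there is nothing to compare against in this paper itself.
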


  We have assumed that $y \in \BT^E (G,E) \cap A(G,T,E)$. We may assume that $y_i$ is fixed by $\Gal (E/F)$ . Then, $y_i$ is a point in $\BT^E(G^i , F)$ by a result of Rousseau. The embedding $j_i: \BT^E(G^i , E) \to \BT^E (G , E)$ is Galois equivariant, hence induces an embeddings $\BT^E(G^i , F) \to \BT ^E (G , F)$ by an other result of Rousseau. We identify $\BT^E(G^i,F)$ with its image in $\BT^E(G,F)$. Therefore, we identify $y_i$ with $y$.
  We now have another important proposition 

   \begin{prop} \cite[2.10]{YU} If $\overrightarrow{r}$ is increasing with $r_0 >0$, we have 
  \begin{center}
  $\overrightarrow{G}(F)_{y,\overrightarrow{r}}= G^0 (F) _{y, r_0} G^1 (F) _{y , r_1} \ldots G^d (F) _{y , r_d}$
  \end{center} 
   where $G^i(F) _{y,r_i}$, $0 \leq i \leq d$, are Moy-Prasad's groups (see Notation).
  \end{prop}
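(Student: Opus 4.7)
The plan is to deduce this from the corresponding statement over a tame Galois splitting field $E/F$ of $\overrightarrow{G}$ (part (iii) of the previous proposition) by Galois descent. The containment $G^0(F)_{y,r_0}\cdots G^d(F)_{y,r_d}\subseteq\overrightarrow{G}(F)_{y,\overrightarrow{r}}$ is immediate, since each factor sits inside $G^i(E)_{y,r_i}\subseteq\overrightarrow{G}(E)_{y,\overrightarrow{r}}$ and also inside $G(F)$.

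For the reverse inclusion I would induct on $d$, the case $d=0$ being trivial. For the inductive step, set $\overrightarrow{G}{}'=(G^0,\dots,G^{d-1})$ and $\overrightarrow{r}{}'=(r_0,\dots,r_{d-1})$; by the induction hypothesis $\overrightarrow{G}{}'(F)_{y,\overrightarrow{r}{}'}=G^0(F)_{y,r_0}\cdots G^{d-1}(F)_{y,r_{d-1}}$, so it suffices to factor any $g\in\overrightarrow{G}(F)_{y,\overrightarrow{r}}$ as $g'\cdot g_d$ with $g'\in\overrightarrow{G}{}'(F)_{y,\overrightarrow{r}{}'}$ and $g_d\in G^d(F)_{y,r_d}$. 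I produce such a factorisation by successive approximation. Fix an admissible $\overrightarrow{s}>\overrightarrow{r}$ close enough to $\overrightarrow{r}$ for part (ii) of the previous proposition to apply over $E$; then $Q(E):=\overrightarrow{G}(E)_{y,\overrightarrow{r}:\overrightarrow{s}}$ is abelian, and via the Moy-Prasad isomorphism decomposes as $\bigoplus_{a\in\Phi_d}\mathfrak{g}_a(E)_{y,f_{\overrightarrow{r}}(a):f_{\overrightarrow{s}}(a)}$. The partition $\Phi_d=\Phi_{d-1}\sqcup(\Phi_d\setminus\Phi_{d-1})$ is $\Gal(E/F)$-stable because $G^{d-1}$ is $F$-defined, yielding a Galois-stable direct-sum splitting $Q(E)=Q'(E)\oplus Q_d(E)$, where $Q'(E)$ is the image of $\overrightarrow{G}{}'(E)_{y,\overrightarrow{r}{}'}$ and $Q_d(E)=\bigoplus_{a\in\Phi_d\setminus\Phi_{d-1}}\mathfrak{g}_a(E)_{y,r_d:s_d}$.

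The image of $g$ in $Q(E)^{\Gal(E/F)}$ thus splits uniquely as $q'+q_d$ with $q'\in Q'(E)^{\Gal(E/F)}$ and $q_d\in Q_d(E)^{\Gal(E/F)}$. Applying the surjectivity in part (i) of the previous proposition, respectively to $\overrightarrow{G}{}'$ and to $G^d$, lifts $q'$ and $q_d$ to elements $g'_0\in\overrightarrow{G}{}'(F)_{y,\overrightarrow{r}{}'}$ and $g_{d,0}\in G^d(F)_{y,r_d}$ satisfying $g\cdot(g'_0 g_{d,0})^{-1}\in\overrightarrow{G}(F)_{y,\overrightarrow{s}}$. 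Iterating with $\overrightarrow{s}$ in place of $\overrightarrow{r}$ produces sequences $(g'_n)$ and $(g_{d,n})$ whose ordered products converge, in the separated Moy-Prasad filtrations on $\overrightarrow{G}{}'(F)_{y,\overrightarrow{r}{}'}$ and $G^d(F)_{y,r_d}$, to elements $g'$ and $g_d$ giving the required factorisation $g=g'\cdot g_d$.

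The main obstacle is the iterative step: multiplying the successive corrections on the right of $g'_0$ and $g_{d,0}$ does not a priori commute with the splitting, so one must argue that all reordering errors lie in strictly deeper filtration levels. This rests on the twisted Levi commutator estimate $[G^i(F)_{y,r},G^j(F)_{y,s}]\subseteq G^{\max(i,j)}(F)_{y,r+s}$, which follows from the Moy-Prasad axioms applied to $\overrightarrow{G}$; iterating with $\overrightarrow{s}\to\infty$ then forces the correction products to be Cauchy in each factor, so their limits indeed live in $\overrightarrow{G}{}'(F)_{y,\overrightarrow{r}{}'}$ and $G^d(F)_{y,r_d}$, closing the induction.
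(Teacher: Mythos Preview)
The paper does not give its own proof of this proposition: it is stated with a bare citation to \cite[2.10]{YU} and no argument is supplied. So there is nothing to compare against directly.

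Your sketch is the standard Galois-descent-plus-successive-approximation argument and is essentially what Yu does. Two small points worth tightening. First, when you lift $q_d\in Q_d(E)^{\Gal(E/F)}$ to $g_{d,0}\in G^d(F)_{y,r_d}$, the image of $g_{d,0}$ back in $Q(E)$ need not land purely in $Q_d(E)$: it may acquire a component in $Q'(E)$ coming from the $\Phi_{d-1}$-root pieces of $\mathfrak{g}^d(E)_{y,r_d}$. The fix is to lift $q_d$ \emph{first}, then take the image of $g\,g_{d,0}^{-1}$ in $Q(E)$, observe that this now lies entirely in $Q'(E)^{\Gal(E/F)}$ (the quotient being abelian), and only then lift to $g'_0$. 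Second, the commutator estimate you invoke at the end is exactly right in spirit, but note that it is the $E$-level version $[G^i(E)_{y,r},G^j(E)_{y,s}]\subseteq G^{\max(i,j)}(E)_{y,r+s}$ that follows directly from the root-group axioms; the $F$-level statement is then obtained by intersecting with $G(F)$. With these adjustments your argument goes through.
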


  \subsection{Generic elements and generic characters}\label{subsectionyugeneric}

  If $L$ is a lattice in an $F$-vector space $V$, then the dual lattice $L^*$ is defined to be

  \begin{center} ${\{x \in V^* \mid x(L) \subset \mathfrak{o}_F \}.}$ \end{center}
  Put $L^{\bullet}=L^* \otimes _{\mathfrak{o}_F} \mathfrak{p}_F$.
 If $L\subset M$ are lattices in $V$, then
 the Pontrjagin dual of $M/L$ can be identified with $L^{\bullet} / M^{\bullet}$ 
via an additive character $\psi_F$ of conductor $\mathfrak{p}_F$.  Explicitly, every element $a \in L^{\bullet}$ defines a character $\chi = \chi _a$ on $M$ by $\chi _a (m)= \psi _F (a(m))$.
  We say that $a$ realises the character $\chi$.
If $\overrightarrow{r}=(r_0 , \ldots , r_d)$ is an $\R$-valued sequence, then  $\overrightarrow{r}+$ denotes the sequence $(r_0+, \ldots , r_d+)$. 
Then, $\mathfrak{g}^*(F)_{y,\overrightarrow{r}}$ is equal to $\mathfrak{g}(F)_{y,(-\overrightarrow{r})+}^*\otimes _{\mathfrak{o}_F} \mathfrak{p}_F$ and 
$\mathfrak{g}^*(F)_{y,\overrightarrow{r}+}$ is equal to 
$\mathfrak{g}(F)_{y,-\overrightarrow{r}}^* \otimes _{\mathfrak{o}_F} \mathfrak{p}_F$. 
Let $r>0$ and let $S$ such that  $G(F)_{y,(r/2)+} \supset S \supset G(F)_{y,r}$. Then, $S/G(F)_{y,r+} \simeq \mathfrak{s}/\mathfrak{g}(F)_{y,r+}$, where $\mathfrak{s}$ is a lattice between $\mathfrak{g}(F)_{y,(r/2)+}$ and $\mathfrak{g}(F)_{y,r}$. 

\begin{defi} \label{realizedef} A character of $S/G(F)_{y,r+}$ is realised by an element $a \in \mathfrak{g}^*(F)_{y,-r} = (\mathfrak{g}(F)_{y,r+})^{\bullet}$ if it is equal to the composition

\centerline{\xymatrix{
S/G(F)_{y,r+} \ar[r] ^{\sim}& \mathfrak{s}/\mathfrak{g}(F)_{y,r+} \ar[r]^{\chi _a}& \C ^{\times}.
}}

\end{defi}

We now introduce the notion of generic element. A generic character will be defined as certain characters whose restrictions are realised by generic elements.
 Let $G' \subset G$ be a tamely ramified twisted Levi sequence.
 Let $Z'$ be the center of $G'$, and let $T$ be a maximal torus of $G'$. The space $\Lie ^*( (Z')^{\circ})$ can be regard as a subspace of $\Lie ^* (G')$ in a canonical way (see \cite[§8]{YU}).  The space $\Lie ^* (G')$ can also be regarded as a subspace of $\Lie ^* (G)$ in a canonical way.

 \begin{defi}
 An element $X^*$ of $(\Lie ^* (Z')^{\circ})_{-r} $ is called $G$-generic of depth $r\in \R$ if two conditions, \textbf{GE1} and \textbf{GE2}, hold. These conditions are explained below.  \end{defi} Let us explain \textbf{GE1}. Let $a$ denote a root 
 in $\Phi (G,T, \overline{F})$, let $a^{\vee}$ be the coroot of $a$, and let $\mathrm{d}a^{\vee}$ denote the differential of $a^{\vee}.$ Let $H_a$ denote the element $\mathrm{d}a^{\vee} (1)$.

 \begin{rema}\label{generema} In the following definition of Yu, it is implicit that we see $X^*$ canonically as an element in $\Lie^* ({Z'} ^{\circ} \times _{F}  \overline{F})$. This is done by remarking two elementary facts.
  First, $\Lie({\mathrm{G}} ^{\circ} \times _{F} \overline{F})$ is canonically isomorphic to $\Lie ( {\mathrm{G}} ^{\circ}) \otimes _{F} \overline{F}$ because $F$ is a field, and thus their duals are canonically isomorphic. The canonical injective map 

 \begin{align*}
 \Lie^* ({\mathrm{G}} ^{\circ}) &\to (\Lie ({\mathrm{G}} ^{\circ}) \otimes _F \overline{F})^*\\
  f &\mapsto  ( z \otimes \lambda \mapsto f(z) \lambda )
 \end{align*}
 ends this remark.
 \end{rema}

 \begin{defi} \label{defgeun} An element $X^*$ of $(\Lie ^* (Z')^{\circ})_{-r} $ satisfies \textbf{GE1} with depth $r$ if $\ord(X^* (H_a))=-r$ for all root $a \in \Phi (G,T, \overline{F}) \setminus \Phi (G,T, F)$.
\end{defi}

We refer to Section $8$ in 
\cite{YU}
for a definition of the condition \textbf{GE2}.In most cases, the condition \textbf{GE2} is implied by the condition. In particular, in this paper the condition \textbf{GE2} will always hold as soon as the condition \textbf{GE1} will hold thank to the following propositions.
 We refer to Section 7 of \cite{YU}, or \cite{Steinbergtor} for the notion of torsion prime for a root datum.

\begin{prop} \label{ge1implyge2tors}\cite[8.1]{YU} If the residual characteristic of $F$ is not a torsion prime for the root datum $(X, \Phi (G,T,\overline{F}) , X ^{\vee} , \Phi ^{\vee}(G,T,\overline{F})$, then \textbf{GE1} implies \textbf{GE2}.
\end{prop}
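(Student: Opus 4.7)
The plan is to unpack condition \textbf{GE2} from \cite[§8]{YU} and reduce it to a combinatorial statement about coroot lattices, where the torsion-prime hypothesis yields the result at once.

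First, I would fix a tamely ramified Galois splitting field $E/F$ for $T$ and work with the decomposition $\Lie(G\times_F E) = \Lie(G'\times_F E) \oplus \bigoplus_{a} \mathfrak{g}_a(E)$, the sum running over $a \in \Phi(G,T,\overline{F}) \setminus \Phi(G',T,\overline{F})$. The element $X^*$ is viewed in $\Lie^*({Z'}^{\circ} \times_F \overline{F})$ as in Remark \ref{generema}, and \textbf{GE1} pins down the valuations $\ord(X^*(H_a))=-r$ for every such root. Condition \textbf{GE2} asks for an analogous control at the level of closed sub-root-systems: for every closed subsystem $\Psi \subset \Phi(G,T,\overline{F})$ not already supported inside $\Phi(G',T,\overline{F})$, the image of $X^*$ under the projection attached to the centre of the Levi generated by $\Psi$ should still have depth exactly $-r$.

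The next step is to invoke Steinberg's characterisation of torsion primes: $p$ is not a torsion prime for the root datum $(X, \Phi, X^{\vee}, \Phi^{\vee})$ iff, for every closed subsystem $\Psi \subset \Phi$, the quotient $\Z\Phi^{\vee} / \Z\Psi^{\vee}$ is $p$-torsion-free, equivalently $\Z\Psi^{\vee}$ is a direct summand of $\Z\Phi^{\vee}$ after localising at $p$. This direct-summand property means that the reduction modulo $\mathfrak{p}_F$ of a $\Z$-linear functional on $\Z\Phi^{\vee}$ which is already nonzero on each coroot $a^{\vee}$ with $a$ outside $\Phi(G',T,\overline{F})$---the content of \textbf{GE1}---remains nonzero on the coroots of any such subsystem $\Psi$. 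This is precisely the conclusion \textbf{GE2} demands.

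The main obstacle is matching the precise lattice-theoretic formulation of \textbf{GE2} in \cite[§8]{YU} with the coroot-lattice property supplied by Steinberg's criterion. Once the translation is written down carefully (using Galois descent from $\overline{F}$ to $F$ and the compatibility of Moy--Prasad filtrations with the decomposition into root spaces), the implication reduces to the elementary fact that mod-$\mathfrak{p}_F$ reduction commutes with projection onto a direct summand of the coroot lattice after localisation at $p$.
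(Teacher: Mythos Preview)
The paper does not prove this proposition; it is stated purely as a citation of Yu's Lemma 8.1, so there is no argument in the paper to compare against. Your sketch is therefore an attempt to reconstruct Yu's proof, and it has a genuine gap.

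Your description of \textbf{GE2} is not Yu's condition. In \cite[§8]{YU}, \textbf{GE2} is the requirement that the isotropy group, in the absolute Weyl group $W=W(\Phi(G,T,\overline{F}))$, of the residue $\bar X^{*}\in \Lie^*(Z'^{\circ})_{-r}/\Lie^*(Z'^{\circ})_{(-r)+}$ be exactly the reflection subgroup $W(\Phi(G',T,\overline{F}))$. It is not a statement about projections to centres of Levis attached to closed subsystems, and the reduction you propose is aimed at the wrong target.

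The actual mechanism is as follows. Suppose $w\in W$ fixes $\bar X^{*}$. The torsion-prime hypothesis enters through the Springer--Steinberg theorem that, when $p$ is not a torsion prime for the root datum, isotropy subgroups of $W$ acting on the reduction modulo $p$ of the (co)character lattice are generated by the reflections they contain. So it suffices to treat a single reflection $s_a$ with $s_a\bar X^{*}=\bar X^{*}$. One computes that $s_a$ moves $X^{*}$ by a multiple of $X^{*}(H_a)$, so fixing $\bar X^{*}$ forces $\ord(X^{*}(H_a))>-r$; by \textbf{GE1} this is impossible unless $a\in\Phi(G',T,\overline{F})$, whence $s_a\in W(\Phi(G',T,\overline{F}))$. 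The reverse inclusion is immediate since $X^{*}$ lies in $\Lie^{*}(Z'^{\circ})$. Thus the Steinberg input you need is the reflection-subgroup theorem for isotropy groups, not the direct-summand characterisation of torsion primes for coroot sublattices.
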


\begin{prop} \cite{Steinbergtor} Let $(X,\Phi , X ^{\vee} , \Phi ^{\vee} )$ be a root datum of type $A$. Then, the set of torsion prime for $(X,\Phi , X ^{\vee} , \Phi ^{\vee} )$ is empty.
\end{prop}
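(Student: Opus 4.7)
The plan is to unwind the definition of torsion prime and verify it concretely for type $A$. Recall (Steinberg) that $p$ is a torsion prime for the root datum $(X,\Phi,X^{\vee},\Phi^{\vee})$ if there exists a closed subsystem $\Psi\subset\Phi$ (i.e.\ $\Psi=\Phi\cap\mathbb{Z}\Psi$) such that the quotient $\mathbb{Z}\Phi^{\vee}/\mathbb{Z}\Psi^{\vee}$ has $p$-torsion. Therefore it suffices to show that for every closed subsystem $\Psi$ of a root system $\Phi$ of type $A_n$, the abelian group $\mathbb{Z}\Phi^{\vee}/\mathbb{Z}\Psi^{\vee}$ is torsion-free.

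The first step is to use the standard realisation of $A_n$. Write $\Phi=\{e_i-e_j \mid 1\leq i\neq j\leq n+1\}$ inside $\mathbb{Z}^{n+1}$; since $A_n$ is simply laced, the coroots coincide with the roots and $\mathbb{Z}\Phi^{\vee}=\mathbb{Z}\Phi=\{v\in\mathbb{Z}^{n+1}\mid \sum v_i=0\}$. The second step is a classification of closed subsystems: I would argue that any closed subsystem $\Psi\subset\Phi$ is obtained from a partition $\{1,\dots,n+1\}=S_1\sqcup\cdots\sqcup S_k$ by setting $\Psi=\{e_i-e_j\mid i,j \text{ in the same block}\}$ (equivalently, $\Psi$ is itself of type $A_{|S_1|-1}\times\cdots\times A_{|S_k|-1}$). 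This follows from the observation that the equivalence relation on $\{1,\dots,n+1\}$ generated by ``$i\sim j$ if $e_i-e_j\in\Psi$'' has blocks whose internal roots lie in $\Psi$ by closedness.

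The third step is the computation of the quotient. Given such a partition, $\mathbb{Z}\Psi^{\vee}$ is the sublattice of vectors $v\in\mathbb{Z}\Phi^{\vee}$ whose coordinates sum to zero on each block $S_\ell$. The map sending $v\in\mathbb{Z}\Phi^{\vee}$ to the tuple of partial sums $\bigl(\sum_{i\in S_\ell}v_i\bigr)_{1\leq \ell\leq k}$ identifies $\mathbb{Z}\Phi^{\vee}/\mathbb{Z}\Psi^{\vee}$ with the lattice $\{(a_1,\dots,a_k)\in\mathbb{Z}^k\mid \sum a_\ell=0\}$, which is free of rank $k-1$. Since the quotient is torsion-free for every closed subsystem $\Psi$, no prime can be a torsion prime.

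The only real obstacle is the classification of closed subsystems in type $A$; once that is in hand, the quotient is explicit and visibly torsion-free. I would expect the whole argument to fit comfortably in a short paragraph, since the combinatorial structure of $A_n$ is almost minimally constrained and there is no accidental torsion to control, unlike what happens for $B$, $C$, $D$ or the exceptional types.
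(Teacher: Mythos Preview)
The paper does not give its own proof of this proposition; it is stated as a citation to Steinberg \cite{Steinbergtor} and used as a black box. So there is no argument in the paper to compare yours against.

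Your direct verification is correct. The classification of closed (symmetric) subsystems of $A_n$ via partitions of $\{1,\dots,n+1\}$ is right, and your one-line justification (that ``$e_i-e_j\in\Psi$'' generates an equivalence relation by symmetry and closedness) is exactly the argument. The quotient computation is also correct: the block-sum map identifies $\mathbb{Z}\Phi^{\vee}/\mathbb{Z}\Psi^{\vee}$ with the sum-zero sublattice of $\mathbb{Z}^k$, which is free.

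One small caveat: the statement is phrased for a root \emph{datum}, not merely a root system, and depending on the precise definition one adopts, the torsion primes of a root datum may also include primes coming from torsion in $X^{\vee}/\mathbb{Z}\Phi^{\vee}$. Your argument only treats the root-system contribution. In the paper's setting this is harmless: the ambient group is $\mathrm{GL}_N$ (or a Weil restriction thereof), for which $X^{\vee}/\mathbb{Z}\Phi^{\vee}\cong\mathbb{Z}$ is torsion-free, so nothing extra appears. If you want your argument to match the generality of the stated proposition literally, add a sentence handling this quotient.
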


As announced earlier, the definition of a generic element is as follows:

\begin{defi} \label{defelegeneric} An element $X^*$ of $(\Lie ^* (Z')^{\circ})_{-r} $ is called $G$-generic of depth $r\in \R$ if the conditions \textbf{GE1} and \textbf{GE2} hold.
\end{defi}

We can now give Yu's definition of generic characters:

\begin{defi}\begin{enumerate}[(i)]
\item A character $\chi$ of $G'(F)$ is called $G$-generic if it is realised  (in the sense of definition \ref{realizedef}) by an element ${X^*}$  in 

 ${(\Lie ^* (Z')^{\circ})_{-r} \subset (\Lie ^* G') _{y,-r}}$, which is $G$-generic of depth $r$.
\item  A character $\boldsymbol{\Phi}$ of $G'(F)$ is called $G$-generic (relative to y) of depth $r$ if $\boldsymbol{\Phi}$ is trivial on $G'(F)_{y,r+}$, non-trivial on $G'(F)_{y,r} $ and $\boldsymbol{\Phi}$ restricted to $G'(F)_{y,r:r+}$ is $G$-generic of depth $r$ in the sense of $(i)$.\end{enumerate}
\end{defi}

  \subsection{Yu data}
In this article we will only use the following notion of Yu datum, and will only consider Yu's construction and associated objects for these Yu data.
\begin{sloppypar}
 \begin{defi} \label{defyudatum} A Yu datum consists in the following objects.
 \begin{enumerate}
 \item[($\overrightarrow{G}$)]  An anisotropic tame twisted Levi sequence in $G$, that is 
  \begin{center}{$G^0\subset \cdots\subset G^i \subset \cdots \subset G^d=G $}\end{center} such that
 \begin{enumerate}
 \item there exists a finite tamely ramified Galois extension $E/F$ such that ${G^i \times _{F} E}$ is a split Levi subgroup of ${G \times _{F} E}$, 
 \item $Z(G^0)/Z(G)$ is anisotropic.

 \end{enumerate}
 \item[($y$)] A point $y \in \BT^E(G^0,F) \cap A(G,T,E)$  where $T$ is a maximal torus of $G^0$, such that $T \times _{F} E $ is split and $A(G,T,E)$ denotes the apartment associated to $T$ over $E$,
 \item[($\overrightarrow{r}$)] A sequence of real numbers $0<\mathbf{r}_0<\mathbf{r}_1<...<\mathbf{r}_{d-1}\leq \mathbf{r}_d$ if $d>0$ , $0\leq \mathbf{r}_0$ if $d=0$,
 \item[($\rho$)] An irreducible representation $\rho$ of $K^0=G^0_{[y]}$ such that ${\rho \mid _{G^0(F)_{y,0+}}=1}$ and such that $\pi_{-1}:=\mathrm{\cind}_{K^0}^{G^0(F)}(\rho)$ is irreducible and supercuspidal.
 \item[($\overrightarrow{\boldsymbol{\Phi}}$)] A sequence ${\boldsymbol{\Phi} _0,\ldots, \boldsymbol{\Phi} _d }$ of characters of ${G^0(F),\ldots,G^d(F)}$. We assume that $\boldsymbol{\Phi} _i $ is trivial on $G^i(F)_{y,\mathbf{r}_i+} $ but not on $G^i(F)_{y,\mathbf{r}_i} $ for ${0\leq i \leq d-1}$. If ${\mathbf{r}_{d-1} < \mathbf{r}_d}$, then we assume that $\boldsymbol{\Phi} _d$ is trivial on $G^d(F)_{y,\mathbf{r}_d +}$ but not on $G^d(F)_{y,\mathbf{r}_d}$. If $\mathbf{r}_{d-1} = \mathbf{r}_d$, then we assume that $\boldsymbol{\Phi} _d=1$.
The characters are assumed to satisfy Yu's generic condition: $\boldsymbol {\Phi} _i $ is $G^{i+1}$-generic of depth $\mathbf{r} _i$ for $0\leq i \leq d-1$. 
 \end{enumerate}

 \end{defi}
 \end{sloppypar}
 \subsection{Yu's construction} \label{yuconstrutru}
 We will fix a generic Yu datum.
 The three first objects $(\overrightarrow{G} , y ,\overrightarrow{r} )$ allow us to define various groups. The point $y$ can be seen as a point in the enlarged Bruhat-Tits Building of $G^i$ for each $i$ using embeddings  \[\BT^E(G^0,F)\hookrightarrow \BT^E(G^1,F) \hookrightarrow \ldots \hookrightarrow \BT^E(G^d,F) ,\] which  was explained in Section 2 of Yu's paper \cite[§2 , page 589 line 5]{YU}. Yu defined several groups:

 \begin{defi} \label{defyugroup} \cite[§3, 15.3]{YU} Put $\mathbf{s}_i=\frac{\mathbf{r}_i}{2}$ for $0 \leq i \leq d.$

 For $i=0$, put 

  \begin{enumerate}[(i)]

 \item $K^0 _+ = G^0(F)_{y,0+}$

 \item $^{\circ} K^0 = G^0(F)_{y}$

 \item $ K^0=G^0(F)_{[y]}$.
 \end{enumerate}

  For  $1\leq i \leq d $, put

 \begin{enumerate}[(i)]

 \item \begin{align*}
 K^i _+ &= G^0(F)_{y,0+} G^1(F)_{y,\mathbf{s}_0+} \cdots G^i(F)_{y,\mathbf{s}_{i-1}+} \\
 & =(G^0, G^1 , \ldots , G^i) (F) _{y,(0+,s_0+,\ldots , s_{i-1}+)}
 \end{align*}

 \item \begin{align*}
^{ \circ} K^i&=G^0(F)_{y} G^1(F)_{y,\mathbf{s}_0} \cdots G^i(F)_{y,\mathbf{s}_{i-1}}\\
 &= G^0(F) _{y}(G^0, G^1 , \ldots , G^i) (F) _{y,(0,s_0,\ldots , s_{i-1})}
 \end{align*}

 \item \begin{align*}
  K^i&=G^0(F)_{[y]} G^1(F)_{y,\mathbf{s}_0} \cdots G^i(F)_{y,\mathbf{s}_{i-1}}\\
  &=G^0(F) _{[y]}(G^0, G^1 , \ldots , G^i) (F) _{y,(0,s_0,\ldots , s_{i-1})}.
  \end{align*}

 \end{enumerate}
 \end{defi}

 \begin{prop} \label{propyugroup}\cite{YU} Let $0 \leq i \leq d$.
 \begin{enumerate}[(i)]
 \item The three objects $K_+^i$, $ ^{\circ}K^i$, $K^i $ defined precedently are groups.
 \item They do not depend on the choice of the embeddings 

 \begin{center}$\BT^E(G^0,F) \hookrightarrow \BT^E(G^1,F) \hookrightarrow  \ldots \hookrightarrow  \BT^E(G^i,F)$.\end{center}
 \item
 There are inclusions $K_+^i ~\subset ~ ^{\circ}K^i ~\subset ~K^i$.
 \item The groups $K_+^i$ and $ ^{\circ} K^i$ are compact and $  K^i$ is compact modulo the center. Moreover $^{\circ} K^i$ is the maximal compact subgroup of $K^i$.
\end{enumerate} \end{prop}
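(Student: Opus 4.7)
The plan is to reduce the entire proposition to the structure results on $\overrightarrow{G}(F)_{y,\overrightarrow{r}}$ recalled in Section~\ref{subsectionyutwi}, exploiting that any initial segment $(G^0,\ldots,G^i)$ of the tame twisted Levi sequence is itself a tame twisted Levi sequence. First I would check that the two sequences $(0+,\mathbf{s}_0+,\ldots,\mathbf{s}_{i-1}+)$ and $(0,\mathbf{s}_0,\ldots,\mathbf{s}_{i-1})$ used in the definitions are increasing and admissible with index $\nu=0$: since the Yu datum satisfies $0<\mathbf{r}_0<\mathbf{r}_1<\ldots$, we have $0<\mathbf{s}_0<\mathbf{s}_1<\ldots$, so the admissibility inequality $\tfrac12 r_\nu\le r_{\nu+1}\le\ldots$ is satisfied trivially. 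The cited proposition then yields that $K^i_+$ coincides with $(G^0,\ldots,G^i)(F)_{y,(0+,\mathbf{s}_0+,\ldots,\mathbf{s}_{i-1}+)}$, which is a group by its very construction from root subgroups.

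For the passage from $K^i_+$ to $^\circ K^i$ and $K^i$, I would use that the embeddings $\BT^E(G^0,F)\hookrightarrow\BT^E(G^j,F)$ are $G^0(F)$-equivariant; hence $G^0(F)_y\subset G^j(F)_y$ and $G^0(F)_{[y]}\subset G^j(F)_{[y]}$ for each $j$. Either of these subgroups stabilises $y$, thus normalises every Moy--Prasad group $G^j(F)_{y,\mathbf{s}_{j-1}}$, and consequently normalises their ordered product. So $^\circ K^i$ and $K^i$ are each written as a subgroup (the stabiliser of $y$ or of $[y]$ in $G^0(F)$) multiplied by a subgroup it normalises, which is a group. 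This settles (i). Item (ii) is immediate because the factors $G^j(F)_{y,r}$ depend only on the image $y_j$ of $y$ in $\BT^E(G^j,F)$, and this image is canonical up to translation by $X_*(Z(G^j),E)\otimes_\Z\R$, which acts trivially on the Moy--Prasad filtration. Item (iii) is straightforward from $G^j(F)_{y,s+}\subset G^j(F)_{y,s}$ and $G^0(F)_{y,0+}\subset G^0(F)_y\subset G^0(F)_{[y]}$.

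The main obstacle is (iv). For compactness of $K^i_+$ and $^\circ K^i$: the factor $G^0(F)_y$ is the stabiliser of a point in the enlarged building and is compact, and each $G^j(F)_{y,\mathbf{s}_{j-1}}$ with $\mathbf{s}_{j-1}>0$ is a compact Moy--Prasad subgroup; a finite ordered product of such compact subsets in $G(F)$ is the continuous image of their direct product, hence compact. For $K^i$ I would analyse the quotient $K^i/{}^\circ K^i\simeq G^0(F)_{[y]}/G^0(F)_y$, which is finitely generated abelian and, modulo the image of $Z(G)(F)$, is finite under the Yu-datum hypothesis that $Z(G^0)/Z(G)$ is anisotropic (so $Z(G^0)(F)/Z(G)(F)$ is compact); this gives compactness of $K^i$ modulo the centre of $G(F)$. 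Finally, for the maximal compactness claim, any compact subgroup $C\subset K^i$ projects to a compact hence finite subgroup of the discrete group $K^i/{}^\circ K^i$; since the torsion of this quotient already lifts into $G^0(F)_y\subset{}^\circ K^i$, one concludes $C\subset{}^\circ K^i$.
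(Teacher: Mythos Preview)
The paper does not give its own proof of this proposition: it is stated with the attribution \cite{YU} and no argument is supplied. So there is no ``paper's proof'' to compare against; the proposition is simply being quoted from Yu. Your sketch is a reasonable reconstruction of why these facts hold, and it follows the general lines of Yu's arguments (admissibility of the sequences, the product decomposition of $\overrightarrow{G}(F)_{y,\overrightarrow{r}}$, and normalisation of Moy--Prasad subgroups by point-stabilisers).

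One point to tighten in your argument for (iv): your justification that ${}^{\circ}K^i$ is the maximal compact subgroup of $K^i$ is phrased awkwardly. You write that ``the torsion of this quotient already lifts into $G^0(F)_y$'', but the cleaner statement is that the quotient $K^i/{}^{\circ}K^i \simeq G^0(F)_{[y]}/G^0(F)_y$ is \emph{torsion-free}: indeed $G^0(F)_{[y]}$ acts by translations on the affine fibre $X_*(Z(G^0),F)\otimes_{\Z}\R$ over $[y]$, and $G^0(F)_y$ is exactly the kernel of that action, so the quotient injects into a real vector space. Hence any compact subgroup of $K^i$ has trivial image in $K^i/{}^{\circ}K^i$ and is contained in ${}^{\circ}K^i$. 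With that adjustment your outline is correct.
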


Yu also defined  groups $J^{i}$ and $J^{i}_+$ for $1\leq i \leq d$, as follows. For $1 \leq i \leq d$ , $(r_{i-1} , s_{i-1} )$ and $(r_{i-1} , s_{i-1}+)$ are admissible sequence 

\begin{defi} 
 Let $J^i $ be the group $(G^{i-1}, G^i ) (F) _{(r_{i-1}, s_{i-1})}
$ and $J^i_+$ be the group $(G^{i-1}, G^i ) (F) _{(r_{i-1}, s_{i-1}+)}
$. 
\end{defi}

\begin{prop} Let $0 \leq i \leq d-1$. The following equalities of groups  hold:
 \begin{enumerate}[(i)]
\item $K^{i-1} J^i = K^i$
\item $K_+^{i-1} J_{+}^i=K_+^i$.
\end{enumerate}
\end{prop}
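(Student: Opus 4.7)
The plan is to prove each equality by a double inclusion, treating part (i) in detail; part (ii) will be entirely analogous.

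For the inclusion $K^{i-1}J^i\subseteq K^i$: since $K^i$ is a group by Proposition \ref{propyugroup}, it suffices to check that both factors are contained in $K^i$. The inclusion $K^{i-1}\subseteq K^i$ is immediate from the definitions, as $K^i$ is obtained from $K^{i-1}$ by appending the factor $G^i(F)_{y,\mathbf{s}_{i-1}}$. For $J^i\subseteq K^i$, I would decompose $J^i=(G^{i-1},G^i)(F)_{y,(\mathbf{r}_{i-1},\mathbf{s}_{i-1})}$ along its generating root subgroups: $G_a(F)_{y,\mathbf{r}_{i-1}}$ for $a\in\Phi(G^{i-1},T)\cup\{0\}$, and $G_a(F)_{y,\mathbf{s}_{i-1}}$ for $a\in\Phi(G^i,T)\setminus\Phi(G^{i-1},T)$. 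By the admissibility/strict-increasingness of $\overrightarrow{\mathbf{r}}$, one has $\mathbf{r}_{i-1}>\mathbf{r}_{i-2}>\mathbf{s}_{i-2}$, so the first family lies in $G^{i-1}(F)_{y,\mathbf{s}_{i-2}}\subseteq K^{i-1}\subseteq K^i$, while the second lies in $G^i(F)_{y,\mathbf{s}_{i-1}}\subseteq K^i$.

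For the reverse inclusion $K^i\subseteq K^{i-1}J^i$, the crux is to show $G^i(F)_{y,\mathbf{s}_{i-1}}\subseteq K^{i-1}J^i$; once this is in hand, the identity $K^i=K^{i-1}\cdot G^i(F)_{y,\mathbf{s}_{i-1}}$ combined with the fact that $K^{i-1}$ is a group gives $K^i\subseteq K^{i-1}\cdot K^{i-1}J^i=K^{i-1}J^i$. To establish the key inclusion I would invoke the Iwahori/Levi-type decomposition
\[
G^i(F)_{y,\mathbf{s}_{i-1}}=G^{i-1}(F)_{y,\mathbf{s}_{i-1}}\cdot\prod_{a\in\Phi(G^i,T)\setminus\Phi(G^{i-1},T)}G_a(F)_{y,\mathbf{s}_{i-1}},
\]
which is available for the tamely ramified twisted Levi $G^{i-1}\subset G^i$: the left-hand factor lies in $G^{i-1}(F)_{y,\mathbf{s}_{i-2}}\subseteq K^{i-1}$ (since $\mathbf{s}_{i-1}>\mathbf{s}_{i-2}$), and the right-hand product lies in $J^i$ by the very definition of $J^i$. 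Part (ii) proceeds identically after replacing the new-root level $\mathbf{s}_{i-1}$ by $\mathbf{s}_{i-1}+$: from $K^i_+=K^{i-1}_+\cdot G^i(F)_{y,\mathbf{s}_{i-1}+}$ and the analogous decomposition of $G^i(F)_{y,\mathbf{s}_{i-1}+}$, the Levi piece sits inside $G^{i-1}(F)_{y,\mathbf{s}_{i-2}+}\subseteq K^{i-1}_+$ and the new-root piece inside $J^i_+$.

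The main technical point I expect to require care is the Iwahori-type decomposition above, where the "Levi" factor appears on the left and the "new root" subgroups on the right in a prescribed order. I would obtain it by passing to a tame splitting field $E/F$, where Yu's product formula for admissible increasing sequences applies to the two-step sequence $(G^{i-1},G^i)$, and then descend via the $\Gal(E/F)$-surjectivity recalled in the proposition of Section \ref{subsectionyutwi}; standard commutator relations among root subgroups (together with the concavity of the defining function) ensure that the old-root contributions can be collected on one side and the new-root contributions on the other, regardless of the initial ordering.
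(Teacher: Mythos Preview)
The paper does not supply its own proof of this proposition; it is quoted from \cite{YU} as part of the review of Yu's construction, so there is no ``paper's approach'' to compare against. Your outline is the standard one and is essentially correct, but two points deserve tightening.

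First, the index range in the statement is a typo (it should read $1\leq i\leq d$, since $J^i$ is only defined in that range), and your argument silently assumes $i\geq 2$ when you invoke $\mathbf{s}_{i-2}$ and $\mathbf{r}_{i-2}$. For $i=1$ the inclusion you need is simply $G^{0}(F)_{y,\mathbf{s}_{0}}\subseteq G^{0}(F)_{y}\subseteq K^{0}=G^{0}(F)_{[y]}$ (and $G^{0}(F)_{y,\mathbf{r}_{0}}\subseteq G^{0}(F)_{y,0}\subseteq K^{0}$ for the $J^1\subseteq K^1$ step); you should state this base case separately.

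Second, the displayed decomposition
\[
G^i(F)_{y,\mathbf{s}_{i-1}}=G^{i-1}(F)_{y,\mathbf{s}_{i-1}}\cdot\prod_{a\in\Phi(G^i,T)\setminus\Phi(G^{i-1},T)}G_a(F)_{y,\mathbf{s}_{i-1}}
\]
is not literally meaningful over $F$, since the individual root subgroups $G_a$ live only over the splitting field $E$. What you actually want (and what your descent sketch is aiming at) is the $F$-rational statement
\[
G^i(F)_{y,\mathbf{s}_{i-1}}=G^{i-1}(F)_{y,\mathbf{s}_{i-1}}\cdot J^i,
\]
obtained as follows: over $E$ one has $G^i(E)_{y,\mathbf{s}_{i-1}}=G^{i-1}(E)_{y,\mathbf{s}_{i-1}}\cdot J^i(E)$ because $G^{i-1}(E)_{y,\mathbf{s}_{i-1}}$ normalises $J^i(E)$ (conjugation by $G^{i-1}$ preserves the strata $\Phi_{i-1}$ and $\Phi_i\setminus\Phi_{i-1}$ and the filtration at $y$), so the right-hand side is a subgroup containing every $G_a(E)_{y,\mathbf{s}_{i-1}}$; the reverse inclusion is immediate since $\mathbf{r}_{i-1}\geq\mathbf{s}_{i-1}$. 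Taking $\Gal(E/F)$-invariants and using the surjectivity statement recalled in Section~\ref{subsectionyutwi} (applied to the quotient $G^i(E)_{y,\mathbf{s}_{i-1}}/J^i(E)\simeq G^{i-1}(E)_{y,\mathbf{s}_{i-1}:\mathbf{r}_{i-1}}$, which is where the cohomological input lives) gives the $F$-version. Once you have this, your argument goes through verbatim: $G^{i-1}(F)_{y,\mathbf{s}_{i-1}}\subseteq K^{i-1}$ by the filtration inclusions, hence $G^i(F)_{y,\mathbf{s}_{i-1}}\subseteq K^{i-1}J^i$ and therefore $K^i=K^{i-1}G^i(F)_{y,\mathbf{s}_{i-1}}\subseteq K^{i-1}J^i$.
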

 Thanks to $\overrightarrow{\boldsymbol{\Phi}}$, Yu defines a character $\prod\limits_{i=1}^d \hat{\boldsymbol{\Phi}}_i$ on $K^d _+$. He then constructs a representation $\rho _d=\rho _d( \overrightarrow{G},y,\overrightarrow{\mathbf{r}},\rho ,\overrightarrow{\boldsymbol{\Phi}}) $ on $K^d$ \cite[§4]{YU}. Let us explain the construction of these objects.
  Let $0 \leq i \leq d-1$.
   Put $T^i=(Z(G^i))^{\circ}$, let us consider the adjoint action of $T^i$ on $\mathfrak{g}$, the space $\mathfrak{g}^i=\Lie(G^i)$ is the maximal subspace on which $T^i$ acts trivially. Let $\mathfrak{n}^i$ be the sum of the remaining isotypic subspaces.
 Let $\mathbf{s} \geq 0 \in \tilde{\R}$, then $\mathfrak{g}(F) _s = \mathfrak{g} ^i (F) _s \oplus \mathfrak{n} ^i (F) _s $ where $\mathfrak{n} ^i (F) _s \subset \mathfrak{n} ^i (F)$.
 There exists a sequence of morphisms as follows (see \cite[section 4]{YU}).
 \begin{equation}
G^i(F)_{\mathbf{s}_{i+} :\mathbf{r}_{i+}} \simeq \mathfrak{g}^i(F)_{\mathbf{s}_{i+} :\mathbf{r}_{i+}} \subset \mathfrak{g}^i(F)_{\mathbf{s}_{i+} :\mathbf{r}_{i+}} \oplus \mathfrak{n}^i(F)_{\mathbf{s}_{i+} :\mathbf{r}_{i+}}\simeq G(F)_{\mathbf{s}_{i+} :\mathbf{r}_{i+}}\label{iso}
 \end{equation}
 The character $\boldsymbol{\Phi} _i$ of $G^i(F)$ is of depth $\mathbf{r} _i $. Thus, thanks to the isomorphism  \eqref{iso}, it induces a character on  $\mathfrak{g}^i(F)_{\mathbf{s}_{i+} :\mathbf{r}_{i+}}$. We extend the latter to   $\mathfrak{g}^i(F)_{\mathbf{s}_{i+} :\mathbf{r}_{i+}} \oplus \mathfrak{n}^i(F)_{\mathbf{s}_{i+} :\mathbf{r}_{i+}}$ by decreting that it is  $1$ on $\mathfrak{n}^i(F)_{\mathbf{s}_{i+} :\mathbf{r}_{i+}}$. Thanks to the isomorphim  \ref{iso}, we obtain a character on $G(F)_{\mathbf{s}_{i+}}$, which Yu denotes by $\hat{\boldsymbol{\Phi}}_i$. By construction, the following equality holds: $\hat{\boldsymbol{\Phi}}_i \mid _{G^i(F)_{\mathbf{s}_{i+}}}= \boldsymbol{\Phi} _i \mid _{G^i(F)_{\mathbf{s}_{i+}}}$. There exists a unique character on $G^0(F)_{[y]}G^i(F)_0 G(F)_{\mathbf{s} _{i+}}$ which extends $\boldsymbol{\Phi} _i$ and $\hat{\boldsymbol{\Phi}} _i$. Yu also denotes this character by the symbol $ \hat{\boldsymbol{\Phi}} _i$. Because of
 $K_+^d \subset G^0(F)_{[y]}G^i(F)_0 G(F)_{\mathbf{s} _{i+}}$, we have defined a character $ \hat{\boldsymbol{\Phi} }_i$ on $K_+^d$. The character  $ \hat{\boldsymbol{\Phi} }_i$ depends only on $( \overrightarrow{G},y,\overrightarrow{\mathbf{r}}, \boldsymbol{\Phi}_i)$, we sometimes denote it as $ \hat{\boldsymbol{\Phi}} _i= \hat{\boldsymbol{\Phi} }_i( \overrightarrow{G},y,\overrightarrow{\mathbf{r}}, \boldsymbol{\Phi}_i)$. Let $ \theta _d =\theta _d ( \overrightarrow{G},y,\overrightarrow{\mathbf{r}}, \overrightarrow{\boldsymbol{\Phi}})$ be the character $\prod\limits_{i=0}^d\hat{ \boldsymbol{\Phi} }_i \mid _{K_+ ^d}$. We put $\hat{\boldsymbol{\Phi}} _d= \boldsymbol{\Phi} _d$.
Then, Yu constructs for $ 0 \leq j \leq d$ a representation $\rho _j$ of $K^j$. The compactly induced representation $\cind ^{G^j(F)}_{K^j}(\rho _j)$ is an irreducible and supercuspidal representation of $G^j(F)$. However, we are mainly interested in the case $j=d$. We also write $\rho_d= \rho_d ( \overrightarrow{G},y,\overrightarrow{\mathbf{r}}, \overrightarrow{\boldsymbol{\Phi}},\rho).$ We will use similar notations in the following. For each $j$, the representation $\rho _j$ of $K^j$ is naturally expressed as a tensor product of representations.

\begin{lemm} \cite[§4]{YU}\label{tildisemi}Let $0 \leq i \leq d-1$, there exists a canonical irreducible representation $\tilde{\boldsymbol{\Phi}}_i$ of $K^i \rtimes J^{i+1}$ such that the following conditions hold:

\begin{enumerate}[(i)]

\item The restriction of $\tilde{\boldsymbol{\Phi}}_i$ to $1 \ltimes J_+^{i+1} $ is $(\hat{\boldsymbol{\Phi}}_i \mid _{J^{i+1}_+})-$isotypic.

\item The restriction of $\tilde{\boldsymbol{\Phi}}_i$ to $K^i_+ \ltimes 1 $ is $1-$isotypic. 

\end{enumerate}
 \end{lemm}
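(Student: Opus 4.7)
The plan is to realise $\tilde{\boldsymbol{\Phi}}_i$ as a Heisenberg--Weil type representation attached to the character $\hat{\boldsymbol{\Phi}}_i$. First I would analyse the quotient $V_i := J^{i+1}/J^{i+1}_+$: by the Moy--Prasad isomorphisms invoked in Section \ref{subsectionyutwi}, this is canonically a finite-dimensional vector space over the residual field $k_F$, and the group commutator on $J^{i+1}$ descends to an alternating bilinear pairing
$\langle \cdot , \cdot \rangle$ on $V_i$ with values in the quotient $J^{i+1}_+/\ker(\hat{\boldsymbol{\Phi}}_i|_{J^{i+1}_+})$, which via $\hat{\boldsymbol{\Phi}}_i$ injects into $\mathbb{C}^\times$.

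Next I would verify non-degeneracy of $\langle \cdot , \cdot \rangle$. This is the place where Yu's genericity condition \textbf{GE1} enters crucially: the restriction $\boldsymbol{\Phi}_i|_{G^i(F)_{y,\mathbf{r}_i:\mathbf{r}_i+}}$ is realised by a $G^{i+1}$-generic element $X^*$ of depth $\mathbf{r}_i$, and the condition $\ord(X^*(H_a)) = -\mathbf{r}_i$ for all $a \in \Phi(G^{i+1},T,\overline{F}) \setminus \Phi(G^i,T,\overline{F})$ translates, root space by root space under the decomposition $V_i = \bigoplus_a \mathfrak{g}_a(F)_{y,\mathbf{s}_i : \mathbf{s}_{i}+}$, into non-degeneracy of the pairing on the corresponding pairs of opposite root spaces (the roots in $\Phi(G^i,T,\overline{F})$ contribute trivially and drop out of $V_i$).

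With non-degeneracy in hand, the extension
\[
1 \to J^{i+1}_+/\ker\hat{\boldsymbol{\Phi}}_i \to J^{i+1}/\ker\hat{\boldsymbol{\Phi}}_i \to V_i \to 1
\]
is a finite Heisenberg group, and the Stone--von Neumann theorem produces a unique (up to isomorphism) irreducible representation $\eta_i$ of $J^{i+1}$ with central character $\hat{\boldsymbol{\Phi}}_i|_{J^{i+1}_+}$. This gives property $(i)$ on the $J^{i+1}$ factor.

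To promote $\eta_i$ to a representation of $K^i \ltimes J^{i+1}$, I would use that $K^i$ normalises $J^{i+1}$ and that $\hat{\boldsymbol{\Phi}}_i$ already extends to a character on the larger group $G^0(F)_{[y]} G^i(F)_0 G(F)_{\mathbf{s}_i+}$, so conjugation by $K^i$ preserves the central character of $\eta_i$. This provides a projective representation of $K^i$ on the space of $\eta_i$, and the standard Heisenberg--Weil machinery for the finite symplectic group acting on $V_i$ lifts it to a genuine linear representation; composing semidirect-product-wise with $\eta_i$ produces the desired $\tilde{\boldsymbol{\Phi}}_i$ on $K^i \ltimes J^{i+1}$. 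Twisting by a suitable character of $K^i/K^i_+$ normalises the lift so that $K^i_+$ acts trivially, giving $(ii)$. The main obstacle is the canonicity of this extension: one must check that the Weil lift is well defined in our mixed-characteristic setting, that the normalisation rendering $K^i_+$ trivial is unique---this rests on the fact that $K^i_+$ acts through unipotent symplectic transformations on $V_i$, over which the metaplectic cocycle is canonically trivial---and that the resulting object does not depend on auxiliary choices (embeddings of buildings, tame Galois splitting field, etc.).
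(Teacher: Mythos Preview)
The paper does not supply its own proof of this lemma: it is stated with the citation \cite[\S 4]{YU} and no argument is given. The paper later (in the proof of Proposition~\ref{dimensionphiprim}) remarks that $\tilde{\boldsymbol{\Phi}}_i$ is constructed in \cite[11.5]{YU} as the pull-back of the Weil representation of $Sp(J^{i+1}/J^{i+1}_+) \ltimes (J^{i+1}/N_i)$ with $N_i = \ker(\hat{\boldsymbol{\Phi}}_i)$, which is exactly the Heisenberg--Weil mechanism you sketch. So your proposal is in line with Yu's actual construction and is correct in outline.

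One small comment on your final step: you phrase the triviality on $K^i_+$ as achieved by ``twisting by a suitable character of $K^i/K^i_+$'', but in Yu's treatment this is not an a posteriori twist. Rather, $K^i_+$ acts on $V_i = J^{i+1}/J^{i+1}_+$ through unipotent symplectic elements, and for unipotent elements the Weil representation already has a canonical lift (the Schr\"odinger model gives an explicit intertwiner with no sign ambiguity). So property~(ii) comes out of the construction directly rather than being imposed, and this is also what secures the canonicity you flag as the main obstacle. Your identification of \textbf{GE1} as the source of non-degeneracy is correct and is indeed the key input.
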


  \begin{lemm} \label{inflfacto} Let $0 \leq i \leq d-1$. Let $\infl (\boldsymbol{\Phi} _i ) $ be the inflation of $\boldsymbol{\Phi} _i \mid _{K ^i}$ to $K^i \ltimes J^{i+1}$. 
  Let $\tilde{\boldsymbol{\Phi}}_i$ be the canonical irreducible representation introduced in lemma \ref{tildisemi}. Then $\infl (\boldsymbol{\Phi} _i ) \otimes \tilde{\boldsymbol{\Phi}}_i$ factors through the map  \begin{center}${K^i \ltimes J^{i+1} \to K^i J^{i+1} = K^{i+1}}$. \end{center}

 \end{lemm}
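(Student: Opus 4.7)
My plan is to identify the kernel of the surjective homomorphism
\[\mu : K^i \ltimes J^{i+1} \to K^i J^{i+1} = K^{i+1}, \quad (k,j) \mapsto kj,\]
and to verify that the representation $\infl(\boldsymbol{\Phi}_i) \otimes \tilde{\boldsymbol{\Phi}}_i$ acts trivially on this kernel. The kernel is visibly the antidiagonal subgroup $N = \{(h, h^{-1}) : h \in K^i \cap J^{i+1}\}$, so the proof reduces to two sub-tasks: to compute $K^i \cap J^{i+1}$, and then to check the triviality of the tensor product on $N$.

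For the first task, I will show that $K^i \cap J^{i+1} = G^i(F)_{y, r_i}$. On one hand, $K^i \subset G^i(F)$ by the definition of $K^i$. On the other hand, to compute $J^{i+1} \cap G^i(F)$ I will work over a tame splitting field $E$: by definition, $J^{i+1}(E) = (G^i, G^{i+1})(E)_{y, (r_i, s_i)}$ is generated by the $G^i$-root subgroups at depth $r_i$ together with the $(G^{i+1}\setminus G^i)$-root subgroups at depth $s_i$, and an Iwahori-type decomposition relative to the Levi pair $G^i \subset G^{i+1}$ then forces $J^{i+1}(E) \cap G^i(E) = G^i(E)_{y, r_i}$. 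Taking $\Gal(E/F)$-invariants gives $J^{i+1} \cap G^i(F) = G^i(F)_{y, r_i}$. Combined with $G^i(F)_{y, r_i} \subset K^i$ (which follows, for $i \geq 1$, from $r_i > s_{i-1}$ and the definition of $K^i$; for $i = 0$ from $K^0 \supset G^0(F)_{y, r_0}$), this yields $K^i \cap J^{i+1} = G^i(F)_{y, r_i}$.

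For the second task, I will fix $h \in G^i(F)_{y, r_i}$ and use the decomposition $(h, h^{-1}) = (h, 1)(1, h^{-1})$ in the semidirect product. Three depth inclusions are relevant. First, $h \in K^i_+$: indeed $r_i > 0$, and $r_i > s_{i-1}$ for $i \geq 1$. Second, $h^{-1} \in J^{i+1}_+$: the $G^i$-component of $J^{i+1}_+$ remains $G^i(F)_{y, r_i}$ (the ``$+$'' only affects the $G^{i+1}$-component). Third, $h \in G^i(F)_{y, s_i +}$, since $r_i > s_i$. The first inclusion together with Lemma~\ref{tildisemi}(ii) gives $\tilde{\boldsymbol{\Phi}}_i(h, 1) = \mathrm{id}$; the second together with Lemma~\ref{tildisemi}(i) gives $\tilde{\boldsymbol{\Phi}}_i(1, h^{-1}) = \hat{\boldsymbol{\Phi}}_i(h^{-1}) \cdot \mathrm{id}$; and the third together with the constructed equality $\hat{\boldsymbol{\Phi}}_i|_{G^i(F)_{y, s_i+}} = \boldsymbol{\Phi}_i|_{G^i(F)_{y, s_i+}}$ gives $\hat{\boldsymbol{\Phi}}_i(h^{-1}) = \boldsymbol{\Phi}_i(h)^{-1}$. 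Multiplying,
\[(\infl(\boldsymbol{\Phi}_i) \otimes \tilde{\boldsymbol{\Phi}}_i)(h, h^{-1}) = \boldsymbol{\Phi}_i(h) \cdot \boldsymbol{\Phi}_i(h)^{-1} \cdot \mathrm{id} = \mathrm{id},\]
so the tensor product is trivial on $N$ and hence descends to $K^{i+1}$.

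The main obstacle will be the identity $J^{i+1} \cap G^i(F) = G^i(F)_{y, r_i}$: one must be careful to use the root-subgroup definition of $J^{i+1}$, because the superficially similar product $G^i(F)_{y, r_i} \cdot G^{i+1}(F)_{y, s_i}$ collapses to $G^{i+1}(F)_{y, s_i}$ (since $r_i > s_i$ forces $G^i(F)_{y, r_i} \subset G^i(F)_{y, s_i}$) and would give the wrong intersection $G^i(F)_{y, s_i}$, which would \emph{not} be contained in $J^{i+1}_+$ and would break the factorisation argument. Once this intersection is correctly identified, the remainder of the argument is a careful bookkeeping with Moy--Prasad depths and the defining properties of $\tilde{\boldsymbol{\Phi}}_i$ and $\hat{\boldsymbol{\Phi}}_i$ recalled in Lemma~\ref{tildisemi} and in the paragraph preceding it.
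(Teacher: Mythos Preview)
Your proposal is correct and is precisely the standard argument from \cite[\S4]{YU} that the paper defers to (the paper's own proof is just the one-line reference ``This is easy and is proven in Section 4 of \cite{YU}''). Your identification of the kernel as the antidiagonal over $K^i\cap J^{i+1}=G^i(F)_{y,\mathbf r_i}$, together with the three depth inclusions and the two isotypy properties of $\tilde{\boldsymbol\Phi}_i$ from Lemma~\ref{tildisemi}, is exactly how Yu checks the factorisation.
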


 \begin{proof} This is easy and is proven in Section 4 of \cite{YU}.

 \end{proof}

 \begin{defi} \label{phiprim}
 Let us denote by $\boldsymbol{\Phi}_i'$ the representation of $K^{i+1}$ whose inflation to $K^i \ltimes J^{i+1} $ is $\infl (\boldsymbol{\Phi} _i ) \otimes \tilde{\boldsymbol{\Phi}}_i$.

 \end{defi}

 \begin{lemm} \label{lemmhamuinfid} \cite[page 50]{Hamu}  The following assertions hold.\begin{enumerate}[(i)] \item If $\mu$ is a representation of $K^i$ which is $1$-isotypic on $K^i \cap J^{i+1} = G^i(F) _{y,\mathbf{r}_i}$ then there is
a unique extension of $\mu$ to a representation, denoted by $\inf ^{K^{i+1}} _{K^i} (\mu),$ of $K^{i+1}$, which is $1$-isotypic on $J^{i+1}$. If $i<d-1$, then this inflated representation is $1$-isotypic on $K^{i+1} \cap J^{i+2}$.
 \item  We may repeatedly inflate $\mu$. More precisely, if $0 \leq i \leq j \leq d$, then we may define $\inf^{K^j} _{K^i} ( \mu ) = \inf ^{K^j}_{ K^{j-1}} \circ \ldots \circ  \inf ^{K^{i+1}}_{K^i} (\mu)$.
\end{enumerate}
\end{lemm}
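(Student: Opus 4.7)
The argument rests on two structural facts from Yu's construction already recorded earlier in the excerpt: the product decomposition $K^{i+1}=K^iJ^{i+1}$, and the fact that $K^i$ normalises $J^{i+1}$ (implicit in the semi-direct product $K^i\rtimes J^{i+1}$ appearing in Lemma \ref{tildisemi}, and visible directly from Moy--Prasad filtrations since $K^i$ normalises each root-subgroup factor generating $J^{i+1}$). Together these give a canonical group isomorphism
\[
K^{i+1}/J^{i+1}\;\cong\;K^i/(K^i\cap J^{i+1}),
\]
and the identification $K^i\cap J^{i+1}=G^i(F)_{y,\mathbf{r}_i}$ holds because the only contribution of $J^{i+1}=(G^i,G^{i+1})(F)_{y,(r_i,s_i)}$ lying inside $G^i(F)$ comes from the root spaces indexed by $\Phi_i$ at depth $r_i$.

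Granted this, part (i) is almost formal. For existence, a representation $\mu$ of $K^i$ which is trivial on $K^i\cap J^{i+1}$ descends to $K^i/(K^i\cap J^{i+1})$ and thus inflates along the isomorphism above to a representation $\inf^{K^{i+1}}_{K^i}(\mu)$ of $K^{i+1}$ trivial on $J^{i+1}$. Uniqueness is immediate: any extension trivial on $J^{i+1}$ must satisfy $\tilde\mu(kj)=\mu(k)$ for every $k\in K^i$, $j\in J^{i+1}$, so is determined by $\mu$.

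For the second assertion of (i) I would establish the set-theoretic inclusion $K^{i+1}\cap J^{i+2}\subseteq J^{i+1}$, from which triviality of the inflation on $K^{i+1}\cap J^{i+2}$ follows at once. The group $J^{i+2}=(G^{i+1},G^{i+2})(F)_{y,(r_{i+1},s_{i+1})}$ is generated by $G^{i+1}(F)_{y,r_{i+1}}$ together with the root subgroups attached to $\Phi_{i+2}\setminus\Phi_{i+1}$ at depth $s_{i+1}$. Since $K^{i+1}\subseteq G^{i+1}(F)$, the latter factor is killed upon intersection with $K^{i+1}$, leaving $K^{i+1}\cap J^{i+2}\subseteq G^{i+1}(F)_{y,r_{i+1}}$. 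Using $r_{i+1}>r_i$ and $r_{i+1}>s_i$ (the second from $s_i=r_i/2<r_i<r_{i+1}$), I can then decompose $G^{i+1}(F)_{y,r_{i+1}}$ along the root spaces of $\Phi_i$ and $\Phi_{i+1}\setminus\Phi_i$ to conclude $G^{i+1}(F)_{y,r_{i+1}}\subseteq J^{i+1}$.

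Part (ii) is a routine iteration: having built $\inf^{K^{i+1}}_{K^i}(\mu)$, which by the second claim of (i) is $1$-isotypic on $K^{i+1}\cap J^{i+2}$, the hypothesis of (i) is satisfied with $i$ replaced by $i+1$, and one continues up to $j$. The main obstacle is not conceptual but combinatorial: clean bookkeeping with the Moy--Prasad filtrations and the chain $\Phi_i\subseteq\Phi_{i+1}\subseteq\Phi_{i+2}$ of root subsystems is what makes the identifications $K^i\cap J^{i+1}=G^i(F)_{y,\mathbf{r}_i}$ and $K^{i+1}\cap J^{i+2}\subseteq J^{i+1}$ rigorous; once these are in hand, everything else is formal group theory.
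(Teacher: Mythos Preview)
The paper does not supply its own proof of this lemma: it is stated with a citation to \cite[page 50]{Hamu} and no argument is given. So there is nothing in the paper to compare against directly.

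That said, your proposal is correct and is essentially the standard argument one finds in Hakim--Murnaghan. The key structural input is exactly what you identify: $K^{i+1}=K^iJ^{i+1}$ with $J^{i+1}$ normal in $K^{i+1}$, giving $K^{i+1}/J^{i+1}\cong K^i/(K^i\cap J^{i+1})$, and then inflation along this isomorphism. Your verification that $K^{i+1}\cap J^{i+2}\subseteq J^{i+1}$ via the inequalities $r_{i+1}>r_i$ and $r_{i+1}>s_i$ is the right idea. One small caveat: the step ``intersecting $J^{i+2}$ with $G^{i+1}(F)$ kills the root-subgroup factors from $\Phi_{i+2}\setminus\Phi_{i+1}$'' is not literally the intersection of a product being the product of intersections; it relies on the Iwahori-type factorisation of $\overrightarrow{G}(F)_{y,\overrightarrow{r}}$ (cf.\ \cite[\S1--2]{YU}) which makes such splittings legitimate. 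You flag this yourself as the bookkeeping burden, and that is accurate.
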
 

 \begin{defi} \label{kappaijijiji} Let $0 \leq j \leq d$.  Let $ 0\leq i < j$. Let $\kappa ^j _i $ be the inflation of $ \boldsymbol{\Phi}_i'$ to $K^j$; that is, $\kappa ^j _i=\inf ^{K^j}_{ K^{i+1}} (\boldsymbol{\Phi}_i')$. Let $\kappa _j ^j $ be $\boldsymbol{\Phi}_j \mid _{K^j}$. Let $\kappa _{-1} ^j$ be the inflation of $\rho $ to $K^j$; that is, $\kappa _{-1}^j  = \inf _{K^0}^{K^j} (\rho)$.

 \end{defi}

 If $j=d$ and $-1 \leq i \leq d$, then we also denote $\kappa ^d _i$ by $\kappa _i$.  This notation and the statement of the following proposition is due to Hakim-Murnaghan.

 \begin{prop} \label{rhortensorproduct}The representation $\rho _j$ constructed by Yu is isomorphic to

 \begin{center} $ \kappa _{-1} ^j\otimes \kappa _0 ^j\otimes \ldots \otimes \kappa _j^j$.
 \end{center}
  In particular, the representation $\rho_d$ constructed by Yu is isomorphic to \begin{center} $\kappa _{-1} \otimes \kappa _0 \otimes \ldots \otimes \kappa _d$.
 \end{center}
 \end{prop}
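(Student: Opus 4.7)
The approach is induction on $j$, unfolding Yu's inductive construction of $\rho_j$ and matching each stage with the tensor factors $\kappa_\ell^j$ introduced in Definition \ref{kappaijijiji}. For the base case $j=0$, the representation $\rho_0$ is built by Yu on $K^0$ from $\rho$ together with the character $\boldsymbol{\Phi}_0\mid_{K^0}$, which matches $\kappa_{-1}^0 \otimes \kappa_0^0$ directly from the definition of the $\kappa_\ell^0$.

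For the inductive step, I would unwind Yu's passage from $\rho_j$ to $\rho_{j+1}$ on $K^{j+1} = K^j J^{j+1}$. Yu's construction combines $\rho_j$, the canonical representation $\tilde{\boldsymbol{\Phi}}_j$ of $K^j \rtimes J^{j+1}$ from Lemma \ref{tildisemi}, and the character $\boldsymbol{\Phi}_{j+1}\mid_{K^{j+1}}$, using Lemma \ref{inflfacto} to factor through the multiplication map $K^j \ltimes J^{j+1} \to K^{j+1}$. Assuming $\rho_j \cong \kappa_{-1}^j \otimes \kappa_0^j \otimes \cdots \otimes \kappa_j^j$, the factors $\kappa_\ell^j$ with $\ell < j$ are $1$-isotypic on $K^j \cap J^{j+1} = G^j(F)_{y,\mathbf{r}_j}$, since they are inflations from smaller groups $K^{\ell+1}$ and are therefore trivial on the intersection with $J^{j+1}$. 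By Lemma \ref{lemmhamuinfid}(i) they extend uniquely to $K^{j+1}$, and by the transitivity in Lemma \ref{lemmhamuinfid}(ii) these extensions are precisely the $\kappa_\ell^{j+1}$.

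The key manipulation is to isolate the distinguished factor $\kappa_j^j = \boldsymbol{\Phi}_j\mid_{K^j}$ inside $\rho_j$ so that Lemma \ref{inflfacto} applies directly: combining it with $\tilde{\boldsymbol{\Phi}}_j$ yields the representation $\boldsymbol{\Phi}_j'$ of $K^{j+1}$, which by Definition \ref{phiprim} and Definition \ref{kappaijijiji} equals $\kappa_j^{j+1}$. Finally Yu appends $\boldsymbol{\Phi}_{j+1}\mid_{K^{j+1}} = \kappa_{j+1}^{j+1}$ at this stage, yielding
\[ \rho_{j+1} \cong \kappa_{-1}^{j+1} \otimes \kappa_0^{j+1} \otimes \cdots \otimes \kappa_j^{j+1} \otimes \kappa_{j+1}^{j+1}, \]
which closes the induction, the $j=d$ case being the displayed statement.

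The main obstacle will be verifying the triviality conditions needed to invoke Lemma \ref{lemmhamuinfid}(i) at each stage, i.e.\ checking that each previously-built $\kappa_\ell^j$ ($\ell < j$) is trivial on $K^j \cap J^{j+1}$. This reduces to a careful tracking of the inclusions among the Moy--Prasad subgroups $G^i(F)_{y,\mathbf{s}_{i-1}}$ and $G^j(F)_{y,\mathbf{r}_j}$, together with the prescribed depths of the characters $\boldsymbol{\Phi}_\ell$ in the Yu datum. Once these are in place, the argument is a careful bookkeeping of the factorisations provided by Lemma \ref{inflfacto} and the transitivity of inflation in Lemma \ref{lemmhamuinfid}.
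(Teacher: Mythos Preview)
Your proposal is correct and follows essentially the same inductive strategy as the paper. The only organisational difference is that the paper carries Yu's auxiliary representation $\rho_j'$ through the induction explicitly, proving simultaneously that $\rho_j' \cong \kappa_{-1}^j \otimes \cdots \otimes \kappa_{j-1}^j$ and $\rho_j \cong \kappa_{-1}^j \otimes \cdots \otimes \kappa_j^j$; since Yu's recursion is $\rho_j' = \inf_{K^{j-1}}^{K^j}(\rho_{j-1}') \otimes \boldsymbol{\Phi}_{j-1}'$ and $\rho_j = \rho_j' \otimes \boldsymbol{\Phi}_j\mid_{K^j}$, this makes the bookkeeping immediate and removes the need to re-verify the triviality conditions you flag as the ``main obstacle'' --- those are already absorbed into the well-definedness of Yu's inflation step and the transitivity in Lemma~\ref{lemmhamuinfid}(ii).
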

 \begin{sloppypar}
 \begin{proof} 
 The representation $\rho _j $ is constructed in \cite{YU} on page 592. Yu inductively constructs two representations: $\rho _j$ and ${\rho _j} '$.

 Let us show by induction on $j$ that ${{\rho _j }'= \kappa ^j _{-1} \otimes \kappa ^j _0 \otimes   \ldots \otimes \kappa ^j _{j-1}}$ and

 ${\rho _j = \kappa ^j _{-1} \otimes \kappa ^j _0  \otimes \ldots \otimes \kappa ^j _{j}}$  
 If $j=0$, then by definition the representation $\rho _0 '$ constructed by Yu is $\rho$ and $\rho _0$ is $\rho _0 ' \otimes (\boldsymbol{\Phi}_0 \mid _{K^0})$. We have $\kappa _{-1}^0 = \rho $ and $\kappa _0 ^0 = {\boldsymbol{\Phi}_0}\mid _{K^0} $. So the case $j=0$ is complete.
 Assume that ${\rho _{j-1}' = \kappa ^{j-1} _{-1} \otimes \kappa ^{j-1} _0  \ldots \otimes \kappa ^{j-1} _{j-2}}$ and ${\rho _{j-1} = \kappa ^{j-1} _{-1} \otimes \kappa ^{j-1} _0 \otimes \ldots \otimes \kappa ^{j-1} _{j-1}}$. 
 Then by definition $\rho _j '$ is equal to ${\infl _{K^{j-1}}^{K^j} (\rho _{j-1} ' ) \otimes \boldsymbol{\Phi} _{j-1} '} $. By definition $\boldsymbol{\Phi} _{j-1} '$ is equal to $\kappa ^j _{j-1}$. 
 Moreover \begin{align*} \infl  _{K^{j-1}}^{K^j} (\rho _{j-1} ' )& =\infl  _{K^{j-1}}^{K^j} (\kappa ^{j-1} _{-1} \otimes \kappa ^{j-1} _0 \otimes \ldots \otimes \kappa ^{j-1} _{j-2})\\&=  \infl_{K^{j-1}}^{K^j}( \kappa ^{j-1} _{-1} )\otimes \infl _{K^{j-1}}^{K^j} (\kappa ^{j-1} _0) \otimes \ldots \infl _{K^{j-1}}^{K^j} (\kappa ^{j-1} _{j-2})\\
 &=\kappa ^j _{-1} \otimes \kappa ^j _0 \otimes  \ldots \otimes \kappa ^j _{j-2}
 \end{align*}

 Consequently ${{\rho _j }'= \kappa ^j _{-1} \otimes \kappa ^j _0 \otimes \ldots \otimes \kappa _i ^j \otimes \ldots \otimes \kappa ^j _{j-1}}$. Finally, by Yu's definition, $\rho _j $ is equal to $\rho _j ' \otimes \boldsymbol{\Phi} _j \mid _{K^j} $, and thus $\rho _j =\kappa ^j _{-1} \otimes \kappa ^j _0\otimes \ldots \otimes \kappa ^j _{j}$, as required.

 \end{proof}
\end{sloppypar}

\begin{prop}\label{dimensionphiprim} Let $0 \leq j \leq d $. Let $0 \leq i < j$. The dimension of $\kappa _i^j$ is equal to the dimension of $\boldsymbol{\Phi}_i '$. The dimension of $\boldsymbol{\Phi}_i '$ is equal to $[J^{i+1} : J^{i+1}_+]^{\frac{1}{2}}$.

\end{prop}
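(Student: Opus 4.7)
The plan is to prove the two equalities in sequence; the first is essentially tautological, and the second follows from the Heisenberg-theoretic nature of $\tilde{\boldsymbol{\Phi}}_i$ as constructed in \cite{YU}.

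First I would observe that $\dim(\kappa_i^j) = \dim(\boldsymbol{\Phi}_i')$ because $\kappa_i^j = \infl_{K^{i+1}}^{K^j}(\boldsymbol{\Phi}_i')$ by Definition \ref{kappaijijiji}, and inflation preserves dimension (the underlying vector space is unchanged, only the group action factors through the quotient). This takes care of the first claim in a single line.

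For the second claim, I would unfold Definition \ref{phiprim}: the inflation of $\boldsymbol{\Phi}_i'$ to $K^i \ltimes J^{i+1}$ equals $\infl(\boldsymbol{\Phi}_i) \otimes \tilde{\boldsymbol{\Phi}}_i$. Since $\boldsymbol{\Phi}_i$ is a character of $G^i(F)$, its restriction and subsequent inflation to $K^i \ltimes J^{i+1}$ remains one-dimensional. Therefore $\dim(\boldsymbol{\Phi}_i') = \dim(\tilde{\boldsymbol{\Phi}}_i)$, and it suffices to compute the dimension of the canonical representation $\tilde{\boldsymbol{\Phi}}_i$ of Lemma \ref{tildisemi}.

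The key input here is the Heisenberg-type construction in \cite[§11]{YU}. The character $\hat{\boldsymbol{\Phi}}_i$ restricted to $J^{i+1}_+$, transported via the isomorphism $J^{i+1}/J^{i+1}_+ \simeq (G^{i-1},G^i)(F)_{y,(r_i,s_i)}/(G^{i-1},G^i)(F)_{y,(r_i,s_i+)}$, gives rise to a non-degenerate symplectic pairing on the $\mathbf{F}_p$-vector space $V_i := J^{i+1}/J^{i+1}_+$, and $\tilde{\boldsymbol{\Phi}}_i$ is (up to the $K^i$-action) the associated Heisenberg representation of the central extension $J^{i+1}/\ker(\hat{\boldsymbol{\Phi}}_i|_{J^{i+1}_+})$ of $V_i$. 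By the standard theory of Heisenberg representations, there is a unique (up to isomorphism) irreducible representation of this extension with central character $\hat{\boldsymbol{\Phi}}_i|_{J^{i+1}_+}$, and its dimension is $|V_i|^{1/2} = [J^{i+1}:J^{i+1}_+]^{1/2}$.

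The main obstacle, to the extent there is one, is verifying that the pairing induced by $\hat{\boldsymbol{\Phi}}_i$ on $J^{i+1}/J^{i+1}_+$ is indeed non-degenerate; this is precisely what Yu's genericity hypothesis \textbf{GE1}, \textbf{GE2} on $\boldsymbol{\Phi}_i$ guarantees (see \cite[§11]{YU}). Once that is granted, the dimension formula is standard. Thus I would simply cite Yu's existence theorem for $\tilde{\boldsymbol{\Phi}}_i$ along with the Heisenberg dimension formula, and the proposition follows.
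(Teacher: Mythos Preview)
Your proposal is correct and follows essentially the same approach as the paper: both reduce to computing $\dim(\tilde{\boldsymbol{\Phi}}_i)$ via Yu's Heisenberg--Weil construction in \cite[\S 11]{YU} and read off the dimension $[J^{i+1}:J^{i+1}_+]^{1/2}$. One small slip: in your displayed isomorphism you wrote $(G^{i-1},G^i)(F)_{y,(r_i,s_i)}$, but by definition $J^{i+1}=(G^i,G^{i+1})(F)_{y,(r_i,s_i)}$; this is purely notational and does not affect the argument.
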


\begin{proof} By definition $\kappa _i ^j$ is an inflation of $\boldsymbol{\Phi}_i$, consequently theses representations have equal dimensions. The representation $\boldsymbol{\Phi}_i '$ is the unique representation of $K^i+1$ whose inflation to $K^i \ltimes J^{i+1}$ is  $\tilde{\boldsymbol{\Phi}}_i$. Thus, the dimension of $\boldsymbol{\Phi}_i '$ is equal to $\tilde{\boldsymbol{\Phi}}_i$. The representation $\tilde{\boldsymbol{\Phi}}_i$ is constructed in \cite[11.5]{YU} and is the pull back of the Weil representation of ${Sp(J^{i+1}/J^{i+1}_+) \ltimes (J^{i+1}/N_i)}$ where $N_i= \ker (\hat{\boldsymbol{\Phi}}_i)$ (see \cite{YU}). Thus, the dimension of $\tilde{\boldsymbol{\Phi}}_i$ is $[J^{i+1} : J^{i+1}_+]^{\frac{1}{2}}$.

\end{proof}

 \begin{theo} \label{rhodinductsuper} (Yu)  \cite[4.6 , §15]{YU} The representation $\mathrm{\cind}_{K^d}^{G(F)}\rho_d$ is irreducible and supercuspidal.

 \end{theo}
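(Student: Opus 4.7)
The plan is to prove this by induction on the length $d$ of the twisted Levi sequence, showing at each stage that $\pi_j := \mathrm{cind}_{K^j}^{G^j(F)}(\rho_j)$ is irreducible and supercuspidal. By Proposition \ref{Cara}, it suffices to show that $\rho_j$ is irreducible and that $I_{G^j(F)}(\rho_j) = K^j$. The base case $j=0$ is part of the definition of a Yu datum, since $\rho_0 = \rho \otimes (\boldsymbol{\Phi}_0|_{K^0})$ is irreducible (as $\rho$ is) and $\mathrm{cind}_{K^0}^{G^0(F)}(\rho)$ is assumed irreducible supercuspidal; twisting by the one-dimensional character $\boldsymbol{\Phi}_0|_{K^0}$ preserves these properties.

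For the inductive step, I would use the tensor product decomposition given by Proposition \ref{rhortensorproduct}, namely
\[
\rho_j \;\simeq\; \inf_{K^{j-1}}^{K^j}(\rho_{j-1}) \;\otimes\; \boldsymbol{\Phi}_{j-1}' \;\otimes\; (\boldsymbol{\Phi}_j|_{K^j}),
\]
where $\boldsymbol{\Phi}_{j-1}'$ is the representation of $K^j = K^{j-1}J^j$ built from the Heisenberg--Weil representation $\tilde{\boldsymbol{\Phi}}_{j-1}$ of $K^{j-1} \ltimes J^j$ (Lemma \ref{tildisemi}, Definition \ref{phiprim}). Irreducibility of $\rho_j$ follows because $\boldsymbol{\Phi}_j|_{K^j}$ is a character, $\inf(\rho_{j-1})$ acts irreducibly on the first factor with $1$-isotypic behaviour on $J^j$, and $\boldsymbol{\Phi}_{j-1}'$ restricts on $J^j_+$ to the $\hat{\boldsymbol{\Phi}}_{j-1}$-isotypic type of the Heisenberg representation, so Mackey's criterion on $K^{j-1} \ltimes J^j$ ensures irreducibility of the tensor product and hence of its descent to $K^j$.

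The hard part will be the intertwining computation $I_{G^j(F)}(\rho_j) \subseteq K^j$. The plan is to pick $g \in G^j(F)$ intertwining $\rho_j$ and analyse the restriction of $\rho_j$ to the pro-$p$ subgroup $J^j_+$. On $J^j_+$ the representation $\rho_j$ is $\hat{\boldsymbol{\Phi}}_{j-1}$-isotypic, so $g$ must intertwine $\hat{\boldsymbol{\Phi}}_{j-1}$ between $J^j_+$ and ${}^gJ^j_+$. Here I would invoke the $G^j$-generic condition (GE1 and GE2) on $\boldsymbol{\Phi}_{j-1}$: using Moy--Prasad theory on the dual Lie algebra, a generic realising element $X^* \in \Lie^*(Z(G^{j-1})^\circ)_{-\mathbf{r}_{j-1}}$ has centraliser equal to $G^{j-1}$, so any element intertwining $\hat{\boldsymbol{\Phi}}_{j-1}$ in $G^j(F)$ must lie in $G^{j-1}(F) \cdot K^j_+$. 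Passing to the further factors then reduces the problem to showing that such a $g$ (after multiplication by an element of $K^j$) intertwines $\rho_{j-1}$ in $G^{j-1}(F)$; by the inductive hypothesis this forces $g \in K^{j-1}$ times a factor in $J^j$, i.e.\ $g \in K^{j-1}J^j = K^j$.

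The main obstacle in this plan is the generic-intertwining step: rigorously cutting out $G^{j-1}$ as the exact intertwining set of the Heisenberg--Weil piece $\boldsymbol{\Phi}_{j-1}'$ requires the full Weil-representation machinery of \cite[§11, §14]{YU} and the torsion-prime hypothesis ensuring GE1 $\Rightarrow$ GE2 (Proposition \ref{ge1implyge2tors}), together with a careful analysis of how the symplectic space $J^j/J^j_+$ decomposes under the action of $G^{j-1}(F)_y$. Once this step is in place, supercuspidality is immediate from Proposition \ref{Cara} since $K^j$ is compact modulo the centre of $G^j(F)$ by Proposition \ref{propyugroup}(iv).
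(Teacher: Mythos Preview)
The paper does not give a proof of this theorem: it is stated as Yu's result and simply cited as \cite[4.6, §15]{YU}. So there is no ``paper's own proof'' to compare against; your proposal is in fact a sketch of Yu's original argument, and as such it is broadly correct in strategy (induction on $d$, irreducibility via the Heisenberg--Weil construction, and the intertwining computation reducing from $G^j$ to $G^{j-1}$ via genericity).

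One refinement: your description of the intertwining step is slightly imprecise. The key result from \cite[§14]{YU} is not that the intertwining of $\hat{\boldsymbol{\Phi}}_{j-1}$ on $J^j_+$ is exactly $G^{j-1}(F)\cdot K^j_+$, but rather that the intertwining of the full Weil-type representation $\boldsymbol{\Phi}_{j-1}'$ on $K^j$ is $K^j G^{j-1}(F) K^j$; the passage from ``intertwines $\hat{\boldsymbol{\Phi}}_{j-1}$'' to ``lies in $J^j G^{j-1}(F) J^j$'' uses the symplectic structure on $J^j/J^j_+$ and the genericity hypothesis in an essential way (this is the content of \cite[Theorem 9.4]{YU}). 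Also, you should be aware that Yu's original argument in \cite[§14]{YU} contains a gap (in the use of a claimed compatibility of the Weil representation with certain intertwining operators) that was later repaired by Fintzen; since the present paper only quotes the result, this does not affect anything here, but if you were writing out a full proof you would need to address it.
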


 \begin{defi} \label{yuextended} We say that $(K^d ,\rho_d)$  is a tame Yu extended type for $G=G^d$.\\

 \end{defi}

 \begin{sloppypar}
 We now introduce some notations for later use.
Put ${}^{\circ}\rho_d={}^{\circ}\rho_d( \overrightarrow{G},y,\overrightarrow{\mathbf{r}},\rho , \overrightarrow{\boldsymbol{\Phi}})=\rho_d \mid _{^{\circ}K^d}$. Put also ${}^{\circ} \kappa _i = \kappa _i  \mid _{ {}^{\circ} K_d }$ and ${}^{\circ}\lambda  = ^{\circ}\kappa _0  \otimes \ldots \otimes ^{\circ} \kappa _d $.
The following theorem shows that the construction of Yu is exhaustive when the residual characteristic is sufficiently large. \end{sloppypar}
 \begin{theo}
 (Kim) \cite{Kim} \label{exhaukim} Let $G$ be a connected reductive $F$ group, if the residue characteristic
$p$ of $F$ is sufficiently large, for each irreducible supercuspidal representation $\pi$ of $G(F)$, then there exists $(\overrightarrow{G},y,\overrightarrow{\mathbf{r}},\rho ,\overrightarrow{\boldsymbol{\Phi}})$, such that $\pi=\mathrm{\cind}_{K^d}^{G(F)} \rho _d( \overrightarrow{G},y,\overrightarrow{\mathbf{r}},\rho ,\overrightarrow{\boldsymbol{\Phi}})$.
  \end{theo}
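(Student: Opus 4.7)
The plan is to proceed by induction on the depth $r$ of $\pi$, using Moy--Prasad theory to extract a minimal $K$-type from $\pi$ and then descending to a smaller twisted Levi subgroup to apply the induction hypothesis.

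The base case is depth zero. If the depth of $\pi$ is $0$, then by Moy--Prasad $\pi$ contains an unrefined minimal $K$-type $(G(F)_y, \sigma)$ in which $\sigma$ is the inflation of an irreducible cuspidal representation of the finite reductive quotient $G(F)_{y,0:0+}$. Such supercuspidals are known to arise from Yu's construction with $d=0$: take $\overrightarrow{G}=(G)$, the given $y$, $\overrightarrow{\boldsymbol{r}}=(0)$, $\rho$ extracted from $\sigma$ (using that $Z(G^0)/Z(G)$ anisotropic is automatic when $G^0=G$ after possibly replacing $G$ by the Levi containing $\pi$ cuspidally), and empty $\overrightarrow{\boldsymbol{\Phi}}$.

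For the inductive step, suppose $\pi$ has depth $r>0$. By Moy--Prasad, $\pi$ contains an unrefined minimal $K$-type of positive depth $r$, which is a pair $(G(F)_{y,r}, \chi)$ where $\chi$ is realised by some element $X^* \in \mathfrak{g}^*(F)_{y,-r}$. The hypothesis that $p$ is sufficiently large (in particular, avoiding torsion primes for the root datum, and large enough for Adler--Spice-type exponential arguments) allows one, after $G(F)$-conjugation and mild refinement, to arrange that $X^*$ is $G$-generic of depth $r$ in Yu's sense and that the connected centraliser $G^{d-1}$ of $X^*$ in $G$ is a tame twisted Levi subgroup of $G$ with $X^* \in (\Lie^*(Z(G^{d-1}))^{\circ})_{-r}$. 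Extend the character realised by $X^*$ on $G^{d-1}(F)_{y,r}$ to a $G$-generic character $\boldsymbol{\Phi}_{d-1}$ of $G^{d-1}(F)$.

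The heart of the argument is the descent: one constructs, by a Hecke-algebra isomorphism argument in the spirit of Bushnell--Kutzko's covers and Kim--Murnaghan's refinement theory, a bijection between isomorphism classes of irreducible representations of $G(F)$ containing the generic $K$-type realised by $\chi$ and isomorphism classes of irreducible representations of $G^{d-1}(F)$ of strictly smaller depth, with supercuspidality preserved. One checks that $\pi$ corresponds under this bijection to an irreducible supercuspidal $\pi'$ of $G^{d-1}(F)$ of depth $<r$, to which one applies the induction hypothesis to obtain a Yu datum $(\overrightarrow{G}', y, \overrightarrow{\boldsymbol{r}}', \rho, \overrightarrow{\boldsymbol{\Phi}}')$. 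One then assembles the Yu datum for $G$ by appending $G$ at the top of $\overrightarrow{G}'$, appending $r$ to $\overrightarrow{\boldsymbol{r}}'$, and appending $\boldsymbol{\Phi}_{d-1}$ and a trivial/final character to $\overrightarrow{\boldsymbol{\Phi}}'$ (taking the two cases $\boldsymbol{r}_{d-1}<\boldsymbol{r}_d$ vs.\ equality into account as in Definition \ref{defyudatum}). A direct comparison of the compactly induced representation $\mathrm{c\text{-}ind}_{K^d}^{G(F)}(\rho_d)$ of the assembled datum with $\pi$, using Proposition \ref{rhortensorproduct} and Proposition \ref{Cara}, finishes the induction.

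The main obstacle is the descent step: producing the Hecke-algebra isomorphism that transfers $\pi$ to a supercuspidal of $G^{d-1}(F)$, and doing so compatibly with the Yu-type structure on both sides. This is where the large-$p$ hypothesis is consumed most heavily, because one needs (a) the exponential/Moy--Prasad isomorphism in the relevant range, (b) Howe-factorisation-type genericity for $X^*$, and (c) the identification of the compact-mod-centre stabiliser of the generic $K$-type with $K^{d-1}$. Secondary obstacles are verifying the anisotropy condition on $Z(G^0)/Z(G)$ at the bottom of the induction (which may require replacing $G$ by an appropriate twisted Levi before starting), and handling the dichotomy between a top character of positive depth and a purely depth-zero top layer.
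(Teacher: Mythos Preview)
The paper does not give a proof of this theorem at all: it is stated as Kim's result, with a bare citation to \cite{Kim}, and is immediately followed by the remark that Fintzen has since improved it. There is therefore no ``paper's own proof'' to compare your proposal against; the author is simply recording the state of the literature before moving on to the comparison that is the paper's actual content.

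As for your sketch on its own terms: the inductive-on-depth strategy you describe, with a Hecke-algebra/cover-style descent from $G$ to a proper twisted Levi $G^{d-1}$, is a plausible heuristic, but it is \emph{not} how Kim's proof proceeds. Kim's argument is analytic rather than type-theoretic: it uses the Plancherel formula and character expansions to show that, under the large-$p$ hypotheses, every irreducible admissible representation contains a ``good $K$-type'' in the sense of Kim--Murnaghan, and then identifies supercuspidals containing such a type with the output of Yu's construction. The Hecke-algebra isomorphism you invoke as ``the heart of the argument'' is not the mechanism Kim uses, and your sketch omits the substantive analytic input (the spectral decomposition and the asymptotic character computations) that actually consumes the large-$p$ hypothesis in her paper. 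If you want to write up a proof along the lines you propose, you would be closer to the circle of ideas in later work (e.g.\ Fintzen), but even there the descent is organised differently and does not rely on a Hecke-algebra bijection of the kind you describe.
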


   Fintzen has recently ameliorated this exhaustion result \cite{fintzen}.

 \section{Tame simple strata } \label{sectamesimple}
 
This section will focus on the approximation process for simple strata $[\fA, n , r , \beta ]$, which was previously described in section \ref{sectionconstbk}, when the field extension $F[\beta]/F$ is tamely ramified. In this situation, an approximation element $\gamma$ can be chosen inside the field $F[\beta]$. We will refer to Bushnell-Henniart for this result, which will be recalled as proposition \ref{tssprop2} in this section. The main new result in this section is Proposition \ref{tssprop3}. The Proposition \ref{tssprop1} is used to prove Proposition \ref{tssprop3}.

 \begin{defi}\label{tssdef1} A pure (resp simple) stratum $[\fA, n , r , \beta ]$ is a tame pure (resp tame simple) stratum if the field extension $F[\beta]/F$ is tamely ramified.
 \end{defi}

 Let $[\fA , n ,r , \beta ]$ be a tame pure stratum in the algebra $A=\End_F (V)$, set $E=F[\beta]$. Also set $B_E = \End_E(V)$.  Let $s:A \to B_E$ be the tame corestriction that is the identity  of $B_E$, we recall that such maps exist by \ref{tamecoresprop}. The element $s(b)$ is denoted by "$b$" when $b$ is in $B_E$. Let $\fP$ be the Jacobson radical of $\fA$. Set $\fB _E = \fA \cap B_E $ and $\fQ _E = \fP \cap \fB _E $. Thus, $\fB _E$ is an $\mathfrak{o}_E$-hereditary order in $B_E$ and $\fQ _E$ is the Jacobson radical of $\fB _E$.
 The following is a analogous to \cite[2.2.3]{BK}, the main differences are that the tameness condition is assumed and a maximality condition is removed.

 \begin{prop}\label{tssprop1} Let $[\fA,n,r,\beta ] $ be a tame simple stratum. Let $b \in \fQ _E ^{-r}$, and suppose that the stratum $[\fB _E , r , r-1 , b ]$ is simple. Then

 \begin{enumerate}

 \item[(i)] The stratum $[\fA , n , r-1, \beta +b ]$ is simple

 \item[(ii)] The field $F[ \beta +b ]$ is equal to the field $F[ \beta , b ]$

 \item[(iii)]We have

 \begin{center} $k_0 (\beta +b , \fA ) =\left\{ \begin{array}{ll}
        -r=k_0(b,\mathfrak{B}_E)$ if $b \not \in E \\
        k_0(\beta , \mathfrak{A})$ if $b\in E
    \end{array}  
\right.$

 \end{center}
 \end{enumerate}
 \end{prop}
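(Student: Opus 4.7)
The plan is to emulate the proof of the analogous non-tame/maximal result \cite[2.2.3]{BK}, using the tameness assumption as a substitute for the maximality hypothesis used there. The argument splits into three stages: verifying purity of $[\fA, n, r-1, \beta+b]$; identifying the generated field $F[\beta+b]$; and controlling the critical exponent, which will simultaneously give simplicity and part (iii).

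For the purity stage, I would first exploit that $\beta \in E$ and $b \in B_E = \End_E(V)$ commute, so $F[\beta, b] \subseteq E[b]$, and $E[b]$ is a field because $[\fB_E, r, r-1, b]$ is pure. Hence $F[\beta+b] \subseteq E[b]$ is a field. The valuation $\nu_{\fA}(\beta+b) = -n$ follows from $\nu_{\fA}(\beta) = -n$ together with $\nu_{\fA}(b) > -n$, which in turn comes from $\nu_{\fB_E}(b) = -r$ by the standard proportionality between $\nu_{\fA}$ and $\nu_{\fB_E}$ on elements of $B_E$ (a consequence of Lemma \ref{valval} applied twice), combined with $r < n$. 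Finally $F[\beta+b]^{\times} \subseteq \mathfrak{K}(\fA)$ follows from $E^{\times} \subseteq \mathfrak{K}(\fA)$ (purity of $[\fA,n,r,\beta]$) and $E[b]^{\times} \subseteq \mathfrak{K}(\fB_E) \subseteq \mathfrak{K}(\fA)$.

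For the field identification, since $F[\beta+b] \subseteq F[\beta, b] = E[b]$, it suffices to match degrees. I would argue as in Theorem \ref{approxi}(i): a valuation-residue inspection of $\beta+b$, whose $\fP$-leading part coincides with that of $\beta$, shows that $e(E|F)$ and $f(E|F)$ divide the analogous invariants of $F[\beta+b]/F$; a parallel inspection modulo $E$ (using the $\fQ_E$-leading behaviour of $b$ and minimality-type data from simplicity of $[\fB_E,r,r-1,b]$) supplies the remaining factors, forcing $[F[\beta+b]:F] \geq [E[b]:F]$ and hence equality.

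For the critical-exponent stage, the key identity is $\ad(\beta+b) = \ad(\beta) + \ad(b)$ on $\fA$, which I would compare using a tame corestriction $s : A \to B_E$ with $s\mid_{B_E} = \Id_{B_E}$; such $s$ exists by Proposition \ref{tamecoresprop}(ii) thanks to the tameness of $E/F$. Since $\beta \in E$ is central in $B_E$, applying $s$ to $\ad(\beta+b)(x) \in \fP^k$ yields $\ad(b)(s(x)) \in \fQ_E^k$, which reduces the estimation of $\mathfrak{N}_k(\beta+b,\fA)$ to the data of $[\fB_E,r,r-1,b]$ and delivers $k_0(\beta+b,\fA) \leq -r$, whence the simplicity of $[\fA,n,r-1,\beta+b]$ in (i). The case split for (iii) is then performed on whether $b \in E$: for $b \notin E$, the field identification gives $F[\beta+b] = E[b] \supsetneq E$, and the strict inclusion $B_{\beta+b} \subsetneq B_E$ combined with $k_0(b,\fB_E)=-r$ yields the matching lower bound; for $b \in E$, the field identification collapses to $F[\beta+b] = E$ and $B_{\beta+b} = B_E$, and a direct cancellation of the $\ad(b)$ contribution (which lands in $B_E$) gives $k_0(\beta+b,\fA) = k_0(\beta,\fA)$.

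The main obstacle is stage three: without the maximality hypothesis used in \cite[2.2.3]{BK} the block decomposition of $\fA$ relative to $\fB_E$ is less rigid, so the commutator analysis cannot be performed by direct structural inspection. Tameness intervenes precisely here, since Proposition \ref{tamecoresprop}(ii) supplies a section-type tame corestriction that cleanly separates the $E$-part from the $B_E$-part of the commutator data; this is the replacement for the maximality argument in the original Bushnell--Kutzko proof.
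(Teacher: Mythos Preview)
Your approach diverges from the paper's, and stage three has a genuine gap. Applying the tame corestriction $s$ to the condition $\ad(\beta+b)(x)\in\fP^k$ does yield $\ad(b)(s(x))\in\fQ_E^k$, hence $s(x)\in\fB_{E[b]}+\fQ_E$ for $k>-r$; but this only controls the $B_E$-component of $x$. To conclude $x\in\fB_{\beta+b}+\fP$ you must also control $x-s(x)\in\ker(s)\cap\fA$, and nothing in your outline forces this to lie in $\fP$. The section property of $s$ separates the $B_E$-part cleanly, but it gives no grip on the complementary part, which is precisely where the delicate commutator estimates live; the maximality hypothesis in \cite[2.2.3]{BK} is doing real work on that complement, and a projection onto $B_E$ is not a substitute. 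Your stage two is also only a sketch: the claimed divisibilities of $e$ and $f$ for $F[\beta+b]$ do not follow from a leading-term inspection alone.

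The paper avoids all of this by a reduction rather than a direct attack. Set $E_1=E[b]=F[\beta,b]$; since $[\fB_E,r,r-1,b]$ is simple one has $E_1^\times\subset\fK(\fB_E)\subset\fK(\fA)$, so the $(W,E_1)$-decomposition \cite[1.2.8]{BK} gives $\fA\simeq\fA(E_1)\otimes_{\fo_{E_1}}\fB_{E_1}$ with $\fA(E_1)$ the hereditary order in $\End_F(E_1)$. Transporting $\beta$ and $b$ to $\fA(E_1)$ and $\fB_E(E_1)$, one checks via Lemma~\ref{valval} and \cite[1.4.13]{BK} that the rescaled strata $[\fA(E_1),n(E_1),r(E_1),\beta]$ and $[\fB_E(E_1),r(E_1),r(E_1)-1,b]$ are again simple; but now $E_1$ is a \emph{maximal} subfield of $\End_F(E_1)$, so \cite[2.2.3]{BK} applies directly and gives (i)--(iii) in $\fA(E_1)$. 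One then transfers back via \cite[1.4.13]{BK}. Note that this argument never invokes the tameness of $E/F$; the hypothesis is present only because the paper works in the tame setting throughout.
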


 \begin{proof} Let $\mathcal{L}=\{L_i\} _{i\in \Z} $ be an $\mathfrak{o} _F$-lattice chain such that
  \begin{center}${\fA = \{x \in A \mid x(L_i) \subset L_i , i \in \Z \}.}$ \end{center}
 By definition \cite[2.2.1]{BK}, \begin{center}${\mathfrak{K} (\fA) = \{ x \in G \mid x(Li) \in  \mathcal{L}, i \in \Z \}} $\end{center} and \begin{center} ${ \mathfrak{K} (\fB _E)=\{x \in G_E \mid x(L_i) \in \mathcal{L}, i \in \Z \}.} $ \end{center} Thus, \begin{equation}\label{kk}\mathfrak{K}(\fB _E) \subset \mathfrak{K} (\fA).\end{equation}The
 stratum $[\fB _{E} , r , r-1 , b ]$ is simple, thus the definition of a simple stratum shows that  \begin{equation} \label{ek}E[b] ^{\times } \subset \mathfrak{K}(\mathfrak{B}_E).\end{equation} Put $E_1= E[b]= F[\beta , b]$.
Equations \ref{kk} and \ref{ek} imply that $E_1 ^{\times} \subset \fK (\fA)$. This allows us to use the machinery of \cite[1.2]{BK} for $\fA$ and $E_1$.

  Set $B_{E_1}= \End_{E_1} (V)$ and $\fB _{E_1} = \fA \cap \End _{E_1} (V)$. The proposition \cite[1.2.4]{BK} implies that $\mathfrak{B}_{E_1}$ is an $\mathfrak{o}_{E_1}$-hereditary order in $B_{E_1}$. Let $A(E_1)$ be the algebra $\End_F(E_1)$ and let $\fA (E_1)$ be the $\fo _F$-hereditary order in $ A(E_1)$ defined by $\mathfrak{A}(E_1) = \{x \in \End _F (E_1) \mid x (\fp _{E_1} ^i) \subset \fp _{E_1} ^i , i \in \Z \}. $ Let $W$ be the $F$-span of an $\fo_{E_1}$-basis of the $\fo _{E_1}$-lattice chain $\mathcal{L}$. The proposition \cite[1.2.8]{BK} shows that the $(W,E_1)$-decomposition of $A$ restricts to an isomorphism $\fA \simeq \fA (E_1) \otimes _{\fo _{E_1} } \fB $ of $(\fA (E_1) , \fB _{E_1})$-bimodules. Similarly, we have  a decomposition ${\fB _E \simeq \fB _E (E_1) \otimes _{\fo _{E_1}} \fB _{_{E_1}}}$.
    Set $B_E (E_1)= \End_E (E_1)$ and $\fB _E (E_1) = B_E (E_1) \cap \fA (E_1)$. Also set $n(E_1)= \dfrac{n}{e(\fB _{E_1} \mid \fo _{E_1})}$ and $r(E_1)= \dfrac{r}{e(\fB _{E_1} \mid \fo _{E_1})}$. Let us prove that the following two equalities hold.

   \begin{equation} \label{111} \nu _{\fA (E_1)} (\beta) =-n (E_1)      \end{equation}

   \begin{equation}
    \nu _{\fB _E (E_1)}(b) =-r(E_1)  \label{eqv2}
    \end{equation}
    Let us prove that the equation \ref{111} holds. By definition of $E_1$,  the element $\beta$ is inside $E_1$ and thus $\nu _{\fA (E_1)} (\beta) = \nu _{E_1} (\beta). $ Thus, Lemma \ref{valval} shows that \begin{equation} \label{v3}\nu _{\fA} (\beta) e(E_1 \mid F) = e( \fA \mid \fo _F) \nu _{\fA (E_1)} (\beta). \end{equation} The proposition \cite[1.2.4]{BK} give us the equality  \begin{equation}\label{44} e(\fB _{E_1} \mid \fo _{E_1} ) =\dfrac{e(\fA \mid \fo _F)}{e(E_1 \mid F) }.\end{equation}
Because $[\fA , n , r , \beta ]$ is a simple stratum, $n$ is equal to $- \nu _{\fA} (\beta)$, consequently using equations \ref{v3} and \ref{44}, the following sequence of equality holds.

\begin{align*}
\nu _{\fA (E_1)} (\beta) = \frac{ \nu _{\fA} (\beta) e ( E_1 \mid F )}{e(\fA \mid \mathfrak{o} _F )}= \frac{\nu _{\fA} (\beta)}{e(\fB _{E_1} \mid \mathfrak{o}_{E_1})}=  \frac{-n}{e(\fB _{E_1} \mid \mathfrak{o}_{E_1})}=-n(E_1)
\end{align*}
This concludes the proof of the equality \ref{111} and the equality \ref{eqv2} is easily proven in the same way.
Proposition \cite[1.4.13]{BK}  gives

\begin{center}
$\left\{ \begin{array}{ll}
        k_0(\beta,\fA (E_1))=\dfrac{k_0 ( \beta , \fA )}{e(\fB _{E_1} \mid \fo _{E_1})} \\
        k_0(b , \fB _E (E_1) )= \dfrac{k_0 (b , \fB _E) }{e (\fB _{E_1} \mid \fo _{E_1})}
    \end{array}  
\right.$
.
\end{center}
Consequently $[\fA (E_1) , n (E_1) , r(E_1) , \beta ]$ and $[\fB _E (E_1) , r(E_1) , r(E_1) -1 , b ]$ are simple strata and satisfy the hypothesis of the proposition \cite[2.2.3]{BK}. Consequently $[\fA (E_1) , n , r-1, \beta + b ]$ is simple and the field $F[\beta + b]$ is equal to the field $F[\beta , b ]$.  Moreover, \cite[2.2.3]{BK} implies that \begin{center}

$k_0 (\beta +b , \fA (E_1) ) = \left\{ \begin{array}{ll}
        -r(E_1)=k_0(b,\mathfrak{B}_E(E_1))$ if $b \not \in E \\
        k_0(\beta , \mathfrak{A}(E_1))$ if $b\in E.
    \end{array}  
\right.$

\end{center}
The valuation $\nu _{\fA (E_1)} (\beta +b)$ is equal to $-n(E_1)$ and the same argument as before shows that $\nu _{\fA} (\beta +b ) =-n $. The proposition \cite[1.4.13]{BK} shows that $k_0 (\beta +b , \fA) = k_0 (\beta +b , \fA (E_1) ) e(\fB _{E_1} \mid \fo _{E_1} ) $.
Thus

\begin{center}
$k_0 (\beta +b , \fA  ) = \left\{ \begin{array}{ll}
        -r=k_0(b,\mathfrak{B}_E)$ if $b \not \in E \\
        k_0(\beta , \mathfrak{A})$ if $b\in E
    \end{array}  
\right.$
\end{center}
This completes the proof.

\end{proof}

Given a non-necessary tame, pure stratum $[\fA , n , r, \beta ]$, the existence of a simple stratum $[\fA , n , r , \gamma ]$ equivalent to $[\fA , n , r, \beta ]$ is a fundamental theorem in Bushnell-Kutzko's theory. Given such $[\fA , n , r, \beta ]$ and $[\fA , n , r, \gamma ]$,  there is (in general) no  inclusion between the field $F[\beta]$ and $F[\gamma]$; however, the following arithmetical properties are always true.\begin{center} \begin{equation}
e(F[\gamma] \mid F ) \mid e (F[\beta] \mid F )
\end{equation}\begin{equation}
f(F[\gamma] \mid F ) \mid f (F[\beta] \mid F )\end{equation} 

\end{center} Moreover, if $[\fA , n , r, \beta ]$ is not simple, then the degree $[F[\beta] : F]$ is strictly bigger than $[F[\gamma] : F]$ by \ref{approxi}.
In the tame situation, a new property is always true. Given a tame pure stratum $[\fA , n , r, \beta ]$  such that ${r=-k_0 ( \beta , \fA )}$, there is an equivalent tame simple stratum $[\fA , n , r, \gamma ]$ such that the field $F[\gamma]$ is included in the field $F[\beta]$. We refer to Bushnell-Henniart for the proof of this fact. This property is given in the following proposition:

\begin{prop} \cite[3.1 Corollary]{Esse} \label{tssprop2}Let $[\fA, n , r , \beta ] $ be a tame pure stratum in the algebra $A= \End_F (V)$ such that ${r=-k_0 ( \beta , \fA )}$. There is an element $\gamma$ in the field $F[\beta]$ such that the stratum $[\fA , n , r, \gamma]$ is simple and equivalent to $[\fA , n , r , \beta ]$

\end{prop}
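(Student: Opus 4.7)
The plan is to combine the general approximation Theorem~\ref{approxi}(i) with the tame structure of $E=F[\beta]$ to force an equivalent simple approximation to live inside $E$ itself. The first move is to apply Theorem~\ref{approxi}(i) to the pure stratum $[\fA,n,r,\beta]$: this produces an equivalent simple stratum $[\fA,n,r,\gamma']$ with $\gamma' \in A$ (a priori not in $E$), satisfying the divisibility constraints $e(F[\gamma']\mid F) \mid e(E\mid F)$ and $f(F[\gamma']\mid F) \mid f(E\mid F)$, and with $[F[\gamma']:F]$ minimal among all such simple approximations. The problem is that $\gamma'$ need not lie in $E$, so the goal is to replace it by an element of $E$ with the same effect.

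The second step is to push $\gamma'$ into the centraliser of $E$. Because $E/F$ is tame, Proposition~\ref{tamecoresprop}(ii) yields a tame corestriction $s\colon A \to B_E$ with $s\mid_{B_E} = \Id_{B_E}$. Since $\beta \in B_E$ and $\beta-\gamma' \in \fP^{-r}$, applying $s$ gives
\[
\beta - s(\gamma') \;=\; s(\beta-\gamma') \;\in\; s(\fP^{-r}) \;=\; \fQ_E^{-r},
\]
so $s(\gamma') \in B_E$ is an equivalent approximation to $\beta$ lying in the centraliser. The final and essential step is to descend from $B_E$ to $E$: using the tame structure $E = E_0[\pi_E]$ (with $E_0$ the maximal unramified subextension and $\pi_E^{e(E\mid F)}$ a scalar multiple of $\pi_F$), one expands $\beta$ canonically in powers of $\pi_E$ with coefficients in $E_0$ and truncates at the level of $\nu_E$ corresponding, via Lemma~\ref{valval}, to the threshold $-r$. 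The truncation $\gamma \in E$ then satisfies $\beta-\gamma \in \fP^{-r}$ automatically, and by construction $F[\gamma]$ is a proper subfield of $E$ with invariants matching those of $F[\gamma']$.

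The main obstacle is verifying that this truncated $\gamma$ actually yields a \emph{simple} stratum $[\fA,n,r,\gamma]$, i.e.\ that $-k_0(\gamma,\fA) > r$. This is where the tame hypothesis is indispensable: one must control the commutator module $\fN_k(\gamma,\fA)$ modulo $\fB_\gamma + \fP$ and show that $\gamma$ reaches the minimal-degree bound forced by Theorem~\ref{approxi}(i). In the tame setting, the arithmetic of the $\pi_E$-expansion is compatible with this control because the residue extension $k_E/k_F$ is separable and the ramification is prime-to-$p$, so the commutator action of $\gamma$ on $\fA$ can be read off from its leading tame part. This last verification is the delicate point that Bushnell--Henniart carry out in \cite[3.1 Corollary]{Esse}, to which the statement appeals; the technical heart is that tameness guarantees the subfield lattice of $E$ is rich enough to realise all arithmetic invariants allowed by the approximation theorem.
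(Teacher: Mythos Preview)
The paper does not give its own proof of this proposition: it is stated with the citation \cite[3.1 Corollary]{Esse} and the surrounding text explicitly says ``We refer to Bushnell-Henniart for the proof of this fact.'' So there is no argument in the paper to compare against; the result is imported wholesale.

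Your sketch is therefore not being measured against a proof in the paper, and in fact your own final paragraph concedes that the crucial step---verifying that the candidate $\gamma$ gives a \emph{simple} stratum---is exactly what is deferred to \cite{Esse}. That is honest, but it means what you have written is an outline of a plausible strategy rather than a proof.

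Two structural issues in the outline are worth flagging. First, your step~2 (applying the tame corestriction $s$ to land $s(\gamma')$ in $B_E$) does not feed into step~3 at all: the truncation in step~3 is performed directly on $\beta$, and $s(\gamma')$ is never used again. Since $B_E = \End_E(V)$ is generally much larger than $E$, landing in $B_E$ is far from landing in $E$, so step~2 as written accomplishes nothing toward the goal. Second, the $\pi_E$-adic truncation of $\beta$ in step~3 does not, by itself, produce an element generating a \emph{proper} subfield of $E$: truncating a generic element of $E$ at some valuation level still typically generates all of $E$. The actual content of the Bushnell--Henniart argument is to exploit the fact that tame extensions have a rich lattice of intermediate fields (one for every pair of divisors of $e$ and $f$), so that the arithmetic constraints $e(F[\gamma']\mid F)\mid e(E\mid F)$ and $f(F[\gamma']\mid F)\mid f(E\mid F)$ from Theorem~\ref{approxi}(i) can be realised by an honest subfield $E'\subset E$, and then to locate $\gamma$ inside that $E'$. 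Your sketch gestures at this in the last sentence but does not set it up.
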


To make an explicit link between Bushnell-Kutzko's and Yu's formalisms, the following proposition is used crucially in Section  \ref{sectgecha}  of this paper.

\begin{prop}\label{tssprop3} Let $[\fA , n , r , \beta ] $ be a tame pure stratum such that
\begin{center} ${r=-k_0 ( \beta , \fA )}$. \end{center} For all elements $\gamma$ in the field $F[\beta]$ such that $[\fA , n , r , \gamma ]$ is a simple stratum equivalent to $[\fA , n , r , \beta ]$, the stratum $[\fB _{\gamma} , r , r-1 , \beta - \gamma ]$ is simple, here $\fB _{\gamma }= \End _{F[\gamma]} (V) \cap \fA $.

\end{prop}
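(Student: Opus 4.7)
My plan is to verify the two defining conditions of a simple stratum for $[\fB_\gamma, r, r-1, \beta - \gamma]$ separately: first that it is pure, then that $k_0(\beta - \gamma, \fB_\gamma) \leq -r$. The crucial structural observation throughout is that $\gamma \in F[\beta]$, which makes $\gamma$ central in $B_\gamma$ and forces $B_\beta \subset B_\gamma$.

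For purity, the condition that $F[\gamma][\beta - \gamma] = F[\beta]$ is a field is automatic, and $F[\beta]^\times \subset \fK(\fB_\gamma)$ follows from $F[\beta]^\times \subset \fK(\fA) \cap B_\gamma^\times$ together with the observation that any element of $B_\gamma^\times \cap \fK(\fA)$ normalises $\fB_\gamma = \fA \cap B_\gamma$. The delicate condition is $\nu_{\fB_\gamma}(\beta - \gamma) = -r$. The inequality $\geq -r$ comes from $\beta - \gamma \in \fP^{-r}$ together with the standard identification $\fQ_\gamma^k = \fB_\gamma \cap \fP^k$. For the reverse, I argue by contradiction: if $\nu_\fA(\beta - \gamma) \geq -r + 1$, then $[\fA, n, r - 1, \beta] \sim [\fA, n, r - 1, \gamma]$, and both strata are simple (the first because $r - 1 < r = -k_0(\beta, \fA)$, the second because $[\fA, n, r, \gamma]$ is simple); by the minimal-degree characterisation in Theorem \ref{approxi}(i) their fields have the same $F$-degree. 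But that same characterisation, applied to the non-simple pure stratum $[\fA, n, r, \beta]$ together with its simple equivalent $[\fA, n, r, \gamma]$, forces the \emph{strict} inequality $[F[\gamma]:F] < [F[\beta]:F]$, yielding a contradiction.

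For the $k_0$ bound I work directly from the definition. Since $\gamma$ is central in $B_\gamma$, one has $[\beta - \gamma, x] = [\beta, x]$ for $x \in \fB_\gamma$, and $[\beta, x] \in B_\gamma \cap \fA = \fB_\gamma$; using $\fQ_\gamma^k = \fB_\gamma \cap \fP^k$ again yields the identity $\mathfrak{N}_k(\beta - \gamma, \fB_\gamma) = \fB_\gamma \cap \mathfrak{N}_k(\beta, \fA)$. For every $k \geq -r + 1$, the hypothesis $k_0(\beta, \fA) = -r$ gives $\mathfrak{N}_k(\beta, \fA) \subset \fB_\beta + \fP$, and intersecting with $\fB_\gamma$, combined with the elementary identity $\fB_\gamma \cap (\fB_\beta + \fP) = \fB_\beta + \fQ_\gamma$ (which is immediate from $\fB_\beta \subset \fB_\gamma$), yields $\mathfrak{N}_k(\beta - \gamma, \fB_\gamma) \subset \fB_\beta + \fQ_\gamma$. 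Hence $k_0(\beta - \gamma, \fB_\gamma) \leq -r$, as required. The main obstacle I anticipate is the purity step, where the strict inclusion $F[\gamma] \subsetneq F[\beta]$ does all the work; the $k_0$ bound itself is a clean algebraic manipulation that needs no appeal to tame corestrictions or to the $(W, E_1)$-decomposition used in the proof of Proposition \ref{tssprop1}.
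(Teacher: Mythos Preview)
Your proof is correct and takes a genuinely different route from the paper's.

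The paper proceeds indirectly: it reduces to the case where $F[\beta]$ is a maximal subfield of $A$, invokes the Bushnell--Henniart approximation (Proposition~\ref{tssprop2}) once more inside $B_\gamma$ to produce a simple stratum $[\fB_\gamma, r, r-1, \alpha]$ with $F[\gamma][\alpha] \subset F[\beta]$, lifts back to $A$ via Proposition~\ref{tssprop1} to show $F[\gamma,\alpha] = F[\beta]$, and finally appeals to \cite[2.2.2]{BK} (maximal subfield criterion) to conclude simplicity. Your argument bypasses all of this machinery. The key observation---that $\gamma$ is central in $B_\gamma$, hence $[\beta-\gamma,x]=[\beta,x]$ lies in $B_\gamma$ for $x\in\fB_\gamma$, giving $\mathfrak{N}_k(\beta-\gamma,\fB_\gamma)=\fB_\gamma\cap\mathfrak{N}_k(\beta,\fA)$---reduces the $k_0$ bound to an intersection computation, and the modular identity $\fB_\gamma\cap(\fB_\beta+\fP)=\fB_\beta+\fQ_\gamma$ (immediate from $\fB_\beta\subset\fB_\gamma$) finishes it. For purity, your degree-comparison contradiction is exactly the right mechanism and is cleaner than the paper's indirect purity claim. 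A notable bonus: your argument never uses the tameness of $F[\beta]/F$ itself, only the hypothesis $\gamma\in F[\beta]$; tameness enters only upstream, in guaranteeing that such a $\gamma$ exists (Proposition~\ref{tssprop2}). What the paper's approach buys is that it stays within the toolkit already assembled for Proposition~\ref{tssprop1} and the reduction-to-minimal-order circle of ideas in \cite[\S2.2]{BK}, whereas your approach requires unwinding the definition of $k_0$ directly---arguably more transparent here, but less modular.
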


\begin{proof} Using a similar argument as that in Proposition \ref{tssprop1}, it
 is enough to prove the proposition in the case where $F[\beta]$ is a maximal subfield of the algebra $A=\End_F(V)$. 
So let $[\fA , n , r , \beta ]$ be a tame pure stratum such that $F[\beta]$ is a maximal subfield of $A$ and $k_0 ( \beta , \fA) = -r$. Let $\gamma$ be in $F[\beta]$ such that $[\fA , n , r , \gamma ] $ is simple. The stratum $[\fB _{\gamma} , r , r-1 , \beta - \gamma ] $ is pure in the algebra $\End_{F[\gamma]} (V)$  because it is equivalent to a simple one by \cite[2.4.1]{BK}. Moreover ${[\fB _{\gamma} , r , r-1 , \beta - \gamma ] }$
 is tame pure, so Proposition \ref{tssprop2} shows that there exists a simple stratum $[\fB _{\gamma} , r , r-1 , \alpha  ] $
  equivalent to $[\fB _{\gamma} , r , r-1 , \beta - \gamma ] $, such that $F[\gamma][\alpha] \subset F[\gamma][\beta - \gamma ].$ By proposition \ref{tssprop1},
  $[\fA , n , r-1 , \gamma + \alpha ]$ is simple and $F[\gamma + \alpha ]$ is equal to the field $F[\gamma , \alpha ]$. Set $\mathfrak{Q} _{\gamma} = \rad ( \fB _{\gamma} ) = \fB _{\gamma} \cap \fP$. The equivalence $[\fB _{\gamma} , r ,r-1 , \alpha ] \sim [\fB _{\gamma} , r , r-1 , \beta - \gamma ]$ shows that $ \alpha \equiv \beta - \gamma$ $(\mod~ \mathfrak{Q} _{\gamma} ^{-(r-1)})$. This implies $\gamma + \alpha \equiv \beta$ $ (\mod~ \fP ^{-(r-1)}). $ We deduce that $[\fA , n , r -1 , \gamma + \alpha ] $ and $[\fA , n , r-1 , \beta ]$ are two simple strata equivalent. Indeed, the first is simple by construction and the second is simple by hypothesis because $k_0 (\beta , \fA )=-r$. The definitions shows that $F[\gamma + \alpha ] \subset F[\beta]$, and \ref{approxi} shows that $[F[\gamma + \alpha ]:F] =[F[\beta]:F]$. Thus, $F[\gamma + \alpha ] = F[\beta]$. The trivial inclusions $F[\gamma + \alpha] \subset F[\gamma , \alpha] \subset F[\beta]$ then shows that $ F[\gamma + \alpha] = F [\gamma , \alpha ] = F [\beta ]$.
  We have thus obtained that the following three assertions hold:

 - The stratum $[\fB _{\gamma} , r , r-1 , \alpha ]$ is a simple stratum in $\End_{F[\gamma]}(V)$.

 - The field $F[\gamma][\alpha]$ is a maximal subfield of the $F[\gamma]$-algebra $\End_{F[\gamma]}(V)$.

 -$[\fB _{\gamma} , r , r-1 , \alpha ] \sim[\fB _{\gamma} , r ,r-1, \beta - \gamma ]$\\ 
Consequently, by \cite[2.2.2]{BK}, $[\fB _{\gamma} , r ,r-1 , \beta - \gamma ]$ is simple,   as required.

\end{proof}

\section{Tame minimal elements}\label{sectemrs}

  In this section, we study Bushnell-Kutzko's tame minimal elements. We use standard representative elements introduced by Howe\footnote{Howe's construction of supercuspidal representations should be considered as the common ancestor of \cite{BK} and \cite{YU}, and Moy's presentation of Howe's construction was an important hint in our work.} \cite{HO}.  The main result of this section is Proposition \ref{minigene}. The following describes the multiplicative group of a non-Archimedean local field.

 \begin{prop} \cite[Chapter 2 Proposition 5.7]{Neuk} \label{multistru}

 Let $K$ be a non-Archimedean local field and $q=p^f$ the number of elements in the residue field of $K$. Let  
  $\mu _{q-1}$  denote the group of $(q-1)$-th roots of unity in $K$. Let $\pi _ K$ be a uniformiser in $K$. Then, the following hold:

 \begin{enumerate}
 \item[($i$)] If $K$ has characteristic $0$, then one has the following isomorphisms of topological groups
 \begin{center}
 $K^{\times} \simeq \pi _ K ^{\Z} \times \mathfrak{o}_K ^{\times} \simeq \pi _ K^{\Z} \times \mu _{q-1} \times (1+ \mathfrak{p} _K )\simeq \Z \times \Z / (q-1) \Z \times \Z /p^a \Z \times \Z _p ^d $
 \end{center}

 where $a\geq 0$ and $d=[K:\Q _p]$.

 The first three groups are denoted multipticatively and the last group is denoted additively.

 \item[($ii$)] If $K$ has characteristic $p$, then one has the following isomorphisms of topological groups:
 \begin{center}
 $K^{\times} \simeq   \pi _ K ^{\Z} \times \mathfrak{o}_K ^{\times}\simeq \pi _ K ^{\Z} \times \mu _{q-1} \times 1+ \mathfrak{p} _K  \simeq \Z \times \Z /(q-1)\Z \times \Z _p ^{\mathbf{N}}$. \end{center}

 The first three groups are denoted multipticatively and the last group is denoted additively.
 \end{enumerate}
 \end{prop}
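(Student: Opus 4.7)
The plan is to decompose $K^\times$ in three successive stages, each corresponding to one of the isomorphisms in the statement, and to handle the two characteristics separately only at the last stage.

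First, I would use the valuation $\nu_K$ to obtain the split exact sequence
\[
1 \to \mathfrak{o}_K^\times \to K^\times \xrightarrow{\nu_K} \Z \to 1,
\]
and the choice of the uniformiser $\pi_K$ furnishes a section, yielding $K^\times \simeq \pi_K^{\Z} \times \mathfrak{o}_K^\times$. Next I would split off the roots of unity: the residue map $\mathfrak{o}_K^\times \to k_K^\times$ admits a section given by the Teichm\"uller lift (or equivalently by Hensel's lemma applied to $X^{q-1}-1$), whose image is precisely $\mu_{q-1}$; since $1+\mathfrak{p}_K$ is the kernel, this gives $\mathfrak{o}_K^\times \simeq \mu_{q-1} \times (1+\mathfrak{p}_K)$. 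This proves the first two isomorphisms in both $(i)$ and $(ii)$.

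The content is then the structure of the principal units $U^{(1)} = 1+\mathfrak{p}_K$ as a topological $\Z_p$-module (the $\Z_p$-module structure coming from the fact that $U^{(1)}$ is pro-$p$, so exponentiation $(1+x)^n$ extends continuously from $n \in \Z$ to $n \in \Z_p$). In characteristic $0$, I would use the $p$-adic logarithm and exponential: for sufficiently large $m$, $\log$ gives a topological $\Z_p$-module isomorphism $U^{(m)} \simeq \mathfrak{p}_K^m$, and $\mathfrak{p}_K^m$ is a free $\mathfrak{o}_F$-module of rank $d = [K:\Q_p]/[\text{not really, just use }]$ --- more precisely, $\mathfrak{p}_K^m$ is isomorphic to $\mathfrak{o}_K$ as $\Z_p$-module and $\mathfrak{o}_K \simeq \Z_p^d$ by choosing a $\Z_p$-basis. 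Since $U^{(1)}/U^{(m)}$ is a finite abelian $p$-group, the structure theorem for finitely generated $\Z_p$-modules gives $U^{(1)} \simeq T \times \Z_p^d$ with $T$ finite; finally $T$ must coincide with the $p$-power torsion of $K^\times$, which is the group of $p$-power roots of unity in $K$, necessarily cyclic of order $p^a$ for some $a \geq 0$.

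In characteristic $p$, the same logarithm argument fails, and here lies the main obstacle. I would show instead that $U^{(1)}$ is torsion-free: if $(1+x)^{p^n}=1$ for some $x\in\mathfrak{p}_K\setminus\{0\}$, expanding gives $1+x^{p^n}=1$ in characteristic $p$ (using the Frobenius), hence $x=0$. Thus $U^{(1)}$ is a torsion-free pro-$p$ abelian group, i.e.\ a torsion-free compact $\Z_p$-module. Choosing a basis $(\pi_K^{i})_{i \in I}$ of $\mathfrak{p}_K$ indexed by $I$ (using that $k_K$ is perfect, or a suitable filtration argument), one checks that the principal units of successive levels form a $\Z_p$-basis, yielding $U^{(1)} \simeq \Z_p^{\mathbf{N}}$. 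Combining these decompositions gives the two displayed chains of isomorphisms. I expect the characteristic $p$ case to be the main technical point, since the usual $\exp/\log$ machinery is unavailable and one must argue directly with the $p$-th power map.
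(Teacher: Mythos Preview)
The paper does not give its own proof of this proposition: it is quoted verbatim as \cite[Chapter 2 Proposition 5.7]{Neuk} and used as a black box. Your sketch is essentially the standard argument found in Neukirch (valuation splitting, Teichm\"uller section, then $p$-adic $\log/\exp$ in characteristic $0$ and a direct torsion-freeness and basis argument in characteristic $p$), so there is nothing to compare beyond noting that your outline matches the cited reference; the one passage about the rank of $\mathfrak{p}_K^m$ is garbled but the intended statement ($\mathfrak{o}_K \simeq \Z_p^{d}$ with $d=[K:\Q_p]$) is correct.
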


  The previous proposition allows us to deduce the following corollary, which is a well known result. Recall that we have a fixed uniformiser $\pi _F$.

\begin{coro} \label{corost} Let $E$ denote a tamely ramified extension of $F$. There exists a uniformiser $\pi _E$ of $E$ and a root of unity $z \in E$, of order prime to $p$, such that $\pi _ E ^e z = \pi _F$.

\end{coro}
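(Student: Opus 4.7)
The plan is to start with an arbitrary uniformiser $\varpi_E$ of $E$, compare $\varpi_E^e$ with $\pi_F$, and then absorb the resulting ``principal unit'' defect into the uniformiser by extracting an $e$-th root, which is possible because $e$ is prime to $p$.

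More precisely, fix any uniformiser $\varpi_E$ of $E$. Since $\nu_E(\varpi_E^e) = e = \nu_E(\pi_F)$, there exists $u \in \mathfrak{o}_E^\times$ with $\pi_F = \varpi_E^e \, u$. By Proposition \ref{multistru} applied to $E$, the unit $u$ decomposes uniquely as $u = z v$ with $z \in \mu_{q-1} \subset E$ (a root of unity of order dividing $q - 1$, hence prime to $p$, where $q = |k_E|$) and $v \in 1 + \mathfrak{p}_E$. The goal then reduces to finding $w \in 1 + \mathfrak{p}_E$ with $w^e = v$, for then $\pi_E := w \varpi_E$ is still a uniformiser of $E$ and
\[
\pi_E^e \, z = w^e \varpi_E^e \, z = v \, z \, \varpi_E^e = u \, \varpi_E^e = \pi_F,
\]
which is the required identity.

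The main point is therefore the existence of an $e$-th root of $v$ inside $1 + \mathfrak{p}_E$. Here we use tameness of $E/F$: by definition $\gcd(e, p) = 1$. By the explicit structure given in Proposition \ref{multistru}, the multiplicative topological group $1 + \mathfrak{p}_E$ is a pro-$p$ group (a finite product of copies of $\Z/p^a\Z$ and $\Z_p$ in the characteristic zero case, and a countable product of $\Z_p$ in the positive characteristic case), and raising to the $e$-th power corresponds on each factor to multiplication by the integer $e$. Since $e$ is invertible in $\Z_p$ and in $\Z/p^a\Z$, this map is a bijection on each factor, hence a bijection on $1 + \mathfrak{p}_E$. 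In particular $v$ admits a (unique) $e$-th root $w \in 1 + \mathfrak{p}_E$, which completes the construction.

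The only delicate step is the last one, namely establishing the bijectivity of the $e$-th power map on $1 + \mathfrak{p}_E$; everything else is essentially the definition of the ramification index plus the decomposition of units into a Teichm\"uller part and a principal unit part already recorded in Proposition \ref{multistru}.
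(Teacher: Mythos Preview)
Your proof is correct and follows essentially the same approach as the paper: start from an arbitrary uniformiser, write $\pi_F$ as that uniformiser to the $e$-th power times a unit, split the unit into a Teichm\"uller part and a principal unit part via Proposition \ref{multistru}, and absorb the principal unit into the uniformiser by extracting an $e$-th root inside the pro-$p$ group $1+\mathfrak{p}_E$ using $\gcd(e,p)=1$. If anything, your write-up is slightly more explicit than the paper's about why the $e$-th power map is bijective on $1+\mathfrak{p}_E$.
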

 \begin{proof} Let $\pi$ be a uniformiser of $E$. The proposition $\ref{multistru}$ shows that there exist an isomorphism $f:E^{\times} \simeq \pi ^{\Z} \times \mu _{q-1} \times G' $ where $G' = 1+ \mathfrak{p} _E$ is a multiplicatively denoted group. Each element of $G'$ has an $e$-th root. Indeed, the proposition \ref{multistru} shows that $1+ \mathfrak{p}  _E$ is isomorphic to the additive
  group $\Z / p ^a \Z \times \Z _p ^d $ or to the additive group $\Z _p ^{\mathbf{N}}$. The
   image of $\pi _F$ by $f$ is $(e,z,g)$, where ${(e,z,g)\in \pi _E ^{\Z} \times \mu _{q-1} \times G' }$; that is, $\pi_F = \pi  ^e z g $. Let $r$ be in $G'$ such that $r^e =g$. Then $r \pi $ is a uniformiser of $E$ and  $\pi _F =(r \pi )^e z$. So $\pi _E = r \pi $ has the required property.

 \end{proof}

\begin{defi}\label{defice}Let $E/F$  and $\pi_E$ be as in the previous corollary; that is, such that $\pi_F = \pi_E ^e z$ with $z$ a root of unity of order prime to $p$. Let $C_E$ be the group generated by $\pi _E $ and the roots of unity of order prime to $p$ in $E^{\times}$.

\end{defi}

\begin{prop} \label{ceindep}The group $C_E$ is independent of the choice of 
$\pi _E$ used in \ref{defice} to define it.
\end{prop}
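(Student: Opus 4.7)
The plan is to show that any two valid choices $\pi_E$ and $\pi_E'$ from Definition \ref{defice} differ by an element of the intrinsic group $\mu$ of roots of unity in $E^{\times}$ of order prime to $p$. Since $C_E$ contains $\mu$ by construction and equals $\pi_E^{\mathbb{Z}} \cdot \mu$ (once one observes $\mu$ and $\pi_E$ generate this in $E^{\times}$), this is exactly what we need.

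First I would emphasize that $\mu$ is canonically attached to $E$: by Proposition \ref{multistru}, the torsion subgroup of $\mathfrak{o}_E^{\times}$ of order prime to $p$ is exactly $\mu_{q-1}$ for $q = \#k_E$ (the $p'$-torsion in $1+\mathfrak{p}_E$ is trivial, since this group is an extension of a pro-$p$ group by a finite $p$-group in the mixed-characteristic case and pro-$p$ in equal characteristic). In particular $\mu$ does not depend on any choice, so both groups $C_E$ obtained from $\pi_E$ or from $\pi_E'$ contain $\mu$, and it suffices to prove $\pi_E' \in \pi_E^{\mathbb{Z}} \cdot \mu$ (the reverse inclusion then follows by symmetry).

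Next, by hypothesis there exist $z, z' \in \mu$ with $\pi_F = \pi_E^e z = (\pi_E')^e z'$, whence the unit $u := \pi_E/\pi_E' \in \mathfrak{o}_E^{\times}$ satisfies
\[
u^e = z'/z \in \mu.
\]
Write $N = \#\mu = q-1$. Then $u^{eN} = 1$, so $u$ is a root of unity of order dividing $eN$. Here is where tameness of $E/F$ enters crucially: it gives $\gcd(e,p) = 1$, and of course $\gcd(N,p)=1$, so $\gcd(eN,p)=1$. Therefore $u$ is a root of unity of order prime to $p$, i.e.\ $u \in \mu$. Consequently $\pi_E = u \cdot \pi_E' \in \mu \cdot \pi_E'$, which yields $\pi_E \in \langle \pi_E', \mu\rangle$ and hence the two definitions produce the same group $C_E$.

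There is no real obstacle; the only point that deserves care is the identification of $\mu$ with $\mu_{q-1}$ using Proposition \ref{multistru}, and the observation that tameness of $E/F$ is exactly what is needed to force $u$ itself (not just some power of $u$) to lie in $\mu$.
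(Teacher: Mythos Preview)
Your proof is correct and follows essentially the same approach as the paper's: both show that the ratio $u=\pi_E/\pi_E'$ has $u^e\in\mu$ (from $\pi_E^e z=\pi_F=(\pi_E')^e z'$), and then use tameness ($p\nmid e$) together with $p\nmid\#\mu$ to conclude that $u$ itself lies in $\mu$. The paper phrases the last step as $(\pi_1\pi_2^{-1})^{e\,o_z}=1$ with $e\,o_z$ prime to $p$, which is exactly your $u^{eN}=1$ with $eN$ prime to $p$.
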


\begin{proof} Let $\pi _1$ and $\pi _2$ be two uniformisers of $E$ and $z_1$ , $z_2$ be two roots of unity of order prime to $p$ such
 that ${\pi ^e z_1 = \pi _F} $ and $\pi _2 ^e z_2 = \pi _F $. Let $C^1$ be the 
group generated by $\pi _1$ and the root of unity of order prime to $p$. Let $C^2$ be the group generated by $\pi _2$ and the root of unity
 of order prime to $p$. By symmetry, it is enough show that $C^1 \subset C^2$. It is also enough to show that $\pi _1 \in C_2$. The equation $\pi _1 ^e z_1 = \pi _F $ implies
 that $\pi _1 ^e \in C_2$, thus there exists a root of unity $z$ of order prime to $p$ such that $\pi _1 ^e = \pi _2 ^e z$. We have $(\pi _1 \pi _2 ^{-1} )^e = z $. Let $o_z$ be the order of $z$, which is an integer prime
  to $p$. We have  $(\pi _1 \pi _2 ^{-1} )^{eo_z} = 1 $.  The integer $e o_z$ is prime to $p$, indeed $e=e(E \mid F)$ is prime to $p$ because $E/F$ is a tamely ramified extension and $o_z$ is prime to $p$. Consequently $\pi _1 \pi _2 ^{-1}$ is a root of unity of order prime to $p$. This implies that $\pi _1 \in C_2$, as required.

\end{proof}

 We have fixed a uniformiser $\pi _F$ at the beginning of the text. So to each tamely ramified extension $E/F$, the group $C_E$ is well-defined and does not depend on any choice. 

 \begin{prop} \label{srunik}Let $E/F$ be a tamely ramified extension. Let $c$ be an element in $E^\times$. The following holds. 
 \begin{enumerate}[(i)]
 \item  There exists a unique element $sr(c) \in C_E$, called the standard representative of $c$ and a unique element $x \in 1+\mathfrak{p}_E$
 such that $c=sr(c)\times x $.
 \item The element $sr(c)$ is the unique element in $C_E$, such that $\nu _ E (sr(c)- c) >\nu _E (c) $
 \item The map $sr:E^{\times} \to C_E$, $c \mapsto sr(c)$ is a morphism of groups whose restriction to $C_E$ is the identity map.
 \end{enumerate}
 \end{prop}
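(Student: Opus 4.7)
The plan is to reduce everything to the topological decomposition of $E^{\times}$ recalled in Proposition \ref{multistru}, by identifying $C_E$ with the complement of $1+\mathfrak{p}_E$. Let $q$ denote the cardinality of the residue field $k_E$. By Hensel's lemma the roots of unity in $E^{\times}$ of order prime to $p$ are exactly the group $\mu_{q-1}$, which maps isomorphically onto $k_E^{\times}$; in particular, the element $z$ appearing in Corollary \ref{corost} lies in $\mu_{q-1}$. Consequently, for the fixed uniformiser $\pi_E$ produced by that corollary, $C_E$ is generated by $\pi_E$ and $\mu_{q-1}$. The isomorphism of Proposition \ref{multistru} shows that the subgroup $\langle \pi_E\rangle\cdot\mu_{q-1}$ intersects $1+\mathfrak{p}_E$ trivially and that the natural map $\langle\pi_E\rangle\times\mu_{q-1}\times(1+\mathfrak{p}_E)\to E^{\times}$ is an isomorphism of groups. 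Therefore $E^{\times}=C_E\times(1+\mathfrak{p}_E)$ as an internal direct product.

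With this direct product decomposition in hand, assertion (i) is immediate: given $c\in E^{\times}$, project $c$ onto the two factors to obtain unique elements $sr(c)\in C_E$ and $x\in 1+\mathfrak{p}_E$ with $c=sr(c)\,x$. For (ii), write $c=sr(c)\,x$ with $x\in 1+\mathfrak{p}_E$; then
\[
sr(c)-c=sr(c)(1-x),
\]
and since $1-x\in\mathfrak{p}_E$ we obtain $\nu_E(sr(c)-c)\geq\nu_E(sr(c))+1>\nu_E(c)$. Conversely, if $y\in C_E$ satisfies $\nu_E(y-c)>\nu_E(c)$, then $\nu_E(yc^{-1}-1)>0$, so $yc^{-1}\in 1+\mathfrak{p}_E$, i.e.\ $c=y\cdot(yc^{-1})^{-1}$ is a decomposition of $c$ as an element of $C_E$ times an element of $1+\mathfrak{p}_E$; by the uniqueness in (i), $y=sr(c)$.

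Part (iii) is a formal consequence: the projection $E^{\times}\to C_E$ along the internal direct product $E^{\times}=C_E\times(1+\mathfrak{p}_E)$ is a group homomorphism, and if $c\in C_E$ then the decomposition $c=c\cdot 1$ shows $sr(c)=c$.

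The only genuinely nontrivial point is verifying that $C_E$, which is defined abstractly from one specific uniformiser and the prime-to-$p$ roots of unity, coincides with the canonical complement $\langle\pi_E\rangle\cdot\mu_{q-1}$ of $1+\mathfrak{p}_E$ inside $E^{\times}$. This is exactly the content of Proposition \ref{ceindep} combined with the identification of prime-to-$p$ roots of unity with $\mu_{q-1}$, so the argument goes through without further obstacle.
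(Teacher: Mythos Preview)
Your proof is correct and follows essentially the same approach as the paper: both reduce everything to the internal direct product decomposition $E^{\times}=C_E\times(1+\mathfrak{p}_E)$ coming from Proposition~\ref{multistru}, with (i) and (iii) being the projection and (ii) a short valuation computation. Your version is slightly more explicit (you spell out the converse in (ii) and the identification of $C_E$ with $\langle\pi_E\rangle\cdot\mu_{q-1}$), but there is no substantive difference.
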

\begin{proof}\begin{enumerate}[(i)]
\item The proposition \ref{multistru} shows that $E^{\times} \simeq C_E \times ( 1+ \mathfrak{p} _E )$ and $(i)$ is a consequence.
\item The element $sr(c)$ is the unique element in $C_E$ such that $c=sr(c) \times (1+ y)$, with $y \in \mathfrak{p}_E$. Thus, $sr(c)$ is the unique element in $C_E$ such that $c- sr(c) \in sr(c) \mathfrak{p} _E$. Thus, $(ii)$ holds, remarking that $sr(c)$ and $c$ have the same valuation.

\item This is obvious because by definition the map $sr$ is the projection map $E^{\times}\simeq C_E \times 1+ \mathfrak{p}_E \to C_E$.
\end{enumerate}
\end{proof}

 \begin{prop}\label{CE} Let $E'/E/F$ be a tower of finite tamely ramified extensions.  Let $s \in C_E$. The following assertions hold.\begin{enumerate}
 \item[($i$)] The group $C_E $ is included in the group $ C_{E'} $.
 \item[($ii$)]If $E/F$ is a Galois extension, then $C_E$ is stable under the Galois action of $\mathrm{Gal}(E/F)$ on $E$. Moreover, if $\sigma _ 1 $ and $ \sigma _2 $ are elements in $ \mathrm{Gal}(E/F)$  such that $\sigma_1 (s) \not = \sigma _2 (s)$, then \begin{center}$\nu_E (\sigma _ 1(s) - \sigma _2(s))= \nu _E (s)$ .\end{center}

\item[($iii$)] Even if $E/F$ is not necessarily a Galois extension, then for any pair of morphisms of $F$-algebras from $E $ to $\overline{F}$ such that $\sigma _1 (s) \neq \sigma _2 (s) $, we have 
 \begin{center}$\ord (\sigma _ 1(s) - \sigma _2(s))= \ord (s)$ .\end{center}

 \item[($iv$)] Let $c$ be an arbitrary element in $E^{\times}$. Let $\sigma $ be a morphism of $F$-algebra from $E$ to $\overline{F}$. Then $\sigma (sr(c))=sr(\sigma (c))$.
   \end{enumerate}
  \end{prop}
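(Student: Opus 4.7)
For part (i), every root of unity of $E$ of order prime to $p$ lies in $E'$ and is still of order prime to $p$, so it suffices to show $\pi_E \in C_{E'}$. Setting $e = e(E\mid F)$, $e' = e(E'\mid F)$ and $e'' = e(E'\mid E)$, the relations $\pi_F = \pi_E^{e} z = \pi_{E'}^{e'} z'$ (with $z,z'$ roots of unity of order prime to $p$) give $u := \pi_E / \pi_{E'}^{e''} \in \mathfrak{o}_{E'}^{\times}$ with $u^{e} = z'z^{-1}$. Since $z'z^{-1}$ has order $m$ coprime to $p$ and $e$ is coprime to $p$ by tameness of $E/F$, the identity $u^{em}=1$ with $em$ coprime to $p$ shows $u$ is a root of unity of order prime to $p$ in $E'$, hence $\pi_E = u\,\pi_{E'}^{e''} \in C_{E'}$.

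For (ii) and (iii) the central algebraic fact is: for $s=\pi_E^{k}\zeta\in C_E$ and any two $F$-embeddings $\sigma_1,\sigma_2: E\to\overline{F}$, the ratio $\xi := \sigma_1(s)/\sigma_2(s)$ is a root of unity of order prime to $p$. Indeed,
\[
\bigl(\sigma_1(\pi_E)/\sigma_2(\pi_E)\bigr)^{e} \;=\; \sigma_2(z)/\sigma_1(z),
\]
which is a prime-to-$p$ root of unity, and tameness of $E/F$ makes $e$ coprime to $p$, so $\sigma_1(\pi_E)/\sigma_2(\pi_E)$ itself has order prime to $p$; the contribution of $\zeta$ is obviously of the same kind. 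In the Galois setting of (ii) this already yields $\sigma(C_E)\subset C_E$, using Proposition \ref{ceindep} to recognise $\sigma(\pi_E)$ as a legitimate uniformiser defining $C_E$. When $\sigma_1(s)\neq\sigma_2(s)$ we have $\xi\neq 1$, and writing
\[
\sigma_1(s)-\sigma_2(s) \;=\; \sigma_2(s)\,(\xi-1)
\]
reduces the valuation statement to showing that $\xi-1$ is a unit in the ring of integers of any finite extension containing $\xi$. This is the standard Hensel fact that the reduction map on roots of unity of order prime to $p$ is injective into the residue field, so $\bar{\xi}\neq 1$ and $\xi-1$ is a unit. Combined with the preservation of $\nu_E$ (resp.\ $\ord$) under $F$-embeddings, this gives (ii) and (iii).

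For (iv), apply $\sigma$ to the decomposition $c = sr(c)\cdot(1+y)$ with $y\in\mathfrak{p}_E$ to obtain $\sigma(c) = \sigma(sr(c))\cdot(1+\sigma(y))$ with $\sigma(y)\in\mathfrak{p}_{\sigma(E)}$. The argument of the previous paragraph, applied with $\sigma_2=\Id_{\sigma(E)}$ together with Proposition \ref{ceindep}, shows that $\sigma(sr(c))\in C_{\sigma(E)}$. The uniqueness assertion of Proposition \ref{srunik}(i), now applied inside $\sigma(E)$, then forces $\sigma(sr(c))=sr(\sigma(c))$.

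The main technical hurdle is the recognition that the ratios $\sigma_1(\pi_E)/\sigma_2(\pi_E)$ are themselves prime-to-$p$ roots of unity, which is where tameness of $E/F$ enters crucially (to conclude from $u^{e}\in\mu_{p'}$ that $u\in\mu_{p'}$). Once this is established, everything else reduces to Hensel-style injectivity of reduction on roots of unity of order coprime to $p$ and to the uniqueness clauses of Propositions \ref{ceindep} and \ref{srunik}.
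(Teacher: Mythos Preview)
Your proof is correct. The approach differs from the paper's in a few places worth noting.

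For (i), the paper applies Corollary~\ref{corost} to the tame extension $E'/E$ to produce $\pi_{E'}$ with $\pi_{E'}^{e(E'\mid E)}w=\pi_E$ and then checks that this $\pi_{E'}$ is a legitimate generator of $C_{E'}$. You instead take arbitrary defining uniformisers $\pi_E,\pi_{E'}$ and show directly that the ratio $\pi_E/\pi_{E'}^{e''}$ is a prime-to-$p$ root of unity. Both work; yours is slightly more self-contained.

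For the valuation statements in (ii) and (iii), the paper's argument is different and shorter: since $\sigma_1(s)\in C_E$ one has $sr(\sigma_1(s))=\sigma_1(s)$, and if $\nu_E(\sigma_1(s)-\sigma_2(s))>\nu_E(s)$ held then the uniqueness clause of Proposition~\ref{srunik}(ii) would force $\sigma_2(s)=\sigma_1(s)$, a contradiction. The paper then deduces (iii) from (ii) by passing to the Galois closure $\overline{E}$ and invoking (i). Your route---computing the ratio $\xi=\sigma_1(s)/\sigma_2(s)$, recognising it as a prime-to-$p$ root of unity, and using injectivity of reduction on such roots to see that $\xi-1$ is a unit---is a direct and uniform argument that handles (ii) and (iii) at once without appealing to $sr$. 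It trades the slickness of the $sr$-characterisation for a more explicit arithmetic computation.

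For (iv), the paper uses the valuation characterisation of $sr$ (Proposition~\ref{srunik}(ii)) after applying $\sigma$, implicitly relying on $\sigma(C_E)=C_{\sigma(E)}$. You make this last point explicit and use the multiplicative decomposition together with uniqueness in Proposition~\ref{srunik}(i). The content is the same.
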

  \begin{sloppypar}
 \begin{proof}
 ($i$) Recall that the group $C_E$ and $C_{E'}$ are independent of the choices of uniformisers used to define them by \ref{ceindep}. Let $\pi _E$ be a uniformiser of $E$ and $z$ a root of unity of order prime to $p$ in $E$ such that $\pi _E ^{e(E\mid F)} z = \pi _F$. Because $E'/E$ is tamely ramified, there exists a uniformiser $\pi _{E'} \in E'$ and a root of unity $w$ of order prime to $p$ in $E'$ such that $\pi _{E'} ^{e(E' \mid E )} w = \pi _E$. Elevating to the power $e(E\mid F)$, we have $\pi _{E'} ^{e(E' \mid E ) e(E \mid F)} w ^{e(E\mid F)}= \pi _E ^{e(E\mid F)}$. We thus get $\pi _{E'} ^{e(E\mid F)} w ^{e(E\mid F)} z  = \pi _F$. The element $w ^{e(E\mid F)} z $ is a root of unity of order prime to $p$.
 Consequently, $C_{E'}$ is the group generated by $\pi _{E'}$ and the roots of unity of order prime to $p$ in $E'$. The equation $\pi _{E'} ^{e(E' \mid E )} w = \pi _E$ shows that $\pi  _E$ is inside $C_{E'}$. Trivially, the roots of unity of order prime to $p$ in $E$ are inside the roots of unity of order prime to $p$ in $E'$. Consequently $C_E$ is inside $C_{E'}$, as required.

   $(ii)$ Let $\sigma \in Gal(E/F)$, and let $\pi _E$ be an element such that $\pi _E ^e z = \pi _F$ for $z $ a root of unity in $E$ of order prime to $p$. Let $o_z$ be the order of $z$. It is enough to show that $z$ and $\pi _E$ are mapped in $C_E$ by $\sigma$. The equality $(\sigma (z))^{o_z}=1 $ shows that $\sigma (z) $ is a root of unity of order prime to $p$ and thus inside $C_E$. The equality $\sigma (\pi _E )^e \sigma (z) = \pi _F$ together with \ref{ceindep} show that we can use $\sigma (\pi _E )$ to define $C_E$, and thus $\sigma  (\pi _E )$ is inside $C_E$.
 This proves the first part of the assertion.   
      The element $\sigma _1(s)$ is in $C_E$, so $sr(\sigma _1 (s))=\sigma _1 (s)$.
      Consequently 
      $\nu_E (\sigma _ 1(s) - \sigma _2(s))= \nu _E (\sigma_1(s))$,
       indeed assume ${\nu _E (\sigma _1 (s) -\sigma _2 (s) )\not = \nu _E (\sigma _1 (s) )}$, then $\nu _E (\sigma _1 (s) -\sigma _2 (s) )> \sigma _1 (s) $, and so $\sigma _2 (s) = sr(\sigma _1 (s)) = \sigma _1 (s)$ by \ref{srunik}, this is a contradiction. This completes the second part of the assertion and the proof of the proposition.

     $(iii)$ Let $\overline{E}$ be the Galois closure of $E$. Then, $C_{E} \subset C_{\overline{E}}$ by the first assertion. Moreover $\sigma _1$ and $\sigma _2$ extend to $\overline{E}$ and $(iii)$ now follows from $(ii)$.

     $(iv)$ We have $\ord(sr(c)-c)>\ord(c)$ and so $\ord (\sigma (sr(c))- \sigma (c) ) > \ord ( \sigma (c))$ because $\sigma$ preserves $\ord$; this implies that $\sigma (sr(c))= sr(\sigma (c))$.
 \end{proof}
 \end{sloppypar}

 We need to remark an elementary lemma  to prove Proposition \ref{minigene}, which is the main result of this section.

 \begin{lemm} \label{reduuni} Let $E/F$ be a finite unramified extension. Let $z \in E$ be a root of unity of order prime to $p$. Then, $z$ generates $E/F$ if and only if $z+ \mathfrak{p} _E$ generates the residual field extension $k_E /k_F$.
 \end{lemm}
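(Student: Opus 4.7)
The plan is to exploit the fact that $E/F$ is unramified, which forces every intermediate field $F \subseteq F' \subseteq E$ to be unramified and to satisfy $[F':F] = [k_{F'}:k_F]$. In particular, since $F(z) \subseteq E$, the extension $F(z)/F$ is unramified, and it suffices to prove the key identity
\[
k_{F(z)} \;=\; k_F(\bar z).
\]
Given this identity, the equivalence follows at once: $z$ generates $E/F$ iff $[F(z):F] = [E:F] = f$ iff $[k_F(\bar z):k_F] = [k_E:k_F] = f$ iff $\bar z$ generates $k_E/k_F$.

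The inclusion $k_F(\bar z) \subseteq k_{F(z)}$ is tautological. For the reverse inclusion I will use the Teichm\"uller/Hensel lift. First I would check that the reduction map restricts to an order-preserving injection from the prime-to-$p$ roots of unity of $\mathfrak{o}_E^\times$ into $k_E^\times$; in particular $\bar z$ and $z$ have the same order $m$ (if $z^k \in 1+\mathfrak{p}_E$ had finite order prime to $p$, then writing $z^k = 1+v$ with $v$ of valuation $n\ge 1$ and expanding $(1+v)^\ell-1 = \ell v + \binom{\ell}{2}v^2 + \cdots$ would force $\nu_E(v) = \infty$ since $\ell$ is a unit).

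Now let $F''$ be the unique unramified subextension of $E/F$ with residue field $k_F(\bar z)$ (obtained from the Galois-correspondence between unramified subextensions and subfields of $k_E/k_F$). By Hensel's lemma applied to $X^m - 1$, there is a unique root of unity of order $m$ in $\mathfrak{o}_E$ lifting $\bar z$, and this lift must lie in $F''$ (as $F''$ already contains such a lift). But $z$ itself is such a root of unity, so $z \in F''$, hence $F(z)\subseteq F''$ and $k_{F(z)} \subseteq k_F(\bar z)$. This closes the identity.

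I do not anticipate a serious obstacle: the only mildly delicate point is the Teichm\"uller argument that identifies $z$ with the canonical lift of $\bar z$, which reduces to the fact that $1 + \mathfrak{p}_E$ contains no nontrivial prime-to-$p$ torsion. Everything else is bookkeeping with the unramified Galois correspondence $F'\mapsto k_{F'}$.
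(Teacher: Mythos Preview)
Your argument is correct and takes a genuinely different route from the paper. You prove the stronger identity $k_{F(z)} = k_F(\bar z)$ via the Teichm\"uller lift (Hensel applied to $X^m-1$, together with the observation that $1+\mathfrak{p}_E$ has no nontrivial prime-to-$p$ torsion), and then read off both implications at once from the unramified Galois correspondence $F'\mapsto k_{F'}$. The paper instead treats the two directions separately: for the forward implication it quotes a result from Neukirch to the effect that a generator of $E/F$ already generates $\mathfrak{o}_E$ over $\mathfrak{o}_F$, hence generates the residue extension; for the reverse it does a bare degree count with the minimal polynomial $P_z\in\mathfrak{o}_F[X]$ of $z$, noting that its reduction (of the same degree $d$) annihilates $\bar z$, so $[k_E:k_F]\le d\le [E:F]=[k_E:k_F]$. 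Your approach is a bit more structural and avoids the external citation, at the cost of invoking Hensel and the correspondence between unramified subextensions and residue subfields; the paper's proof is more elementary for the reverse direction but leans on the cited lemma for the forward one.
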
 

 \begin{proof} If $z$ generates $E$ over $F$, then $z$ generates $\mathfrak{o} _E $ over $\mathfrak{o}_F$ by \cite[7.12]{Neuk}. Thus, $z$ generates the residual field extension $k_E /k_F $.  Let us check the reverse implication.  Assume that $z+ \mathfrak{p}_E$ generates $k_E /k_F $. The field extension $E/F$ is unramified, so  $[k_E : k_F] = [E:F]$. Let $P_z \in F[X]$ be the minimal polynomial of $z$ and $d$ its degree, clearly $P_z $ is in $\mathfrak{o}_F[X]$. It is enough to show that $d=[E:F]$.  We have ${d \leq [E:F]}$. The reduction $\mod \mathfrak{p}_E$ of $P_z$ is of degree $d$ and annihilates $z+ \mathfrak{p} _E$, a generator of $k_E / k_F$, and thus $[k_E : k_F] \leq d $. So ${[k_E : k_F] \leq d \leq [E:F]}$. So $d =[E:F]$, and this concludes the proof.

 \end{proof}

 \begin{lemm}Let $E/F$ be a finite tamely ramified extension. Let $c,c' \in C_E$ such that $c\neq c'$ and $\ord (c) = \ord( c')$. Then \[ \ord (c - c' ) = \ord (c) . \]

 \end{lemm}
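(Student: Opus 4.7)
The plan is to use the explicit factorisation of elements of $C_E$ as $\pi_E^k \zeta$, where $\zeta$ is a root of unity of order prime to $p$, and to reduce the claim to the statement that distinct prime-to-$p$ roots of unity in $E$ have distinct reductions modulo $\fp_E$.

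First I would observe that, because the roots of unity of order prime to $p$ form a multiplicative subgroup of $E^\times$, every element of $C_E$ can be written as $\pi_E^k \zeta$ with $k \in \Z$ and $\zeta \in E$ a root of unity of order prime to $p$. Writing $c = \pi_E^k \zeta$ and $c' = \pi_E^{k'} \zeta'$, and using that $\ord(\pi_E^k\zeta) = k/e(E \mid F)$, the hypothesis $\ord(c) = \ord(c')$ immediately gives $k = k'$. The claim then reduces to showing that $\nu_E(\zeta - \zeta') = 0$ whenever $\zeta \neq \zeta'$ are prime-to-$p$ roots of unity in $E$, since then
\[
\ord(c - c') = \ord\bigl(\pi_E^k(\zeta - \zeta')\bigr) = k/e(E \mid F) = \ord(c).
\]

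For the remaining reduction step I would invoke Hensel's lemma (equivalently, the existence of the Teichm\"uller system): the polynomial $X^{q_E - 1} - 1$ splits in $\fo_E$ into $q_E - 1$ distinct linear factors, and reduction modulo $\fp_E$ induces a bijection between the group of roots of unity of order prime to $p$ in $E$ and $k_E^\times$. Hence if $\zeta \neq \zeta'$ are two such roots of unity, their images in $k_E$ differ, and $\zeta - \zeta'$ lies in $\fo_E^\times$, i.e.\ $\nu_E(\zeta - \zeta') = 0$.

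There is no real obstacle here: the lemma is essentially a direct consequence of the internal structure of $C_E$ together with the injectivity of reduction on the Teichm\"uller part of $\fo_E^\times$. The only point to be a little careful about is noting that all $\ord$ computations are normalised so that $\ord(\pi_F) = 1$, so $\ord(\pi_E) = 1/e(E \mid F)$, which is what makes the equality of orders translate into equality of the exponents $k$ and $k'$.
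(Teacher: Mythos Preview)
Your argument is correct. The paper's proof is shorter because it leverages the uniqueness characterisation of the standard representative already established in Proposition~\ref{srunik}(ii): if $\ord(c-c') > \ord(c)$ and $c' \in C_E$, then $c' = sr(c)$; but $c \in C_E$ forces $sr(c)=c$, contradicting $c \neq c'$. Your proof unpacks this same structural fact by hand, writing $c = \pi_E^k \zeta$, $c' = \pi_E^k \zeta'$ and invoking the injectivity of reduction on prime-to-$p$ roots of unity (the Teichm\"uller system). Both approaches rest on the decomposition $E^\times \simeq C_E \times (1+\fp_E)$; the paper simply quotes the consequence it has already recorded, while you reprove the relevant piece directly. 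Your version has the advantage of being self-contained at the level of residue fields, while the paper's version is a two-line contradiction once Proposition~\ref{srunik} is in place.
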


 \begin{proof} This is an obvious consequence of definitions. Indeed, assume that $ \ord (c - c' ) \neq  \ord (c) $, then $\ord (c - c' ) > \ord (c)$ and by definition of standard representative we have $sr(c)=c'$. Because $c \in C_E$, we have $sr(c)=c$. So $c=c'$ and it is a contradiction. So $\ord (c - c' ) = \ord (c)$.

 \end{proof}

  \begin{prop}\label{minigene}
Let $E/F$ be a finite tamely ramified extension, let $\beta$ be an element in $ E$ such that $E=F[\beta]$. Put $-r=\ord (\beta)$. The following assertions are equivalent.

\begin{enumerate}

\item[($i$)] The element $\beta $ is minimal over $F$.

\item[($ii$)] The standard representative element of $\beta$ generates the field extension $E/F$; that is,  $F[sr(\beta)]=E$.

\item[($iii$)] For all morphisms of $F$-algebra $\sigma  \neq \sigma ' $ from $E$ to $\overline{F}$, we have \[\ord (\sigma ( \beta ) - \sigma ' (\beta)  ) =-r .\]

 \end{enumerate}

 \end{prop}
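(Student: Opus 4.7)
The plan is to work throughout with the factorisation $\beta = sr(\beta)(1+x)$, $x \in \mathfrak{p}_E$, furnished by Proposition \ref{srunik}, together with the representation $sr(\beta) = \pi_E^k z_1$ with $k = \nu_E(\beta)$ and $z_1$ a root of unity of order prime to $p$; here $\pi_E$ is chosen as in Corollary \ref{corost} so that $\pi_E^e z_0 = \pi_F$. A direct computation gives $sr(\beta)^e = \pi_F^k \eta$ with $\eta := z_0^{-k} z_1^e$ a root of unity of order prime to $p$, and expanding $(1+x)^e \in 1 + \mathfrak{p}_E$ yields $\pi_F^{-\nu_E(\beta)} \beta^e \equiv \eta \pmod{\mathfrak{p}_E}$. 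Thus Bushnell-Kutzko's residue condition in $(i)$ becomes exactly the statement that $\bar{\eta}$ generates $k_E/k_F$.

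For $(ii) \Leftrightarrow (iii)$, I fix distinct $F$-embeddings $\sigma, \sigma'$ of $E$ into $\overline{F}$ and decompose $\sigma(\beta) - \sigma'(\beta)$ as $(\sigma(sr(\beta)) - \sigma'(sr(\beta))) + (\sigma(sr(\beta))\sigma(x) - \sigma'(sr(\beta))\sigma'(x))$; the second summand has $\ord > -r$ because $\ord(\sigma(x)) > 0$, while Proposition \ref{CE}$(iii)$ applied to $sr(\beta) \in C_E$ shows the first summand has $\ord = -r$ precisely when $\sigma(sr(\beta)) \neq \sigma'(sr(\beta))$. Thus $(iii)$ holds if and only if the images $\sigma(sr(\beta))$ are pairwise distinct, if and only if $[F[sr(\beta)]:F] = [E:F]$, which is $(ii)$.

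For $(i) \Leftrightarrow (ii)$, the key intermediate object is the field $F_1 := F[\eta]$, which is unramified over $F$ of degree $[k_F[\bar{\eta}]:k_F]$ by Lemma \ref{reduuni}. Assuming $(ii)$, the relation $\eta = \pi_F^{-k}sr(\beta)^e \in F[sr(\beta)] = E$ places $F_1 \subseteq E$, and since $sr(\beta)$ satisfies $X^e - \pi_F^k \eta$ over $F_1$ the degree $[E:F_1]$ divides $e$, forcing $[F_1:F] = f$ and thereby the residue condition. For the gcd statement, set $d := \gcd(k,e)$, $k = dk'$, $e = de'$, and compute $sr(\beta)^{e'} = \pi_F^{k'} \eta'$ with $\eta'$ a tame root of unity whose $F$-adjunction is unramified in $E$, hence contained in the maximal unramified subextension $F_1$; then $[E:F_1]$ divides $e'$, giving $e' = e$ and $d = 1$. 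The reverse direction $(i) \Rightarrow (ii)$ is symmetric: the residue condition combined with Lemma \ref{reduuni} gives $[F_1:F] = f$, while $\gcd(k,e) = 1$ and $\ord(sr(\beta)) = -k/e$ force the ramification index of $F[sr(\beta)]/F$ to equal $e$, and a degree count closes. The main obstacle is the arithmetic bookkeeping around $\pi_E^e = \pi_F z_0^{-1}$ and verifying that the auxiliary $\eta'$ always lies in $F_1$, since this is what allows $[E:F_1]$ to be bounded by $e'$; once this is secured the argument reduces to classical degree arithmetic for unramified extensions and pure equations $X^e - a$.
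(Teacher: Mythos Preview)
Your proof is correct and follows essentially the same strategy as the paper's: the equivalence $(ii)\Leftrightarrow(iii)$ is obtained by comparing $\sigma(\beta)-\sigma'(\beta)$ with $\sigma(sr(\beta))-\sigma'(sr(\beta))$ via Proposition~\ref{CE}, and $(i)\Leftrightarrow(ii)$ hinges on the root of unity $\pi_F^{-\nu_E(\beta)}sr(\beta)^e$ (your $\eta$) together with Lemma~\ref{reduuni}. The only cosmetic differences are that the paper works with $E^{\nr}$ where you work with $F_1=F[\eta]$ (these coincide once the residue condition is established), that the paper proves the gcd condition before the residue condition in $(ii)\Rightarrow(i)$ while you reverse the order, and that for $(i)\Rightarrow(ii)$ the paper produces a uniformiser $sr(\beta)^a\pi_F^b$ via B\'ezout whereas you argue directly that $\ord(sr(\beta))=k/e$ with $\gcd(k,e)=1$ forces $e(F[sr(\beta)]\mid F)=e$.
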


\begin{proof}

Let us prove that ($i$) implies ($ii$). Assume that $\beta $ is minimal over $F$. Let us remark that the definition of $sr(\beta)$ implies trivially that $F[sr(\beta)] \subset E$. Let $E^{\nr}$ denote the maximal unramified extension contained in $E$. To prove the opposite inclusion $ E \subset F[sr(\beta)]$, it is enough to show that  ${E^{\nr}\subset F[sr(\beta)]}$ and ${E\subset E^{\nr}[sr(\beta)]}$. Put $\nu=\nu_E(\beta)$, $ e =e(E \mid F)$. The valuation of  $\pi_F^{-\nu}\beta^e$ is equal to $0$, consequently by \ref{srunik} we have  $\nu_E(sr(\pi_F^{-\nu}\beta^e)-\pi_F^{-\nu}\beta ^e)>0$, and so ${sr(\pi_F^{-\nu}\beta^e)+\mathfrak{p}_E=\pi_F^{-\nu}\beta ^e+\mathfrak{p}_E.}$ We have $sr(\pi_F^{-\nu}\beta ^e)=\pi_F^{-\nu}sr(\beta)^e$, and this is a root of unity of order prime to $p$. The definition of being minimal implies that $\pi_F^{-\nu}sr(\beta)^e +\mathfrak{p}_E$ generates $k_E / k_F$. So $\pi_F^{-\nu}sr(\beta)^e$ generates $E^{\nr}$ by \ref{reduuni}. So $E^{\nr}\subset F[sr(\beta)]$.
We have $\nu_E(\beta)=\nu_E(sr(\beta)),$ so $\mathrm{gcd}(\nu_E(sr(\beta)),e)=1.$ Let $a $ and $b$  be integers such that $a\nu_E(sr(\beta))+be=1$. Thus, $\nu_E(sr(\beta)^a \pi_F ^b)=1$ and so $E^{\nr}[sr(\beta)^a \pi_F ^b]=E$ because a finite totally ramified extension is generated by an arbitrary uniformiser. So $E^{\nr}[sr(\beta)]=E$ and ($i$) hold. We have thus shown that ${E^{\nr}\subset F[sr(\beta)]}$ and ${E\subset E^{\nr}[sr(\beta)]}$ and so  ($i$) implies ($ii$).

Let us prove that $(ii)$ implies $(i)$. Assume that $F[sr(\beta)]=E$. We start by showing that $e$ is prime to $\nu$. The field $E ^{\nr}$ is generated over $F$ by the roots of unity of order
 prime to $p$ contained in $E$. Let $d=\gcd (\nu , e)$ and $b=\frac{e}{d}$. Let $\pi_E$ be a uniformiser in $E$ such that $\pi _E ^{e} z = \pi _F$ with $z$ a root of unity of order prime to $p$. The element $sr(\beta ) $ is in $C_E$ and so $sr(\beta) = \pi _E ^{\nu} w$ with $w$ a root of unity of order prime to $p$ in $E$. The equalities 
${sr(\beta)^b = (\pi _E ^e ) ^{\frac{\nu}{gcd(\nu , e)} }w^b= (\pi _F z ^{-1})^{\frac{\nu}{gcd(\nu , e)} }w^b}$ show that $sr(\beta) ^b$ is contained in $E^{\nr}$. By hypothesis, the element $sr(\beta)$ generates
 $E$ over $F$ and so generates $E$ over $E^{\nr}$. Consequently, the field $E$ is generated
 by an element whose $b$-th power is in $E^{\nr}$. Therefore, the inequality  $[E:E^{\nr}]\leq b$ holds. The extension $E^{\nr}$ is the maximal unramified extension contained in $E$, so $[E:E^{\nr}]=e$. Thus, the inequality  $e \leq b \leq \frac{e}{d}$ holds. This implies $d=1$ and so $\nu$ is prime to $e$.
 Let us prove that $\pi _F ^{- \nu } \beta ^e + \mathfrak{p} _E $ generates the residue field extension $k _E $ over $k _F$. Because $\pi _F ^{- \nu} \beta ^e + \mathfrak{p} _E = \pi _F ^{- \nu} sr(\beta)^e + \mathfrak{p} _E$, it is equivalent to show that $x+\mathfrak{p} _E$ generates $k_E$ over $k _F$ , where $x=\pi _F ^{-\nu} sr (\beta) ^e. $ The element $sr(\beta)$ generates $E$ over $F$ by hypothesis; that is, $E=F[sr(\beta)]$.  So the inequality $[E : F[x]]\leq e$ holds, indeed $E $ is generated over $F[x]$ by the element $sr(\beta)$ whose $e$-th power is in $F[x]$. Because  $x$ is a root of unity of order prime to $p$, the field $F[x]$ is include in $E^{\nr}$, so $[E : E^{\nr}] \leq [E : F[x]] $. Consequently, the identity $e=[E : E^{\nr}] \leq [E : F[x]] \leq e $ holds. Because $F[x] \subset E^{\nr}$, the previous identity implies that $F[x]=E^{\nr}$. Thus by \ref{reduuni} the element $x +\mathfrak{p} _E$ generates $k_E$ over $k_F$. So $\beta$ is minimal over $F$.

 Let us prove that $(ii)$ is equivalent to $(iii)$. Let $\sigma \neq \sigma '$ be two morphisms of $F$-algebra from $E$ to $\overline{F}$. Put \begin{align*}
 &A= \sigma (\beta) - \sigma ' (\beta) \\
 & B = \sigma (sr(\beta)) - \sigma ' ( sr(\beta)).
 \end{align*}
 We have $\ord (A) \geq -r $ and $\ord (B) \geq -r$. We have \begin{align*}
 \ord (A-B)& = \ord ( ~~\sigma (\beta) - \sigma ' (\beta) - (~\sigma (sr(\beta))- \sigma '(sr(\beta))  ~)  ~ ~  ) \\
 &= \ord ( ~~ \sigma (\beta) - \sigma (sr(\beta)) - (~ \sigma ' (\beta) -\sigma ' (sr(\beta))~) ~~)\\
  &= \ord ( ~~ \sigma (\beta) -  sr(\sigma(\beta)) - (~ \sigma ' (\beta) - sr(\sigma '(\beta))~) ~~)\\
 &>-r ~~~~
 \end{align*}
 because $ \ord (~\sigma (\beta) - sr(\sigma(\beta))~) >-r $ and $ \ord (~\sigma '(\beta) - sr(\sigma '(\beta))~) >-r$,  by definition of standard representatives.
 Let us prove $(ii) \Rightarrow (iii)$. If $(ii)$ holds, then $\sigma ( sr(\beta) )\neq \sigma ' ( sr(\beta))$ and $\ord ( B ) = \ord ( \sigma (sr (\beta )) - \sigma ' ( sr (\beta)) ) =-r $ because  $\sigma (sr (\beta ))$ and $\sigma ' ( sr (\beta))$ are both in $C_{\overline{E}}$. So $\ord(A) \geq -r , \ord (B) =-r , \ord ( A-B) >-r$, this implies $\ord (A)=-r$. Let us prove $(iii) \Rightarrow (ii)$. We assume that $(iii)$ holds and we want to prove that $sr(\beta)$ generates $E/F$. It is enough to show that $\sigma (sr(\beta)) \neq \sigma ' (sr(\beta))$ for any $\sigma , \sigma '$ as before. 
 By hypothesis $\ord (A) =-r$; because $\ord (B) \geq -r$ and $\ord (A-B) >-r$, we deduce $\ord (B) =-r$, in particular $\sigma (sr(\beta)) \neq \sigma ' (sr(\beta))$ as required.
 This finishes the proof of the proposition \ref{minigene}.

 \end{proof}

\section{Generic elements and tame minimal elements} \label{geneassomini}

In this section, we study the relationship between (tame) minimal elements of \cite{BK} and the generic element of \cite{YU}.  More precisely, let ${E'/E/F}$ be a tower of tamely ramified field extensions and let $V$ be an $E'$-vector space of dimension $d$. We are going to explicitly define and describe the group schemes ${H' = \Res_{E'/F} \underline{\Aut} _{E'} (V)}$, ${H=  \Res_{E/F} \underline{\Aut} _{E} (V)}$ and $G= \underline{\Aut} _F (V)$. We will show that the sequence ${(H',H,G)}$ forms a
  tamely ramified twisted Levi sequence in $G$. The choice of an $E'$-maximal decomposition $D$, $V=(V_1 \oplus \ldots \oplus V_{d})$, of $V$ in $1$-dimensional $E'$-vector spaces gives birth to a maximal torus $T_D$ of $\underline{\Aut} _{E'} (V)$. By restriction of scalar, we get
  a maximal torus $T=\Res_{E'/E} (T_D)$ of $H'$. We are going to describe the set over $\overline{F}$ of roots of $H'$ and $H$ with respect to $T$. Moreover, we will describe the condition \textbf{GE1} in this situation. Finally, given $c\in E'$, we will introduce an element $X_{c}^* \in \Lie^*(Z(H'))$. This section is devoted to prove the following theorem.

  \begin{theo} 
  Let $E'/E/F, V, H' , H $ and $G$ as before.
  \begin{enumerate}
  \item The sequence $H' \subset H \subset G $ is a tamely ramified twisted Levi sequence in $G$.
  \item Let $c \in E'$. Let $r$ be $-\ord(c)$, $A$ be $\End_F(V)$ and $Z(H')$ be the center of $H'$. Let $X_{c}^*$ be   $ \left(x \mapsto Tr_{A/F}(cx) \right) \in \Lie ^* (Z(H'))_{-r}$.
   The following assertions are equivalent. \begin{enumerate}
\item The element $X_{c}^*$ is $H$-generic of depth $r$.
\item The element $c$ is minimal\footnote{see \ref{defminimal}} relatively to the extension $E'/E$.\end{enumerate}

  \end{enumerate}
  \end{theo}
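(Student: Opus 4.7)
The plan is as follows. For part 1, I base change from $F$ to a tamely ramified Galois extension $L/F$ splitting $E'$ (take $L$ to be the Galois closure of $E'$ over $F$, which is tame since $E'/F$ is). Then $E'\otimes_F L \simeq \prod_{\sigma\colon E'\hookrightarrow L} L$ and $V\otimes_F L \simeq \bigoplus_\sigma L^d$, which realises $G\otimes_F L \simeq GL_{d[E':F]}$, $H'\otimes_F L \simeq \prod_\sigma GL_d$ and $H\otimes_F L \simeq \prod_{\tau\colon E\hookrightarrow L} GL_{d[E':E]}$ as standard block-diagonal Levi subgroups. Grouping the embeddings $\sigma$ by their restriction $\sigma|_E$ simultaneously exhibits $H'\otimes L$ as a Levi in $H\otimes L$, so $(H',H,G)$ is a tamely ramified twisted Levi sequence as required.

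For part 2, I choose an $E'$-basis of $V$, let $T_D\subset\underline{\Aut}_{E'}(V)$ be the diagonal torus and set $T := \Res_{E'/F}(T_D)$, which is a maximal torus in each of $H'$, $H$ and $G$. After base change to $L$, the characters of $T$ are indexed by pairs $(\sigma,i)$ with $\sigma\colon E'\hookrightarrow L$ and $1\le i\le d$, and reading off the block structure gives
\begin{align*}
\Phi(H',T,\overline{F}) &= \{(\sigma,i)-(\sigma,j) : i\ne j\}, \\
\Phi(H,T,\overline{F}) &= \{(\sigma,i)-(\sigma',j) : \sigma|_E = \sigma'|_E,\ (\sigma,i)\ne(\sigma',j)\},
\end{align*}
so the roots of $\Phi(H,T,\overline{F})\setminus\Phi(H',T,\overline{F})$ are exactly the differences $(\sigma,i)-(\sigma',j)$ with $\sigma\ne\sigma'$ but $\sigma|_E=\sigma'|_E$. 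Identifying $\Lie(Z(H'))\otimes_F\overline{F}$ with $\prod_\sigma \overline{F}$ via $y\mapsto(\sigma(y))_\sigma$ and extending $X_c^*$ canonically to $\Lie^*(H')$ through the decomposition $\Lie(H') = \Lie(Z(H'))\oplus[\Lie(H'),\Lie(H')]$, a direct computation using $\Tr_{A/F}(cy) = d\cdot\Tr_{E'/F}(cy) = d\sum_\sigma \sigma(c)\sigma(y)$ yields
\[
X_c^*(H_a) = \sigma(c) - \sigma'(c) \qquad \text{for } a = (\sigma,i) - (\sigma',j).
\]

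Putting this together, condition \textbf{GE1} for $X_c^*$ at depth $r$ becomes the requirement that $\ord(\sigma(c) - \sigma'(c)) = -r$ for every pair of distinct $E$-embeddings $\sigma,\sigma'\colon E'\hookrightarrow\overline{F}$. When $E[c]\subsetneq E'$, some such pair satisfies $\sigma(c)=\sigma'(c)$, so $\ord=+\infty$ and \textbf{GE1} fails; this matches the failure of the generation clause in the definition of minimality. When $E[c]=E'$, distinct $E$-embeddings of $E'$ are separated by $c$, and Proposition \ref{minigene} applied to the tame extension $E'/E$ with generator $c$ translates the ord-condition into minimality of $c$ relative to $E'/E$. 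Since the ambient root datum is of type $A$ (that of a general linear group), Steinberg's computation of torsion primes together with Proposition \ref{ge1implyge2tors} make \textbf{GE2} automatic from \textbf{GE1}. The main bookkeeping obstacle is the correct identification of $X_c^*\in\Lie^*(Z(H'))_{-r}$ with its canonical extension to $\Lie^*(H')$ (as in Remark \ref{generema} and \cite[§8]{YU}) and the verification that the coroot pairing against this extension produces exactly the differences of Galois conjugates of $c$; once this identification is pinned down, Proposition \ref{minigene} finishes the proof in a single stroke.
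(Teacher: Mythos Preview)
Your proof is correct and follows essentially the same route as the paper. The paper packages part~1 as Corollary~\ref{corotwistedlevitame} (via the explicit base-change computation of Proposition~\ref{grousplit}) and part~2 as Theorem~\ref{minimalgenericelemenx} (via Proposition~\ref{xycomput}, which gives exactly your formula $X_c^*(H_a)=\sigma(c)-\sigma'(c)$, followed by Proposition~\ref{minigene}); your argument compresses these steps but uses the same ingredients in the same order, including the type~$A$ observation that \textbf{GE1} forces \textbf{GE2}. The only cosmetic difference is that the paper evaluates $X_c^*$ on $H_a$ by pushing the trace form $h\mapsto\Tr_{A/F}(ch)$ through the base-change diagram directly, whereas you pass through the projection $\Lie(H')\to\Lie(Z(H'))$ along the derived subalgebra; since $\Tr_{A/F}(c\,\cdot\,)$ vanishes on $\mathfrak{sl}_{E'}(V)$, the two computations coincide.
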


  \begin{proof} This is Corollary \ref{corotwistedlevitame} and Theorem \ref{minimalgenericelemenx}.

  \end{proof}

\subsection{The group scheme of automorphisms of a free $A$-module of finite rank} \label{autam}
Let $\mathrm{A}$ be a commutative ring and $\mM$ be a free $\mathrm{A}$-module of rank $r$. The functor 
\begin{center}
\begin{align*}
 \{ \mathrm{A}-\mathrm{algebra} \} & \to  \mathbf{Gp} \\
 \mB~~~~~~ & \mapsto  \Aut_{\mB} (\mM \otimes _{A} B) \\
\end{align*}
\end{center}
is representable by an affine $\mA$-scheme that we denote by $\underline{\Aut}_{\mA} (\mM)$. This scheme is isomorphic to the group scheme $\mathrm{GL}_N$ over $\mA$, with $N=r$. 
Let $D$ be a decomposition ${M=\mM_1 \oplus \ldots \oplus \mM_{r}}$ of $\mM$ in submodule of rank $1$. Let us define a maximal split torus of $\underline{\Aut}_{\mA} (M)$. 
The functor 
\begin{center}

\begin{small}
\begin{align*}
 &\{ \mathrm{A}-algebra \}  \to~~~~~~  \mathbf{Gp} \\
 & \mB~~ \mapsto \left\{ \begin{small}   x \in \Aut _{\mB} (\mM \otimes _{\mA} \mB ) \left\|\begin{aligned}&  \text{For all} ~i \in \{1, \ldots , r  \}, \text{ there exists }~ \lambda  ^i(x) \in B^{\times} \\ & \text{such that}~ 
  x (v_i \otimes 1) =  \lambda  ^i(x)  (v_i \otimes 1) \text{ for all }v_i \in M_i \end{aligned} \right.\end{small} \right\}
\end{align*}

\end{small}
\end{center}
is representable by an affine $\mA$-scheme that we denote by $T_D$, this is a closed affine subscheme of $\underline{\Aut}_{{\mA}} (\mM)$. The $\mA$-scheme $T_D$ is canonically isomorphic to ${\displaystyle \prod _{i=1} ^{r} \underline{\Aut} _{\mA} (M_i)}$. Let us give an explicit expression of the set of roots $\Phi (\underline{\Aut}_{\mA} (\mM), T_D )$ in this functorial point of view. The notation $ 0 \leq i \not = i' \leq r$ means that $1\leq i \leq r$, $1 \leq i' \leq r$ and that $i \not = i' $.
The set of root of $\underline{\Aut}_{\mA }(\mM)$ relative to $T_D$ is the set
\begin{center}
$\Phi (\underline{\Aut}_{\mA} (\mM), T_D )= \{ \alpha _{ii'} \mid 1 \leq i \not = i' \leq r \}$
\end{center}
where $\alpha _{ii'}$ is the morphism of algebraic group $T_D \to \mathbb{G}_m$ characterised by the formula,

\begin{center}
 for all $\mathrm{A}$-algebras $\mB$, for all $x \in T_D( \mB)$ , $\alpha _{ii'} (x) = \lambda ^i (x) (\lambda ^{i'} (x))^{-1}$.
\end{center}
For each root $\alpha$, let $\alpha ^{\vee}: \mathbb{G}_m \to T_D$ be the coroot of $\alpha$ and let $\mathrm{d}\alpha ^{\vee}$ be the derivative of $\alpha ^{\vee}$. Finally, let $H_{\alpha}$ be the element $\mathrm{d}\alpha ^{\vee} (1) \in \Lie (T_D) (A)$.
Let us make these objects explicit in our functorial point of view.
Let $1 \leq i \not = i' \leq r$, the coroot $\alpha _{ii'} ^{\vee}$ is the morphism of algebraic group $\mathbb{G}_m \to T_D$ characterised by the formula,

\begin{small}

 for all $\mathrm{A}$-algebras $\mB$, for all $\lambda \in B^{\times}$ ,\begin{small} $\left\{\begin{aligned}& \alpha _{ii'}^{\vee} (\lambda) (v_i \otimes 1) = \lambda  (v_i \otimes 1)~~~\forall v_i \in M_i \\
  &\alpha _{ii'}^{\vee} (\lambda) (v_{i'} \otimes 1) = \lambda^{-1}  (v_{i'} \otimes 1) ~~~\forall v_{i'} \in M_{i'} \\
  &\alpha _{ii'}^{\vee} (\lambda) (v_k \otimes 1) =   (v_{k} \otimes 1) ~\forall v_k \in M_k, k \not = i,i' \end{aligned} \right.$  \end{small}

\end{small}
The derivative of $\alpha _{ii'} ^{\vee}$ is the differential morphism $\mathrm{d}\alpha _{ii'} ^{\vee} : \mathbb{G}_a \to \underline{\Lie} (T_D)$, which is characterised by the formula (see \cite[3.9.4]{SGA31}),

\begin{small}
for all $\mathrm{A}$-algebra $\mB$, for all $h \in B$ ,\begin{small} $\left\{\begin{aligned}& \mathrm{d}\alpha _{ii'}^{\vee} (h) (v_i \otimes 1) = h  (v_i \otimes 1)~~~\forall v_i \in M_i\\
  &\mathrm{d}\alpha _{ii'}^{\vee} (h) (v_{i'} \otimes 1) = -h  (v_{i'} \otimes 1) ~~~\forall v_{i'} \in M_{i'}\\
  &\mathrm{d}\alpha _{ii'}^{\vee} (h) (v_k \otimes 1) =  0  ~~~\forall v_k \in M_k~\forall k \not = i,i'. \end{aligned} \right.$  \end{small}

\end{small}
\begin{sloppypar} Consequently the element $H_{\alpha _{ii'}} $ which is by definition ${\mathrm{d}\alpha _{ii'} ^{\vee}(1) \in \underline{\Lie} (T_D)(A)= \End_A(M)}$ is the element sending each element $v_i \in M_i $ to $v_i$, each element $v_{i'} \in M_{i'}$ to $-v_{i'}$ and, for all $k$ different of $i,i'$, each element $v_k \in M_k$ to $0$.
\end{sloppypar}

 \subsection{Twisted Levi sequences} \label{gllevi}
In this subsection, we state or recall algebraic facts that will be applied to the following subsections.
 Let $f$ be a commutative ring and $B$ be a commutative $f$-algebra, $C$ be an $B$-algebra. Let $A$ be an $f$-algebra. In this situation $A \otimes _f B$ is an $B$-algebra and $C$ is naturally an $f$-algebra. The $C$-algebra $(A \otimes _f B) \otimes _B C $ is canonically isomorphic to $A \otimes _f C$. Explicitly, the isomorphism is given by

 \begin{align*}
 (A \otimes _f B ) \otimes _B C &\to A \otimes _f C \\
 (a \otimes b ) \otimes c &\mapsto a \otimes bc.
 \end{align*}
 The inverse is explicitly given by 
 \begin{align*}
 A \otimes _f C &\to ( A\otimes _f B ) \otimes _B C \\
 a \otimes c &\mapsto ( a \otimes 1 ) \otimes c. 
 \end{align*}
 We now fix in the rest of this subsection a tower of finite separable extensions of fields  $l'/l/f$. In the next subsection, we will apply this to $l'=E', l=E$ and $f=F$, where $E'/E/F$ is a tower of finite tamely ramified extensions. Let $V$ be an $l'$-vector space of dimension $d$. Let $D$:$~$ $V=(D_1 \oplus \ldots \oplus D_{d) })$, be an $l'$-decomposition of $V$ in subspaces of dimension $1$. In a previous subsection we introduced an $l'$-group scheme $\underline{\Aut}_{l'} (V)$ and a maximal split torus $T_{D}$ of $\underline{\Aut}_{l'} (V)$.
  Let $H'$ be the restriction of scalar from $l' $ to $f$ of $\underline{\Aut}_{l'} (V)$. Also, let $T$ be $\Res_{l'/f}(T_D)$.
    Thus, $H'$ represents the functor 
 \begin{align*}
 \{f-algebra\}  & \to ~~~~\mathbf{Gp}\\
 A~~~~~~ & \mapsto  \underline{\Aut}_{l'} (V) ( A \otimes _f l').
 \end{align*}
 For each $f$-algebra $A$ the group $H'(A)$ is thus equal to the group \begin{center}$\Aut_{A \otimes _f l' } \left(V \otimes _{l'} (A \otimes _f  l') \right)$.\end{center}
 Because  $l \subset l'$, $V$ is an $l$-space and, we have a group $\underline{\Aut}_{l} (V)$ and its restriction of scalar $H$.  So that for each $f$-algebra $A$, the group $H(A)$ is equal to the group $\Aut_{A \otimes _f l } \left(V \otimes _{l} (A \otimes _f  l) \right)$. 
 Also let $G $ be $\underline{\Aut}_{f} (V)$.
 For each $f$-algebra $A$, the canonical morphism $A \otimes _f l \to A \otimes _f l'$ induces a canonical morphism of groups \begin{center} $\Aut_{A \otimes _f l' } \left(V \otimes _{l'} (A \otimes _f l') \right) \to \Aut_{A \otimes _f l } \left(V \otimes _{l} (A \otimes _f l) \right)$, \end{center} which is functorial in $A$.
 We thus get a canonical morphism of $f$-group scheme $H' \to H$. This morphism is a closed immersion. We also have a canonical morphism of $F$-group schemes $H \to G$.
 We are interested in Condition \textbf{GE1}, which is related to the extension of scalar from $f$ to $\overline{f}$, the algebraic closure of $f$. So let us compute \begin{center} $T \times _{f }  \overline{f} $,  $H' \times _{f} \overline{f} $ and  $H \times _{f} \overline{f} $.\end{center}
 Let $A$ be an $\overline{f}$-algebra, by definition $(H \times _{f} \overline{f} ) (A) = H (A)$. We have seen that this is equal to $\Aut_{A \otimes _f l} \left(V \otimes _{l} (A \otimes _f l) \right)$. We need to study the algebra $A \otimes _f l$. 
 We know that there exists $\sigma _{1} , \ldots , \sigma _{i} , \ldots , \sigma _{[l:f]}$, distinct morphisms of $f$-algebra from $l$ to the Galois closure of $l$. We also know that for $1 \leq i \leq [l:f]$, there exists $[l':l]$ morphisms of $f$-algebra from $l'$ to the Galois closure of $l'$ extending $\sigma_{i}$, which we denote as  $\sigma _{i1} , \ldots ,\sigma _{ij} , \ldots , \sigma _{i[l':l]}$. We write ${\displaystyle \prod _i }$ instead of ${\displaystyle \prod _{i=1}^{[l:f]}}$ and ${\displaystyle {\bigoplus _{i,j}  }}$ instead of ${\displaystyle {\bigoplus _{i=1}^{[l:f]} \bigoplus_{j=1}^{[l':l] }}}$, we use other similar notations.

 \begin{prop} \label{splitfieldgene} Let $K'$ be the Galois closure of $l'$ and let $K$ be the Galois closure of $l$.  The following assertions hold:

 \begin{enumerate}[(i)]
  \item Let $A$ be a $K$-algebra. Then, $A \otimes _{f} l$ is canonically isomorphic to ${\displaystyle \prod _{i} A_{i}}$,  where $A_{i}=A$ for each $i$. Moreover, this isomorphism is explicitly given as follows:

\centerline{
\xymatrix@R=0.0em{ A \otimes _f {l} \ar[r]^{\sim} & {\displaystyle \prod_{i} A_i } \\
  a \otimes e \ar@{|->}[r] & {\displaystyle \prod_{i} a\sigma _i(e) }}  }

 \item Let $A$ be a $K'$-algebra. Then, $A \otimes _{f} l'$ is canonically isomorphic to ${\displaystyle \prod _{i,j} A_{ij}}$, where $A_{ij}=A$ for each $i,j$.
  Moreover, this isomorphism is explicitly given as follows:

\centerline{
\xymatrix@R=0.0em{ A \otimes _f {l'} \ar[r]^{\sim} & {\displaystyle \prod_{i,j} A_{ij} } \\
  a \otimes e \ar@{|->}[r] & {\displaystyle \prod_{i,j} a\sigma _{ij}(e) }}  }

 Moreover, the $A$-algebra $A \otimes _f l '$ is  canonically an $A \otimes _f l$-algebra. The ring ${\displaystyle \prod _{i,j} A_{ij}}$ is canonically an  ${\displaystyle \prod _{i} A_{i}}$-algebra and the structure is given by 
 \begin{center}
 ${\displaystyle( \prod _i \lambda _i ).( \prod _{i,j} a_{ij} )= \prod _{i,j} \lambda _i a_{ij}}$.
 \end{center}

 \item The previous assertions are functorial in $A$.

 \end{enumerate}

 \end{prop}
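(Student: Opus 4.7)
The plan is to combine the primitive element theorem with the Chinese Remainder Theorem. For (i), since $l/f$ is finite separable, I pick a primitive element $\alpha$ with minimal polynomial $P(X)\in f[X]$, so $l\cong f[X]/(P(X))$ and hence $A\otimes_f l\cong A[X]/(P(X))$. As $A$ is a $K$-algebra and $K$ contains all conjugates of $\alpha$, separability yields a factorisation $P(X)=\prod_i (X-\sigma_i(\alpha))$ into pairwise coprime linear factors in $A[X]$. The CRT then produces $A[X]/(P(X))\cong \prod_i A[X]/(X-\sigma_i(\alpha))\cong \prod_i A_i$, the second map being evaluation at $\sigma_i(\alpha)$. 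Tracing $a\otimes e$, written as $e=Q(\alpha)$ for some $Q\in f[X]$, through the composition gives $(a\,\sigma_i(Q(\alpha)))_i=(a\,\sigma_i(e))_i$, which is the required formula.

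For (ii), I apply the same argument to $l'/f$ with a primitive element $\alpha'$, whose minimal polynomial splits in $K'\subset A$ into $[l':f]$ distinct linear factors indexed by the embeddings $\sigma_{ij}$. This yields the canonical isomorphism $A\otimes_f l'\cong \prod_{i,j} A_{ij}$ with explicit description $a\otimes e\mapsto (a\,\sigma_{ij}(e))_{i,j}$.

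For the compatibility of algebra structures, the inclusion $l\hookrightarrow l'$ induces a ring map $A\otimes_f l\to A\otimes_f l'$, and I must identify this map, under the two isomorphisms, with $(\lambda_i)_i\mapsto (\lambda_i)_{i,j}$. The key point is that each $\sigma_{ij}:l'\to K'$ was chosen to extend $\sigma_i:l\to K$, so $\sigma_{ij}|_l=\sigma_i$. For $e\in l$, the image of $(a\,\sigma_i(e))_i$ in $\prod_{i,j}A_{ij}$ is then $(a\,\sigma_{ij}(e))_{i,j}=(a\,\sigma_i(e))_{i,j}$, which matches the stated structure map.

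Finally, the functoriality claim in (iii) is automatic: every ingredient in the construction---tensor product, the polynomial identification, the splitting $P=\prod_i(X-\sigma_i(\alpha))$, CRT, and evaluation at the fixed elements $\sigma_i(\alpha)$ or $\sigma_{ij}(\alpha')$---is natural with respect to morphisms of $K$-algebras (resp.\ $K'$-algebras). The argument presents no real obstacle: it is essentially a classical Galois-descent computation, and the only care needed is a consistent indexing of embeddings so that part (ii) meshes cleanly with part (i).
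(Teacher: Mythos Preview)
Your proof is correct. The paper's own proof consists of a single sentence, ``This is elementary algebra,'' so your argument via the primitive element theorem and the Chinese Remainder Theorem supplies exactly the standard details that the paper chose to omit; there is nothing to compare.
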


 \begin{proof} This is elementary algebra.

 \end{proof}
 Let $\mA$ be a commutative ring, let $\mA _1$ and $\mA _2$ be two commutative $\mA$-algebras. Let $\mB_1$ be an $\mA_1$-algebra and let $\mB _2$ be an $\mA _2$-algebra. Let $\mM$ be a free $\mA$-module of rank $r$. There are  canonical isomorphism of groups \begin{equation} \label{autabautaautb}
{\Aut}_{\mA _1 \times \mA_2 } ( \mM \otimes _{A} (\mB_1 \times \mB_2))  \simeq {\Aut}_{\mA _1  } ( \mM \otimes _{A} \mB_1 ) \times {\Aut}_{\mA _2  } ( \mM \otimes _{A} \mB_2 )  .
\end{equation}
More generally, the following hold:

\begin{lemm} \label{autaiiiiautaiautaiautai} Let $\mathrm{A}$ be a commutative ring, let $\mA _i$ , $0 \leq i \leq d$, be some commutative $\mA$-algebras. For $0 \leq i \leq d$, let $\mB _i$ be an $\mA _i$-algebra. Let $M$ be a free $\mA$-module of finite rank. Then, we have a canonical isomorphism of groups

\begin{center}
$ \Aut_{\prod _{i=1}^d \mA _i }( \mM \otimes _{\mA} \prod _{i=1}^d \mB_i ) \simeq \displaystyle \prod _{i=1}  ^d \Aut _{\mA_i} (\mM \otimes _{\mA} {\mB _i} )$. \end{center}

\end{lemm}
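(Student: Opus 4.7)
The plan is to reduce the statement to the two-factor isomorphism \eqref{autabautaautb} by a straightforward induction on $d$. Set $\mA' = \prod_{i=1}^{d-1} \mA_i$ and $\mB' = \prod_{i=1}^{d-1} \mB_i$, so that $\mB'$ is an $\mA'$-algebra. Grouping the first $d-1$ factors, the ring $\prod_{i=1}^d \mA_i$ is canonically isomorphic to $\mA' \times \mA_d$, and similarly $\prod_{i=1}^d \mB_i \simeq \mB' \times \mB_d$ as a $(\mA' \times \mA_d)$-algebra. Applying \eqref{autabautaautb} to this two-factor decomposition yields
\[
\Aut_{\mA' \times \mA_d}\bigl(\mM \otimes_{\mA} (\mB' \times \mB_d)\bigr) \simeq \Aut_{\mA'}(\mM \otimes_{\mA} \mB') \times \Aut_{\mA_d}(\mM \otimes_{\mA} \mB_d).
\]
The induction hypothesis applied to the first factor on the right then rewrites it as $\prod_{i=1}^{d-1} \Aut_{\mA_i}(\mM \otimes_{\mA} \mB_i)$, and combining gives the desired isomorphism.

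The base case $d = 2$ is exactly the isomorphism \eqref{autabautaautb} that was asserted in the text. For completeness I would also check that the isomorphism obtained is indeed the canonical one (i.e.\ does not depend on the way we bracket the product), which follows from the associativity of the product of rings and the uniqueness of the decomposition $\mM \otimes_{\mA} \prod_i \mB_i \simeq \prod_i (\mM \otimes_{\mA} \mB_i)$; this last identification holds because $\mM$ is free of finite rank, so tensor product commutes with the finite direct product on the right.

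The only conceptual point worth noting (and the closest thing to an obstacle, although it is very mild) is why a $\prod \mA_i$-linear automorphism of $\prod (\mM \otimes_{\mA} \mB_i)$ must respect the factor decomposition: this is because the orthogonal idempotents $e_i = (0,\dots,1,\dots,0) \in \prod \mA_i$ act as the projectors onto the $i$-th factor, and any $\prod \mA_i$-linear map commutes with multiplication by each $e_i$, hence preserves each summand. Once this is noted, the induction is purely formal.
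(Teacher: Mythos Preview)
Your proof is correct and is exactly what the paper intends: the lemma is stated right after the two-factor isomorphism \eqref{autabautaautb} with the phrase ``More generally, the following hold'' and is given no proof, so the paper is clearly relying on the obvious induction you have spelled out. Your remark on the orthogonal idempotents $e_i$ is the right explanation for why the two-factor case \eqref{autabautaautb} holds in the first place.
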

 Let $i,j,k$ be integers as above and let $C/K'$ be a field extension ($K'$ is the Galois closure of $l'$). We put $V_{ij} = V \otimes _{l'} C_{ij}$ ($C_{ij}=C$ is defined as in Proposition \ref{splitfieldgene}). We put also $D_{ijk} = D_k \otimes _{l'} C_{ij}$.

 \begin{sloppypar}
 \begin{prop} \label{grousplit}  With the previously introduced notations, the following assertions hold: \begin{enumerate}[(i)]
 \item 
 There is a canonical commutative diagram  of $f$-schemes
 \begin{center}
 \begin{scriptsize}
 \xymatrix{ T \times _{f}  C \ar[r]^-{h_1} \ar[d]^-{v_1}& H ' \times _{f} C \ar[r]^-{h_2} \ar[d]^-{v_2} & H \times _{f} C  \ar[r]^-{h_3} \ar[d]^-{v_3} &G \times _{f}  C \ar[d]^-{v_4}\\
{\displaystyle \prod_{i,j,k} \underline{ \Aut} _{C} (D_{ijk}) }\ar[r]^-{f_1} &  \displaystyle \prod_{i,j}  \underline{ \Aut} _{C} \displaystyle( V_{ij}) \ar[r]^-{f_2}& \displaystyle \prod_{i} \underline{ \Aut} _{C} (\displaystyle \bigoplus _j V_{ij})  \ar[r]^-{f_3} & \underline{ \Aut} _{C} \displaystyle (\bigoplus _{i,j} V_{ij}) .}

 \end{scriptsize}
 \end{center}

 \item There is a canonical commutative diagram of $k$-spaces
 \begin{flushleft}
 \begin{scriptsize}

 \xymatrix{  
 \Lie (T ) \ar[r] \ar[d]& \Lie ( H ' ) \ar[r] \ar[d] & \Lie ( H )  \ar[r] \ar[d] & \Lie (G ) \ar[d]\\
  \Lie (T \times _{f} C) \ar[r] \ar[d]^{\simeq}& \Lie ( H ' \times _{f} C )\ar[r] \ar[d]^{\simeq} & \Lie ( H \times _{f} C)  \ar[r] \ar[d]^{\simeq} & \Lie (G \times _{f} C) \ar[d]^{\simeq} \\
{\displaystyle \prod_{i,j,k} \End _{C} (D_{ijk}) }\ar[r] &  \displaystyle \prod_{i,j}  \End _{C} \displaystyle(V_{ij})\ar[r]& \displaystyle \prod_{i} \End _{C} (\displaystyle \bigoplus _j V_{ij})  \ar[r] & \End _{C} (\displaystyle \bigoplus _{i,j}  V_{ij}) .}

 \end{scriptsize}
 \end{flushleft}

 \item Let $s $ be an element in $l'$. Let $m_s$ be the element of $\Lie (T)$ which send an element $h$ to $sh$. Let  $m_{s , C}$ be the element in $ \displaystyle \prod_{i,j}  \End _{C} \displaystyle(V_{ij}) \subset \End _{C} ( \displaystyle \bigoplus _{i,j} V_{ij})$ characterised by the formula,
 \begin{center}
 for all $i,j$, for all $v \in V_{ij}$, $m_{s , C}(v )=\sigma _{ij} (s) v$.
 \end{center}

 Then, the image of $m_s$ in $\End _{C} ( \displaystyle \bigoplus _{i,j} V_{ij})$ through the diagram introduced in $(ii)$ is $m_{s,C}$.
 \end{enumerate}

 \end{prop}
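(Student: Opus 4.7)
The plan is to construct the bottom rows of both diagrams first as objects parameterised by $C$-algebras $A$, then identify the top rows with them via the universal properties of restriction of scalars; part (iii) will follow by tracing $s$ through the explicit formulas in Proposition \ref{splitfieldgene}.

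For part (i), fix a $C$-algebra $A$. Proposition \ref{splitfieldgene} yields canonical decompositions $A \otimes_f l \simeq \prod_i A_i$ and $A \otimes_f l' \simeq \prod_{i,j} A_{ij}$, compatible with the algebra map $A \otimes_f l \to A \otimes_f l'$, which is the diagonal map on the $(i,j)$-component. Base-changing $V$ to these rings gives respectively the $\prod_i A_i$-module $\bigoplus_i (\bigoplus_j V_{ij} \otimes_C A)$ and the $\prod_{i,j} A_{ij}$-module $\bigoplus_{i,j} (V_{ij} \otimes_C A)$, while $V \otimes_f A$ is simply $(\bigoplus_{i,j} V_{ij}) \otimes_C A$. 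Applying Lemma \ref{autaiiiiautaiautaiautai} at each level provides the vertical isomorphisms $v_2, v_3, v_4$; the corresponding statement for $T$, using the decomposition $D$ inside each $V_{ij}$, provides $v_1$. The horizontal maps $f_1, f_2, f_3$ on the bottom are the block-diagonal inclusions obtained by grouping the factors indexed by $k$, then $j$, then $i$. Commutativity of each square is immediate on $A$-points from these explicit formulas, and functoriality in $A$ promotes this to a commutative diagram of $C$-group schemes.

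Part (ii) is obtained from (i) by applying $\Lie$. The top squares of vertical isomorphisms come from the canonical base-change isomorphism $\Lie(-)\otimes_f C \simeq \Lie(- \times_f C)$ for smooth affine $f$-group schemes. The bottom squares come from the identification $\Lie(\underline{\Aut}_C(W)) = \End_C(W)$ and the fact that $\Lie$ commutes with finite products; the horizontal arrows are the differentials of the maps in (i).

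For part (iii), the key step is to track the image of $s \in l'$ under the base change $l' \hookrightarrow C \otimes_f l' \simeq \prod_{i,j} C_{ij}$: by Proposition \ref{splitfieldgene}(ii), the element $1\otimes s$ maps to the tuple $(\sigma_{ij}(s))_{i,j}$. Since the $l'$-action on $V$ base-changes to the diagonal action of $\prod_{i,j} C_{ij}$ on $\bigoplus_{i,j} V_{ij}$, the endomorphism $m_s \colon v \mapsto sv$ becomes the endomorphism acting by $\sigma_{ij}(s)$ on the $(i,j)$-component, which is exactly $m_{s,C}$. The only real obstacle is the bookkeeping needed to align the various $\otimes$-decompositions and the position of the $T$-maximal decomposition $D$ inside the factorisation of $V$ produced by Proposition \ref{splitfieldgene}; this is controlled by keeping the three indices $i,j,k$ consistent throughout.
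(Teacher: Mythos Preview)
Your proposal is correct and follows essentially the same route as the paper: both use Proposition~\ref{splitfieldgene} to decompose $A\otimes_f l$ and $A\otimes_f l'$, then Lemma~\ref{autaiiiiautaiautaiautai} to obtain the vertical isomorphisms and the block-diagonal bottom maps, derive (ii) by applying $\Lie$, and prove (iii) by tracking $1\otimes s\mapsto(\sigma_{ij}(s))_{i,j}$ through the explicit decomposition. The paper spells out each $A$-point computation and the chain of maps $V\otimes_f C\to V\otimes_{l'}(l'\otimes_f C)\to V\otimes\prod_{i,j}C_{ij}\to\bigoplus_{i,j}V_{ij}$ in full, whereas you summarise these steps, but there is no substantive difference in strategy.
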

 \end{sloppypar}

\begin{proof} \begin{enumerate}[(i)] \item The upper horizontal line is induced by the previously introduced morphisms $ T \to H' \to H \to G $. We thus get some maps $h_1,h_2$ and $h_3$. Let $A$ be a $C$-algebra. In the rest of this proof, we still denote $h_1 (A)$ by $h_1$, and we do the same for $h_2$ and $h_3$.  We have
\begin{align*}
\left(T \times _{f} C \right) (A) \simeq & \left( \Res _{l'/f} \displaystyle \prod _k \underline{\Aut} _{l'} (D_k) \right) (A)\\
{\footnotesize \text{By properties of $\Res$}}~~~~\simeq & \left( \displaystyle \prod _k  \Res_{l'/f} \underline{\Aut } _{l'} (D_k)\right) (A)\\
\simeq &   \displaystyle \prod _k \left( \Res_{l'/f} \underline{\Aut } _{l'} (D_k) (A) \right)\\
\begin{footnotesize}
\text{ By definition of $\Res$}
\end{footnotesize}~~~~\simeq & \displaystyle \prod _k  \underline{\Aut } _{l'} (D_k) (A\otimes _{f} l') \\
\begin{footnotesize}
\text{ By definition of $\underline{\Aut}$}
\end{footnotesize}~~~~\simeq & \displaystyle \prod _k   {\Aut } _{A\otimes _{f} l'} (D_k \otimes _{l'} (A\otimes _{f} l'))  \\
\begin{footnotesize}\text{ By proposition \ref{splitfieldgene}}
\end{footnotesize}~~~~\simeq & \displaystyle \prod _k   {\Aut } _{\prod _i \prod _j A_{ij}} (D_k \otimes _{l'} (\prod _{i,j} A_{ij}))  \\
\begin{footnotesize}\text{ By proposition \ref{autaiiiiautaiautaiautai}}
\end{footnotesize}~~~~\simeq & \displaystyle \prod _{k,i,j}  {\Aut } _{A_{ij}} (D_k \otimes _{l'}  A_{ij})  \\
\simeq & \displaystyle   \prod _{i,j,k}  {\Aut } _{A_{ij}} (D_k \otimes _{l'}  A_{ij})  \\
\simeq & \displaystyle   \prod _{i,j,k}  {\Aut } _{A_{ij}} (D_{ijk})  \\
\end{align*}
We thus get an isomorphism \begin{center} $\left(T \times _{f} C \right) (A) \to \displaystyle   \prod _{i,j,k}  {\Aut } _{A_{ij}} (D_{ijk}) $,\end{center} let us denote it as $v_1$.
 We have
\begin{align*}
\left(H' \times _{f} C \right) (A) \simeq & \left( \Res _{l'/f}  \underline{\Aut} _{l'} (V) \right) (A)\\
\simeq & \underline{\Aut}_{l'} (V) (A \otimes _{f} l' ) \\
\simeq & \Aut _{A \otimes _{f} l' } ( V \otimes _{l'} ( A \otimes _{f} l' ))\\
\simeq & \Aut _{\prod _{i,j} A_{ij}} ( V \otimes _f ( \displaystyle \prod_{i,j} A_{ij} ))\\
\simeq & \displaystyle \prod _{i,j} \Aut _{A_{ij}} (V \otimes A_{ij})\\
\simeq & \displaystyle  \prod _{i,j} \Aut _{A_{ij}} (V_{ij}).
\end{align*}
We thus get an isomorphism \begin{center}$\left(H' \times _{F} C \right) (A) \to  \displaystyle  \prod _{i,j} \Aut _{A_{ij}} (V_{ij})$,\end{center} let us denote it as $v_2$.
We have
\begin{align*}
\left(H \times _{f} C \right) (A) \simeq & \left( \Res _{l/f}  \underline{\Aut} _{l} (V) \right) (A)\\
\simeq & \underline{\Aut}_{l} (V) (A \otimes _{f} l ) \\
\simeq & \Aut _{A \otimes _{f} l } ( V \otimes _l ( A \otimes _{f} l )).\\
\end{align*}
As an $A \otimes _f l$-module,  $ V \otimes _l ( A \otimes _{f} l )$ is isomorphic to $V \otimes _{l'} ( A \otimes _f l' )$. So
\begin{align*}
\left(H \times _{f} C \right) (A) \simeq & \Aut _{A \otimes _{f} l } ( V \otimes _{l'} ( A \otimes _{f} l' ))\\
\simeq & \Aut _{\prod _i A_i}  ( V \otimes _{l'} (\prod _{i,j} A_{ij} ))\\
\begin{footnotesize}\text{ By proposition \ref{autaiiiiautaiautaiautai}}
\end{footnotesize}~~~~\simeq & \prod _{i} \Aut _{A_i} ( V \otimes _{l'} ( \prod _j A_{ij} )\\
\simeq & \prod _i \Aut _{A_i} ( \bigoplus _j V \otimes _{l'} A_{ij} )\\
\simeq & \prod _i \Aut _{A_i} (\bigoplus _j V_{ij}).\\
\end{align*}
We thus get an isomorphism  \begin{center}$\left(H \times _{f} C\right) (A) \to  \prod _i \Aut _{A_i} (\bigoplus _j V_{ij})$,\end{center} let us denote it as $v_3$.
We have 
\begin{align*} 
\left( G \times _{f} C  \right) (A) \simeq &( \underline{\Aut } _{f} (V) )(A) \\
\simeq & \Aut _A ( V \otimes _f A) \\
\simeq & \Aut _A ( V \otimes _{l'} ( l' \otimes _f A) )\\
\simeq & \Aut _A ( V \otimes _{l'} ( \displaystyle \prod _{i,j} A_{ij} )) \\
\simeq & \Aut _A ( \displaystyle \bigoplus _{i,j} V \otimes _{l'} A_{ij} )\\
\simeq & \Aut _A ( \displaystyle \bigoplus _{i,j} V_{ij})
\end{align*}
 We thus get an isomorphism \begin{center}$\left( G \times _{f} C  \right) (A) \simeq ( \underline{\Aut } _{f} (V) )(A) \to \Aut _A ( \displaystyle \bigoplus _i V_{ij})$,\end{center} let us denote it as $v_4$. 
Let us recall that for all $i,j$, $V_{ij} = \displaystyle \bigoplus _k D _{ijk}$. In the following, $v_{ijk}$ denotes an arbitrary vector in $D_{ijk}$, and $v_{ij}$ denotes an arbitrary vector in $V_{ij}$.
 Let $f_1$ be the canonical morphism  
 \begin{center} $ {\displaystyle   \prod _{i,j,k}  {\Aut } _{A_{ij}} (D_{ijk}) \to  \displaystyle \prod _{i,j} \Aut _{A_{ij}} (\bigoplus _k D_{ijk})}$ \end{center} sending  $\prod _{i,j,k} ( L_{ijk}) $ to $  \prod _{i,j} \left(\sum _k v_{ijk} \mapsto \sum _k L_{ijk} (v_{ijk}) \right)$. It is a formal computation to verify that the morphism $v_2 \circ h_1 $ is equal to $f_1  \circ v_1$. 
 Let $f_2$ be the canonical morphism
 \begin{center} $ \displaystyle \prod _{i,j} \Aut _{A_{ij}} (V_{ij}) \to   \prod _i \Aut _{A_i} (\bigoplus _j V_{ij})$  \end{center} sending $\prod _{i,j} L_{ij} $ to $\prod _i \left( \sum _j  v_{ij} \mapsto \prod _i  \sum _j L_{ij} ( v_{ij}) \right).$ It is a formal computation to verify that the morphism $v_3 \circ h_2 $ is equal to $f_2 \circ v_2$. 
 Let $f_3$ be the canonical morphism 
 \begin{center} $\displaystyle  \prod _i \Aut _{A_i} (\bigoplus _j V_{ij}) \to \Aut _A ( \displaystyle \bigoplus _{i,j} v_{ij})$\end{center} sending $\prod _i L_i $ to  $ \left( \sum _{i,j}  v_{ij} \mapsto \sum _i L_i ( \sum _j  v_{ij} )\right)$. It is a formal computation to verify that $v_4 \circ h_3 $ is equal to $f_3 \circ v_3$.
 The previous isomorphisms are functorial in $A$ and form a canonical diagram, and thus induce the required diagram at the level of $C$-algebraic groups.
 This concludes the proof of $(i)$

 \item This is a consequence of $(i)$ taking the Lie algebra of all objects.

 \item The image of $m_s$ in $\Lie (G) = \End _{f} (V)$ is the map sending $v$ to $sv$. The map $\Lie (G) \to \Lie ( G \times _{f} C)$ is the map\begin{center} ${\End _f (V) \to \End _{C} ( V \otimes _f C)}$ \end{center} sending a $f$-linear map $L$ to the $C$-linear map $\left( v \otimes \lambda \mapsto L(v) \otimes \lambda \right)$ so the image of $m_s$ in $\Lie (G \times _{F} \overline{F}$ is the map $(v \otimes \lambda \mapsto sv \otimes \lambda )$, let still denote it $m_s$. Consider the diagram of $C$-linear maps

  \centerline{ \xymatrix{V \otimes _f C \ar[r]^-{m_s} \ar[d]^-c  & V \otimes _f C \ar[d]^-c \\
 V \otimes _{l'} ( l' \otimes _f C) \ar[d]^-i & V \otimes _{l'} ( l' \otimes _{f} C_{ij})\ar[d]^-i \\
 { V \otimes \displaystyle \prod _{i,j} C_{ij}} \ar[d]^-b &  { V \otimes \displaystyle \prod _{i,j} C_{ij}} \ar[d]^-b\\
 \displaystyle \bigoplus _{i,j} V_{ij} & \displaystyle \bigoplus _{i,j} V_{ij}} }
 \begin{sloppypar}
 where $c$ is the canonical map, $i$ is the map induced by the map introduced in proposition \ref{splitfieldgene}, and $b$ is the canonical map induced by the definition of $V_{ij}$. The image of $m_s$ in $\End _{\overline{F}} \left( \displaystyle \bigoplus _{i,j} V_{ij} \right)$ is the composition ${b \circ i \circ c \circ m_s \circ c^{-1} \circ i^{-1} \circ b ^{-1}}$.
Let us show that it is equal to $m_{s,C}$. The equality ${b \circ i \circ c \circ m_s \circ c^{-1} \circ i^{-1} \circ b ^{-1}}= m_{s,C}$ is equivalent to the equality ${b \circ i \circ c \circ m_s =m_{s,C} \circ b \circ i \circ c}$. Let us prove this last equality by calculation. Let $v \otimes \lambda \in V \otimes _{F} C$, we have 
\end{sloppypar}

\begin{align*}b \circ i \circ c \circ m_s (v \otimes \lambda ) = &b \circ i \circ c ( sv \otimes \lambda )\\
=& b \circ i ( sv \otimes ( 1 \otimes \lambda ))\\
=& b \circ i ( v \otimes (s \otimes \lambda ))\\
=& b ( v \otimes \prod _{i,j} \sigma _{ij} (s) \lambda )\\
=& \displaystyle \sum _{i,j} v \otimes \sigma_{ij}(s) \lambda 
\end{align*}

and 
\begin{align*}
m_{s,C} \circ b \circ i \circ c (v \otimes \lambda)= &m_{s,C} \circ b \circ i (v \otimes ( 1 \otimes \lambda) ) \\
=& m_{s,C} \circ b ( v \otimes \displaystyle \prod _{i,j} \lambda )\\
=& m_{s,C} (\sum _{i,j } v \otimes \lambda )\\
= &\sum _{i,j} v \otimes \sigma _{ij}(s) \lambda.
\end{align*}
This ends the proof of $(iii)$.

  \end{enumerate}

 \end{proof}

 \begin{sloppypar}
 So, the torus $T \times _{f} C $ is a maximal split torus of ${H' \times _{f} C}$, ${ H \times _{f} C}$ and ${ G \times _{f}C}$. Moreover, ${H' \times _{f}C}$ is a Levi subgroup of ${ H \times _{f} C} $, and $ {H \times _{f}C} $ is a Levi subgroup of $ {G \times _{ f} C}$. We thus have inclusions of the corresponding sets of roots. \end{sloppypar}\[ \Phi ( H' , T , C) \subset \Phi ( H , T , C) \subset \Phi (G , T , C)\] Let us identify, using \ref{grousplit}, \begin{align*}
&T \times _{f} C &~~~~~~\text{with}~~~~~~~~~~~& \displaystyle \prod_{i,j,k} \underline{ \Aut} _{C} (D_{ijk}),\\ 
 & H'\times _{f} C &~~~~~~\text{with}~~~~~~~~~~~& \displaystyle \prod_{i,j}  \underline{ \Aut} _{C} \displaystyle( V_{ij}),\\
 &H\times _{f} C &~~~~~~\text{with}~~~~~~~~~~~& \displaystyle \prod_{i}   \underline{ \Aut} _{C} \displaystyle(\bigoplus _j V_{ij}),~ \text{and}\\
  &G\times _{f}C&~~~~~~\text{with}~~~~~~~~~~~& \underline{ \Aut} _{C} ( \displaystyle \bigoplus _{i,j} V_{ij}) .\\
\end{align*}   Because  $\displaystyle \bigoplus _{i,j} V_{ij}$ is equal to $ \displaystyle \bigoplus _{i,j,k} D_{ijk} , $ we can apply \ref{autam} to describe the set of roots $\Phi (G , T , C)$. Putting \begin{align*}
I&=\{1, \ldots, i ,\ldots ,[l:f]\}\\
 J&=\{1,\ldots ,j,\ldots , [l':l]\} \\
 K&=\{1,\ldots ,k,\ldots ,d\}, 
 \end{align*} we obtain the following equality.\[
\Phi (G , T , C)= \{ \alpha _{ijk , i'j'k'} \mid (i,j,k),(i',j',k') \in  (I \times J \times K), (i,j,k) \not = (i',j',k') \}\]
The set of roots $\Phi (H , T , C)$ is the following subset of $\Phi (G , T , C)$\[\Phi (H , T , C)= \{ \alpha _{ijk , i'j'k'} \in \Phi (G , T , C) \mid i=i'  \}\]
The set of roots $\Phi (H' , T ,C)$ is the following subset of $\Phi (G , T , C)$\[\Phi (H' , T , C)= \{ \alpha _{ijk , i'j'k'} \in \Phi (G , T , C) \mid i=i' ~\text{and}~ j=j'  \}.\]
  The condition \textbf{GE1} is relative to the set $\Phi (H , T ,C) \setminus \Phi (H ', T , C)$. The following is a description of this set\[ \Phi (H , T ,C) \setminus \Phi (H' , T , C)= \{ \alpha _{ijk , i'j'k'} \in \Phi (G , T , C) \mid i=i' ~\text{and}~ j\not=j'  \}.\]
 \begin{sloppypar}
 The condition \textbf{GE1} involves the element $H_{\alpha} $ for $\alpha$ in ${ \Phi (H , T , C) \setminus \Phi (H' , T , C)}$.
 Let us recall the description given in \ref{autam}. Let $\alpha _{ijk , i'j'k'} \in \Phi (G , T , C),$ the element $H_{\alpha}$, which is by definition  ${\mathrm{d}\alpha _{ijk,i'j'k'} ^{\vee}(1) }$ is the element sending each element $v \in D_{ijk} $ to $v$, and sending each element $v \in D_{i'j'k'}$ to $-v$ and, for all $i''j''k''$ different of $ijk,i'j'k'$, sending each element $v \in D_{i''j''k''} $ to $0$.\end{sloppypar}

  \subsection{Tame twisted Levi sequences}
  \begin{sloppypar}
  Let $E'/E/F$ be a tower of finite tamely ramified extensions. Let $V$ be an $E'$-vector space of dimension $d$ and $D$ be a decomposition ${V=(D_1 \oplus \ldots \oplus D_k \oplus \ldots \oplus D_d)}$ of $V$ in one dimensional $E'$-vector spaces. 
  In the previous subsection, we introduced $H'= \Res _{E'/F} \underline{\Aut} _{E'}(V)$,  $H= \Res _{E/F} \underline{\Aut} _{E}(V)$, and $G = \underline{\Aut}_{F} (V)$. We also associated a torus $T= \Res _{E'/F} (T_D) $ to the decomposition $D$.
  In proposition \ref{grousplit}, we computed the extension of scalar of these $F$-groups scheme to an extension containing the Galois closure of $E'$. We deduce the following corollary.\end{sloppypar}

  \begin{coro} \label{corotwistedlevitame} The sequence $H' \subset H \subset G$ is a tamely ramified twisted Levi sequence in $G$, moreover $Z(H')/Z(G)$ is anisotropic.
  \end{coro}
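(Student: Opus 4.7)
The plan is to apply Proposition \ref{grousplit} with $C$ chosen to be the Galois closure $L$ of $E'$ over $F$. First, I would observe that $L/F$ is tamely ramified: since $E'/F$ is tame by hypothesis, all its $F$-embeddings into $\overline{F}$ land in tame extensions, and the compositum of tamely ramified extensions inside a fixed algebraic closure is still tamely ramified. Thus $L/F$ provides a candidate for the tame Galois splitting field required in the definition of a tame twisted Levi sequence.

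Next, I would invoke part $(i)$ of Proposition \ref{grousplit} with $C=L$. The description there identifies $T\times_F L$ with $\prod_{i,j,k}\underline{\Aut}_L(D_{ijk})$, a product of copies of $\mathbb{G}_m$, so $T$ is split over $L$ and is a maximal torus of each of $H'\times_F L$, $H\times_F L$, and $G\times_F L$. The horizontal morphisms $f_1,f_2,f_3$ in the diagram identify $H'\times_F L \simeq \prod_{i,j}\underline{\Aut}_L(V_{ij})$ with the block-diagonal Levi subgroup of $H\times_F L \simeq \prod_i \underline{\Aut}_L(\bigoplus_j V_{ij})$ obtained by fixing the $j$-decomposition, and similarly $H\times_F L$ with the block-diagonal Levi of $G\times_F L \simeq \underline{\Aut}_L(\bigoplus_{i,j} V_{ij})$ obtained by fixing the $i$-decomposition. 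Hence, over $L$, the sequence becomes a sequence of split Levi subgroups of $G\times_F L$, confirming the tame twisted Levi condition.

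For the anisotropy of $Z(H')/Z(G)$, I would compute the centers directly from the functorial descriptions: $Z(H')=\Res_{E'/F}\mathbb{G}_m$ and $Z(G)=\mathbb{G}_m$, with the embedding $Z(G)\hookrightarrow Z(H')$ being the inclusion of scalars $R^\times\hookrightarrow (R\otimes_F E')^\times$. Passing to character lattices gives the exact sequence
\[
0\longrightarrow X^*\!\bigl(Z(H')/Z(G)\bigr)\longrightarrow \mathbb{Z}[\Hom_F(E',\overline{F})]\longrightarrow \mathbb{Z}\longrightarrow 0,
\]
where the right-hand arrow is the augmentation $\sum a_\sigma[\sigma]\mapsto \sum a_\sigma$. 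The Galois group $\Gal(\overline{F}/F)$ acts transitively on $\Hom_F(E',\overline{F})$, so a Galois-invariant element of the kernel must be a scalar multiple of $\sum_\sigma[\sigma]$ with augmentation $0$, forcing it to vanish. Thus $X^*(Z(H')/Z(G))^{\Gal}=0$, i.e.\ $Z(H')/Z(G)$ is $F$-anisotropic.

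The argument is essentially bookkeeping built on top of Proposition \ref{grousplit}; the only subtle point is ensuring $L/F$ is tame, which I expect to be the mildest of obstacles since tameness is preserved under taking normal closures inside an algebraic closure.
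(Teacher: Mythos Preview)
Your proof of the twisted Levi sequence assertion is essentially the same as the paper's: both invoke Proposition~\ref{grousplit} with $C$ taken to be the Galois closure of $E'$ over $F$, observing that this closure is tame and that the diagram there realizes $H'\times_F L$ and $H\times_F L$ as split Levi subgroups of $G\times_F L$.

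For the anisotropy of $Z(H')/Z(G)$ you take a genuinely different route. The paper argues topologically: it identifies $(Z(H')/Z(G))(F)\simeq E'^{\times}/F^{\times}$ via Hilbert~90 and then uses the explicit structure of the multiplicative group of a local field (Proposition~\ref{multistru}) to see that this quotient is compact, hence the torus is anisotropic. You instead compute the Galois action on the character lattice, identifying $X^{*}(Z(H')/Z(G))$ with the augmentation-zero elements of $\mathbb{Z}[\Hom_F(E',\overline{F})]$ and observing that the only Galois-invariant such element is $0$ because the Galois action on the embeddings is transitive. Both arguments are correct. Yours is purely algebraic and works uniformly over any field (it never uses that $F$ is local), whereas the paper's approach ties the statement back to the concrete structure of $E'^{\times}$ already set up in Section~\ref{sectemrs} and reuses Proposition~\ref{multistru}; it is shorter in context but less portable.
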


  \begin{proof} We have to verify that the definition given at the beginning of section \ref{yu} is satisfied. We need to show that there exists a finite tamely ramified Galois extension $L$ of $F$ such that $H' \times _{F} L$ and  $H' \times _{F} L$ are Levi subgroups of $G \times _{F} L$. This is a direct consequence of Proposition \ref{grousplit}.  
Then, the isomorphism of topological groups
$ \left(Z(H') /Z(G) \right) (F)\simeq E'^{\times} /F^{\times}$ holds.
 The explicit description of the topological multiplicative group of a non-Archimedean local field given in Proposition \ref{multistru} implies that  $E'^{\times} /F^{\times}$ is compact. This implies that $Z(H') /Z(G)$ is anisotropic.
 This ends the proof.

  \end{proof}

 \subsection{Generic elements and minimal elements} \label{genericassotomini}

In this subsection, we use the notations of the previous subsection.
The center $Z'$ of $H'$ is isomorphic to $\Res _{E'/F} ( \mathbb{G}_m)$, thus it is connected; that is, $Z'^{\circ} = Z'$.
The inclusions $Z' \to H' \to H \to G $ induces a canonical diagram

\centerline{ \xymatrix{ \Lie ( Z')\ar[d] \ar[r] & \Lie (G) \ar[d] \\
 \Lie ( Z') \otimes _F \overline{F}  \ar@{}[d]|*=0[@]{\cong} \ar[r]& \Lie (G)\otimes _F \overline{F} \ar@{}[d]|*=0[@]{\cong} \\
 \Lie ( Z' \times _{F} \overline{F}) \ar[r] &\Lie ( G \times _{F} \overline{F}). }}
 As explained after Definition \ref{realizedef}, we have canonical inclusions
 \[{\Lie^*(Z') \to \Lie^*(H') \to \Lie^*(H) \to \Lie ^* (G)},\]
  inducing
   a canonical inclusion $\Lie ^* (Z') \to \Lie^* (G)$ and a canonical commutative diagram

 \centerline{ \xymatrix{\Lie ^* ( Z') \ar[r]\ar[d] & \Lie ^* (G)\ar[d] \\
 \Lie ^* (Z') \otimes _F \overline{F} \ar[r] \ar@{}[d]|*=0[@]{\cong} & \Lie ^* (G ) \otimes _F \overline{F} \ar@{}[d]|*=0[@]{\cong} \\
 \Lie ^* ( Z' \times _{F } \overline{F} )\ar[r]& \Lie ^* (G \times _{F} \overline{F}). }}

 Recall that an element $X ^* \in \Lie ^* (Z')$ is $H$-generic of depth $r$ if and only if $X^ * \in \Lie ^* (Z') _{-r} $ and if Conditions \textbf{GE1} and \textbf{GE2} hold. Because  $H'$ and $H$ are of type A, Condition \textbf{GE1} implies Condition \textbf{GE2} by \ref{ge1implyge2tors}. Given $X^* \in \Lie ^* (Z')$ we denote by $X^*_{\overline{F}}$ the image of $X^*$ in $\Lie ^* (Z' \times _{ F} \overline{F}) $ via the previous commutative diagram. Let recall that Condition \textbf{GE1} holds for $X^*$ if $X^* _{\overline{F}} (H _{\alpha}) = -r $ for all root $\alpha \in \Phi ( H , T , \overline{F}) \setminus \Phi ( H', T, \overline{F}) $.

 \begin{defi}\label{xy} Let $c \in E'$. Let $X^* _{c}$ be the element in $\Lie ^* (Z')$ sending an element $h \in Lie (Z') $ to $\Tr _{\End _F (V) / F } (m_c \circ i (h) )$ where $i$ is the map ${Lie (Z') \to \Lie (G)}$, and $m_c \in End_F (V)$ is the map sending $v \in V $ to $cv$; that is,  $m_c$ is the multiplication by $c$.
 \end{defi}

 \begin{prop}\label{xycomput} \begin{sloppypar}Let $c\in E'$. Let $X_c ^* \in Lie^* (Z')$ be the element introduced in definition \ref{xy}. Let $X^* _{c, \overline{F}}$ be the corresponding element in ${\Lie ^* (Z' \times _{ F} \overline{F})}$. Then
 \end{sloppypar}
 \begin{enumerate}[(i)]
 \item  \begin{sloppypar}$X^* _{ c , \overline{F}} ( H _{\alpha _{i_1 j_1 k_1 , i_2 j_2 k_2}}) = \sigma _{i_1j_1}(c) - \sigma _{i_2 j_2}(c) $ for all roots ${{\alpha _{i_1 j_1 k_1 , i_2 j_2 k_2}} \in \Phi (G , T , \overline{F}) }.$ \end{sloppypar}
 \item The element $X^*_c$ is in $Lie^*(Z') _{-r}$ where $r=-\ord(c)$.

 \end{enumerate}
 \end{prop}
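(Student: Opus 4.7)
The plan is to use the identifications established in Proposition \ref{grousplit} to transport the problem to the split situation over $\overline{F}$, where both $X^*_{c,\overline{F}}$ and $H_\alpha$ become explicit diagonal operators on $\bigoplus_{i,j,k} D_{ijk}$.

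For (i), I would first invoke the compatibility of the trace with base change, so that $X^*_{c,\overline{F}}(h) = \Tr(m_{c,\overline{F}} \circ h)$ for any $h \in \Lie(G \times_F \overline{F})$, where $m_{c,\overline{F}}$ denotes the image of $m_c$ in $\End_{\overline{F}}(V \otimes_F \overline{F}) = \End_{\overline{F}}(\bigoplus_{i,j,k} D_{ijk})$. By part (iii) of Proposition \ref{grousplit} applied with $s = c$ and $C = \overline{F}$, this $m_{c,\overline{F}}$ is diagonal: it acts on every $D_{ijk} \subset V_{ij}$ by the scalar $\sigma_{ij}(c)$. On the other hand, by the explicit description recalled at the end of Subsection \ref{gllevi}, $H_{\alpha_{i_1j_1k_1,i_2j_2k_2}}$ is likewise diagonal, equal to $+1$ on $D_{i_1j_1k_1}$, to $-1$ on $D_{i_2j_2k_2}$, and to $0$ on every other summand. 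The composition is then diagonal, and since each $D_{ijk}$ is one-dimensional over $\overline{F}$, the sum of its diagonal entries is exactly $\sigma_{i_1j_1}(c) - \sigma_{i_2j_2}(c)$.

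For (ii), I would use the canonical isomorphism $\Lie(Z') \simeq E'$ coming from $Z' = \Res_{E'/F}\mathbb{G}_m$, under which the inclusion $\Lie(Z') \hookrightarrow \Lie(G) = \End_F(V)$ sends $y \in E'$ to multiplication $m_y$, and under which the Moy--Prasad filtration becomes $\Lie(Z')_s = \{y \in E' \mid \ord(y) \geq s\}$. For any $y \in \Lie(Z')_{r+}$ one then has $\ord(cy) > -r + r = 0$, hence $cy \in \mathfrak{p}_{E'}$. Because $V$ is an $E'$-vector space of dimension $d$, the $F$-trace of $m_{cy}$ equals $d \cdot \Tr_{E'/F}(cy)$, and this lies in $\mathfrak{p}_F$ since every Galois conjugate of $cy$ has positive $\ord$. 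Appealing to the definition of $\Lie^*(Z')_{-r}$ recalled in the Notations gives $X^*_c \in \Lie^*(Z')_{-r}$, as required.

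The main subtlety is not a calculation but bookkeeping: one must keep track of the several canonical identifications of Proposition \ref{grousplit} (in particular the passage from $\Lie(Z') \hookrightarrow \Lie(G)$ to the diagonal embedding in $\End_{\overline{F}}(\bigoplus D_{ijk})$) together with the standard identification of the Moy--Prasad filtration on $\Lie(Z')$ with the $\ord$-filtration on $E'$. Once these are in place, both statements are immediate trace computations with explicitly diagonal endomorphisms.
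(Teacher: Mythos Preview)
Your proof is correct and follows essentially the same approach as the paper. For part (i), the paper spends most of its argument constructing an explicit commutative diagram to justify the identity $X^*_{c,\overline{F}} = \Tr_{\overline{F}} \circ (m_{c,\overline{F}} \circ {-})$, which you invoke directly as ``compatibility of the trace with base change''; after that, both proofs reduce to the same diagonal trace computation using Proposition~\ref{grousplit}(iii) and the explicit form of $H_\alpha$. For part (ii), your argument via the identification $\Lie(Z') \simeq E'$ and $\Tr_{A/F}(m_{cy}) = d\cdot\Tr_{E'/F}(cy)$ is a slightly more explicit version of the paper's one-line observation that $c\,\Lie(Z')_{r+} = \Lie(Z')_{0+}$.
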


 \begin{proof}\begin{enumerate}[(i)]
 \item
   Consider the diagram 

 \centerline{\xymatrix{\Lie (Z')\ar[d]^{Id\otimes 1} \ar[r]^-i& \End_F (V) \ar[d]^{Id\otimes 1} \ar[r]^-{m_c \circ } &\End_F (V) \ar[d]^{Id\otimes 1} \ar[r]^-{\Tr _{F }}  \ar[d]^{Id\otimes 1}  &F\ar[d]^{Id\otimes 1} \\
\Lie (Z') \otimes _F \overline{F} \ar[d]^{g} \ar[r]^-{i\otimes Id } & \End_F (V) \otimes _F \overline{F}\ar[d]^{f}  \ar[r]^-{ m_c \circ \otimes Id }  &\End_F (V) \otimes _F \overline{F}\ar[d]^{f} \ar[r]^-{\Tr _{F} \otimes Id }&\overline{F} \ar[d]^{Id} \\ \Lie (Z' \times _{F} \overline{F} \ar[r]^-{i_{\overline{F}}} &\End_{\overline{F}}(V \otimes _F \overline{F}) \ar[r]^-{ m_{c ,\overline{F}}\circ} &\End_{\overline{F}} (V \otimes _F \overline{F}) \ar[r]^-{\Tr _{\overline{F}}} &\overline{F} }}

where $i$ is the canonical inclusion, $ m_c \circ$ is the composition by $m_c$, and $ m_{c , \overline{F}} \circ $ is the composition by the image $m_{c, \overline{F}}$ of $m_c$ in $\End_{\overline{F}} ( V \otimes _F \overline{F})$.
Let us prove that it is commutative. The left-hand part of the diagram was introduced previously and is the canonical diagram induced by $Z' \to G$. The upper-middle and upper-right squares are trivially commutative. The right-hand lower square is commutative by compatibility of traces. Let us prove that the middle-lower square is commutative. Let $L \otimes \lambda \in \End _F (V) \otimes _F \overline{F}$, then 
\begin{align*}
\left( (m_{c, \overline{F}} \circ ) \circ f ) \right) ( L \otimes \lambda ) = & ( m_{c, \overline{F}} \circ )\left( v \otimes \lambda ' \mapsto L(v) \otimes \lambda \lambda ' \right) \\
= & \left( v \otimes \lambda ' \mapsto c L(v) \otimes \lambda \lambda ' \right) 
\end{align*}
 and 
 \begin{align*}
 (f \circ (m_c \otimes Id )) ( L \otimes \lambda ) &= m_c \circ L \otimes \lambda \\
 &= (v \otimes \lambda ' \mapsto cL(v) \otimes \lambda \lambda '.
 \end{align*}
This concludes the proof of the commutativity of the diagram. By definition, we have
\begin{center}$X^* _c = \Tr _F \circ ( m_c \circ ) \circ i $ \end{center} and \begin{center} $X^* _{c , \overline{F}} = \left((\Tr _F \circ  (m_c \circ ) \circ i ) \otimes Id \right) \circ g^{-1}$. \end{center} We thus get
\begin{center}
$X^* _{c , \overline{F}} = (\Tr _F \otimes Id )  \circ  ((m_c \circ ) \otimes Id) \circ (i\otimes Id )  \circ g^{-1}$.
\end{center} 
Thus, the  commutativity of the previous diagram implies 
 \begin{center}
$X^* _{c , \overline{F}} = \Tr _{\overline{F}}  \circ  (m_{c,\overline{F}} \circ ) \circ i_{\overline{F}}.$
\end{center}
Consequently, for all roots $\alpha  \in \Phi (G, T , \overline{F}) $, we have 
\begin{equation}\label{kli}
X^* _{c , \overline{F}}(H_{\alpha}) = \Tr _{\overline{F}}   (m_{c,\overline{F}} \circ   H _{\alpha})
\end{equation}
We have already computed $m_{c,\overline{F}}$ and $H_{\alpha }$ in terms of the decomposition $V \otimes _{F} \overline{F} = \displaystyle \bigoplus _{i,j,k} D_{ijk}$. Let us recall this. By proposition \ref{grousplit}, $m_{c,\overline{F}}$ is the map \begin{align*}m_{c,\overline{F}} :  \bigoplus _{i,j,k} D_{ijk}& \to  \bigoplus _{i,j,k}  D_{ijk} \\
\displaystyle \sum _{i,j,k} v_{ijk} & \mapsto \displaystyle \sum _{i,j,k} \sigma _{ij}(s) v_{ijk}\end{align*}
Let $\alpha _{i_1 j_1 k_1 ,i_2 j_2 k_2 } \in  \Phi (G , T , \overline{F}) $.
By the calculation done at the end of the subsection \ref{gllevi}, $H_{ \alpha _{i_1 j_1 k_1 ,i_2 j_2 k_2 } } $ is the map \begin{align*}H_{\alpha _{i_1 j_1 k_1 ,i_2 j_2 k_2 }} :  \bigoplus _{i,j,k}  D_{ijk}& \to  \bigoplus _{i,j,k}  D_{ijk} \\
\displaystyle \sum _{i,j,k} v_{ijk} & \mapsto v_{i_1 j_1 k_1 } - v_{i_2 j_2 k_2} .\end{align*}
Consequently, the maps $m_{c , \overline{F}} \circ H_{\alpha _{i_1 j_1 k_1 ,i_2 j_2 k_2 } } $ is the map
 \begin{align*}m_{c , \overline{F}} \circ H_{\alpha _{i_1 j_1 k_1 ,i_2 j_2 k_2 } }  :  \bigoplus _{i,j,k}  D_{ijk}& \to  \bigoplus _{i,j,k}  D_{ijk} \\
\displaystyle \sum _{i,j,k} v_{ijk} & \mapsto \sigma _{i_1j_1} (c) v_{i_1 j_1 k_1 } - \sigma _{i_2 j_2  } (c) v_{i_2 j_2 k_2} .\end{align*}
This implies that \begin{equation} \label{rhj} Tr _{\overline{F}} (m_{c , \overline{F}} \circ H_{\alpha _{i_1 j_1 k_1 ,i_2 j_2 k_2 } } ) = \sigma _{i_1 j_1 } (c) - \sigma _{i_2 j_2} (c). \end{equation}
The proposition is now a consequence of \eqref{kli} and \eqref{rhj}.
\item   By definition (see the notation at the beginning of the document) \begin{center}$\Lie^*(Z') _{-r}= \{ X \in \Lie ^* (Z') \mid X (\Lie((Z')_{r+} )\subset \mathfrak{p}_F \}$. \end{center}
We have $c\Lie(Z')_{r+}= \Lie(Z')_{0+}$ and thus ${\Tr_{End_F(V)/F} (c \Lie (Z') ) \subset \mathfrak{p}_F }$. So $X^*_c \in \Lie ^*( Z') _{-r}$.
\end{enumerate}

\end{proof}

\begin{theo} \label{minimalgenericelemenx} Let $c \in E'$. Let 
$r $ be $-\ord (c)$. The following assertions are equivalent \begin{enumerate}
\item The element $X_{c}^*$ is $H$-generic of depth $r$.
\item The element $c$ is minimal relatively to the (tame) extension $E'/E$ (see \ref{defminimal}), in particular $E[c]=E'$.\end{enumerate}
\end{theo}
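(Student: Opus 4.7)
The plan is to reduce the statement to Proposition \ref{minigene} applied to the tame extension $E'/E$, using the explicit description of $X^*_{c,\overline{F}}$ on coroots obtained in Proposition \ref{xycomput}.

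First, I would unpack what it means for $X_c^*$ to be $H$-generic of depth $r$. Proposition \ref{xycomput}(ii) gives $X_c^* \in \Lie^*(Z')_{-r}$, so only conditions \textbf{GE1} and \textbf{GE2} remain to check. Since $H$ (hence its split form) is of type $A$, the root datum has no torsion primes, and Proposition \ref{ge1implyge2tors} tells us \textbf{GE1} implies \textbf{GE2}. Thus $X_c^*$ is $H$-generic of depth $r$ if and only if \textbf{GE1} holds. By the description of $\Phi(H,T,\overline{F}) \setminus \Phi(H',T,\overline{F})$ at the end of Subsection \ref{gllevi} and by Proposition \ref{xycomput}(i), this condition reads
\[
\ord\bigl(\sigma_{ij_1}(c) - \sigma_{ij_2}(c)\bigr) = -r \quad \text{for all } i \text{ and all } j_1 \neq j_2.
\]

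Next I would identify these pairs with pairs of distinct $E$-algebra morphisms $E' \to \overline{F}$. By construction, $\sigma_{ij_1}$ and $\sigma_{ij_2}$ are the $F$-algebra morphisms $E' \to \overline{F}$ that restrict to the same embedding $\sigma_i \colon E \to \overline{F}$; conversely, any ordered pair of distinct $F$-morphisms $E' \to \overline{F}$ with common restriction to $E$ arises this way. Viewing $E$ as embedded in $\overline{F}$ via $\sigma_i$, such pairs are precisely the pairs of distinct $E$-algebra morphisms $E' \to \overline{F}$. So \textbf{GE1} is equivalent to
\[
\ord\bigl(\sigma(c) - \sigma'(c)\bigr) = -r \quad \text{for every pair } \sigma \neq \sigma' \text{ of } E\text{-algebra morphisms } E' \to \overline{F}.\tag{$\star$}
\]

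Before invoking Proposition \ref{minigene} (applied to the tame extension $E'/E$), I must verify that $(\star)$ forces $E[c] = E'$. If $E[c] \subsetneq E'$, pick any $E$-morphism $\tau \colon E[c] \to \overline{F}$; it admits at least two distinct extensions $\sigma, \sigma' \colon E' \to \overline{F}$. These agree on $c$, so $\sigma(c)-\sigma'(c)=0$ has infinite order, contradicting $(\star)$ since $r$ is finite (note $c \neq 0$, as $r=-\ord(c)\in\mathbb{R}$). So $(\star)$ automatically implies $E[c]=E'$; conversely, minimality of $c$ relative to $E'/E$ includes $E[c]=E'$ by Definition \ref{defminimal}.

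Granted $E[c]=E'$, Proposition \ref{minigene} applied to the tamely ramified extension $E'/E$ (with $c$ in the role of $\beta$ and $E$ in the role of $F$) gives exactly the equivalence between $(\star)$ and the minimality of $c$ relative to $E'/E$. Combining all these equivalences yields the theorem. The only delicate point I anticipate is keeping the bookkeeping straight between $F$-morphisms and $E$-morphisms in step two, and noting the automatic implication $(\star) \Rightarrow E[c]=E'$ so that Proposition \ref{minigene} is legitimately applicable; everything else is a direct assembly of the already-established propositions.
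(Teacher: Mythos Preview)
Your proof is correct and follows essentially the same route as the paper: reduce $H$-genericity to \textbf{GE1} (type $A$, no torsion primes), translate \textbf{GE1} via Proposition \ref{xycomput} into the valuation condition on pairs of $E$-embeddings of $E'$, and conclude by Proposition \ref{minigene} applied to $E'/E$. The one point where you are slightly more careful than the paper is in explicitly verifying that $(\star)$ forces $E[c]=E'$ before invoking Proposition \ref{minigene}; the paper's argument leaves this implicit (the condition $\ord(\sigma(c)-\sigma'(c))=-r$ for all distinct $E$-morphisms already entails $\sigma(c)\neq\sigma'(c)$, hence $E[c]=E'$), and your making it explicit is a genuine improvement in exposition.
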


\begin{proof}
Assume that the element $X_{c}^*$ is $H$-generic of depth $r$.
Let $\sigma '  \neq \sigma $ be a pair of $E$-morphisms $E'\to \overline{F}$. By Proposition \ref{xycomput} and the explicit description of $\Phi (H , T , \overline{F}) \setminus \Phi (H' , T , \overline{F})$ given previously in this section, there exists $\alpha \in\Phi (H , T , \overline{F}) \setminus \Phi (H' , T , \overline{F})$  such that 
$X^*_c (H_{\alpha}) = \sigma (c) - \sigma' (c)$. So $\ord(\sigma (c) - \sigma' (c))=-r$ and so $c$ is minimal by Proposition \ref{minigene}. Reciprocally assume that $c$ is minimal relatively to $E'/E$ and let $\alpha \in \Phi (H , T , \overline{F}) \setminus \Phi (H' , T , \overline{F})$. By Proposition \ref{xycomput}, there exists a pair of $F$-morphisms $\sigma \neq \sigma ' : E' \to \overline{F}$ whose restrictions to $E$ are  equal such that $X^*_c (H_{\alpha})= \sigma (c) - \sigma ' (c)$. We want to prove that $\ord( \sigma (c) - \sigma ' (c) )=-r$. Composing with $\sigma ^{-1}$, we can assume $\sigma , \sigma ' $ are the identity map on $E$. Now by Proposition \ref{minigene}, $\ord (X^*_c (H_{\alpha}) )=-r$ and $GE_1$ is valid. Given that  $GE_1 \Rightarrow GE_2$ here, this ends the proof.

\end{proof}

\section{Tame simple characters} \label{tameca}

Let $[\fA , n , r , \beta ]$ be a tame simple stratum. In this section, we choose and fix a defining sequence $\{[\fA , n , r _i  , \beta _i ]$, $0 \leq i \leq s \}$ and a  simple character $\theta \in \mathcal{C} (\fA, 0 ,\beta)$ , we show that $\theta = {\displaystyle \prod_{i=0}^s \theta ^i}$ where $\theta ^i$ satisfies some conditions. We then introduce two cases depending on the condition that $\beta _s \in F $ or $\beta _ s \not \in F$.

\subsection{Factorisations of tame simple characters }\label{abstfact}

Fix a tame simple stratum $[\fA , n , r , \beta ]$ in the algebra $A=\End _F (V)$. Propositions \ref{tssprop2} and \ref{tssprop3} allow us to choose a defining sequence $\{[\fA , n , r _i  , \beta _i ]$, $0 \leq i \leq s \}$ (see corollary \ref{suiteap}) such that, putting $\fB _{\beta_i}  := \fA \cap \End_{F[\beta _i] }(V)$, $r_0=0$, $\beta _0 = \beta $, the following holds: \begin{enumerate}
\item[(vii)] $F[\beta _{i+1}] \subsetneq F[\beta _i ]$ for $0 \leq i \leq s-1$  

\item [(vi')] The stratum $[\fB _{\beta _{i+1}} , r_{i+1} , r_{i+1} -1 , \beta _i - \beta _{i+1} ]$ is simple in the algebra $\End_{F[\beta _{i+1} ]} (V)$ for $0 \leq i \leq s-1 $.

\end{enumerate}

We fix such a defining sequence in the rest of this section \ref{tameca}, which includes the following subsection \ref{explifact}.
The elements $\beta _i $ , $ 0 \leq i \leq s$ are all included in $F[\beta]$. Put $E_i := F[\beta _i ] $ for $0 \leq i \leq s$.
Let us define elements $c_i$ , $ 0 \leq i \leq s$ , thanks to the following formulas.

$\bullet~ c_i = \beta _i - \beta _{i+1} ~\text{if} ~0 \leq i \leq s-1$

$\bullet~ c_s = \beta _s$.\\The following Theorem is the factorisation of tame simple characters, as announced earlier.

\begin{theo} \label{facttam} Let $\theta \in \mathcal{C} (\fA, m ,\beta)$ be a simple character. There exist smooth characters $\phi _0 , \ldots , \phi _s $ of $E_0^{\times},\ldots ,  E_s ^{\times}$ such that: 

\begin{center} ${\theta = \displaystyle \prod_{i=0}^s \theta ^i}$\end{center} where $\theta ^i, 0 \leq i \leq s$, is the character defined by the following conditions.
\begin{enumerate}[(i)] 
\item $ \theta ^i \mid _{H^{m+1} (\beta , \fA ) \cap B _{\beta _i }} = \phi _i \circ \det _{B _{\beta _i}}$
\item $ \theta ^i \mid _{H^{m_i+1 } (\beta , \fA)} = \psi _{c_i}$ where $m_i  = \max \{[\frac{-\nu _{\fA} (c_i)}{2}] , m \}$.

\end{enumerate}
\end{theo}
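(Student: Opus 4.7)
The plan is to induct on the length $s$ of the defining sequence, paralleling the recursive structure of Definition \ref{carsimpl}. For the base case $s=0$, the element $\beta$ is minimal over $F$, so $c_0=\beta$, $-\nu_\fA(c_0)=n$, and $m_0=\max\{[n/2],m\}$; set $\theta^0:=\theta$. The two conditions of the minimal case of Definition \ref{carsimpl} immediately produce a unique character $\phi_0$ of $E_0^\times$ with $\theta|_{H^{m+1}(\beta,\fA)\cap B_\beta^\times}=\phi_0\circ\det_{B_\beta}$, verifying (i); and the containment $H^{m_0+1}(\beta,\fA)\subset H^{m+1}(\beta)\cap U^{[n/2]+1}(\fA)$ yields (ii) as $\psi_\beta=\psi_{c_0}$.

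For the inductive step $s\geq 1$, put $\gamma:=\beta_1$, $c:=c_0=\beta-\gamma$, and $m':=\max\{m,[r_1/2]\}$, observing that $m'=m_0$ since $\nu_\fA(c_0)=-r_1$ (this follows from the simplicity of $[\fB_{\beta_1},r_1,r_1-1,c_0]$ via Lemma \ref{valval}). The non-minimal case of Definition \ref{carsimpl} then provides $\theta_0\in\mathcal{C}(\fA,m',\gamma)$ with
$$\theta|_{H^{m'+1}(\beta,\fA)}=\theta_0\cdot\psi_c.$$
The truncated sequence $\{[\fA,n,r_i,\beta_i]\}_{1\leq i\leq s}$ is a defining sequence for $[\fA,n,r_1,\gamma]$ of length $s-1$ still satisfying (vii) and (vi'); applying the induction hypothesis to $\theta_0$ at depth $m'$ yields characters $\phi_1,\ldots,\phi_s$ of $E_1^\times,\ldots,E_s^\times$ and a factorisation $\theta_0=\prod_{i=1}^s\theta_0^i$ on $H^{m'+1}(\gamma,\fA)$.

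The character $\phi_0$ is read off from the first condition of the non-minimal case of Definition \ref{carsimpl}: $\theta|_{H^{m+1}(\beta,\fA)\cap B_\beta^\times}$ factors through $\det_{B_\beta}$. For each $i\geq 1$, extend $\theta_0^i$ to a character $\theta^i$ of $H^{m+1}(\beta,\fA)$ by imposing condition (i) of the theorem with the already constructed $\phi_i$; existence and uniqueness of the extension rest on the decomposition
$$H^{m+1}(\beta,\fA)=U^{m+1}(\fB_{\beta_0})\,U^{[\frac{-\nu_\fA(c_0)}{2}]+1}(\fB_{\beta_1})\cdots U^{[\frac{-\nu_\fA(c_s)}{2}]+1}(\fA),$$
the stratified analogue of the Fact in the introduction. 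Finally set $\theta^0:=\theta\cdot(\theta^1\cdots\theta^s)^{-1}$.

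The main obstacle is verifying that this $\theta^0$ satisfies both (i) and (ii). For (i), restrict to $U^{m+1}(\fB_\beta)=H^{m+1}(\beta,\fA)\cap B_\beta^\times$: on this subgroup each $\det_{B_{\beta_i}}$ ($i\geq 1$) factors as $N_{E_0/E_i}\circ\det_{B_\beta}$, so $\prod_{i\geq 1}\theta^i|_{B_\beta^\times}$ takes the form $\chi\circ\det_{B_\beta}$ for some character $\chi$ of $E_0^\times$, and (i) becomes an identity after replacing $\phi_0$ by $\phi_0\chi^{-1}$. For (ii), restrict the identity $\theta=\prod_{i=0}^s\theta^i$ to $H^{m_0+1}(\beta,\fA)\subset H^{m'+1}(\beta,\fA)$: since $\theta^1\cdots\theta^s|_{H^{m'+1}(\beta,\fA)}=\theta_0$ by construction, we obtain $\theta^0|_{H^{m_0+1}}=\theta\cdot\theta_0^{-1}|_{H^{m_0+1}}=\psi_c=\psi_{c_0}$, as required.
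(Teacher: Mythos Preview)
Your proof is correct and follows essentially the same inductive scheme as the paper: same base case, same use of Definition~\ref{carsimpl} to peel off $\psi_{c_0}$ and obtain $\theta_0\in\mathcal{C}(\fA,m_0,\gamma)$, same extension of each inductive factor across $U^{m+1}(\fB_{\beta_0})$ via $\phi_i\circ\det_{B_{\beta_i}}$, same definition of $\theta^0$ as the residual quotient, and same verification of (i) for $\theta^0$ via the identity $\det_{B_{\beta_i}}|_{B_{\beta_0}}=N_{E_0/E_i}\circ\det_{B_{\beta_0}}$.

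One cosmetic point: for the extension step you invoke the full product decomposition
\[
H^{m+1}(\beta,\fA)=U^{m+1}(\fB_{\beta_0})\,U^{[\frac{-\nu_\fA(c_0)}{2}]+1}(\fB_{\beta_1})\cdots U^{[\frac{-\nu_\fA(c_s)}{2}]+1}(\fA),
\]
but as written this is only literally correct when the later exponents dominate $m+1$ (e.g.\ for $m=0$, the case actually used later). The paper avoids this by using only the two-term identity $H^{m+1}(\beta,\fA)=U^{m+1}(\fB_{\beta_0})\,H^{m_0+1}(\beta,\fA)$, which follows directly from the recursive definition $\mathfrak{H}(\beta,\fA)=\fB_{\beta_0}+\mathfrak{H}(\gamma,\fA)\cap\fP^{[r/2]+1}$ and suffices for the extension. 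Replacing your displayed decomposition with this two-term version makes the argument uniform in $m$.
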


\begin{proof}

Let us prove the proposition by induction. Suppose first that $s=0$; that is, that $\beta$ is minimal over $F$. Put $\theta ^0 = \theta $. Then, the condition $(i)$ is trivially satisfied thanks to the definition of a simple character in the minimal case (see \cite[3.2.1]{BK} or \ref{carsimpl} ). The integer $s$ is equal to $0$, thus $\beta = \beta _0 = c_0 $. So $- \nu _{\fA (c_0)} = - \nu _{\fA} (\beta ) =n$. By the definition of simple characters in the minimal case, the restriction $\theta \mid  _{H^{m+1}(\beta , \fA ) \cap U^{[\frac{n}{2}]+1} (\fA)}$ is equal to $\psi _{\beta}$. So it is enough to verify that $H^{m+1}(\beta , \fA ) \cap U^{[\frac{n}{2}]+1 }(\fA)= H^{m_0 ' +1 }(\beta , \fA) $ where $m_0 ' = \max \{ [\frac{n}{2}] , m \}$ which is a consequence of the definition of $H^{m+1}(\beta , \fA )$.
Suppose now that $s>0$. Let us remark that $-k_0 (\beta , \fA)=-\nu _{\fA} (c_0)$, indeed the
 stratum $[\fB _{\beta _1} , -k_0(\beta , \fA ) , -k_0 (\beta , \fA )-1 , \beta_0 - \beta _1 ]$ is simple.  Thus, the definition of simple characters implies that $\theta \mid _{H^{m_0+1}(\beta , \fA) }=\theta ' \psi _{c_0}$ where $\theta ' \in \mathcal{C}(\fA , m_0 , \beta _1)$. Thanks to the induction hypothesis there exists
  characters $\phi _1 , \ldots \phi _s$ of $E_1 ^{\times }, \ldots , E_s ^{\times }$ such that  
 ${\theta ' =\displaystyle \prod_{i=1}^s{\theta '} ^{i}}$ where the ${\theta'} ^i$ are the characters defined by the following conditions.
 \begin{enumerate}[(i')]
\item ${\theta ' }^{i} \mid _{H^{m_0+1} (\beta , \fA )\cap B_{\beta _i}} = \phi _i \circ \det _{A_i} \mid _{H^{m'+1} (\beta , \fA )\cap B_{\beta _i}} $
\item\begin{flushleft}
 ${ \theta '}^i \mid _{H^{m_i+1 } (\beta , \fA)} = \psi _{c_i}$ 
\end{flushleft}
\end{enumerate}
Identity $(ii')$ is a consequence of the induction hypothesis $(ii)$ and the fact that $\max ( [\frac{-\nu_{\fA} (c_i)}{2}] , m_0 ) = \max ([\frac{-\nu_{\fA} (c_i)}{2}], [\frac{-\nu_{\fA} (c_0)}{2}], m ) =m_i$, because $-\nu_{\fA} (c_0) < {-\nu_{\fA} (c_i)}$.
For $1 \leq i \leq s$, the character ${\theta '} ^{i}$ is defined on $H^{m_0 +1} (\beta , \fA ) $ and we can extend ${\theta '}^{i}$ to $H^{m+1} (\beta , \fA) $ thanks to the character $\phi _i$ as follows. The group $H^{m+1} (\beta , \fA)$ is equal to $ U^{m+1}(\fB _{\beta_0}) H^{m_0 +1} (\beta , \fA ) $, we extend ${\theta '}^i$ to a function
 $\theta ^i $ of $H^{m+1} (\beta , \fA)$ by putting $\theta ^i (x) = \phi _i \circ \det _{A_0} (x)$ for $x \in U^{m+1}(\fB _{\beta_{0}})$. The function $\theta ^i$ is a character. The character $\theta ^i $ satisfies the required conditions $(i)$ and $(ii)$ by construction.
 Finally, put $\theta ^0 = \theta \times \displaystyle \prod_{i=1}^s (\theta ^i)^{-1}$. The restriction $\theta ^0 $ to ${H^{m+1}(\beta , \fA ) \cap B_{\beta _i} }$ is equal to the product of the restriction of $\theta$ to ${H^{m+1}(\beta , \fA ) \cap B_{\beta _i} }$ by the restriction of $\theta _i ^{-1}$ for $1 \leq i \leq s$. Let us show that
  each factor factors through $\det_{B_{\beta _0}}$. By definition of a simple character, this is the case for $\theta ^0$. Let $1 \leq i \leq s, $ because of ${H^{m+1}(\beta , \fA ) \cap B_{\beta _i} } \subset {H^{m+1}(\beta , \fA ) \cap B_{\beta _0} }$,  the restriction of $\theta ^i $ to ${H^{m+1}(\beta , \fA ) \cap B_{\beta _i} }$ is equal to  $\phi _i \circ \det _{B _{\beta _i}}\mid _{H^{m+1}(\beta , \fA ) \cap B_{\beta _i} }$. 
 However, a basic fact of algebraic number theory shows that $\det _{B _{\beta _i} } \mid _{B_{\beta _0}} = \det _{B_{\beta _0 }} \circ N_{E_0 / E_i}$, where$ N_{E_0 / E_i}$ is the norm map. Thus each factor factors through $\det _{B_{\beta _0}}$. Consequently, there exists a smooth character $\phi _0$ of $E_0 ^{\times}$ such that the condition $(i)$ is satisfied. Let us prove that $(ii)$ holds.

 \begin{align*} \theta ^0 \mid_{H^{m_0+1 } (\beta , \fA)}=& \left( \theta  \mid_{H^{m_0+1 } (\beta , \fA)}\times  \prod\limits_{i=1}^s (\theta ^i )^{-1}\mid_{H^{m_0+1 } (\beta , \fA)}\right)\\
    &= \left( \theta \mid_{H^{m_0+1 } (\beta , \fA)} \times (\theta ')^{-1}\right)\\
    &= \left( \psi _{c_0} \times \theta ' \times (\theta ')^{-1}\right)\\
    & = \psi _{c_0} \\
   \end{align*}
 This completes the proof of the theorem, indeed we have found the required characters $\phi _i$ , $0 \leq i \leq s$ such that Conditions $(i)$ and $(ii)$ are satisfied.

\end{proof}

\begin{prop} \label{minicici}For $0 \leq i \leq s-1$, the element $c_i$ is minimal relatively to $E_i/E_{i+1}$ and $c_s$ is minimal relatively to $E_s/F$.

\end{prop}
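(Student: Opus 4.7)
The plan is to reduce both claims to Proposition \ref{minisimplealfalfa}, which characterises minimality via the existence of a simple stratum of consecutive-level form $[\fA', n', n'-1, \beta']$. Everything the statement asks for is already encoded in the defining sequence we fixed at the start of Section \ref{abstfact}; the only work is to translate each piece of that data into the language of minimal elements.

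For $0 \leq i \leq s-1$, by the very choice of the defining sequence (condition (vi')), the stratum $[\fB_{\beta_{i+1}}, r_{i+1}, r_{i+1} - 1, c_i]$ is simple in the algebra $\End_{E_{i+1}}(V)$. Since this is a simple stratum of the consecutive-level form over the base field $E_{i+1}$, applying Proposition \ref{minisimplealfalfa} over $E_{i+1}$ (the statement and proof are insensitive to the choice of non-Archimedean local base field) yields that $c_i$ is minimal over $E_{i+1}$. To identify the relevant extension as $E_i/E_{i+1}$, I would then observe that $E_{i+1}[c_i] = E_{i+1}[\beta_i - \beta_{i+1}] = E_{i+1}[\beta_i] = E_i$, where the last equality uses $\beta_{i+1} \in E_{i+1} \subset E_i = F[\beta_i]$. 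This is exactly the statement that $c_i$ is minimal relatively to $E_i/E_{i+1}$.

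For $i = s$, condition (v) in Corollary \ref{suiteap} asserts that $k_0(\beta_s, \fA) \in \{-n, -\infty\}$. By Proposition \ref{minisimplealfalfa} this is equivalent to $\beta_s$ being minimal over $F$; and since $F[\beta_s] = E_s$ by the definition of $E_s$, this is precisely the minimality of $c_s = \beta_s$ relatively to $E_s/F$.

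I do not expect a genuine obstacle in this proof: both assertions are essentially a repackaging of properties already built into the defining sequence. The only point that requires a line of justification is the harmless transfer of Proposition \ref{minisimplealfalfa} from base field $F$ to base field $E_{i+1}$, and the elementary verification $E_{i+1}[c_i] = E_i$.
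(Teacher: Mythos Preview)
Your proposal is correct and follows essentially the same approach as the paper: for $0\leq i\leq s-1$ the paper invokes condition (vi') together with Proposition~\ref{minisimplealfalfa}, and for $i=s$ it invokes the fact that $k_0(\beta_s,\fA)\in\{-n,-\infty\}$ together with Proposition~\ref{minisimplealfalfa}. Your version simply spells out the two easy points the paper leaves implicit (the base-field change in Proposition~\ref{minisimplealfalfa} and the identification $E_{i+1}[c_i]=E_i$).
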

\begin{proof}
For $0 \leq i \leq s-1$, this is a direct consequence of Proposition \ref{minisimplealfalfa} and (vi') of the beginning of this section. For $i=s$, this is a direct consequence of Proposition $ \ref{minisimplealfalfa}$ and the fact that $k_0 (\beta _s , \fA) = -n $ or $ - \infty $.
\end{proof}
\subsection{Explicit factorisations of tame simple characters}\label{explifact}

To associate a generic Yu datum to each Bushnell-Kutzko datum, we need to introduce two cases. The two cases are denoted as follows: ${(Case~ A)}$ or ${(Case~ B)}$. In the rest of this paper, we write ${(Case ~A)}$ at the beginning of a paragraph or in a sentence to signify that we work under the ${(Case~ A)}$ hypothesis. We will introduce particular notations in the ${(Case ~A)}$. The same holds for ${(Case~ B)}$. ${(Case~ A)}$ is by definition when the last element $\beta _s$ of the fixed chosen defining sequence is inside the field $F$; that is,  $\beta _s \in F$. $(Case~ B)$ is the other case; that is, when  $ \beta _s \not \in F$.

\subsubsection{Explicit factorisations of tame simple characters in $(Case~ A)$} \label{explifacta}

Recall that in this case $\beta _s \in F$. In this case, we put $d=s$. Let us give an explicit description of the group $H^1(\beta , \fA)$ in this case. This explicit description is written in a convenient manner  to compare with Yu's construction.

\begin{prop} \label{huna} $(Case~ A)$ The group $H^1(\beta , \fA)$ is equal to the following group

\begin{equation}
 U^1 (\fB _{\beta _0}) U^{[\frac{-\nu _{\fA} (c_0)}{2}]+1}(\fB _{\beta _1} )\ldots U^{[\frac{-\nu _{\fA} (c_{i-1})}{2}]+1}(\fB _{\beta _i}) \ldots U^{[\frac{-\nu _{\fA} (c_{s-1})}{2}]+1}(\fB _{\beta _s})
\end{equation}
\end{prop}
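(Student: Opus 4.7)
The argument is by induction on $s$.

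For the base case $s=0$, Case A forces $\beta_0=\beta\in F$, so $E_0=F$ and $\fB_{\beta_0}=\fA$. Since $\beta$ is then (trivially) minimal, Definition \ref{defsimpl}(i) gives $\mathfrak{H}(\beta,\fA)=\fB_\beta+\fP^{[n/2]+1}=\fA$, whence $H^1(\beta,\fA)=\fA\cap U^1(\fA)=U^1(\fA)=U^1(\fB_{\beta_0})$, which is exactly the right-hand side (a single factor).

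For the inductive step $s\geq 1$, the recursive Definition \ref{defsimpl}(ii) yields
\[ \mathfrak{H}(\beta_0,\fA)=\fB_{\beta_0}+\mathfrak{H}(\beta_1,\fA)\cap\fP^{m_1},\qquad m_1:=\tfrac{-\nu_\fA(c_0)}{2}+1 \text{ rounded down}, \]
the exponent $[r_1/2]+1$ being identified with $m_1$ via the simplicity of $[\fB_{\beta_1},r_1,r_1-1,c_0]$ and Lemma \ref{valval}, which together give $r_1=-\nu_\fA(c_0)$. The heart of the argument is the multiplicative factorization
\[ H^1(\beta_0,\fA)=U^1(\fB_{\beta_0})\cdot H^{m_1}(\beta_1,\fA). \]
The inclusion $\supseteq$ is immediate since both factors sit inside the group $H^1(\beta_0,\fA)$ by Proposition \ref{propertyHHJJ}. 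For $\subseteq$, given $1+b+h\in H^1(\beta_0,\fA)$ with $b\in\fB_{\beta_0}$, $h\in\mathfrak{H}(\beta_1,\fA)\cap\fP^{m_1}$ and $b+h\in\fP$, the containment $h\in\fP$ forces $b\in\fB_{\beta_0}\cap\fP=\rad(\fB_{\beta_0})$, so $1+b\in U^1(\fB_{\beta_0})$; I then write $1+b+h=(1+b)\bigl(1+(1+b)^{-1}h\bigr)$ and verify that $(1+b)^{-1}h\in\mathfrak{H}(\beta_1,\fA)\cap\fP^{m_1}$. This uses condition (vii) ($E_1\subsetneq E_0$, whence $\fB_{\beta_0}\subset\fB_{\beta_1}$) together with the (inductively visible) two-sided $\fB_{\beta_1}$-module structure of $\mathfrak{H}(\beta_1,\fA)$; the geometric series $(1+b)^{-1}=\sum_{k\geq 0}(-b)^k$ converges in $\fB_{\beta_0}$, and $h\in\fP^{m_1}$ with $(1+b)^{-1}\in\fA$ preserves the $\fP$-filtration.

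To close the induction I would prove a mildly strengthened version: for $1\leq j\leq s$ and $m$ with $m+1\geq$ the appropriate threshold,
\[ H^{m+1}(\beta_j,\fA)=U^{m+1}(\fB_{\beta_j})\prod_{i=j+1}^{s}U^{\max(m+1,m_i)}(\fB_{\beta_i}),\qquad m_i:=\bigl[\tfrac{-\nu_\fA(c_{i-1})}{2}\bigr]+1. \]
Iterating the factorization at each level and exploiting the chain $m_1\leq m_2\leq\cdots\leq m_s$ (forced by $r_1<r_2<\cdots<r_s$) makes every maximum collapse to $m_i$ when the argument is applied with $m+1=m_j$; the recursion terminates at $j=s$ via the base case, using $\fB_{\beta_s}=\fA$ in Case A. The main obstacle is the additive-to-multiplicative factorization lemma above, whose subtlety lies in the left-module argument for $(1+b)^{-1}h$; a secondary bookkeeping point is the ramification-index translation, via Lemma \ref{valval}, between the $\fA$-filtration exponents appearing in $m_i$ and the $\rad(\fB_{\beta_i})$-filtration defining $U^k(\fB_{\beta_i})$.
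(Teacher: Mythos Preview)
Your proof is correct and follows the same inductive skeleton as the paper's, with the same base case and the same key identification $r_{i+1}=-k_0(\beta_i,\fA)=-\nu_\fA(c_i)$ via simplicity of the derived stratum. The only difference is in execution: the paper reduces immediately to the \emph{additive} statement
\[
\mathfrak{H}(\beta,\fA)=\fB_{\beta_0}+\fQ_{\beta_1}^{[-\nu_\fA(c_0)/2]+1}+\cdots+\fQ_{\beta_s}^{[-\nu_\fA(c_{s-1})/2]+1},
\]
proves this by a one-line induction, and then invokes \cite[3.1.14, 3.1.15]{BK} once to pass to the multiplicative version. You instead carry out the additive-to-multiplicative factorization $(1+b+h)=(1+b)\bigl(1+(1+b)^{-1}h\bigr)$ by hand at each inductive step, which is more laborious but self-contained. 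Two minor points: your citation of Proposition~\ref{propertyHHJJ} for the inclusion $\supseteq$ is not quite the right reference (that proposition concerns normalisers); the inclusion follows directly from the recursive definition of $\mathfrak{H}$. Also, no ramification-index translation via Lemma~\ref{valval} is actually needed between the $\fA$- and $\fB_{\beta_i}$-filtrations, since $\fQ_{\beta_i}^k=\fP^k\cap B_{\beta_i}$ already matches the exponents.
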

\begin{proof} Recall that $\beta = \beta _0$. By \cite[3.1.14,3.1.15]{BK}, it is enough to show that 

\begin{center}
\begin{equation} \label{HHli}
\mathfrak{H}(\beta , \fA ) = \fB _{\beta _0 } + \fQ _{\beta _1} ^{[- \frac{\nu _{\fA} (c_o)}{2}]+1}+ \ldots +  \fQ _{\beta _s} ^{[- \frac{\nu _{\fA} (c_{s-1})}{2}]+1}.
\end{equation}
\end{center}
\begin{sloppypar}
Let us prove \eqref{HHli} by induction on $s$. If $s=0$, by definition, $\mathfrak{H}(\beta , \fA )= \fB _{\beta _0 } + \fP ^{[\frac{n}{2}]+1}$. The element $\beta _0$ is in $F$, thus $\fB _{\beta _0} = \fA $. Consequently, $\mathfrak{H}(\beta , \fA )= \fB _{\beta _0 }$. If ${s>0}$, then by induction hypothesis we have $ {\mathfrak{H}(\beta _1 , \fA )= \fB _{\beta _1} +  \fQ _{\beta _2} ^{[- \frac{\nu _{\fA} (c_1)}{2}]+1} +\ldots + \fQ _{\beta _s} ^{[- \frac{\nu _{\fA} (c_{s-1})}{2}]+1} }$.
\end{sloppypar}
By definition $\mathfrak{H}(\beta , \fA ) = \fB _{\beta _0 } + \mathfrak{H}(\beta _1, \fA ) \cap \fP ^{[\frac{-k_0 (\beta _0 , \fA)}{2}]+1}$.
 Let us remark that because the stratum $[\fB _{\beta _1} , -k_0(\beta _0 , \fA ) , - k_0 (\beta _0 , \fA )+1 , \beta _0 - \beta _1 ]$
  is simple by the condition $(vi')$, the equality $\nu _{\fB _{\beta _1}} (\beta_0 - \beta _1 )= k_0 (\beta _0, \fA)$ holds. We have $\nu _{\fB _{\beta _1}} (\beta_0 - \beta _1 )= \nu _{\fA} (\beta_0 - \beta _1 ) = \nu _{\fA} (c_0)$. So $k_0 (\beta _0, \fA)=\nu _{\fA} (c_0)$.
Consequently, \begin{align*}\mathfrak{H}(\beta , \fA ) &= \fB _{\beta _0 } +  \mathfrak{H}(\beta _1, \fA ) \cap \fP ^{[\frac{-\nu _{\fA} (c_0)}{2}]+1} \\
&=\fB _{\beta _0 } + \fQ _{\beta _1} ^{[- \frac{\nu _{\fA} (c_o)}{2}]+1}+ \ldots +  \fQ _{\beta _s} ^{[- \frac{\nu _{\fA} (c_{s-1})}{2}]+1},\end{align*} as required.
\end{proof}
We now reformulate Theorem \ref{facttam} in (Case A) for simple characters in $\mathcal{C}(\fA , 0, \beta )$. 
\begin{coro} \label{tamcoa} $(Case~ A)$ Let $\theta  \in \mathcal{C}(\fA , 0, \beta )$, let $\phi _0 , \phi _1 , \ldots , \phi _s$ be the characters introduced in theorem \ref{facttam}, then  $\theta = \displaystyle \prod_{i=0}^s \theta ^i$ where  $\theta ^i$ is the character defined as follows: If $0 \leq i \leq s-1$, then the character  $\theta _i$ is defined by the following two conditions: \begin{enumerate}[(i)] 
\item $ \theta ^i \mid _{U^1 (\fB _{\beta _0}) U^{[\frac{-\nu _{\fA} (c_0)}{2}]+1}(\fB _{\beta _1} )\ldots U^{[\frac{-\nu _{\fA} (c_{i-1})}{2}]+1}(\fB _{\beta _i}) } = \phi _i \circ \det _{B _{\beta _i}}$
\item $ \theta ^i \mid _{U^{[\frac{-\nu _{\fA} (c_{i})}{2}]+1}(\fB _{\beta _{i+1}}) \ldots U^{[\frac{-\nu _{\fA} (c_{s-1})}{2}]+1}(\fB _{\beta _s})} = \psi _{c_i}$ .
\end{enumerate}
If $i = s$, then $\theta ^i $ is defined by $\theta ^i \mid _{H^1(\beta , \fA)} = \phi _i \circ \det _{A}$.

\end{coro}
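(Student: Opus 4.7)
The plan is to derive this corollary directly from Theorem \ref{facttam} (applied with $m=0$) by combining it with the explicit description of $H^1(\beta,\fA)$ given in Proposition \ref{huna}. The theorem already produces characters $\phi_0,\ldots,\phi_s$ of $E_0^\times,\ldots,E_s^\times$ and a factorisation $\theta=\prod_{i=0}^s\theta^i$ satisfying Conditions (i) and (ii) of \ref{facttam}; my task is simply to read off what those conditions say on each of the explicit subgroup factors appearing in Proposition \ref{huna}, and to observe that in (Case A) the index $i=s$ degenerates because $B_{\beta_s}=A$.

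First, I would record the chain of inclusions $E_0\supsetneq E_1\supsetneq\cdots\supsetneq E_s$ implies $B_{\beta_0}\subset B_{\beta_1}\subset\cdots\subset B_{\beta_s}$ and hence $\fB_{\beta_0}\subset\fB_{\beta_1}\subset\cdots\subset\fB_{\beta_s}$. Consequently, each factor $U^{[-\nu_{\fA}(c_{j-1})/2]+1}(\fB_{\beta_j})$ with $j\leq i$ is contained in $B_{\beta_i}$, so the entire product $U^1(\fB_{\beta_0})U^{[-\nu_{\fA}(c_0)/2]+1}(\fB_{\beta_1})\cdots U^{[-\nu_{\fA}(c_{i-1})/2]+1}(\fB_{\beta_i})$ lies in $H^1(\beta,\fA)\cap B_{\beta_i}$. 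Condition (i) of Theorem \ref{facttam} then gives exactly the desired Condition (i) of the corollary. This is the core of the argument and I expect no obstacles here.

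Next, for Condition (ii), I would use the identity $r_{i+1}=-k_0(\beta_i,\fA)=-\nu_{\fA}(c_i)$ which follows from Proposition \ref{minisimplealfalfa} together with the simplicity of the strata $[\fB_{\beta_{j+1}},r_{j+1},r_{j+1}-1,\beta_j-\beta_{j+1}]$ (this equality is verified in passing inside the proof of Proposition \ref{huna}). Hence for $j\geq i$ we have $-\nu_{\fA}(c_j)\geq -\nu_{\fA}(c_i)$ and therefore each factor $U^{[-\nu_{\fA}(c_j)/2]+1}(\fB_{\beta_{j+1}})$ is contained in $U^{m_i+1}(\fA)$ with $m_i=[-\nu_{\fA}(c_i)/2]$. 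Combined with Proposition \ref{huna}, the product of these factors lies in $H^{m_i+1}(\beta,\fA)$, and Condition (ii) of Theorem \ref{facttam} yields the restriction $\psi_{c_i}$ on this subgroup.

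Finally, the $i=s$ case: in (Case A) the hypothesis $\beta_s\in F$ gives $E_s=F$, so $B_{\beta_s}=A$ and $H^1(\beta,\fA)\cap B_{\beta_s}=H^1(\beta,\fA)$. Condition (i) of Theorem \ref{facttam} therefore collapses to $\theta^s\mid_{H^1(\beta,\fA)}=\phi_s\circ\det_A$, matching the statement. The only mildly subtle point—and the main thing to keep straight—is the verification that Conditions (i) and (ii) together genuinely determine $\theta^i$ on all of $H^1(\beta,\fA)$; this uses Proposition \ref{huna} to express every element of $H^1(\beta,\fA)$ as a product of an element of the (i)-subgroup and an element of the (ii)-subgroup. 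Apart from this bookkeeping, the corollary is a direct translation of Theorem \ref{facttam} into the explicit form provided by Proposition \ref{huna}.
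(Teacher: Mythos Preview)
Your proposal is correct and follows essentially the same approach as the paper: both derive the corollary by applying Theorem \ref{facttam} with $m=0$ and reading off the restrictions on the explicit factors of $H^1(\beta,\fA)$ provided by Proposition \ref{huna}, with the $i=s$ case collapsing because $B_{\beta_s}=A$. One small remark: the equality $-k_0(\beta_i,\fA)=-\nu_{\fA}(c_i)$ comes directly from condition (vi') of the defining sequence (via the purity axiom $\nu_{\fB_{\beta_{i+1}}}(c_i)=-r_{i+1}$), not from Proposition \ref{minisimplealfalfa}; but since you already note this is verified in the proof of Proposition \ref{huna}, this is harmless.
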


\begin{proof} The proof consists in applying Theorem \ref{facttam} using the explicit description of $H^1(\beta , \fA)$ given in the lemma \ref{huna}.
In Theorem \ref{facttam}, we have introduced smooth characters $\phi _0 , \ldots \phi _s $ of $E_0 ^{\times} , \ldots E_s ^{\times}$ such that $\theta =  \displaystyle \prod_{i=0}^s \theta ^i $ where $\theta ^i$ is defined by the following two conditions: \begin{enumerate}[(i)]

\item $ \theta ^i \mid _{H^{1} (\beta , \fA ) \cap B _{\beta _i }} = \phi _i \circ \det _{B _{\beta _i}}$
\item $ \theta ^i \mid _{H^{m_i+1 } (\beta , \fA)} = \psi _{c_i}$ where $m_i  = \max \{[\frac{-\nu _{\fA} (c_i)}{2}] , 0 \}$.

\end{enumerate} 
\begin{sloppypar}
Let $0 \leq i \leq s-1$, then Lemma \ref{huna} shows that $H^1(\beta , \fA)  \cap B _{\beta _i} = U^1 (\fB _{\beta _0}) U^{[\frac{-\nu _{\fA} (c_0)}{2}]+1}(\fB _{\beta _1} )\ldots U^{[\frac{-\nu _{\fA} (c_{i-1})}{2}]+1}(\fB _{\beta _i})$. Consequently, the condition $(i)$ of the corollary \ref{tamcoa} is satisfied for $\theta ^i$.
Trivially $m_i  = [\frac{-\nu _{\fA} (c_i)}{2}]$, moreover  the lemma \ref{huna} shows that $H^{[\frac{-\nu _{\fA} (c_i)}{2}]+1  } (\beta , \fA) =U^{[\frac{-\nu _{\fA} (c_{i})}{2}]+1}(\fB _{\beta _{i+1}}) \ldots U^{[\frac{-\nu _{\fA} (c_{s-1})}{2}]+1}(\fB _{\beta _s})$. Thus, Condition $(ii)$ of Corollary \ref{tamcoa} is satisfied for $\theta ^i$.
Finally, for $i=s$, we have $ \theta ^i \mid _{H^{1} (\beta , \fA ) \cap B _{\beta _i }} = \phi _i \circ \det _{B _{\beta _i}}$ by the theorem and the condition of the corollary is satisfied remarking that $B_{\beta _s} =A$ because $\beta _s \in F$.

\end{sloppypar}

\end{proof}

\subsubsection{Explicit factorisations of tame simple characters in $(Case ~B)$} \label{explifactb}

 Recall that in this case $\beta _s \not \in F$. In this case, we put $d=s+1$. Let us give an explicit description of the group $H^1(\beta , \fA)$ in this case. This explicit description is written in a convenient way in order to compare with Yu's construction.

\begin{prop} \label{hunb} $(Case ~B)$ The group $H^1(\beta , \fA)$ is equal to the following group:
\begin{scriptsize}

\begin{equation} U^1 (\fB _{\beta _0}) U^{[\frac{-\nu _{\fA} (c_0)}{2}]+1}(\fB _{\beta _1} )\ldots U^{[\frac{-\nu _{\fA} (c_{i-1})}{2}]+1}(\fB _{\beta _i}) \ldots U^{[\frac{-\nu _{\fA} (c_{s-1})}{2}]+1}(\fB _{\beta _s}) U^{[\frac{-\nu _{\fA} (c_{s})}{2}]+1}(\fA).
\end{equation}

\end{scriptsize}

\end{prop}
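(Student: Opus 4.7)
The plan is to mimic the proof of Proposition \ref{huna} in (Case A), adjusting for the fact that $\beta_s \not\in F$, so $\fB_{\beta_s} \ne \fA$ and an extra term survives. As in (Case A), by \cite[3.1.14, 3.1.15]{BK} passing to the fractional-ideal level reduces the statement to showing
\[
\mathfrak{H}(\beta, \fA) = \fB_{\beta_0} + \fQ_{\beta_1}^{[-\nu_{\fA}(c_0)/2]+1} + \cdots + \fQ_{\beta_s}^{[-\nu_{\fA}(c_{s-1})/2]+1} + \fP^{[-\nu_{\fA}(c_s)/2]+1}.
\]
Intersecting with $U^1(\fA)$ and using that $\fB_{\beta_i}^{\times} \cap U^k(\fA) = U^k(\fB_{\beta_i})$ for $k \geq 1$ then produces the advertised product expression for $H^1(\beta, \fA)$.

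I would prove the displayed identity by induction on the length $s$ of the defining sequence. For the base case $s = 0$, the hypothesis of (Case B) is that $\beta = \beta_0 \not\in F$ is minimal over $F$, so $c_0 = c_s = \beta$ and $-\nu_{\fA}(c_0) = n$. The definition \cite[3.1.7]{BK} in the minimal case directly gives $\mathfrak{H}(\beta, \fA) = \fB_{\beta_0} + \fP^{[n/2]+1}$, matching the right-hand side. For the inductive step, the recursive definition reads $\mathfrak{H}(\beta, \fA) = \fB_{\beta_0} + \mathfrak{H}(\beta_1, \fA) \cap \fP^{[-k_0(\beta_0,\fA)/2]+1}$. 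The simplicity of the derived stratum $[\fB_{\beta_1}, -k_0(\beta_0,\fA), -k_0(\beta_0,\fA)-1, c_0]$ (condition (vi') of Section \ref{abstfact}) identifies $k_0(\beta_0, \fA) = \nu_{\fB_{\beta_1}}(c_0) = \nu_{\fA}(c_0)$. Substituting the inductive hypothesis applied to the defining sequence $\{[\fA, n, r_i, \beta_i]\}_{1 \leq i \leq s}$ for the tame simple stratum $[\fA, n, r_1, \beta_1]$ (which is in (Case B), since $\beta_s \not\in F$) then produces the desired formula, provided one checks that
\[
\bigl( \fQ_{\beta_1}^{[-\nu_{\fA}(c_0)/2]+1} + \cdots + \fQ_{\beta_s}^{[-\nu_{\fA}(c_{s-1})/2]+1} + \fP^{[-\nu_{\fA}(c_s)/2]+1} \bigr) \cap \fP^{[-\nu_{\fA}(c_0)/2]+1}
\]
coincides with the same sum (the first summand already lies inside $\fP^{[-\nu_{\fA}(c_0)/2]+1}$, and the later ones do too since $-\nu_{\fA}(c_{i}) > -\nu_{\fA}(c_0)$ for $i \geq 1$ because of the strict increase of the $r_i$).

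The main obstacle I anticipate is the bookkeeping for the last step: verifying that intersecting the inductive expression for $\mathfrak{H}(\beta_1, \fA)$ with $\fP^{[-\nu_{\fA}(c_0)/2]+1}$ leaves each $\fQ_{\beta_{i+1}}^{[-\nu_{\fA}(c_i)/2]+1}$ and $\fP^{[-\nu_{\fA}(c_s)/2]+1}$ unchanged. This amounts to checking, via Lemma \ref{valval} and the definitions, that the exponents in $\fP$-scale of these later terms dominate $[-\nu_{\fA}(c_0)/2]+1$; the inequalities $-\nu_{\fA}(c_0) < -\nu_{\fA}(c_i)$ for $i \geq 1$, which come from the strict increase $r_0 < r_1 < \cdots$ in the defining sequence, deliver exactly this. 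Finally the passage from $\mathfrak{H}$ to $H^1$ uses the standard fact that $1 + \mathfrak{H}^1(\beta,\fA) = H^1(\beta, \fA)$ and that the product of the units $U^{k_i}(\fB_{\beta_i})$ is a group normalised by $\fK(\fB_{\beta_0})$, exactly as in the proof of \ref{huna}.
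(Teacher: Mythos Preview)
Your proposal is correct and follows essentially the same approach as the paper: reduce via \cite[3.1.14, 3.1.15]{BK} to the order-level identity for $\mathfrak{H}(\beta,\fA)$, then prove that identity by induction on $s$, using the definition in the minimal case for $s=0$ and the recursive definition together with $k_0(\beta_0,\fA)=\nu_{\fA}(c_0)$ for the step. You are in fact a bit more careful than the paper about justifying why $\mathfrak{H}(\beta_1,\fA)\cap\fP^{[-\nu_{\fA}(c_0)/2]+1}$ equals the expected sum (the paper simply asserts this), though note that in your displayed intersection the leading summand should be $\fB_{\beta_1}$ from the inductive hypothesis, becoming $\fQ_{\beta_1}^{[-\nu_{\fA}(c_0)/2]+1}$ only after the intersection.
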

\begin{proof} By \cite[3.1.14,3.1.15]{BK}, it is enough to show that 

\begin{center}
\begin{equation} \label{HHlib}
\mathfrak{H}(\beta , \fA ) = \fB _{\beta _0 } + \fQ _{\beta _1} ^{[- \frac{\nu _{\fA} (c_o)}{2}]+1}+ \ldots +  \fQ _{\beta _s} ^{[- \frac{\nu _{\fA} (c_{s-1})}{2}]+1} +\fP^{[\frac{-\nu _{\fA} (c_{s})}{2}]+1} .
\end{equation}
\end{center}

Let us prove \eqref{HHlib} by induction on $s$. If $s=0$, by definition, $\mathfrak{H}(\beta , \fA )= \fB _{\beta _0 } + \fP ^{[\frac{n}{2}]+1}$, where by definition $n = - \nu _{\fA} (\beta, \fA)$. Because $s=0$, the equality $\beta =c_s=c_0$ hold. Thus, $\mathfrak{H}(\beta , \fA ) = \fB _{\beta _0 }  +\fP^{[\frac{-\nu _{\fA} (c_{s})}{2}]+1} $ as required.
If $s>0$, then by induction hypothesis we have
\begin{center}
 {$ {\mathfrak{H}(\beta _1 , \fA )= \fB _{\beta _1} +  \fQ _{\beta _2} ^{[- \frac{\nu _{\fA} (c_1)}{2}]+1} +\ldots + \fQ _{\beta _s} ^{[- \frac{\nu _{\fA} (c_{s-1})}{2}]+1} } +\fP^{[\frac{-\nu _{\fA} (c_{s})}{2}]+1}$}.\end{center}
By definition $\mathfrak{H}(\beta , \fA ) = \fB _{\beta _0 } + \mathfrak{H}(\beta _1, \fA ) \cap \fP ^{[\frac{-k_0 (\beta _0 , \fA)}{2}]+1}$.
 Let us remark that because the stratum $[\fB _{\beta _1} , -k_0(\beta _0 , \fA ) , - k_0 (\beta _0 , \fA )+1 , \beta _0 - \beta _1 ]$
  is simple by the condition $(vi')$, the equality $\nu _{\fB _{\beta _1}} (\beta_0 - \beta _1 )= k_0 (\beta _0, \fA)$ holds. We have $\nu _{\fB _{\beta _1}} (\beta_0 - \beta _1 )=\nu _{\fA} (\beta _0 - \beta _1) = \nu _{\fA} (c_0)$. So $\nu _{\fA} (c_0)=k_0 (\beta _0 , \fA)$.
Consequently, \begin{align*}
\mathfrak{H}(\beta , \fA )& = \fB _{\beta _0 } + \mathfrak{H}(\beta _1, \fA ) \cap \fP ^{[\frac{-\nu _{\fA} (c_0)}{2}]+1}\\
&=\fB _{\beta _0 } + \fQ _{\beta _1} ^{[- \frac{\nu _{\fA} (c_o)}{2}]+1}+ \ldots +  \fQ _{\beta _s} ^{[- \frac{\nu _{\fA} (c_{s-1})}{2}]+1}+\fP^{[\frac{-\nu _{\fA} (c_{s})}{2}]+1}, \end{align*} as required.

\end{proof}
We now reformulate Theorem \ref{facttam} in  $(Case ~B)$ for the simple characters in $\mathcal{C}(\fA , 0 ,\beta )$. 

\begin{coro} \label{tamcob} $(Case ~B)$ Let $\theta  \in \mathcal{C}(\fA , 0, \beta )$, there exists $\phi _0 , \phi _1 , \ldots , \phi _s$ such that $\theta =   \displaystyle \prod_{i=0}^s \theta ^i$ where the $\theta ^i$ are the characters defined by the following conditions for $0 \leq i \leq s$: \begin{enumerate}[(i)] 
\item $ \theta ^i \mid _{U^1 (\fB _{\beta _0}) U^{[\frac{-\nu _{\fA} (c_0)}{2}]+1}(\fB _{\beta _1} )\ldots U^{[\frac{-\nu _{\fA} (c_{i-1})}{2}]+1}(\fB _{\beta _i}) } = \phi _i \circ \det _{B _{\beta _i}}$
\item $ \theta ^i \mid _{U^{[\frac{-\nu _{\fA} (c_{i})}{2}]+1}(\fB _{\beta _{i+1}}) \ldots U^{[\frac{-\nu _{\fA} (c_{s-1})}{2}]+1}(\fB _{\beta _s}) U^{[\frac{-\nu _{\fA} (c_{s})}{2}]+1}(\fA)} = \psi _{c_i}$ .
\end{enumerate}

\end{coro}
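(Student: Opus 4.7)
My plan is to proceed in direct analogy with the proof of Corollary \ref{tamcoa}, treating Proposition \ref{hunb} as the Case~B replacement for Proposition \ref{huna}. First I would apply Theorem \ref{facttam} to the simple character $\theta \in \mathcal{C}(\fA,0,\beta)$ (taking $m=0$) to obtain smooth characters $\phi_0,\ldots,\phi_s$ of $E_0^\times,\ldots,E_s^\times$ and the associated decomposition $\theta = \prod_{i=0}^s \theta^i$, where each $\theta^i$ is determined by the abstract conditions $\theta^i\mid_{H^1(\beta,\fA)\cap B_{\beta_i}} = \phi_i\circ\det_{B_{\beta_i}}$ and $\theta^i\mid_{H^{m_i+1}(\beta,\fA)} = \psi_{c_i}$, with $m_i = \max\{[-\nu_\fA(c_i)/2],0\}$.

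Next I would translate both abstract conditions into the explicit ones stated in the corollary using the formula for $H^1(\beta,\fA)$ from Proposition \ref{hunb}. For condition (i): since $E_0 \supset E_1 \supset \cdots \supset E_s$, one has $\fB_{\beta_0}\subset \fB_{\beta_1}\subset\cdots\subset \fB_{\beta_s}\subset \fA$. Intersecting the product given by Proposition \ref{hunb} with $B_{\beta_i}$ therefore keeps exactly the first $i+1$ factors and drops the trailing factors $U^{[\ldots]+1}(\fB_{\beta_{i+1}})\cdots U^{[\ldots]+1}(\fA)$, which all live strictly outside $B_{\beta_i}$. Hence
\[
H^1(\beta,\fA)\cap B_{\beta_i}
\;=\;
U^1(\fB_{\beta_0})U^{[\frac{-\nu_\fA(c_0)}{2}]+1}(\fB_{\beta_1})\cdots U^{[\frac{-\nu_\fA(c_{i-1})}{2}]+1}(\fB_{\beta_i}),
\]
which is precisely the group appearing in (i); note that for $i=s$ this already excludes the final factor $U^{[\frac{-\nu_\fA(c_s)}{2}]+1}(\fA)$, since $\fA\not\subset B_{\beta_s}$.

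For condition (ii), I would observe that $m_i = [-\nu_\fA(c_i)/2]$: the inequalities $-\nu_\fA(c_0) < -\nu_\fA(c_1) < \cdots < -\nu_\fA(c_s) = n$ coming from the defining sequence (the critical exponents strictly increase and the last one equals $n$) guarantee that $[-\nu_\fA(c_i)/2]\geq 0$. Reading off the tail of the product in Proposition \ref{hunb} from level $[-\nu_\fA(c_i)/2]+1$ onward, and using again the inclusions $\fB_{\beta_j}\subset\fB_{\beta_{j+1}}\subset\fA$ together with the monotonicity of the $U^k$-filtration in $k$, the group $H^{[-\nu_\fA(c_i)/2]+1}(\beta,\fA)$ coincides with the product
\[
U^{[\frac{-\nu_\fA(c_{i})}{2}]+1}(\fB_{\beta_{i+1}})\cdots U^{[\frac{-\nu_\fA(c_{s-1})}{2}]+1}(\fB_{\beta_s})U^{[\frac{-\nu_\fA(c_{s})}{2}]+1}(\fA),
\]
which is the group appearing in (ii); for $i=s$ only the last factor $U^{[\frac{-\nu_\fA(c_s)}{2}]+1}(\fA)$ remains, matching the statement.

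The only substantive point beyond bookkeeping is verifying the two identifications $H^1(\beta,\fA)\cap B_{\beta_i}$ and $H^{m_i+1}(\beta,\fA)$ as explicit products; the inclusion ``$\supset$'' is immediate by construction, while the inclusion ``$\subset$'' requires the filtration compatibility $U^k(\fB_{\beta_j})\cap B_{\beta_i} = U^k(\fB_{\beta_i})$ for $j\leq i$, which follows from Lemma \ref{valval} applied to each tower $E_j/E_i/F$ relative to $\fA$. I expect this compatibility check, rather than any deep new input, to be the only mildly technical step; everything else is a direct transcription of the Case~A argument with the bottom-level factor $U^{[\frac{-\nu_\fA(c_s)}{2}]+1}(\fA)$ carried along throughout.
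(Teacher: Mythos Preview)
Your approach is correct and essentially identical to the paper's: apply Theorem \ref{facttam} with $m=0$, then use Proposition \ref{hunb} to rewrite $H^1(\beta,\fA)\cap B_{\beta_i}$ and $H^{m_i+1}(\beta,\fA)$ as the explicit products in the statement. One small slip: in your last paragraph the compatibility you need is $U^k(\fB_{\beta_j})\cap B_{\beta_i}=U^k(\fB_{\beta_i})$ for $j\geq i$ (not $j\leq i$), i.e.\ for the tail factors, since for $j\leq i$ one already has $\fB_{\beta_j}\subset B_{\beta_i}$; the relevant tower is $E_i/E_j/F$.
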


\begin{proof} The proof consists in applying Theorem \ref{facttam} using the explicit description of $H^1(\beta , \fA)$ given in Lemma \ref{hunb}.
By Theorem \ref{facttam}, there exist smooth characters $\phi _0 , \ldots , \phi _s $ of $E_0 ^{\times} , \ldots , E_s ^{\times}$ such that $\theta =  \displaystyle \prod_{i=0}^s \theta ^i $, where $\theta ^i$, $0 \leq i \leq s$, is defined by the following two conditions: \begin{enumerate}[(i)]

\item $ \theta ^i \mid _{H^{1} (\beta , \fA ) \cap B _{\beta _i }} = \phi _i \circ \det _{B _{\beta _i}}$
\item $ \theta ^i \mid _{H^{m_i+1 } (\beta , \fA)} = \psi _{c_i}$ where $m_i  = \max \{[\frac{-\nu _{\fA} (c_i)}{2}] , 0 \}$.

\end{enumerate} \begin{sloppypar}
Let $0 \leq i \leq s$. Then Lemma \ref{hunb} shows that \begin{center} $H^1(\beta , \fA)  \cap B _{\beta _i} = U^1 (\fB _{\beta _0}) U^{[\frac{-\nu _{\fA} (c_0)}{2}]+1}(\fB _{\beta _1} )\ldots U^{[\frac{-\nu _{\fA} (c_{i-1})}{2}]+1}(\fB _{\beta _i})$.\end{center} Thus, condition $(i)$ of Corollary \ref{tamcob} is satisfied for $\theta ^i$.
Trivially we have $m_i  = [\frac{-\nu _{\fA} (c_i)}{2}]$. Moreover Lemma \ref{hunb} shows that $H^{[\frac{-\nu _{\fA} (c_i)}{2}]+1  } (\beta , \fA) =U^{[\frac{-\nu _{\fA} (c_{i})}{2}]+1}(\fB _{\beta _{i+1}}) \ldots U^{[\frac{-\nu _{\fA} (c_{s-1})}{2}]+1}(\fB _{\beta _s})U^{[\frac{-\nu _{\fA} (c_{s})}{2}]+1}(\fA)$. Thus Condition $(ii)$ of Corollary \ref{tamcob} is satisfied for $\theta ^i$.
\end{sloppypar}

\end{proof}

\section{Generic characters associated to tame simple characters} \label{sectgecha}

We continue with the same notations as in Section  \ref{tameca}. Thus, we have a fixed tame simple stratum $[\fA , n , 0 , \beta ]$ and various objects and notations relative to it. In particular we have a defining sequence and a simple character $\theta  \in \mathcal{C}(\fA , 0 , \beta ) . $ We have  also distinguished two cases. In both $(Case ~A)$ and $(Case ~B)$, we have introduced various  objects and notations and have established results relative to them. In this section, we are going to introduce a $4$-uple $(\overrightarrow{G} , y , \overrightarrow{r} , \overrightarrow{\boldsymbol{\Phi}} )$, which will be part of a complete Yu datum.

\subsection{The characters $\boldsymbol{\Phi}_i$ associated to a factorisation of a tame simple character} \label{phiiassocitaedtofacto}

We start with $(Case ~A)$.

\subsubsection{The characters $\boldsymbol{\Phi}_i$ in $(Case ~A)$} \label{phiiia}

 In section \ref{tameca}, we have introduced a sequence of fields \begin{center} ${E_0 \supsetneq E_1 \supsetneq \ldots \supsetneq E_i \supsetneq \ldots \supsetneq E_s  }$.\end{center} Recall that in this case $d=s$ and $E_s =F$ because $\beta _s \in F $ and $E_s=F[\beta _s]$.
  For each $i$, the field $E_i$ is included in the algebra $A=\End_F(V)$; that is, $V$ is an $E_i$-vector space.
For $0 \leq i \leq s$, put $G^i = \Res _{E_i /F} \underline{\Aut}_{E_i} (V)$.  If $0 \leq i \leq j \leq d$, then $G^i$ is canonically a closed subgroup scheme of $G^j$.
Let $\overrightarrow{G}$ be the sequence \[G^0 \subsetneq G^1 \subsetneq \ldots \subsetneq G^s .\] 

\begin{prop} \label{sequencetwia} $(Case ~A)$ The sequence $\overrightarrow{G}$ is a tamely ramified twisted Levi sequence in $G$.

\end{prop}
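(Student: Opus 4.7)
The plan is to reduce this to the three-step case already handled in Corollary \ref{corotwistedlevitame} by exhibiting a single tame Galois extension of $F$ that simultaneously splits every step of the tower $E_0 \supsetneq E_1 \supsetneq \dots \supsetneq E_s = F$, and then invoke Proposition \ref{grousplit} directly.

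First I would check the terminal condition. Since $(Case~A)$ assumes $\beta_s \in F$, we have $E_s = F$ and hence $G^s = \Res_{F/F}\underline{\Aut}_F(V) = G$, so the last term of $\overrightarrow{G}$ is indeed $G$, matching the required shape $G^0 \subsetneq \dots \subsetneq G^d = G$ with $d=s$. The strict inclusions $G^i \subsetneq G^{i+1}$ follow from $E_{i+1} \subsetneq E_i$ (see condition (vii) at the beginning of Section~\ref{tameca}). Reductivity of each $G^i$ over $F$ is automatic: since $[\fA,n,0,\beta]$ is a tame simple stratum the field $E_0 = F[\beta]$ is separable over $F$, so each $E_i/F$ is separable, and Weil restriction of $\underline{\Aut}_{E_i}(V) \simeq \mathrm{GL}_{d_i, E_i}$ along a separable extension is reductive.

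Next I would produce the splitting field. Let $L/F$ be a finite Galois tame extension containing the Galois closure of $E_0$; this exists because $E_0/F$ is tame. Since $E_i \subset E_0$ for every $i$, $L$ simultaneously contains the Galois closure of each $E_i$. Applying Proposition \ref{grousplit}(i) to each intermediate tower $E_0 / E_i / F$ (viewing $V$ as an $E_0$-vector space and $D$ as a choice of $E_0$-line decomposition), we obtain, for each $i$, a canonical isomorphism of $L$-group schemes
\[
G^i \times_F L \;\simeq\; \prod_{\sigma \in \Hom_F(E_i,L)} \underline{\Aut}_L\!\Bigl(\bigoplus_{\tau \mid \sigma} V_\tau\Bigr),
\]
where the inner sum runs over $F$-embeddings $\tau\colon E_0 \hookrightarrow L$ extending $\sigma$, and $V_\tau = V \otimes_{E_0,\tau} L$. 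In particular each $G^i \times_F L$ is a product of general linear groups over $L$, hence a split Levi subgroup of $G \times_F L = \underline{\Aut}_L(V\otimes_F L)$.

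Finally, I would verify the chain of Levi inclusions. The commutative diagram in Proposition \ref{grousplit}(i) shows that, after base change to $L$, the inclusion $G^i \hookrightarrow G^{i+1}$ corresponds precisely to the coarsening of the partition of $\Hom_F(E_0,L)$ induced by $\Hom_F(E_0,L) \twoheadrightarrow \Hom_F(E_i,L)$ into the one induced by $\Hom_F(E_0,L) \twoheadrightarrow \Hom_F(E_{i+1},L)$; block-diagonal embeddings of products of $\mathrm{GL}$'s of this shape are exactly standard Levi inclusions. This confirms that each $G^i \times_F L$ is a split Levi subgroup of $G \times_F L$ and that all these Levi subgroups are nested as required. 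I do not anticipate a serious obstacle here: the whole argument is a bookkeeping exercise on top of Proposition \ref{grousplit}, and the only mildly delicate point is the simultaneous choice of $L$, which is handled by taking $L$ large enough to contain the Galois closure of the top field $E_0$.
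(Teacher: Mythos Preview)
Your proposal is correct and takes essentially the same approach as the paper: the paper's proof is the single line ``This is a consequence of \ref{corotwistedlevitame},'' and what you have written is precisely the unpacking of that reference, going one level deeper to Proposition~\ref{grousplit} and making explicit the choice of a common splitting field $L$ containing the Galois closure of $E_0$. The only difference is granularity, not method.
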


\begin{proof} This is a consequence of \ref{corotwistedlevitame}.

\end{proof}

We now introduce some real numbers $\mathbf{r} _i$ for $0 \leq i  \leq s$. Put $\mathbf{r}_i := -\ord(c_i)$ for $0 \leq i \leq s$. Put also $\overrightarrow{\mathbf{r}}=(\mathbf{r}_0 , \mathbf{r}_1 , \ldots ,\mathbf{r}_i ,\ldots ,\mathbf{r}_s )$.

\begin{prop}$(Case ~A)$ \label{rivucia}For $ 0 \leq i \leq s $, the real number $\mathbf{r_i} $ satisfies the following formula:
\begin{center}
 $\mathbf{r_i}= \frac{- \nu _{\fA} (c_i)}{e(\fA \mid \of )}$.
 \end{center}
\end{prop}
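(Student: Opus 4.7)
The plan is to reduce this to a direct application of Lemma \ref{valval}. For each $0 \leq i \leq s$, the element $c_i$ lies in $E_i$: indeed, $c_i = \beta_i - \beta_{i+1}$ for $0 \leq i \leq s-1$ and $E_{i+1} \subsetneq E_i$ so $c_i \in E_i$, while $c_s = \beta_s \in E_s = F$ in $(Case~A)$. Since $[\fA, n, r_i, \beta_i]$ is a simple (hence pure) stratum belonging to the fixed defining sequence, condition $(ii)$ of Definition \ref{defpure} gives $E_i^{\times} \subset \mathfrak{K}(\fA)$. Thus the hypotheses of Lemma \ref{valval} are satisfied for the field $E_i$ and the element $c_i$.

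Applying Lemma \ref{valval} to $c_i \in E_i$ yields
\[
\nu_{\fA}(c_i)\, e(E_i \mid F) \;=\; e(\fA \mid \of)\, \nu_{E_i}(c_i),
\]
so that
\[
\frac{\nu_{E_i}(c_i)}{e(E_i \mid F)} \;=\; \frac{\nu_{\fA}(c_i)}{e(\fA \mid \of)}.
\]
Recalling from the notations section that $\ord$ is the unique valuation on algebraic extensions of $F$ normalised by $\ord(\pi_F) = 1$, and that for any $x \in E_i^{\times}$ one has $\ord(x) = \nu_{E_i}(x)/e(E_i \mid F)$, the left-hand side above equals $\ord(c_i)$. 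Therefore
\[
\mathbf{r}_i \;=\; -\ord(c_i) \;=\; \frac{-\nu_{\fA}(c_i)}{e(\fA \mid \of)},
\]
which is exactly the formula claimed.

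There is no serious obstacle here: the only non-trivial ingredient is the lemma on compatibility of valuations (Lemma \ref{valval}), whose hypothesis $E_i^{\times} \subset \mathfrak{K}(\fA)$ is automatic from the purity of the strata in the defining sequence. The argument is uniform in $i$ and applies identically for $0 \leq i \leq s-1$ and for $i = s$ (in which case $c_s \in F$ and the formula holds trivially with $e(E_s \mid F) = 1$).
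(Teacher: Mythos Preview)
Your proof is correct and follows essentially the same approach as the paper: both use the definition $\mathbf{r}_i = -\ord(c_i)$, the identity $\ord(c_i) = \nu_{E_i}(c_i)/e(E_i \mid F)$, and Lemma~\ref{valval} to convert this into $-\nu_{\fA}(c_i)/e(\fA \mid \of)$. If anything, you are slightly more careful than the paper in checking the hypothesis $E_i^{\times} \subset \mathfrak{K}(\fA)$ of Lemma~\ref{valval}.
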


\begin{proof}
 By definition, $\mathbf{r}_i = - \ord(c_i)$. In addition, by definition of $\ord$ we know that \begin{equation}\ord(c_i) = \frac{\nu _{E_i} (c_i)}{e(E_i \mid F)}. \label{dz1}\end{equation} Lemma \ref{valval} shows that \begin{equation} \frac{ \nu _{\fA} (c_i)}{e(\fA \mid \of)} = \frac{\nu _{E_i} (c_i) }{e(E_i \of F)} \label{dz2} .\end{equation} Equations \ref{dz1} and \ref{dz2} together finish the proof of the proposition.

\end{proof}

\begin{prop} \label{filaeg}$(Case ~A)$  There exists a point $y $ in $\BT ^E (G^0 , F)$ such that the following properties  hold.
\begin{enumerate}
\item[(I)] The following equalities hold.
\begin{enumerate}[(i)]
\item $ U^0(\fB_{\beta _0})=G^0(F)_{y,0}$

   \item $ U^1(\fB_{\beta_0})=G^0(F)_{y,0+}$

 \item $  \fQ _{\beta_0}  = \mathfrak{g}^0(F)_{y, 0 +}$

 \item $  \fB_{\beta_0} = \mathfrak{g}^0(F)_{y, 0} $ 

   \item  $ F[\beta]^{\times }  U^0(\fB_{\beta _0})=G^0(F)_{[y]}$
\end{enumerate}

\item[(II)]
\begin{flushleft}
 There exist continuous, affine and $G^{i-1} (F)$-equivariant maps  ${\iota _i : \xymatrix{ \BT^E(G^{i-1},F) \ar@{^{(}->}[r]^-{\iota _i} & \BT^E(G^i,F)}}$, for $1\leq i \leq s$, such that, denoting $\iota ^i$ the composition ${\iota _i \circ \iota _{\iota _{i-1}} \circ \ldots \circ \iota _1}$, the following equalities hold:
  \end{flushleft}

\begin{enumerate}[(i)]
 \item $  U^{[\frac{-\nu _{\fA} (c_{i-1})}{2}]+1}(\fB_{\beta_i} )   = G^i(F)_{\iota ^i(y), \frac{\mathbf{r}_{i-1}}{2}+}$
 \item $ U^{[\frac{-\nu _{\fA} (c_{i-1})+1}{2}]}(\fB_{\beta_i} )= G^i(F)_{\iota ^i(y), \frac{\mathbf{r}_{i-1}}{2}}$
 \item $ U^{-\nu _{\fA} (c_{i-1})+1}(\fB_{\beta_i} ) = G^i(F)_{\iota ^i(y), \mathbf{r}_{i-1} +}$
 \item $ U^{-\nu _{\fA} (c_{i-1})}(\fB_{\beta_i} ) = G^i(F)_{\iota ^i(y), \mathbf{r}_{i-1}} $
 \item $  \fQ _{\beta_i}  ^{[\frac{-\nu _{\fA} (c_{i-1})}{2}]+1} = \mathfrak{g}^i(F)_{\iota ^i(y), \frac{\mathbf{r}_{i-1}}{2}+}$
 \item $  \fQ _{\beta_i}  ^{[\frac{-\nu _{\fA} (c_{i-1})+1}{2}]} = \mathfrak{g}^i(F)_{\iota ^i(y), \frac{\mathbf{r}_{i-1}}{2}}$
 \item $  \fQ _{\beta_i} ^{-\nu _{\fA} (c_{i-1})+1} = \mathfrak{g}^i(F)_{\iota ^i(y), \mathbf{r}_{i-1} +}$
 \item $  \fQ _{\beta_i} ^{-\nu _{\fA} (c_{i-1})} = \mathfrak{g}^i(F)_{\iota ^i(y), \mathbf{r}_{i-1}} $ and moreover,

 \item $ U^{-\nu _{\fA} (c_{i})}(\fB_{\beta_i} ) = G^i(F)_{\iota ^i(y), \mathbf{r}_{i}} $

 \item $U^{-\nu _{\fA} (c_i)+1}(\fB_{\beta _i})= G^i(F)_{\iota ^i(y),\mathbf{r}_i+}$

\end{enumerate}
\end{enumerate}

In the rest of this paper, we identify $\iota ^i(y)$ and $y$.
\end{prop}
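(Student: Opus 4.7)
The plan is to build $y$ from a suitable lattice chain and then transport Bushnell--Kutzko's filtrations to Moy--Prasad's filtrations via the dictionary of \cite{Brou}. Let $\mathcal{L} = \{L_i\}_{i \in \Z}$ be the $\of$-lattice chain associated with $\fA$. Since $E_0^{\times} \subset \fK(\fA)$, the chain $\mathcal{L}$ is stable under multiplication by $E_0^{\times}$, and hence is an $\mathfrak{o}_{E_0}$-lattice chain up to repetitions; the associated hereditary order in $\End_{E_0}(V)$ is precisely $\fB_{\beta_0}$. Broussous' identification \cite{Brou} between hereditary orders in $\End_{E_0}(V)$ and points of $\BT^E(\underline{\Aut}_{E_0}(V), E_0) = \BT^E(G^0, F)$ then produces a canonical point $y \in \BT^E(G^0, F)$ attached to $\fB_{\beta_0}$. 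The equalities in part (I) are exactly the content of that dictionary: the maximal parahoric at $y$ is $U^0(\fB_{\beta_0})$, its pro-unipotent radical is $U^1(\fB_{\beta_0})$, the stabiliser of $[y]$ is $F[\beta]^{\times}U^0(\fB_{\beta_0})$, and the Lie algebra versions follow by the same identification.

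For part (II), I would define $\iota_i$ inductively using the embeddings of buildings $\BT^E(\underline{\Aut}_{E_{i-1}}(V), E_{i-1}) \hookrightarrow \BT^E(\underline{\Aut}_{E_i}(V), E_i)$ induced by viewing an $\mathfrak{o}_{E_{i-1}}$-lattice chain in $V$ as a (possibly repeated) $\mathfrak{o}_{E_i}$-lattice chain, which is the standard functoriality with respect to the tame twisted Levi inclusion $G^{i-1} \subset G^i$. Under this embedding, the image $\iota^i(y)$ corresponds to the hereditary order $\fB_{\beta_i}$ in $\End_{E_i}(V)$, again by \cite{Brou}. The key normalisation lemma is Proposition \ref{rivucia}: if $k \in \Z$ and we write $\mathbf{r} = k / e(\fB_{\beta_i} \mid \mathfrak{o}_{E_i})$, then the Moy--Prasad filtration step $G^i(F)_{\iota^i(y), \mathbf{r}}$ coincides with $U^k(\fB_{\beta_i})$, and $\fQ_{\beta_i}^k$ coincides with $\mathfrak{g}^i(F)_{\iota^i(y), \mathbf{r}}$. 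Applying this with $k = -\nu_{\fA}(c_{i-1})$, $k = -\nu_{\fA}(c_{i-1})+1$, $k = [-\nu_{\fA}(c_{i-1})/2]+1$, etc., together with Lemma \ref{valval} (which converts $\nu_{\fA}$ to $\nu_{E_i}$ and then to $\mathbf{r}_i$), produces every equality listed. The formulas (II)(ix) and (II)(x) use the same principle but with the index $-\nu_{\fA}(c_i)$ associated to $c_i$.

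The only genuine content is the compatibility of Bushnell--Kutzko's normalisation $\nu_{\fA}$ with Moy--Prasad's valuation $\ord$ through the elementary lattice identity $\nu_{\fA}(\beta)\,e(E\mid F) = e(\fA \mid \of)\,\nu_E(\beta)$ of Lemma \ref{valval}; once this is combined with Broussous' identification of the point in the building attached to a hereditary order and with the standard functoriality of the Moy--Prasad filtration with respect to a tame Levi subgroup, all ten equalities in (II) reduce to matching exponents on the two sides. The main obstacle I expect is bookkeeping: one must keep straight the three competing normalisations ($\nu_{\fA}$, $\nu_{E_i}$ and $\ord$) and check that the $G^i$-equivariant embeddings $\iota_i$ of buildings are compatible with the chosen lattice chain in each step. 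Once this is done, the identification $\iota^i(y) = y$ in the final sentence is just the statement that all these $\iota^i(y)$ represent the same point via the compatible embeddings.
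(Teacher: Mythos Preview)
Your proposal is correct and follows essentially the same route as the paper: both arguments construct $y$ from the lattice chain underlying $\fB_{\beta_0}$ via the Broussous--Lemaire dictionary \cite{Brou} between lattice functions and points of the enlarged building, and then read off all the filtration identities from that dictionary together with the normalisation identity of Lemma~\ref{valval}. The only detail the paper makes slightly more explicit is that the embeddings $\iota_i$ of \emph{enlarged} buildings require fixing isomorphisms $X_*(Z(G^{i-1}),F)\otimes\R \simeq X_*(Z(G^i),F)\otimes\R$ (available because $Z(G^0)/Z(G)$ is anisotropic), whereas the canonical Broussous--Lemaire embedding lives at the level of the \emph{reduced} buildings; you should flag this step, but it changes nothing in the argument.
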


\begin{proof} \begin{sloppypar} In \cite{Brou}, the authors construct an explicit bijection between the set $Latt^1(V)$ of all lattices functions in $V$ (see \cite[Definition I.2.1]{Brou} for the definition of a lattice function)  and the enlarged Bruhat-Tits building of $\underline{\Aut}_F (V)$ (combine \cite[Prop I.1.4]{Brou} and \cite[Prop I.2.4]{Brou}). The group $\R$ acts on $Latt^1(V)$ and the previous bijection induces a bijection between $Latt(V):= Latt^1(V) / \R$ and the reduced Bruhat-Tits building $\BT^R(\underline{\Aut}_F (V),F)$. The authors show \cite[Theorem II.1.1]{Brou} that if $E/F \subset A$ is a separable extension of fields, then  there is a canonical affine and continuous embedding from $\BT^R(\Res _{E/F} (\underline{\Aut} _E (V) , F)$ to $\BT^R (\underline{\Aut}_F , F)$. Using the general fact that if $G$ is a connected reductive $k'$-group and $k'/k$ is a separable finite extension of non-Archimedean local field, then $\BT^R(\Res _{k'/k} (G) , k') = \BT^R(G,k)$; we deduce canonical maps $\BT^R(G^{i-1},F) \to \BT^R(G^i,F)$ for $1 \leq i \leq d$. Recall that $\BT^E(G,k)$ is defined as $\BT^R(G,k) \times X_*(Z(G),F) \otimes _{\Z} \R.$ Because $Z(G^0 ) /Z(G)$ is anisotropic, $X_*(Z(G^{i-1}), F) \otimes _{\Z} \R $ and $X_* (Z(G^{i},F)$ are isomorphic for $1 \leq i \leq d$. Fix such isomorphisms . They induce continous, affine and $G^{i-1}(F)$-equivariant embeddings \end{sloppypar}
\begin{center}
$\BT^E(G^{i-1},F) \to \BT^E (G^i,F)$.
\end{center}
In \cite[I §7]{Brou}, the authors explain that there are injective maps 

\begin{center}
{\small $\{\text{Lattices chains in}~ V  \} \to \{\text{Lattices sequences in} ~V\} \to \{\text{Lattices functions in}~ V\}$}.
\end{center}
Let $\Lambda \in Latt^1(V)$. To the class $\overline{\Lambda}$ of $\Lambda$, Broussous-Lemaire attach a filtration $a_r(\overline{\Lambda})$ of $A$ and a filtration $U_r(\overline{\Lambda})$ of $A^{\times} =G$, they are indexed by $\R$ and $\R _{\geq 0}$. If $\Lambda$ comes from a lattices chain $\mathcal{L}$, then the filtration of $A$ of Broussous-Lemaire is compatible with the filtration, indexed by $\Z$, given by powers of the radical of the hereditary order associated to $\mathcal{L}$. 
Let $\mathcal{L}$ be an $\mathfrak{o}_E$-lattices chain associated to $\mathfrak{B}$. We thus get a point in $\BT^E(G^0 , F)$ by the previous considerations. The rest of the proposition is a consequence of \cite{Brou}[Theorem II.1.1] and \cite{Brou}[Appendix A].

\end{proof}

 Let us introduce some characters $\boldsymbol{\Phi} _i$ , $ 0 \leq i \leq s$. 

\begin{defi} \label{PHIIa} $(Case ~A)$ Let $0 \leq i \leq s$, and let $\boldsymbol{\Phi } _i$ be the smooth complex character of $G^i(F)$ defined by $\boldsymbol{\Phi } _i:= \phi _i \circ \det_{B_{\beta _i}}$ where $\phi _i$ is the character introduced in  \ref{facttam} ,\ref{tamcoa}.

\end{defi}

\begin{prop} \label{PHIIgenea} $(Case ~A)$ The following assertions hold: \begin{enumerate}[(i)]
\item For $0 \leq i \leq s-1$, the character $\boldsymbol{\Phi}_i$  is $G^{i+1}$-generic of depth $\mathbf{r}_i$ relatively to $y$. \item The character $\boldsymbol{\Phi}_s$ is of depth $\mathbf{r}_s$ relatively  to $y$. \end{enumerate}
\end{prop}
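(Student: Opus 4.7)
The plan is to verify, for each $0 \le i \le s-1$, the two conditions defining a $G^{i+1}$-generic character of depth $\mathbf{r}_i$ relative to $y$: (a) that $\boldsymbol{\Phi}_i$ is trivial on $G^i(F)_{y,\mathbf{r}_i+}$ and non-trivial on $G^i(F)_{y,\mathbf{r}_i}$; and (b) that the induced character of $G^i(F)_{y,\mathbf{r}_i:\mathbf{r}_i+}$ is realised, in the sense of Definition \ref{realizedef}, by an element of $(\Lie^*(Z(G^i))^\circ)_{-\mathbf{r}_i}$ which is $G^{i+1}$-generic of depth $\mathbf{r}_i$. For assertion (ii) only (a) is required, since $G^s=G$ leaves no ambient group for the genericity condition.

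The crucial observation is that on the subgroup $G^i(F)_{y,\mathbf{r}_i}=U^{-\nu_\fA(c_i)}(\fB_{\beta_i})$ the factor $\theta^i$ admits two simultaneous descriptions. By Corollary \ref{tamcoa}(i), $\theta^i=\phi_i\circ\det_{B_{\beta_i}}=\boldsymbol{\Phi}_i$ on this subgroup, since it lies inside $U^1(\fB_{\beta_i})$. On the other hand, because $-\nu_\fA(c_i)>[-\nu_\fA(c_i)/2]+1$ and $\fB_{\beta_i}\subset\fB_{\beta_{i+1}}$, this subgroup also lies inside $U^{[-\nu_\fA(c_i)/2]+1}(\fB_{\beta_{i+1}})\cdots U^{[-\nu_\fA(c_{s-1})/2]+1}(\fB_{\beta_s})$, so Corollary \ref{tamcoa}(ii) gives $\theta^i=\psi_{c_i}$ there. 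Combining these equalities yields $\boldsymbol{\Phi}_i(1+X)=\psi(\Tr_{A/F}(c_iX))$ for every $1+X\in G^i(F)_{y,\mathbf{r}_i}$.

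From this identity, (a) is immediate: the right-hand side, being controlled by the conductor $\mathfrak{p}_F$ of $\psi$ and by $\ord(c_i)=-\mathbf{r}_i$, is trivial on $G^i(F)_{y,\mathbf{r}_i+}$ and non-trivial on $G^i(F)_{y,\mathbf{r}_i}$, as translated through Lemma \ref{valval} and Proposition \ref{filaeg}(II)(ix)--(x). For (b), the right-hand side is by Definition \ref{xy} precisely the pairing $\psi(X^*_{c_i}(X))$, so $\boldsymbol{\Phi}_i|_{G^i(F)_{y,\mathbf{r}_i:\mathbf{r}_i+}}$ is realised by $X^*_{c_i}\in\Lie^*(Z(G^i))_{-\mathbf{r}_i}$, which lies in the correct depth piece by Proposition \ref{xycomput}(ii). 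Finally, applying Theorem \ref{minimalgenericelemenx} to the tower $E_i/E_{i+1}/F$ together with the minimality of $c_i$ relative to $E_i/E_{i+1}$ from Proposition \ref{minicici} shows that $X^*_{c_i}$ is $G^{i+1}$-generic of depth $\mathbf{r}_i$, completing (i). Assertion (ii) follows by the same depth calculation applied to $\phi_s$ and $c_s=\beta_s$, with $B_{\beta_s}=A$.

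The main subtlety lies in the verification of the double inclusion that allows both descriptions of $\theta^i$ from Corollary \ref{tamcoa} to apply simultaneously on $U^{-\nu_\fA(c_i)}(\fB_{\beta_i})$, together with the identification, via Proposition \ref{filaeg} and the Moy--Prasad isomorphisms, of Bushnell--Kutzko filtration data with Moy--Prasad filtration data; once these translations are performed cleanly, the realisation formula and the reduction to Theorem \ref{minimalgenericelemenx} are formal.
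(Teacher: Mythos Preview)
Your proof is correct and follows the same route as the paper's: establish the double inclusion of $U^{-\nu_\fA(c_i)}(\fB_{\beta_i})$ into both domains of Corollary~\ref{tamcoa}, deduce $\boldsymbol{\Phi}_i|_{G^i(F)_{y,\mathbf{r}_i}}=\psi_{c_i}$, read off the depth, then invoke Proposition~\ref{minicici} and Theorem~\ref{minimalgenericelemenx} for genericity. One small imprecision: to apply Corollary~\ref{tamcoa}(i) you need $U^{-\nu_\fA(c_i)}(\fB_{\beta_i})\subset U^{[\frac{-\nu_\fA(c_{i-1})}{2}]+1}(\fB_{\beta_i})$ (for $i\geq 1$), which follows from $-\nu_\fA(c_{i-1})<-\nu_\fA(c_i)$ --- saying it ``lies inside $U^1(\fB_{\beta_i})$'' is not sufficient, since $U^1(\fB_{\beta_i})$ itself is not contained in $H^1(\beta,\fA)$ when $i\geq 1$.
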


\begin{proof} \begin{enumerate}[(i)] \item Let us first prove that $\boldsymbol{\Phi}_i$ is of depth $\mathbf{r}_i$ relatively  to $y$ for $0 \leq i \leq s-1$. The restriction $\boldsymbol{\Phi } _i \mid_{G^i(F)_{\mathbf{r_i} }}$ is equal to the restriction $\boldsymbol{\Phi } _i \mid_{U^{-\nu _{\fA} (c_{i})} (\fB _{\beta _i} )}$ by proposition \ref{filaeg}. 
 Let us prove that the two inclusions  \begin{equation} \label{inclu1}{U^{-\nu _{\fA} (c_{i})} (\fB _{\beta _i} )} \subset {U^1 (\fB _{\beta _0}) U^{[\frac{-\nu _{\fA} (c_0)}{2}]+1}(\fB _{\beta _1} )\ldots U^{[\frac{-\nu _{\fA} (c_{i-1})}{2}]+1}(\fB _{\beta _i}) }\end{equation} and \begin{equation}\label{inclu2}{U^{-\nu _{\fA} (c_{i})} (\fB _{\beta _i} )} \subset U^{[\frac{-\nu _{\fA} (c_{i})}{2}]+1}(\fB _{\beta _{i+1}}) \ldots U^{[\frac{-\nu _{\fA} (c_{s-1})}{2}]+1}(\fB _{\beta _s}) \end{equation} hold.
If $i=0$, then the first inclusion is trivial. Assume now that $i>0$. To prove the first inclusion in this case, we remark that the inequality of integers $-\nu _{\fA} (c_{i-1}) < - \nu _{\fA} (c_i)$ holds. 
We deduce easily and successively the inequalities
\begin{align*}
-\nu _{\fA} (c_{i-1}) &< - \nu _{\fA} (c_i)\\
\frac{-\nu _{\fA} (c_{i-1})}{2}&< - \nu _{\fA} (c_i)\\
[\frac{-\nu _{\fA} (c_{i-1})}{2}]+1 &\leq - \nu _{\fA} (c_i).
\end{align*}
So ${U^{-\nu _{\fA} (c_{i})} (\fB _{\beta _i} )}  \subset U^{[\frac{-\nu _{\fA} (c_{i-1})}{2}]+1} (\fB _{\beta _i} )$, and the first equality holds.
To prove the inclusion \eqref{inclu2}, we remark that the integer $-\nu _{\fA} (c_i)$ is strictly bigger than $0$. We easily deduce successively that
\begin{align*}
-\nu _{\fA} (c_{i}) &>\frac{- \nu _{\fA} (c_i)}{2}\\
-\nu _{\fA} (c_{i})&\geq [\frac{- \nu _{\fA} (c_i)}{2}]+1.\\
\end{align*}
Thus, because $\fB _{\beta _i} \subset \fB _{\beta _{i+1}}$, we get \begin{center} ${U^{-\nu _{\fA} (c_{i})} (\fB _{\beta _i} )} \subset  U^{[\frac{-\nu _{\fA} (c_{i})}{2}]+1}(\fB _{\beta _{i+1}})$ \end{center} and the second inequality follows.
The inclusions \eqref{inclu1} and \eqref{inclu2} together with \ref{tamcoa} imply that  \begin{center}$\boldsymbol{\Phi}_i \mid _{G^i(F)_{y,\mathbf{r}_i}} =\phi _i \circ \det  \mid_{U^{-\nu _{\fA} (c_{i})} (\fB _{\beta _i} )} = \theta ^i \mid_{U^{{-\nu _{\fA} (c_{i})}} (\fB _{\beta _i} )}= \psi _{c_i}\mid_{U^{{-\nu _{\fA} (c_{i})}} (\fB _{\beta _i} )}$.\end{center}
We know that $\psi _{c_i} $ is trivial on $U^{-\nu _{\fA} (c_i)+1}(\fB_{\beta _i})$ and non-trivial on $U^{-\nu _{\fA} (c_i)}(\fB_{\beta _i})$. Consequently, because $U^{-\nu _{\fA} (c_i)+1}(\fB_{\beta _i})= G^i(F)_{y,\mathbf{r}_i+}$ by \ref{filaeg}, the character $\boldsymbol{\Phi}_i$ is of depth $\mathbf{r}_i$ relatively to $y$. Now we have to show that  $\boldsymbol{\Phi _i}$ is $G^{i+1}$-generic of depth $\mathbf{r}_i$ for  ${0\leq i \leq s-1}$. By definition, $\psi _{c_i} (1+x) = \psi \circ \Tr _{A/F} ( c_i x)$. 
We have thus obtained that 
\begin{equation} \label{realize}
\boldsymbol{\Phi } _i \mid_{G^i(F)_{\mathbf{r_i} :\mathbf{r_i}+} }(1+x) = \psi \circ \Tr _{A/F} ( c_i x)
\end{equation}
\begin{sloppypar}
As explained in Section \ref{yu}, the characters of $\mathfrak{g}^i(F)_{\mathbf{r_i} : \mathbf{r_i}+} \simeq \mathfrak{g}^i(F)_{\mathbf{r_i} }/ \mathfrak{g}^i(F)_{\mathbf{r_i} +}$ are in bijection via $\psi$ with $\mathfrak{g}^i(F)_{\mathbf{r_i} +}^{\bullet} / \mathfrak{g}^i(F)_{\mathbf{r_i} }^{\bullet}$ where 
\end{sloppypar}

$\mathfrak{g}^i(F)_{\mathbf{r_i} +}^{\bullet} =\{ x\in {\mathfrak{g}^i}^*(F) \mid x (\mathfrak{g}^i(F)_{\mathbf{r_i} +}) \subset \mathfrak{o}_F \} \otimes _{\mathfrak{o}_F} \mathfrak{p}_F= {\mathfrak{g}^i}^*(F)_{-\mathbf{r_i} } $

and

$\mathfrak{g}^i(F)_{\mathbf{r_i} }^{\bullet} =\{ x\in { \mathfrak{g}^i}^*(F) \mid x (\mathfrak{g}^i(F)_{\mathbf{r_i} }) \subset \mathfrak{o}_F \} \otimes _{\mathfrak{o}_F} \mathfrak{p}_F= {\mathfrak{g}^i}^*(F)_{(-\mathbf{r_i})+ }$

The isomorphism  $G^i(F)_{\mathbf{r_i} : \mathbf{r_i}+} \simeq \mathfrak{g}^i(F)_{\mathbf{r_i} : \mathbf{r_i}+}$ used by Yu \cite{YU}, is the same as the one used by Adler in \cite{Adle} , and it is given in our case by the map ${((1+x )\mapsto x )}$.
The element $X^*_{c_i}= (x\mapsto \Tr _{A/F} (c_i x))$ is an element in $\Lie^* (Z(G^i))_{-\mathbf{r_i}} \subset
 {\mathfrak{g}^i}^* (F) _{y, -\mathbf{r_i}}$. The equation \eqref{realize} shows that $X^*_{c_i}$ realises  $\boldsymbol{\Phi } _i \mid_{G^i(F)_{\mathbf{r_i} :\mathbf{r_i}+ } }$. The element $c_i$ is minimal relatively to $E_i/E_{i+1}$ by Proposition \ref{minicici}.
 So  $X^*_{c_i}$ is $G^{i+1}$-generic of depth $-\ord (c_i)$ by Theorem \ref{minimalgenericelemenx}. Thus, $\boldsymbol{\Phi}_i$ is $G^{i+1}$-generic of depth $\mathbf{r}_i$.
\item \begin{sloppypar} Let us show that $\boldsymbol{\Phi}_s$ is of depth $\mathbf{r}_s$  relatively to $y$. This is easier than $(i)$.
By $\ref{filaeg}$, we have ${G(F)_{y,\mathbf{r}_s } = U^{-\nu _{\fA} (c_s)}(\fB _{\beta _s} )}$ and ${G(F)_{y,\mathbf{r}_s +} = U^{-\nu _{\fA} (c_s)+1}(\fB _{\beta _s} )}$. \end{sloppypar}
Thus, by using  \ref{facttam}, we get \begin{center} $\boldsymbol{\Phi _s} \mid _{G(F)_{y,\mathbf{r}_s}} = \phi _s \circ \det \mid _{U^{-\nu _{\fA} (c_s)}(\fB _{\beta _s} )} = \theta ^s \mid _{U^{-\nu _{\fA} (c_s)}(\fB _{\beta _s} )} =  \psi _{c_s}$. \end{center}
The character $\psi _{c_s}$ is trivial on ${G(F)_{y,\mathbf{r}_s +} = U^{-\nu _{\fA} (c_s)+1}(\fB _{\beta _s} )}$ and non-trivial on ${G(F)_{y,\mathbf{r}_s } = U^{-\nu _{\fA} (c_s)}(\fB _{\beta _s} )}$. This ends the proof of $(ii)$

\end{enumerate}

\end{proof}

\subsubsection{The characters $\boldsymbol{\Phi}_i$ in  $(Case~B)$} \label{phiiib}

 We have already introduced a sequence of fields
\begin{center}$E_0 \supsetneq E_1 \supsetneq \ldots \supsetneq E_i \supsetneq \ldots \supsetneq E_s  $.\end{center} Recall that in this case $d=s+1$ and $E_d =F$ by definition.
  For each $i$, the field $E_i$ is contained in the algebra $A=\End_F(V)$; that is, $V$ is an $E_i$-vector space.
For $0 \leq i \leq d$, put $G^i = \Res _{E_i /F} \underline{\Aut}_{E_i} (V)$.  
Let $\overrightarrow{G}$ be the sequence \[G^0 \subset G^1 \subset \ldots \subset G^d.\] 

\begin{prop} \label{sequencetwib} $(Case~B)$ The sequence $\overrightarrow{G}$ is a tamely ramified twisted Levi sequence in $G$.

\end{prop}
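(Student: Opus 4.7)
The plan is to imitate the proof of Proposition \ref{sequencetwia} and reduce to Corollary \ref{corotwistedlevitame}, the only formal difference with $(Case~A)$ being that now $E_d = F$ is added to the tower after $E_s \not\subset F$, so that $d = s+1$ rather than $d=s$. We have a strictly decreasing chain
\[ F = E_d \subsetneq E_s \subsetneq E_{s-1} \subsetneq \cdots \subsetneq E_0 = F[\beta] \]
of subfields of $A = \End_F(V)$, all tamely ramified over $F$ because they are intermediate fields of the tame extension $E_0/F$. For each $i$, the space $V$ is naturally an $E_i$-vector space, and by definition $G^i = \Res_{E_i/F}\underline{\Aut}_{E_i}(V)$, with $G^d = G$.

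First I would fix a finite Galois tamely ramified extension $L/F$ containing $E_0$ (hence all the $E_i$). Then for each consecutive pair $E_{i+1} \subset E_i$, I apply Corollary \ref{corotwistedlevitame} to the tower $E_i/E_{i+1}/F$: this shows that $G^i \subset G^{i+1} \subset G$ is a tame twisted Levi sequence in $G$, and in particular the splitting field can be taken to be $L$, so that $G^i \times_F L$ and $G^{i+1} \times_F L$ are split Levi subgroups of $G \times_F L$. Since this holds for every $i$, the whole sequence $G^0 \subset G^1 \subset \cdots \subset G^d$ becomes a chain of split Levi subgroups of $G \times_F L$ over $L$, which is exactly the definition of a tame twisted Levi sequence given at the beginning of Section \ref{yu}.

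There is no substantive obstacle: the only point requiring (minor) care is to observe that a single tame Galois extension $L$ works simultaneously for all consecutive pairs (rather than a different $L_i$ for each $i$), which is automatic as soon as $L$ contains $E_0$, since the proof of Corollary \ref{corotwistedlevitame} via Proposition \ref{grousplit} shows that any Galois extension containing the larger field of the tower splits the corresponding Levi. This completes the proof in the same way as $(Case~A)$, the extra inclusion $G^s \subsetneq G^{s+1} = G$ being treated by the very same argument applied to the tame tower $E_s/F/F$ (that is, with $E = E_{s+1} = F$).
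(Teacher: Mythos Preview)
Your proof is correct and follows exactly the paper's approach: the paper simply says that the $(Case~A)$ argument adapts without change, and that argument in turn is just an invocation of Corollary \ref{corotwistedlevitame}. Your additional care about a single common splitting field $L$ and the handling of the extra step $G^s \subset G^d = G$ via the tower $E_s/F/F$ merely spells out what the paper leaves implicit.
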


\begin{proof} The $(Case~A)$ proof adapts to $(Case~B)$ without change.
\end{proof}

We now introduce some real numbers $\mathbf{r} _i$ for $0 \leq i  \leq d$. Put $\mathbf{r}_i := -\ord(c_i)$ for $0 \leq i \leq s$. Put $\mathbf{r}_d = \mathbf{r}_s$. Put also $\overrightarrow{\mathbf{r}}=(\mathbf{r}_0 , \mathbf{r}_1 , \ldots ,\mathbf{r}_i ,\ldots ,\mathbf{r}_s ,\mathbf{r}_d)$.

\begin{prop} \label{rivucib}$(Case~B)$ For $ 0 \leq i \leq s $, the real number $\mathbf{\mathbf{r_i}} $ satisfies the formula
\begin{center}
 $\mathbf{\mathbf{r_i}}= \frac{- \nu _{\fA} (c_i)}{e(\fA \mid \of )}$.
 \end{center}
\end{prop}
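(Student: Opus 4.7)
The proof is essentially identical to that of Proposition \ref{rivucia} in $(Case~A)$, because the definition of the $c_i$ and of $\mathbf{r}_i$ is the same in both cases for $0 \leq i \leq s$, and the only arithmetic facts used are the definition of $\ord$ and Lemma \ref{valval}. The plan is therefore to reduce the claim to these two ingredients.

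First, I would observe that, as in the $(Case~A)$ argument, by definition $\mathbf{r}_i = -\ord(c_i)$, and the normalised valuation $\ord$ on $F[c_i]$ satisfies $\ord(c_i) = \frac{\nu_{E_i}(c_i)}{e(E_i \mid F)}$. This reduces the problem to comparing the valuations $\nu_{\fA}(c_i)$ and $\nu_{E_i}(c_i)$.

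Next, I would verify that $c_i \in E_i$ so that Lemma \ref{valval} applies. Indeed, for $0 \leq i \leq s-1$ we have $c_i = \beta_i - \beta_{i+1}$; since $E_{i+1} \subsetneq E_i$ by condition (vii) in Section \ref{abstfact}, both $\beta_i$ and $\beta_{i+1}$ lie in $E_i$, hence so does $c_i$. For $i = s$ we simply have $c_s = \beta_s \in E_s$. Because each $E_i^{\times}$ is contained in $\fK(\fA)$ (the simple strata $[\fA, n, r_i, \beta_i]$ being pure, so that $E_i^\times \subset \fK(\fA)$), Lemma \ref{valval} gives $\nu_{\fA}(c_i)\, e(E_i \mid F) = e(\fA \mid \of)\, \nu_{E_i}(c_i)$.

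Combining these two equalities yields
\[
\mathbf{r}_i = -\ord(c_i) = -\frac{\nu_{E_i}(c_i)}{e(E_i \mid F)} = \frac{-\nu_{\fA}(c_i)}{e(\fA \mid \of)},
\]
which is the desired formula. No obstacle is expected: this is purely a bookkeeping check that the $(Case~A)$ proof transports verbatim to $(Case~B)$, the only verification being that the elements $c_i$ indeed lie in the relevant fields $E_i$.
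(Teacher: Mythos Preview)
Your proof is correct and follows exactly the same approach as the paper: the paper simply states that the $(Case~A)$ proof adapts without change, and your argument spells out precisely that Case~A computation (definition of $\ord$ plus Lemma~\ref{valval}), with the minor extra care of checking $c_i\in E_i$ and $E_i^\times\subset\fK(\fA)$ so that the lemma applies.
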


\begin{proof}
The $(Case~A)$ proof adapts to $(Case~B)$ without change.
\end{proof}

\begin{prop}(Case B) \label{filbeg}  There exists a point $y $ in $\BT ^E (G^0 , F)$ such that the following properties  hold: \begin{enumerate}
\item[(I)] The following equalities hold.
\begin{enumerate}[(i)]
\item $ U^0(\fB_{\beta _0})=G^0(F)_{y,0}$

   \item $ U^1(\fB_{\beta_0})=G^0(F)_{y,0+}$

 \item $  \fQ _{\beta_0}  = \mathfrak{g}^0(F)_{y, 0 +}$

 \item $  \fB_{\beta_0} = \mathfrak{g}^0(F)_{y, 0} $ 

   \item  $ F[\beta]^{\times }  U^0(\fB_{\beta _0})=G^0(F)_{[y]}$
\end{enumerate}

\item[(II)] \begin{flushleft}
 There exist continuous, affine and $G^{i-1} (F)$-equivariant maps  $\iota _i : \xymatrix{ \BT^E(G^{i-1},F) \ar@{^{(}->}[r]^-{\iota _i} & \BT^E(G^i,F)}$ for $1\leq i \leq s$, such that, denoting $\iota ^i$ the composition $\iota _i \circ \iota _{\iota _{i-1}} \circ \ldots \circ \iota _1$, the following equalities hold: \end{flushleft}

\begin{enumerate}[(i)]
 \item $  U^{[\frac{-\nu _{\fA} (c_{i-1})}{2}]+1}(\fB_{\beta_i} )   = G^i(F)_{\iota ^i(y), \frac{\mathbf{r}_{i-1}}{2}+}$
 \item $ U^{[\frac{-\nu _{\fA} (c_{i-1})+1}{2}]}(\fB_{\beta_i} )= G^i(F)_{\iota ^i(y), \frac{\mathbf{r}_{i-1}}{2}}$
 \item $ U^{-\nu _{\fA} (c_{i-1})+1}(\fB_{\beta_i} ) = G^i(F)_{\iota ^i(y), \mathbf{r}_{i-1} +}$
 \item $ U^{-\nu _{\fA} (c_{i-1})}(\fB_{\beta_i} ) = G^i(F)_{\iota ^i(y), \mathbf{r}_{i-1}} $
 \item $  \fQ _{\beta_i}  ^{[\frac{-\nu _{\fA} (c_{i-1})}{2}]+1} = \mathfrak{g}^i(F)_{\iota ^i(y), \frac{\mathbf{r}_{i-1}}{2}+}$
 \item $  \fQ _{\beta_i}  ^{[\frac{-\nu _{\fA} (c_{i-1})+1}{2}]} = \mathfrak{g}^i(F)_{\iota ^i(y), \frac{\mathbf{r}_{i-1}}{2}}$
 \item $  \fQ _{\beta_i} ^{-\nu _{\fA} (c_{i-1})+1} = \mathfrak{g}^i(F)_{\iota ^i(y), \mathbf{r}_{i-1} +}$
 \item $  \fQ _{\beta_i} ^{-\nu _{\fA} (c_{i-1})} = \mathfrak{g}^i(F)_{\iota ^i(y), \mathbf{r}_{i-1}} $  and moreover,

 \item $ U^{-\nu _{\fA} (c_{i})}(\fB_{\beta_i} ) = G^i(F)_{\iota ^i(y), \mathbf{r}_{i}} $

 \item $U^{-\nu _{\fA} (c_i)+1}(\fB_{\beta _i})= G^i(F)_{\iota ^i(y),\mathbf{r}_i+}$

 \end{enumerate}

 \item[(III)] \begin{flushleft}
 There exists a continuous, affine and $G^{s} (F)$-equivariant map  $\iota _d : \xymatrix{ \BT^E(G^{s},F) \ar@{^{(}->}[r]^-{\iota _i} & \BT^E(G^d,F)}$ such that, denoting $\iota ^d$ the composition $\iota _d \circ \iota _{\iota _{d}} \circ \ldots \circ \iota _1$, the following equalities hold: \end{flushleft}

\begin{enumerate}[(i)]
 \item $  U^{[\frac{-\nu _{\fA} (c_{s})}{2}]+1}(\fA )   = G^d(F)_{\iota ^i(y), \frac{\mathbf{r}_{s}}{2}+}$
 \item $ U^{[\frac{-\nu _{\fA} (c_{s})+1}{2}]}(\fA)= G^d(F)_{\iota ^i(y), \frac{\mathbf{r}_{s}}{2}}$
 \item $ U^{-\nu _{\fA} (c_{s})+1}(\fA ) = G^d(F)_{\iota ^i(y), \mathbf{r}_{s} +}$
 \item $ U^{-\nu _{\fA} (c_{s})}(\fA) = G^d(F)_{\iota ^i(y), \mathbf{r}_{s}} $
 \item $  \fP  ^{[\frac{-\nu _{\fA} (c_{s})}{2}]+1} = \mathfrak{g}^d(F)_{\iota ^i(y), \frac{\mathbf{r}_{s}}{2}+}$
 \item $  \fP  ^{[\frac{-\nu _{\fA} (c_{s})+1}{2}]} = \mathfrak{g}^d(F)_{\iota ^i(y), \frac{\mathbf{r}_{s}}{2}}$
 \item $  \fP ^{-\nu _{\fA} (c_{s})+1} = \mathfrak{g}^d(F)_{\iota ^i(y), \mathbf{r}_{s} +}$
 \item $  \fP ^{-\nu _{\fA} (c_{s})} = \mathfrak{g}^d(F)_{\iota ^i(y), \mathbf{r}_{s}} $ 
\end{enumerate}

\end{enumerate}

In the rest of this paper, we identify $\iota ^i(y)$ and $y$.
\end{prop}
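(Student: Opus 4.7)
The plan is to reuse the strategy of Proposition \ref{filaeg} and to observe that only one additional layer has to be checked in $(Case~B)$, namely the embedding $\BT^E(G^s,F)\hookrightarrow \BT^E(G^d,F)$ and the identifications (III) involving the full order $\fA$ and the full Lie algebra $\mathfrak{g}^d(F)=A$. The starting point is exactly the same: fix an $\mathfrak{o}_{E_0}$-lattice chain $\mathcal{L}$ attached to $\fB_{\beta_0}$; by \cite[Prop.~I.1.4, Prop.~I.2.4]{Brou} its class in $\mathrm{Latt}(V)$ determines a point $\overline{y}$ in $\BT^R(G^0,F)$, and since $Z(G^0)/Z(G)$ is anisotropic (Corollary \ref{corotwistedlevitame}), the choice of an identification ${X_*(Z(G^0),F)\otimes_{\Z}\R\simeq X_*(Z(G^i),F)\otimes_{\Z}\R}$ for each $i$ promotes $\overline{y}$ to a point $y\in \BT^E(G^0,F)$ that embeds canonically into every $\BT^E(G^i,F)$.

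For (I) and (II), the argument of Proposition \ref{filaeg} applies verbatim: the embeddings $\iota_i:\BT^E(G^{i-1},F)\hookrightarrow\BT^E(G^i,F)$ for $1\le i\le s$ exist by \cite[Thm.~II.1.1]{Brou}, and the matching of the Bushnell-Kutzko groups $U^k(\fB_{\beta_i})$ and orders $\fQ_{\beta_i}^k$ with the Moy-Prasad groups $G^i(F)_{y,r}$ and lattices $\mathfrak{g}^i(F)_{y,r}$ follows from \cite[Appendix~A]{Brou}. The only arithmetic to repeat is Proposition \ref{rivucib}, which guarantees that the exponents $k$ and the real levels $r$ match under the relation $\mathbf{r}_i=-\nu_{\fA}(c_i)/e(\fA\mid\of)$.

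For (III), which is the genuinely new content of $(Case~B)$, the same Broussous-Lemaire machinery provides a continuous, affine, $G^s(F)$-equivariant embedding $\iota_d:\BT^E(G^s,F)\hookrightarrow\BT^E(G^d,F)$ coming from the separable extension $E_s/F$ (and from the chosen identification of the split central parts). The crucial compatibility to verify is that under the composition $\iota^d=\iota_d\circ\iota^s$, the image of $y$ corresponds to the $\mathfrak{o}_F$-lattice chain underlying $\fA$ itself: this is true because the $\mathfrak{o}_{E_0}$-lattice chain $\mathcal{L}$, viewed as an $\mathfrak{o}_F$-lattice chain, is by construction the one associated to $\fA$ (since $\fA=\{x\in A\mid x(L_i)\subset L_i,\,i\in\Z\}$). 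Once this is established, the identifications (III)(i)–(viii) are obtained by applying \cite[Appendix~A]{Brou} to $G^d=\underline{\Aut}_F(V)$, which identifies $U^k(\fA)$ with $G^d(F)_{y,k/e(\fA\mid\of)}$ and $\fP^k$ with $\mathfrak{g}^d(F)_{y,k/e(\fA\mid\of)}$. Combined with the formula $\mathbf{r}_s=-\nu_{\fA}(c_s)/e(\fA\mid\of)$, this gives the eight required equalities.

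The main (and only nontrivial) obstacle is verifying that the single point $y$ constructed from $\mathcal{L}$ produces compatible filtrations at all levels $0,1,\ldots,s,d$ simultaneously; that is, checking that the chosen embeddings $\iota_i$ and $\iota_d$ are compatible with the inclusions $\fB_{\beta_0}\subset\fB_{\beta_1}\subset\cdots\subset\fB_{\beta_s}\subset\fA$ on the Bushnell-Kutzko side. This compatibility is precisely what Broussous-Lemaire establish in \cite[Thm.~II.1.1]{Brou} and its appendix, and no new calculation beyond those used in Proposition \ref{filaeg} is required.
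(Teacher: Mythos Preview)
Your proposal is correct and follows essentially the same approach as the paper: the paper's own proof simply states that the $(Case~A)$ argument adapts without change for (I) and (II), and that the proof of (II) adapts to (III) without effort. You have spelled out in more detail what this adaptation actually involves (in particular, the point that the $\mathfrak{o}_{E_0}$-lattice chain $\mathcal{L}$ viewed as an $\mathfrak{o}_F$-lattice chain is the one attached to $\fA$), but the underlying strategy and the references to Broussous--Lemaire are identical.
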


\begin{proof} The $(Case~A)$ proof adapts to $(Case~B)$ without change for $(I)$ and $(II)$, the proof of $(II)$ adapts to $(III)$ without effort.
\end{proof}

Let us introduce certain  characters $\boldsymbol{\Phi }_i$ , $ 0 \leq i \leq d$. 

\begin{defi}\label{PHIIb} $(Case~B)$ Let $0 \leq i \leq s$, and let $\mathbf{\Phi } _i$ be the smooth complex character of $G^i(F)$ defined by $\boldsymbol{\Phi } _i:= \phi _i \circ \det_{B_{\beta _i}}$, where $\phi _i$ is the character introduced in \ref{facttam}, \ref{tamcob}. Let also $\boldsymbol{\Phi } _d$ be the trivial character $1$ of $G^d(F)$.

\end{defi}

\begin{prop} \label{PHIIgeneb}$(Case~B)$ For $0 \leq i \leq s$, the character $\boldsymbol{\Phi}_i$  is $G^{i+1}$-generic of depth $\mathbf{r}_i$. 
\end{prop}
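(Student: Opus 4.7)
The plan is to mimic the strategy used in (Case A), Proposition \ref{PHIIgenea}, but observing that in (Case B) the additional layer $G^d$ with $E_d = F$ is now present, so even at the top index $i=s$ we get a meaningful $G^{s+1}$-genericity statement instead of merely a depth assertion. The proof will split into the routine range $0 \le i \le s-1$ and the new endpoint case $i = s$.

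For $0 \le i \le s-1$, I would first verify, by the same elementary estimates on integers as in (Case A), the two inclusions
\begin{equation}
U^{-\nu_{\fA}(c_i)}(\fB_{\beta_i}) \subset U^1(\fB_{\beta_0}) U^{[\frac{-\nu_{\fA}(c_0)}{2}]+1}(\fB_{\beta_1})\cdots U^{[\frac{-\nu_{\fA}(c_{i-1})}{2}]+1}(\fB_{\beta_i})
\end{equation}
and
\begin{equation}
U^{-\nu_{\fA}(c_i)}(\fB_{\beta_i}) \subset U^{[\frac{-\nu_{\fA}(c_{i})}{2}]+1}(\fB_{\beta_{i+1}})\cdots U^{[\frac{-\nu_{\fA}(c_{s-1})}{2}]+1}(\fB_{\beta_s}) U^{[\frac{-\nu_{\fA}(c_s)}{2}]+1}(\fA),
\end{equation}
whose only new feature relative to (Case A) is the trailing factor $U^{[\frac{-\nu_{\fA}(c_s)}{2}]+1}(\fA)$, absorbed the same way since $-\nu_{\fA}(c_i) \ge [\frac{-\nu_{\fA}(c_i)}{2}]+1$. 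Combining these inclusions with Corollary \ref{tamcob}, the restriction $\theta^i \mid_{U^{-\nu_{\fA}(c_i)}(\fB_{\beta_i})}$ equals $\psi_{c_i}$ on that subgroup, so $\boldsymbol{\Phi}_i \mid_{G^i(F)_{y,\mathbf{r}_i}} = \psi_{c_i}$ via Proposition \ref{filbeg}. Since $\psi_{c_i}$ is trivial on $U^{-\nu_{\fA}(c_i)+1}(\fB_{\beta_i}) = G^i(F)_{y,\mathbf{r}_i+}$ and non-trivial on $G^i(F)_{y,\mathbf{r}_i}$, the depth is $\mathbf{r}_i$.

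Next, to upgrade depth to $G^{i+1}$-genericity, I would use the same realisation argument: the identity $\psi_{c_i}(1+x) = \psi \circ \Tr_{A/F}(c_i x)$ shows that $\boldsymbol{\Phi}_i \mid_{G^i(F)_{y,\mathbf{r}_i : \mathbf{r}_i+}}$ is realised by the element $X^*_{c_i} \in \Lie^*(Z(G^i))_{-\mathbf{r}_i}$ defined in \ref{xy}. By Proposition \ref{minicici}, $c_i$ is minimal relative to the (tame) extension $E_i/E_{i+1}$, so Theorem \ref{minimalgenericelemenx} applied to the tower $E_{i+1}/E_{i+1}/F \subset \ldots$ (more precisely, to $H' = G^i$ inside $H = G^{i+1}$) gives that $X^*_{c_i}$ is $G^{i+1}$-generic of depth $\mathbf{r}_i$, which is the desired conclusion.

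The endpoint case $i = s$ is the only one where (Case B) genuinely differs from (Case A). Here the factorisation from Corollary \ref{tamcob} shows $\boldsymbol{\Phi}_s \mid_{U^{-\nu_{\fA}(c_s)}(\fA)} = \psi_{c_s}$, so $\boldsymbol{\Phi}_s$ has depth $\mathbf{r}_s$ relative to $y$, and the same realisation formula exhibits $X^*_{c_s}$ as realising the restriction to the relevant quotient. Since in (Case B) we have $\beta_s = c_s \notin F$ and $c_s$ is minimal relative to $E_s/F = E_s/E_d$ by Proposition \ref{minicici}, Theorem \ref{minimalgenericelemenx} applied with $(E', E) = (E_s, F)$ delivers $G^{s+1}$-genericity of $X^*_{c_s}$, hence of $\boldsymbol{\Phi}_s$, of depth $\mathbf{r}_s$. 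The only subtle point to be careful about is that the minimality input for the last index now comes from the ``$i=s$'' clause of Proposition \ref{minicici} rather than from the ``$0 \le i \le s-1$'' clause; no further work is required since $G^{s+1} = G^d = \underline{\Aut}_F(V)$ is of type $A$, so Proposition \ref{ge1implyge2tors} still allows us to ignore condition \textbf{GE2}. This is the main conceptual novelty of the argument.
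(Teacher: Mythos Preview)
Your proof is correct and follows exactly the approach the paper indicates, namely that the argument of Proposition~\ref{PHIIgenea} adapts without essential change; you have simply spelled out the details that the paper leaves implicit, including the extra trailing factor from Proposition~\ref{hunb} and the genuinely new $i=s$ endpoint where minimality of $c_s$ over $F$ (Proposition~\ref{minicici}) feeds into Theorem~\ref{minimalgenericelemenx}. One small slip: in the $i=s$ paragraph you write $\boldsymbol{\Phi}_s \mid_{U^{-\nu_{\fA}(c_s)}(\fA)}$, but $\boldsymbol{\Phi}_s$ is only a character of $G^s(F)=B_{\beta_s}^{\times}$, so the correct subgroup is $U^{-\nu_{\fA}(c_s)}(\fB_{\beta_s}) = G^s(F)_{y,\mathbf{r}_s}$; the two inclusions you need (into the parts governed by (i) and (ii) of Corollary~\ref{tamcob}) hold exactly as for $0\le i\le s-1$, and the conclusion is unchanged.
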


\begin{proof}The $(Case~A)$ proof adapts to $(Case~B)$ without change.

\end{proof}

\subsection{The characters $\hat{\boldsymbol{\Phi}}_i$} \label{hatphiiii}

In both $(Case~A)$ and $(Case~B)$, we have obtained part of a Yu datum $(\overrightarrow{G}, y , r , \overrightarrow{\boldsymbol{\Phi} })$. To  $(\overrightarrow{G}, y , r , \overrightarrow{\boldsymbol{\Phi} })$ is attached by Yu various objects.  In the rest of this section, we show that the characters $\hat{\boldsymbol{\Phi}} _i$ (see section \ref{yu}) are equal to the factors $\theta _i$ of $\theta$.

\begin{prop} \label{hunkd} \begin{sloppypar}In both $(Case~A)$ and $(Case~B)$, let ${K_+^d=K_+^d (\overrightarrow{G}, y , r , \overrightarrow{\boldsymbol{\Phi} })}$ be the group attached to $(\overrightarrow{G}, y , r , \overrightarrow{\boldsymbol{\Phi} })$ (see section \ref{yu}). Then $H^1( \beta , \fA ) = K_+^d$. \end{sloppypar}
\end{prop}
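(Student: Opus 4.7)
The plan is to identify the two groups factor by factor, using the explicit descriptions of $H^1(\beta,\fA)$ established in Propositions \ref{huna} and \ref{hunb} on the Bushnell-Kutzko side, together with the filtration comparisons of Propositions \ref{filaeg} and \ref{filbeg} on the Yu side. First I would recall the definition of $K_+^d$ from Definition \ref{defyugroup}: with $\mathbf{s}_i = \mathbf{r}_i/2$ one has
\[
K_+^d = G^0(F)_{y,0+}\, G^1(F)_{y,\mathbf{s}_0+} \cdots G^d(F)_{y,\mathbf{s}_{d-1}+}.
\]

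In $(Case~A)$, where $d=s$, Proposition \ref{huna} gives
\[
H^1(\beta,\fA) = U^1(\fB_{\beta_0})\, U^{[\tfrac{-\nu_{\fA}(c_0)}{2}]+1}(\fB_{\beta_1}) \cdots U^{[\tfrac{-\nu_{\fA}(c_{s-1})}{2}]+1}(\fB_{\beta_s}).
\]
I would then substitute each factor via Proposition \ref{filaeg}: the item (I)(ii) gives $U^1(\fB_{\beta_0}) = G^0(F)_{y,0+}$, and for $1 \leq i \leq s$, item (II)(i) gives $U^{[\tfrac{-\nu_{\fA}(c_{i-1})}{2}]+1}(\fB_{\beta_i}) = G^i(F)_{y,\mathbf{s}_{i-1}+}$. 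Collecting these identifications yields $H^1(\beta,\fA) = K_+^d$ in this case.

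In $(Case~B)$, where $d = s+1$, Proposition \ref{hunb} supplies one extra factor at the end:
\[
H^1(\beta,\fA) = U^1(\fB_{\beta_0}) \prod_{i=1}^{s} U^{[\tfrac{-\nu_{\fA}(c_{i-1})}{2}]+1}(\fB_{\beta_i}) \cdot U^{[\tfrac{-\nu_{\fA}(c_{s})}{2}]+1}(\fA).
\]
The first $s+1$ factors are handled exactly as in $(Case~A)$ using parts (I) and (II) of Proposition \ref{filbeg}. For the last factor, part (III)(i) of the same proposition gives $U^{[\tfrac{-\nu_{\fA}(c_s)}{2}]+1}(\fA) = G^d(F)_{y,\mathbf{s}_s+}$, which equals $G^d(F)_{y,\mathbf{s}_{d-1}+}$ since $d=s+1$. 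Thus again $H^1(\beta,\fA) = K_+^d$.

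There is no real obstacle: the proof is a term-by-term bookkeeping argument once one has set up the admissibility of the sequence $\overrightarrow{\mathbf{r}}$ and the identification of the building point $y$, all of which is already available in Sections \ref{tameca} and \ref{sectgecha}. The only mild subtlety is that in $(Case~B)$ the indexing convention $d=s+1$ must be tracked carefully so that the last factor matches $G^d(F)_{y,\mathbf{s}_{d-1}+}$ rather than $G^s(F)_{y,\mathbf{s}_s+}$; this is purely a matter of notation.
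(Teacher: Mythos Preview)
Your proof is correct and follows essentially the same approach as the paper's own proof: both invoke the explicit product descriptions of $H^1(\beta,\fA)$ from Propositions \ref{huna} and \ref{hunb}, recall the definition of $K_+^d$, and then match factor by factor using the filtration identifications of Propositions \ref{filaeg} and \ref{filbeg}. The paper simply asserts that the equality is ``a formal consequence'' of these filtration propositions, whereas you spell out explicitly which items (I)(ii), (II)(i), and (III)(i) are used; the content is the same.
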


\begin{proof} $(Case~A)$ By proposition \ref{huna}, we have the equality \begin{center}{\scriptsize $H^1(\beta , \fA)= U^1 (\fB _{\beta _0}) U^{[\frac{-\nu _{\fA} (c_0)}{2}]+1}(\fB _{\beta _1} )\ldots U^{[\frac{-\nu _{\fA} (c_{i-1})}{2}]+1}(\fB _{\beta _i}) \ldots U^{[\frac{-\nu _{\fA} (c_{s-1})}{2}]+1}(\fB _{\beta _s})$}.\end{center}
By definition of $K_+^d (\overrightarrow{G}, y , r , \overrightarrow{\boldsymbol{\phi} })$, and because of $d=s$, we have the equality

\begin{center} {\footnotesize $K_+^d(\overrightarrow{G}, y , r , \overrightarrow{\boldsymbol{\phi} })= G^0(F)_{y,0+} G^1(F)_{y,\mathbf{s}_0+} \cdots G^i (F) _{y, \mathbf{s} _{i-1}+} \ldots G^s(F)_{y,\mathbf{s}_{s-1}+} $}.\end{center}
The required statement is now a formal consequence of \ref{filaeg}.

$(Case~B)$ By proposition \ref{hunb}, we have the equality

\centerline{ {\tiny { $H^1(\beta , \fA)=  U^1 (\fB _{\beta _0}) U^{[\frac{-\nu _{\fA} (c_0)}{2}]+1}(\fB _{\beta _1} )\ldots U^{[\frac{-\nu _{\fA} (c_{i-1})}{2}]+1}(\fB _{\beta _i}) \ldots U^{[\frac{-\nu _{\fA} (c_{s-1})}{2}]+1}(\fB _{\beta _s}) U^{[\frac{-\nu _{\fA} (c_{s})}{2}]+1}(\fA)$.}}}By definition of $K_+^d (\overrightarrow{G}, y , r , \overrightarrow{\boldsymbol{\phi} })$, and because of $d=s+1$, we have the equality 

\begin{center}{\scriptsize  { $K_+^d(\overrightarrow{G}, y , r , \overrightarrow{\boldsymbol{\phi} })= G^0(F)_{y,0+} G^1(F)_{y,\mathbf{s}_0+} \cdots G^i (F) _{y, \mathbf{s} _{i-1}} \ldots G^s(F)_{y,\mathbf{s}_{s-1}+} G^d(F)_{y,\mathbf{s}_{s}+} $}}.\end{center}
The required statement is now a formal consequence of \ref{filbeg}.\\

\end{proof}

\begin{prop} \label{chapeau} In both $(Case~A)$ and $(Case~B)$, let $0 \leq i \leq d$ and let $\hat{\boldsymbol{\Phi}}_i$ be the character attached to $\boldsymbol{\Phi} _i$ (see Section \ref{yu}). Then
 \begin{center}

\begin{enumerate}[(i)]
\item $\hat{\boldsymbol{\Phi}}_i= \theta ^i$ for $0 \leq i \leq s$

\item $  {\displaystyle \prod_{i=0}^{d} \hat{\boldsymbol{\Phi}}_i }= \theta $

\end{enumerate} 

\end{center}
\end{prop}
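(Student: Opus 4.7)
My plan is to check the identity $\hat{\boldsymbol{\Phi}}_i=\theta^i$ piece by piece on the factors of the product decomposition of $K_+^d$. By Proposition~\ref{hunkd}, both sides are characters on the common group $H^1(\beta,\fA)=K_+^d$, and by Yu's definition (Section~\ref{yu}) one has
$$K_+^d \;=\; G^0(F)_{y,0+}\,G^1(F)_{y,\mathbf{s}_0+}\,\cdots\,G^d(F)_{y,\mathbf{s}_{d-1}+}.$$
Since $\hat{\boldsymbol{\Phi}}_i$ and $\theta^i$ are homomorphisms to $\mathbb{C}^{\times}$, it will suffice to verify they coincide on each factor $G^j(F)_{y,\mathbf{s}_{j-1}+}$ (with $\mathbf{s}_{-1}:=0$). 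The argument splits according to whether $j\leq i$ or $j>i$.

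For $j\leq i$, the inclusion $E_j\supset E_i$ yields $G^j\subset G^i$ and $\fB_{\beta_j}\subset\fB_{\beta_i}$, so the factor sits inside $G^i(F)_0\cap B_{\beta_i}^{\times}$. Yu's definition forces $\hat{\boldsymbol{\Phi}}_i|_{G^i(F)_0}=\boldsymbol{\Phi}_i=\phi_i\circ\det_{B_{\beta_i}}$ (Definitions~\ref{PHIIa},~\ref{PHIIb}), while the filtration identifications of Propositions~\ref{filaeg}/\ref{filbeg} show that $G^j(F)_{y,\mathbf{s}_{j-1}+}$ is precisely one of the building blocks appearing in condition~(i) of Corollaries~\ref{tamcoa}/\ref{tamcob}; hence $\theta^i$ also equals $\phi_i\circ\det_{B_{\beta_i}}$ there. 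For $j>i$ one has $\mathbf{s}_{j-1}\geq \mathbf{s}_i$ together with $G^j\supset G^i$, so the factor lies in $G(F)_{y,\mathbf{s}_i+}$; condition~(ii) of Corollaries~\ref{tamcoa}/\ref{tamcob} immediately gives $\theta^i=\psi_{c_i}$ on this subgroup, and the plan reduces to showing the same equality for $\hat{\boldsymbol{\Phi}}_i$.

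This last step is the heart of the argument. By construction, $\hat{\boldsymbol{\Phi}}_i$ on $G(F)_{y,\mathbf{s}_i+}$ factors through the Moy--Prasad isomorphism
$$G(F)_{y,\mathbf{s}_i+:\mathbf{r}_i+}\;\simeq\;\mathfrak{g}^i(F)_{y,\mathbf{s}_i+:\mathbf{r}_i+}\oplus\mathfrak{n}^i(F)_{y,\mathbf{s}_i+:\mathbf{r}_i+},$$
the first summand carrying the transport of $\boldsymbol{\Phi}_i$ and the second being declared trivial. The proof of Proposition~\ref{PHIIgenea}(i) already established that the character on the $\mathfrak{g}^i$-summand is realised by $X^*_{c_i}:x\mapsto \psi(\Tr_{A/F}(c_i x))$, so I only need to check that the same formula computes $\hat{\boldsymbol{\Phi}}_i$ on the full direct sum, i.e.~that $\Tr_{A/F}(c_i y)=0$ for $y\in\mathfrak{n}^i(F)$. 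This will follow because $c_i\in E_i$ commutes with $Z(G^i)$, so multiplication by $c_i$ preserves the isotypic decomposition $\mathfrak{g}=\mathfrak{g}^i\oplus\mathfrak{n}^i$, and base-changing to $\overline{F}$ the explicit block description of Proposition~\ref{grousplit} identifies $\mathfrak{n}^i\otimes_F\overline{F}$ with the off-diagonal block part of $\End_{\overline{F}}(V\otimes_F\overline{F})$, on which the trace vanishes. Consequently $\hat{\boldsymbol{\Phi}}_i=\psi_{c_i}$ on $G(F)_{y,\mathbf{s}_i+}$, completing~(i). Assertion~(ii) is then immediate: in $(Case~A)$, $d=s$, and in $(Case~B)$, $\boldsymbol{\Phi}_d=1$ forces $\hat{\boldsymbol{\Phi}}_d=1$, so in both cases $\prod_{i=0}^d\hat{\boldsymbol{\Phi}}_i=\prod_{i=0}^s\theta^i=\theta$. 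The main obstacle is the Moy--Prasad/trace identification just described: the generic-element machinery of Section~\ref{geneassomini} handles the $\mathfrak{g}^i$-component, but the vanishing of $\Tr_{A/F}$ on $\mathfrak{n}^i$ depends crucially on the explicit block-matrix description of that same section, and it is precisely this vanishing that makes Yu's trivial extension on the $\mathfrak{n}^i$-direction agree with the trace-form character $\psi_{c_i}$ of Bushnell--Kutzko.
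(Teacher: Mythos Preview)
Your proof is correct and follows essentially the same two-piece strategy as the paper: verify $\hat{\boldsymbol{\Phi}}_i=\theta^i$ on $G^i(F)\cap K_+^d$ via $\phi_i\circ\det_{B_{\beta_i}}$, then on $G(F)_{y,\mathbf{s}_i+}\cap K_+^d$ via $\psi_{c_i}$, with the crux being the vanishing of $\Tr_{A/F}(c_i\cdot\mathfrak{n}^i)$.

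The only notable difference lies in how that vanishing is established. You invoke the block-matrix description of Proposition~\ref{grousplit} over $\overline{F}$, identifying $\mathfrak{n}^i\otimes_F\overline{F}$ with the off-diagonal part of $\End_{\overline{F}}(V\otimes_F\overline{F})$, then use that $c_i$ preserves this piece and that the trace kills off-diagonal blocks. The paper instead gives a one-line intrinsic argument: write $n\in\mathfrak{n}^i$ as a sum of $T^i$-isotypic components $n_k$ with $\mathrm{ad}_{t_k}(n_k)=\lambda_k n_k$, $\lambda_k\neq 1$; since $c_i$ commutes with $t_k\in Z(G^i(F))$, one has $\Tr(c_in_k)=\Tr(t_k c_in_k t_k^{-1})=\lambda_k\Tr(c_in_k)$, forcing $\Tr(c_in_k)=0$. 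The paper's route is more elementary and avoids the base-change machinery, while yours reuses the structural results already developed in Section~\ref{geneassomini}; both are short and valid.
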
 \begin{proof}
Recall that $\hat{\boldsymbol{\Phi} }_i $ is defined in 
\cite[Section 4]{YU}
 and also in Section \ref{yu} of this text. To prove $(i)$, we need first to study the decomposition $\mathfrak{g}= \mathfrak{g}^i \oplus \mathfrak{n} ^i $. In our situation, where $G = \underline{\Aut} _F( V)$, the Lie algebra $\mathfrak{g}$ is $\End_F(V)$ and the Lie algebra of $G^i$ denoted $\mathfrak{g}^i$ is $End_{F[\beta _i]} (V)$. The space $\mathfrak{g}^i$ is characterised by the fact that it is the maximal subspace of $\mathfrak{g}$, such that the adjoint action of the center ${Z}(G^i(F))$ of $G^i(F)$ is trivial.  By definition, $\mathfrak{n}^i$ is the sum of the other isotypic spaces for the adjoint action of $T^i(F)$ on $\mathfrak{g
}$. This implies that there is an integer $R_i$, such that each $n \in \mathfrak{n}^i$ is a finite sum 

\begin{center}
$n= \displaystyle \sum _{k=0} ^{R_i} n_k$
\end{center}
 such that for each $0 \leq k \leq R_i$, there is an element $t_k \in Z(G^i(F))$ and $\lambda _k\not = 1$, such that $\mathrm{ad}_{t_k} (n_k)= \lambda _k n_k$.
 We are now able to prove $(i)$ of the proposition \ref{chapeau}. 
If $x\in \mathfrak{g}$, then  let $x= \pi _{\mathfrak{g}^i} (x) + \pi _{\mathfrak{n}^i} (x)$ denote the decomposition of $x$ relatively to the decomposition $\mathfrak{g}= \mathfrak{g}^i \oplus \mathfrak{n} ^i $.
Let $0\leq i \leq s$. By definition (see Section \ref{yu}) $\hat{\boldsymbol{\Phi} }_i$ is the character of $K^d _+$ defined by

$\bullet $ $\hat{\boldsymbol{\Phi}}_i \mid _{G^i(F) \cap K^d _+} = {\boldsymbol{\Phi }_i} \mid _{G^i(F) \cap K^d _+}$

$\bullet $ $\hat{\boldsymbol{\Phi  }}_i \mid _{G(F)_{y,\mathbf{s}_i +} \cap K^d _+} ( 1+x) =\boldsymbol{\Phi} _i ( 1+ \pi _{\mathfrak{g}^i} (x))$.

Let us verify that it is equal to the character $\theta ^i$ defined in proposition \ref{facttam}.
First, note that the group $K_+^d $ is equal to the group $H^1(\beta , \fA)$ by Proposition \ref{hunkd}, and so it makes sense to compare $\hat{\boldsymbol{\Phi}}_i$ and $\theta ^i$.
The group $G^i (F) \cap K_+ ^d $ is equal to $B _{\beta _i} \cap H^1(\beta , \fA)$. Thus, the definitions of $\theta ^i$ given in {proposition  \ref{facttam}} shows that

 \begin{equation} \label{eqchapeau}\hat{\boldsymbol{\Phi}}_i \mid _{G^i(F) \cap K^d _+}  = \boldsymbol{\Phi } _i \mid _{G^i(F) \cap K^d _+}=\phi _i \circ {\det}_{B _{\beta _i}} \mid _{G^i(F) \cap K^d _+} = \theta ^i \mid _{G^i(F) \cap K^d _+}.
 \end{equation}
 It is enough to show that
 $\hat{\boldsymbol{\Phi }} _i \mid _{G(F)_{y,\mathbf{s}_i +} \cap K^d _+} $ is equal to $\theta ^i \mid _{G(F)_{y,\mathbf{s}_i +} \cap K^d _+ } $.  The group $G(F)_{y,\mathbf{s}_i +}  $ is equal to $  U^{[\frac{- \nu _{\fA} (c_i)}{2}]+1} (\fA) $.
Consequently, \begin{align*}
\hat{\boldsymbol{\Phi} }_i \mid _{G(F)_{y,\mathbf{s}_i +} \cap K^d _+} (1+x) &= \boldsymbol{\Phi} _i  \mid _{G(F)_{y,\mathbf{s}_i +} \cap K^d _+ }(1+\pi _{\mathfrak{g}^i} (x)) \\ {{\tiny \textit{(Because $1+\pi _{\mathfrak{g}^i} (x) \in G^i(F)$)}}}  &= \boldsymbol{\Phi} _i  \mid _{G(F)_{y,\mathbf{s}_i +} \cap K^d _+ \cap G^i(F)}(1+\pi _{\mathfrak{g}^i} (x)) \\
{{\tiny \textit{(By eq. \eqref{eqchapeau} and equality of groups)}}}&= \theta ^i \mid_{H^1 (\beta , \fA) \cap B_{\beta _i} \cap U ^{[\frac{- \nu _{\fA} (c_i)}{2}]+1}(\fA)}(1+\pi _{\mathfrak{g}^i} (x)) \\
\tiny \textit{(By def. of $\theta ^i$ on $H^1(\beta , \fA) \cap U ^{[\frac{- \nu _{\fA} (c_i)}{2}]+1}) $}  &=\psi \circ \Tr _{A/F} (c_i \pi _{\mathfrak{g}^i}(x)) .
\end{align*}
Let us now compute $\Tr _{A/F} (c_i \pi _{\mathfrak{g}^i}(x)) $.  We have the equalities 
\begin{center}$
\Tr  (c_i x) = \Tr  ( c_i (\pi _{\mathfrak{g}^i} (x) +\pi _{\mathfrak{n}^i} (x)) )= \Tr  (c_i \pi _{\mathfrak{g}^i }(x)) + \Tr  (c_i \pi _{\mathfrak{n}^i} (x) )$.
\end{center}
Let us compute $ \Tr  (c_i \pi _{\mathfrak{n}^i} (x) )$. Because  $ \pi _{\mathfrak{n}^i} (x) \in \mathfrak{n}^i$, there is an integer $R_i$   such that $\pi _{\mathfrak{n}^i} (x)$ is a finite sum 

\begin{center}
$\pi _{\mathfrak{n}^i} (x)= \displaystyle \sum _{k=0} ^{R_i} n_k$
\end{center}
 such that for each $0 \leq k \leq R_i$, there is an element $t_k \in Z(G^i(F))$ and $\lambda _k\not = 1$ such that $\mathrm{ad}_{t_k} (n_k)= \lambda _k n_k$.
 We have \begin{center}$ \Tr  (c_i \pi _{\mathfrak{n}^i} (x) )=  \Tr  (c_i \displaystyle \sum_{k=0} ^{R_i} n_k) = \displaystyle \sum_{k=0} ^{R_i} \Tr  (c_i n_k )$.\end{center}
Fix $ 0 \leq k \leq R_i$. The element $t_k$ commutes with $c_i$. Consequently, $t c_i n_k t^{-1} = c_i tn_k t^{-1} = c_i \lambda  n_k$. So
\begin{center}$
\Tr (c_i  n_k  ) = \Tr  (t c_i  n_k t^{-1}) = \lambda \Tr (c_i n_k)$
\end{center}
This implies that \begin{center} $\Tr(c_i  n_k)  =0$, \end{center} and so \begin{center} $ \Tr  (c_i \pi _{\mathfrak{n}^i} (x) )=0$.
\end{center}
Thus, the equality \begin{center}$\Tr _{A/F} (c_i \pi _{\mathfrak{g}^i}(x)) =\Tr _{A/F} ( c_i x) $ \end{center} holds.
Consequently, \begin{center} $\hat{\boldsymbol{\Phi}} _i \mid _{G(F)_{y,\mathbf{s}_i +} \cap K^d _+} (1+x) = \psi \circ \Tr _{A/F} ( c_i x) = \psi _{c_i}= \theta ^i \mid _{G(F)_{y,\mathbf{s}_i +} \cap K^d _+}$,\end{center} as required. This concludes the proof of $(i)$ of Proposition \ref{chapeau}.
The proof of $(ii)$ is now easy because $\theta = \displaystyle \prod _{i=0} ^s  \theta ^i$ and also because in  $(Case ~A)$, $d=s$, and in $(Case~ B)$, $d=s+1$ and $\hat{\boldsymbol{\Phi}}_d=1$.
This ends the proof of Proposition \ref{chapeau}.
\end{proof}

\section{Extensions and main theorem for the direction: From Bushnell-Kutzko's construction to Yu's construction} \label{extensionn}

In this section, we keep the notations of Sections \ref{tameca} and \ref{sectgecha}. In particular, we have fixed a tame simple stratum $[\fA , n , r , \beta ]$ and a chosen defining sequence $\{[\fA , n , r_i , \beta _i ]$  , $0 \leq i \leq s\}$, such that $F[\beta _{i+1}] \subsetneq F[\beta _i] $ for all $0 \leq i \leq s-1$. We have also fixed a simple character $\theta \in \mathcal{C}(\fA , 0 ,  \beta)$. We have distinguished two cases: in the first , $(Case~ A)$ occurs when $\beta _s \in F$. In this case, we have put $d=s$. In the second case,  $(Case~ B)$, we have put $d=s+1$. In both case, we have introduced part of a Yu datum $(\overrightarrow{G}, y , r , \overrightarrow{\boldsymbol{\Phi} })$. We have also proved some results relative to these objects. 
In this section, we are going to show that the representation $^{\circ} \lambda (\overrightarrow{G}, y , r , \overrightarrow{\boldsymbol{\Phi} })$ is a $\beta$-extension of $\theta$. Then, given a cuspidal
 representation $\sigma$ of  $U^0 (\fB _{\beta _0}) / U^1(\fB _{\beta _0})$ and $\Lambda$ an extension to $E^{\times} J^0 (\beta , \fA)$ of $\kappa \otimes \sigma $, we are going to show that there exists $\rho$ such that $ \Lambda = \rho _d (\overrightarrow{G}, y , r , \overrightarrow{\boldsymbol{\Phi} } , \rho)$.

\begin{prop} \label{rondkdj0} \begin{sloppypar} In both $(Case~ A)$ and $(Case~ B)$, the group  ${^{\circ} K ^d (\overrightarrow{G}, y , r , \overrightarrow{\boldsymbol{\Phi} }) }$ is equal to $ J^0 (\beta , \fA)$. \end{sloppypar}

\end{prop}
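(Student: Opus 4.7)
The plan is to follow the template of Proposition \ref{hunkd}, replacing the group $H^1(\beta,\fA)$ (which contains the $\mathfrak{H}$-order built from $\fP^{[\frac{n}{2}]+1}$ at the minimal step and $\fP^{[\frac{r}{2}]+1}$ at each inductive step) by $J^0(\beta,\fA)$ (which is built from $\mathfrak{J}(\beta,\fA)$, using $\fP^{[\frac{n+1}{2}]}$ at the minimal step and $\fP^{[\frac{r+1}{2}]}$ at each inductive step; see Definition \ref{defsimpl}). The two steps are (a) establish an explicit multiplicative formula for $J^0(\beta,\fA)$ analogous to Propositions \ref{huna} and \ref{hunb}, and (b) match the factors with those of $^{\circ}K^d$ using Proposition \ref{filaeg} in $(Case~A)$ and Proposition \ref{filbeg} in $(Case~B)$.

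For step (a), I would prove by induction on the length $s$ of the defining sequence (exactly as in the proofs of \ref{huna} and \ref{hunb}) the identity
\begin{equation*}
\mathfrak{J}(\beta,\fA) = \fB_{\beta_0} + \fQ_{\beta_1}^{[\frac{-\nu_\fA(c_0)+1}{2}]} + \cdots + \fQ_{\beta_s}^{[\frac{-\nu_\fA(c_{s-1})+1}{2}]}
\end{equation*}
in $(Case~A)$, and the same identity with an extra summand $\fP^{[\frac{-\nu_\fA(c_s)+1}{2}]}$ in $(Case~B)$. The base case $s=0$ reduces in $(Case~A)$ to $\fB_{\beta_0}=\fA$ since $\beta_0\in F$, and in $(Case~B)$ gives $\fB_{\beta_0}+\fP^{[\frac{n+1}{2}]}$ with $n=-\nu_\fA(c_0)$. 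For the inductive step we use the same observation as in \ref{huna}, namely that the simplicity of $[\fB_{\beta_{i+1}},r_i,r_i-1,\beta_i-\beta_{i+1}]$ (condition $(vi')$ of Section \ref{abstfact}) forces $-k_0(\beta_i,\fA) = -\nu_\fA(c_i)$, so the exponent $[\frac{r+1}{2}]$ appearing in Definition \ref{defsimpl}(ii) becomes exactly $[\frac{-\nu_\fA(c_i)+1}{2}]$. Intersecting with $U^0(\fA)$ gives the corresponding product decomposition of $J^0(\beta,\fA)$.

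For step (b), the definition of $^{\circ}K^d$ (Definition \ref{defyugroup}) reads, with $\mathbf{s}_i = \mathbf{r}_i/2$,
\begin{equation*}
^{\circ}K^d = G^0(F)_{y}\, G^1(F)_{y,\mathbf{s}_0}\, G^2(F)_{y,\mathbf{s}_1}\cdots G^d(F)_{y,\mathbf{s}_{d-1}}.
\end{equation*}
Proposition \ref{filaeg}(I)(i) identifies $G^0(F)_y$ with $U^0(\fB_{\beta_0})$, and Proposition \ref{filaeg}(II)(ii) identifies $G^i(F)_{y,\mathbf{s}_{i-1}}$ with $U^{[\frac{-\nu_\fA(c_{i-1})+1}{2}]}(\fB_{\beta_i})$ for $1\leq i\leq s=d$ in $(Case~A)$. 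Combining with the explicit formula from step (a) yields the equality $^{\circ}K^d=J^0(\beta,\fA)$. In $(Case~B)$ one uses the same identifications from \ref{filbeg}(I)(i) and \ref{filbeg}(II)(ii), together with \ref{filbeg}(III)(ii) for the extra factor $G^d(F)_{y,\mathbf{s}_{s}} = U^{[\frac{-\nu_\fA(c_s)+1}{2}]}(\fA)$.

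There is no serious obstacle: all the hard work (the Moy--Prasad/Broussous--Lemaire filtration identifications and the $(vi')$ defining sequence) has already been done. The only point requiring care is the bookkeeping of the integer parts $[\frac{r+1}{2}]$ versus $[\frac{r}{2}]+1$, which is precisely what distinguishes the $\mathfrak{J}$-side from the $\mathfrak{H}$-side and mirrors the distinction between $^{\circ}K^d$ (radius $\mathbf{s}_{i-1}$) and $K_+^d$ (radius $\mathbf{s}_{i-1}+$).
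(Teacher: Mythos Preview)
Your proposal is correct and follows exactly the approach the paper intends: the paper's own proof of Proposition \ref{rondkdj0} is the single sentence ``This proposition is similar to Proposition \ref{hunkd}, and the proof of it adapts trivially,'' and you have carried out precisely that adaptation, replacing the $\mathfrak{H}$-order exponents $[\tfrac{r}{2}]+1$ by the $\mathfrak{J}$-order exponents $[\tfrac{r+1}{2}]$ and invoking parts (I)(i), (II)(ii), (III)(ii) of Propositions \ref{filaeg}/\ref{filbeg} in place of (I)(ii), (II)(i), (III)(i). (One trivial typo: the stratum in condition $(vi')$ has level $r_{i+1}$, not $r_i$, but your conclusion $-k_0(\beta_i,\fA)=-\nu_\fA(c_i)$ is the correct one.)
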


\begin{proof} This proposition is similar to Proposition \ref{hunkd}, and the proof of it adapts trivially. 
\end{proof}

\begin{prop} \label{circlambabetaext} In both $(Case~ A)$ and $(Case~ B)$, the representation \begin{center}$^{\circ} \lambda (\overrightarrow{G}, y , r , \overrightarrow{\boldsymbol{\Phi} })$\end{center} of $^{\circ} K ^d $ is a $\beta $-extension of $\theta$.

\end{prop}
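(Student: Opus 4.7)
The plan is to verify the three equivalent conditions for a $\beta$-extension given by Proposition \ref{beta}, applied to ${}^{\circ}\lambda$ viewed as a representation of $J^0(\beta,\fA) = {}^{\circ}K^d$ via Proposition \ref{rondkdj0}: namely (a) that ${}^{\circ}\lambda$ contains $\theta$, (b) that ${}^{\circ}\lambda$ is intertwined by every element of $B^{\times} = G^0(F)$, and (c) that $\dim({}^{\circ}\lambda) = [J^1(\beta,\fA):H^1(\beta,\fA)]^{1/2}$.

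For (a), I will restrict ${}^{\circ}\lambda = {}^{\circ}\kappa_0 \otimes \cdots \otimes {}^{\circ}\kappa_d$ to $H^1(\beta,\fA) = K^d_+$ (Proposition \ref{hunkd}). Each $\kappa_i = \inf^{K^d}_{K^{i+1}}(\boldsymbol{\Phi}_i')$ for $0 \leq i \leq d-1$ is built from $\boldsymbol{\Phi}_i'$, whose inflation to $K^i \ltimes J^{i+1}$ is $\inf(\boldsymbol{\Phi}_i) \otimes \tilde{\boldsymbol{\Phi}}_i$ (Definition \ref{phiprim}); Lemma \ref{tildisemi}(i) then ensures that $\kappa_i|_{K^d_+}$ is $\hat{\boldsymbol{\Phi}}_i$-isotypic, and $\kappa_d|_{K^d_+} = \hat{\boldsymbol{\Phi}}_d$. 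Taking the tensor product and invoking Proposition \ref{chapeau}(ii) yields ${}^{\circ}\lambda|_{K^d_+}$ is $\theta$-isotypic, which gives (a).

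For (c), Proposition \ref{dimensionphiprim} gives $\dim(\kappa_i) = [J^{i+1} : J^{i+1}_+]^{1/2}$ for $0 \leq i \leq d-1$ and $\dim(\kappa_d) = 1$, so $\dim({}^{\circ}\lambda) = \prod_{i=0}^{d-1} [J^{i+1} : J^{i+1}_+]^{1/2}$. It remains to identify this product with $[J^1(\beta,\fA):H^1(\beta,\fA)]^{1/2}$. Using the explicit description of $H^1(\beta,\fA)$ from Propositions \ref{huna}/\ref{hunb}, the analogous description of $J^1(\beta,\fA)$ (obtained from \cite[3.1.14, 3.1.15]{BK} by replacing each exponent $[-\nu_{\fA}(c_i)/2]+1$ by $[(-\nu_{\fA}(c_i)+1)/2]$), and the filtration dictionary of Propositions \ref{filaeg}/\ref{filbeg}, the required index identity reduces to a direct computation term by term.

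The main obstacle will be (b): showing that every $g \in B^{\times} = G^0(F)$ intertwines ${}^{\circ}\lambda$ on ${}^{g}J^0(\beta,\fA) \cap J^0(\beta,\fA)$. I plan to exploit Yu's intertwining computations for the Heisenberg--Weil pieces $\tilde{\boldsymbol{\Phi}}_i$ (cf.\ \cite[\S11, \S14]{YU}), which rely crucially on $\boldsymbol{\Phi}_i$ being $G^{i+1}$-generic. Since $G^0 \subset G^i$ is a Levi of the tame twisted Levi sequence, elements of $G^0(F)$ normalise the level subgroups entering the definition of $\kappa_i$, and Yu's Hecke-algebra calculations show that $G^0(F) \subset I_{G^{i+1}(F)}(\boldsymbol{\Phi}_i')$ for each $i$. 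The key technical step is to assemble these individual intertwinings into a single intertwining of the tensor product; for this one uses that the conjugation action of $G^0(F)$ preserves each $\hat{\boldsymbol{\Phi}}_i$ on the overlap $K^d_+$, a property which follows from $\boldsymbol{\Phi}_i$ being a character of $G^i(F)$ (and hence invariant under the inner action of $G^0(F)$) combined with the decomposition $\mathfrak{g} = \mathfrak{g}^i \oplus \mathfrak{n}^i$ being $G^0(F)$-stable.
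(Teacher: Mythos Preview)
Your approach is correct and follows the same three-step verification via Proposition \ref{beta} that the paper uses. Two remarks on execution:

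For (b), you are overcomplicating the assembly step. The paper simply invokes \cite[Theorem 14.2]{YU} to get $G^0(F)\subset I_{G(F)}(\boldsymbol{\Phi}_i'|_{{}^{\circ}K^i})$ for each $i$, then observes that since ${}^{\circ}\kappa_i$ is an inflation of $\boldsymbol{\Phi}_i'|_{{}^{\circ}K^i}$ one has $I_{G(F)}(\boldsymbol{\Phi}_i'|_{{}^{\circ}K^i})\subset I_{G(F)}({}^{\circ}\kappa_i)$. Once $G^0(F)$ intertwines every factor, the tensor of the intertwining operators intertwines the tensor product; no separate argument about $G^0(F)$-stability of the decomposition $\mathfrak{g}=\mathfrak{g}^i\oplus\mathfrak{n}^i$ is needed.

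For (c), the paper does not compute the index ``term by term'' from the filtration dictionary. Instead it writes $J^1(\beta,\fA)=G^0(F)_{y,0+}J^1\cdots J^d$ and $H^1(\beta,\fA)=G^0(F)_{y,0+}J^1_+\cdots J^d_+$, reduces to showing $\prod_{i=1}^d[J^i:J^i_+]=[J^1\cdots J^d:J^1_+\cdots J^d_+]$, and proves this by induction on $d$ via an elementary orbit-stabiliser argument (using that $J^1\cdots J^{d-1}J^d_+\cap J^d=J^d_+$). Your direct term-by-term route should also work, but you would need to justify why the product structure does not create overcounting; the paper's inductive argument handles this cleanly.
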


\begin{proof} Let us verify that $^{\circ} \lambda=^{\circ} \lambda(\overrightarrow{G}, y , r , \overrightarrow{\boldsymbol{\Phi} }) $ satisfies the criterion given in Proposition \ref{kappakappa}.

\begin{enumerate}[(a)]
\item  The representation $^{\circ} \lambda $ is equal to ${^{\circ}\kappa _0} \otimes \ldots \otimes {^{\circ}\kappa _d}$ (see section \ref{yu}). By construction of $\kappa _i , 0 \leq i \leq d$, the representation ${^{\circ}\kappa _i}$ contains $\hat {\boldsymbol{\Phi} }_i $ (see \cite[3.27]{Hamu}). Consequently $^{\circ} \lambda $ contains $\hat {\boldsymbol{\Phi}} _0 \otimes \ldots \otimes \hat {\boldsymbol{\Phi} }_d  $. Thus $^{\circ} \lambda $ contains $\theta$ by \ref{chapeau}.

\item  Again, $^{\circ} \lambda = ^{\circ} \kappa _{0} \otimes \ldots \otimes ^{\circ} \kappa _{d}$. Thus, it is enough to show that $G^0(F)$ is contained in $I_{G(F)} (^{\circ} \kappa _i )$ for $0\leq i \leq d$. Theorem 14.2 of \cite{YU}, which is satisfied here, implies that $G^0(F)$ is contained in $I_{G(F)} ( {\boldsymbol{\Phi}_i}' \mid _{^{\circ} K^i})$. However, $^{\circ}\kappa _i$ is an inflation of  ${\boldsymbol{\Phi}_i}'  \mid _{^{\circ} K^i}$ (see definition \ref{kappaijijiji}). Consequently $ I_{G(F)} ( {\boldsymbol{\Phi}_i}'\mid _{^{\circ} K^i}) \subset I_{G(F} ( ^{\circ} \kappa _i )$. Consequently, $G^0(F) \subset I_{G(F)} (^{\circ} \kappa _i )$ as required.

\item \begin{sloppypar} The representation $^{\circ} \lambda $ is equal to ${^{\circ} \kappa _0} \otimes \ldots \otimes {^{\circ} \kappa _i} \otimes \ldots \otimes{ ^{\circ} \kappa _d}$. For ${0 \leq i \leq d-1}$ the dimension of $^{\circ} \kappa _i $ is $[J^{i+1} : J^{i+1}_+ ]^{\frac{1}{2}}$. The representation $^{\circ} \kappa _d$ is one-dimensional, so it is enough to show that ${\displaystyle \prod _{i=1}^{d} [J^{i+1} : J^{i+1}_+]} = [J^1(\beta , \fA) : H^1(\beta , \fA)]$.
The group $ J^1 (\beta , \fA ) $ is equal to $G^0(F) _{y,0+} G^1(F)_{y, \mathbf{s}_0} \ldots G^d(F)_{y, \mathbf{s}_{d-1}}$, thus this is also equal to $G^0(F) _{y,0+} J^1 \ldots J^d$. 
The group $ H^1 (\beta , \fA ) $ is equal to $G^0(F) _{y,0+} G^1(F)_{y, \mathbf{s}_0+} \ldots G^d(F)_{y, \mathbf{s}_{d-1}+}$, thus this is also equal to $G^0(F) _{y,0+} J^1_+ \ldots J^d_+$. 
Because ${G^0(F) _{y,0+} J^1 \ldots J^d / G^0(F) _{y,0+} J^1_+ \ldots J^d_+} \simeq {J^1 \ldots J^d /  J^1_+ \ldots J^d_+ }$ it is enough to show that  
 ${{\displaystyle \prod _{i=1}^{d} [J^{i} : J^{i}_+]} = [ J^1 \ldots J^d  :  J^1_+ \ldots J^d_+ ]}$. Let us prove this by induction on $d$. If $d=1$, this is trivial. Let us assume that this is true for $d-1$. It is now enough to show that $[J^d : J^d_+ ] = \frac{[J^1 \ldots J^d : J^1 _+ \ldots J^d _+ ]} {[J^1 \ldots J^{d-1} : J^1 _+ \ldots J^{d-1} _+ ]}$.
 \end{sloppypar}
 The following fact will be useful.

\textit{Fact: Let $G' \subset G$ be groups and let $H $ be a normal subgroup of $G$. Let $\iota $ be the injective morphism of group $ G' / ( G' \cap H ) \hookrightarrow  G/ H $. As $G$-set, $G/HG'$ and $ (G/H) /\iota (G' / (G' \cap H )) $ are isomorphic.}
 \begin{sloppypar}Because $J^1 _+ \ldots J^d _+$ is a normal subgroup of $ J^1 \ldots J^d$, we can apply the previous fact to $G= J^1 \ldots J^d$, $G' = J^1 \ldots J^{d-1}$ , $H= J^1 _+ \ldots J^d _+$. Using the fact that  $H \cap G' = J^1 _+ \ldots J^{d-1} _+$, we deduce that, as $J^1 \ldots J^d $-sets, $J^1 \ldots J^d / J^1 \ldots J^{d-1} J^d_+$ and $ (J^1 \ldots J^d / J^1 _+ \ldots J^d _+ ) / \iota (J^1 \ldots J^{d-1} / J^1 _+ \ldots J^{d-1} _+ )$ are isomorphic. Let $X$ be this $J^1 \ldots J^d$-set. 
 The set $X$ is a fortiori a $J^d$-set. The group $J^d$ acts transitively on $X=J^1 \ldots J^d / J^1 \ldots J^{d-1} J^d_+$, and the stabiliser of $ (J^1 \ldots J^{d-1} J^d_+) \in J^1 \ldots J^d / J^1 \ldots J^{d-1} J^d_+$ is  $J^1 \ldots J^{d-1} J^d_+ \cap J^d $. The group $J^1 \ldots J^{d-1} J^d_+ \cap J^d $ is equal to $J^d_+$. Consequently, \end{sloppypar} \begin{center}$[J^d : J^d_+ ] = \#(X) = \frac{[J^1 \ldots J^d : J^1 _+ \ldots J^d _+ ]} {[J^1 \ldots J^{d-1} : J^1 _+ \ldots J^{d-1} _+ ]},$\end{center} as required.
 This ends the proof of the proposition.

\end{enumerate}
\end{proof}

The following theorem is the outcome of Sections \ref{tameca} and \ref{sectgecha}. It shows that given a Bushnell-Kutzko datum, there exists a Yu datum $(\overrightarrow{G}, y , r , \overrightarrow{\boldsymbol{\Phi} } , \rho )$,  such that $\Lambda = \rho _d (\overrightarrow{G}, y , r , \overrightarrow{\boldsymbol{\Phi} } , \rho).$ The objects $(\overrightarrow{G}, y , r , \overrightarrow{\boldsymbol{\Phi} } , \rho) $ are given explicitly in terms of the Bushnell-Kutzko datum. 

\begin{theo} \label{bigtheofin}Let $V$ be an $N$-dimensional $F$-vector space. Let $A$ denote $\End _F (V)$ and let $\mathrm{G}$ denote $A^{\times}\simeq GL_N(F)$. The following assertions hold.

\begin{enumerate}[(I)]
\item Let $([\fA , n , r , \beta ] , \theta , \sigma , \kappa , \Lambda ) $ be a tame Bushnell-Kutzko datum of type $(a)$ in $A$. Let $\{[\fA , n , r , \beta _i ]$, $0 \leq i \leq s \}$ be a defining sequence such that $F[\beta _i] \subsetneq F [\beta _{i+1} ]$ for $0 \leq i \leq s-1$, as in Section \ref{tameca}.

$\bullet ~~ (Case~A)$ If $\beta _s $ is in $F$, put $d=s$, and $G^i = \Res _{F[\beta _i] /F} \underline{\Aut} _{F[\beta _i]} (V)$ for $0 \leq i \leq s$.
 Put $\overrightarrow{G}=(G^0 , \ldots , G^s)$. Choose a factorisation ${\displaystyle \theta = \prod _{i=0} ^s \theta ^i}$ as in Theorem \ref{facttam}, Corollary \ref{tamcoa}.
 Let $\boldsymbol{\Phi} _i $ , $0 \leq i \leq s$, be the associated characters as in Definition \ref{PHIIa}. Put $\overrightarrow{\boldsymbol{\Phi}}= (\boldsymbol{\Phi}_0 , \ldots , \boldsymbol{\Phi} _s  )$. Let $y \in \BT^E (G^0 , F )$
 and $\overrightarrow{\mathbf{r}}$ as in Proposition \ref{rivucia}.
  Then, there exists a representation $\rho$ of $G^0 _{[y]}$ such that $(\overrightarrow{G} , y , \overrightarrow{r} , \overrightarrow{\boldsymbol{\Phi}}, \rho )$ is a Yu datum and $\rho _d (\overrightarrow{G} , y , \overrightarrow{r} , \overrightarrow{\boldsymbol{\Phi}}, \rho )$ is isomorphic to $\Lambda$ (see section \ref{yu}).

$\bullet ~~(Case~B)$ If $\beta _s \not \in F$, put $d=s+1$, and $G^i = \Res _{F[\beta _i] /F} \underline{\Aut} _{F[\beta _i]} (V)$ for $0 \leq i \leq s$. Put also $G^d = \underline{\Aut}_F (V)$. 
Put $\overrightarrow{G}= (G^0 , \ldots , G^s , G^d)$. Choose a factorisation ${\displaystyle \theta = \prod _{i=0} ^s \theta ^i}$ as in \ref{facttam}, \ref{tamcob}. Let $\boldsymbol{\Phi} _i$, $0 \leq i \leq s$ be the associated characters and let $\boldsymbol{\Phi}_d$ be the trivial character as in \ref{PHIIb}.
 Put $\overrightarrow{\boldsymbol{\Phi}}= (\boldsymbol{\Phi}_0 , \ldots , \boldsymbol{\Phi} _s , \boldsymbol{\Phi} _d )$.  Let $y \in BT ^e (G^0 , F)$ and $\overrightarrow{\mathbf{r}}$ as in Proposition \ref{rivucib}.   Then, there exists a representation $\rho$ of $G^0 _{[y]}$ such that $(\overrightarrow{G} , y , \overrightarrow{r} , \overrightarrow{\boldsymbol{\Phi}}, \rho )$ is a Yu datum and $\rho _d (\overrightarrow{G} , y , \overrightarrow{r} , \overrightarrow{\boldsymbol{\Phi}}, \rho )$ is isomorphic to $\Lambda$ (see Section \ref{yu}).

\item Let $(\fA , \sigma , \Lambda )$ be a Bushnell-Kutzko datum of type (b). Put $d=0$, $G^0 = \underline{\Aut }_F (V)$ and $\overrightarrow{G}= (G^0)$. Put  $ \mathbf{r}_0 = 0$ and $\overrightarrow{\mathbf{r}}=  (\mathbf{r}_0 )$. Let $y \in BT ^e (G^0 , F) $ such that $\fA ^{\times} = G^0(F) _{y}$. Put $\boldsymbol{\Phi} _0 = 1$ and $\overrightarrow{\boldsymbol{\Phi}}= (\boldsymbol{\Phi} _0)$. Let $\rho $ be $\Lambda$.
Then $(\overrightarrow{G} , y , \overrightarrow{r} , \overrightarrow{\boldsymbol{\Phi}}, \rho )$ is a Yu datum and $\rho _d (\overrightarrow{G} , y , \overrightarrow{r} , \overrightarrow{\boldsymbol{\Phi}}, \rho )$ is isomorphic to $\Lambda$.
\end{enumerate}
\end{theo}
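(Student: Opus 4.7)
The plan is first to check that $(\overrightarrow{G}, y, \overrightarrow{\mathbf{r}}, \overrightarrow{\boldsymbol{\Phi}})$ already satisfies conditions $(\overrightarrow{G})$, $(y)$, $(\overrightarrow{r})$, and $(\overrightarrow{\boldsymbol{\Phi}})$ of Definition \ref{defyudatum}. The sequence $\overrightarrow{G}$ is an anisotropic tame twisted Levi sequence in $G$ by Corollary \ref{corotwistedlevitame}, where the anisotropy of $Z(G^0)/Z(G) = E^\times/F^\times$ is part of that statement. The point $y$ is supplied by Proposition \ref{filaeg} in $(Case~A)$ and Proposition \ref{filbeg} in $(Case~B)$. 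The sequence $\overrightarrow{\mathbf{r}}$ is strictly increasing and positive because the integers $r_i$ of the defining sequence are, and because $c_i \neq 0$ has $\ord(c_i) < 0$; in $(Case~B)$ the equality $\mathbf{r}_d = \mathbf{r}_s$ together with $\boldsymbol{\Phi}_d = 1$ matches Yu's convention when $\mathbf{r}_{d-1} = \mathbf{r}_d$. The genericity of each $\boldsymbol{\Phi}_i$ of depth $\mathbf{r}_i$ is Proposition \ref{PHIIgenea} or \ref{PHIIgeneb}.

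Next, the identification $K^d = E^\times J^0(\beta, \fA)$ follows from Proposition \ref{rondkdj0}, which gives ${}^{\circ}K^d = J^0(\beta, \fA)$, together with $G^0(F)_{[y]} = E^\times G^0(F)_y$ from \ref{filaeg}(I)(v), since $K^d = G^0(F)_{[y]} \cdot {}^{\circ}K^d$. Writing ${}^{\circ}\lambda := {}^{\circ}\lambda(\overrightarrow{G}, y, \overrightarrow{\mathbf{r}}, \overrightarrow{\boldsymbol{\Phi}})$, Proposition \ref{circlambabetaext} shows that ${}^{\circ}\lambda$ is a $\beta$-extension of $\theta$, so Proposition \ref{kappakappa} produces a character $\chi$ of $U^0(\fo_E)/U^1(\fo_E)$ with $\kappa \simeq {}^{\circ}\lambda \otimes (\chi \circ \det_B)$. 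I now define $\rho$ on $K^0 = E^\times G^0(F)_y$: on $G^0(F)_y = U^0(\fB_{\beta_0})$ set $\rho := (\chi \circ \det_B) \otimes \sigma$, which makes sense since both factors descend to $U^0(\fB_\beta)/U^1(\fB_\beta) \simeq \mathrm{GL}_f(k_E)^e$, and extend to $K^0 = E^\times G^0(F)_y$ by transporting the $E^\times$-action used to extend $\kappa \otimes \sigma$ to $\Lambda$ on $E^\times J^0(\beta, \fA)$. The restriction $\rho|_{G^0(F)_{y,0+}}$ is clearly trivial, and the cuspidality of $\sigma$ combined with Proposition \ref{Cara} (in its depth-zero form) ensures that $\cind_{K^0}^{G^0(F)}(\rho)$ is irreducible and supercuspidal.

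It remains to prove $\rho_d \simeq \Lambda$ on $K^d$. Since both are representations of the same group, it suffices to compare their restrictions to $J^0 = {}^{\circ}K^d$ and the action of $E^\times$. By Proposition \ref{rhortensorproduct}, $\rho_d|_{J^0} = {}^{\circ}\kappa_{-1} \otimes {}^{\circ}\lambda$. The representation $\kappa_{-1} = \inf_{K^0}^{K^d}(\rho)$ is trivial on each $J^i$ with $i \geq 1$ by Lemma \ref{lemmhamuinfid}, so ${}^{\circ}\kappa_{-1}$ factors through ${}^{\circ}K^d / (J^1 \cdots J^d) = J^0/J^1$, and equals $(\chi \circ \det_B) \otimes \sigma$ there by the choice of $\rho$. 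Hence $\rho_d|_{J^0} = {}^{\circ}\lambda \otimes (\chi \circ \det_B) \otimes \sigma = \kappa \otimes \sigma = \Lambda|_{J^0}$. The compatibility on $E^\times$ is built into the definition of $\rho$; one verifies that on $E^\times$ the combined action of the $\hat{\boldsymbol{\Phi}}_i$ inside $\rho_d$, which is governed by $\prod_i \hat{\boldsymbol{\Phi}}_i = \theta$ by Proposition \ref{chapeau}, combines with $\rho$ to reproduce $\Lambda|_{E^\times}$.

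Part (II) is Yu's depth-zero case: with $d = 0$ and $\boldsymbol{\Phi}_0 = 1$, one sets $\rho := \Lambda$, and the Yu datum conditions reduce to the requirement that $\cind_{K^0}^{G}(\Lambda)$ is irreducible supercuspidal, which is part of the hypothesis on the Bushnell-Kutzko datum of type (b); the equality $\rho_d \simeq \Lambda$ is then tautological. The hard part of part (I) will be verifying carefully that the iterated inflation ${}^{\circ}\kappa_{-1}|_{J^0}$ does coincide with $(\chi \circ \det_B) \otimes \sigma$ through the identification $J^0/J^1 \simeq \mathrm{GL}_f(k_E)^e$: this requires tracking how $\inf^{K^d}_{K^0}$ interacts with the product decomposition of ${}^{\circ}K^d$ and the non-abelian structure of $J^0/J^1$, and confirming that the extension datum of $\rho$ on $E^\times$ is forced by $\Lambda$ in exactly the way needed for the final comparison.
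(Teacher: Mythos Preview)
Your approach follows the same route as the paper's (Propositions \ref{circlambabetaext}, \ref{kappakappa}, \ref{rhortensorproduct}, \ref{rondkdj0} are exactly the ingredients the paper uses), but your construction of $\rho$ on $E^\times$ has a genuine gap.

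You set $\rho := (\chi \circ \det_B)\otimes\sigma$ on $G^0(F)_y$ and then propose to ``extend to $K^0 = E^\times G^0(F)_y$ by transporting the $E^\times$-action used to extend $\kappa\otimes\sigma$ to $\Lambda$.'' This transport is not well-defined as stated: for $e\in E^\times$, the operator $\Lambda(e)$ acts on $V_\kappa\otimes V_\sigma$, and there is no canonical way to factor it as $A(e)\otimes B(e)$ with $e\mapsto B(e)$ a genuine representation on $V_\sigma$; the scalar ambiguity in such a factorisation is precisely the obstruction. Your claim that ``on $E^\times$ the combined action of the $\hat{\boldsymbol{\Phi}}_i$ inside $\rho_d$ \ldots is governed by $\prod_i\hat{\boldsymbol{\Phi}}_i=\theta$'' does not help here, since $\theta=\prod_i\hat{\boldsymbol{\Phi}}_i$ is a character of $K^d_+$, not of $E^\times$, and the action of $E^\times$ on each $\kappa_i$ involves the Weil-representation factor $\tilde{\boldsymbol{\Phi}}_i$, not just $\hat{\boldsymbol{\Phi}}_i$.

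The paper sidesteps this entirely by a two-step twist. Start with \emph{any} extension $\rho'$ of $\sigma$ to $G^0(F)_{[y]}$; this already gives a Yu datum. Extend your $\chi$ (the paper's $\xi'$) to a character $\chi'$ of $G^0(F)_{[y]}$; then ${}^\circ\rho_d(\ldots,\rho'\otimes\chi') = \sigma\otimes\chi\otimes{}^\circ\lambda \simeq \sigma\otimes\kappa$. Now $\rho_d(\ldots,\rho'\otimes\chi')$ and $\Lambda$ are two extensions of the irreducible representation $\kappa\otimes\sigma$ from $J^0$ to $E^\times J^0$, and the quotient $E^\times J^0/J^0 \simeq G^0(F)_{[y]}/G^0(F)_y$ is abelian (indeed $\simeq\Z$), so they differ by a character $\chi''$ of that quotient. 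Setting $\rho=\rho'\otimes\chi'\otimes\chi''$ finishes, since twisting $\rho$ by $\chi''$ twists $\rho_d$ by the inflation of $\chi''$.

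A minor remark: the issue you flag as ``the hard part'' --- checking ${}^\circ\kappa_{-1}|_{J^0}=(\chi\circ\det_B)\otimes\sigma$ via the inflation --- is in fact routine: $\kappa_{-1}=\inf^{K^d}_{K^0}(\rho)$ is by construction trivial on each $J^i$ ($i\geq 1$) and on $G^0(F)_{y,0+}$, so it factors through $J^0/J^1\simeq U^0(\fB_\beta)/U^1(\fB_\beta)$ where it equals $\rho|_{G^0(F)_y}$. The real subtlety is the $E^\times$-extension, and the paper's abstract ``two extensions differ by a character'' argument is the clean way to handle it.
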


\begin{proof}

\begin{enumerate}[(I)]
\item As usual, put $E= F[\beta]$. Let $\rho'$ be an arbitrary extension of $\sigma $ to $G^0(F)_{[y]}$. Then, the compact induction of $\rho '$ to $G^0(F)$ is irreducible and supercuspidal and so $(\overrightarrow{G} , y , \overrightarrow{r} , \overrightarrow{\boldsymbol{\Phi}}, \rho' )$ is a Yu datum. We are going to show that there exists a character $\chi$ of $G^0(F)_{[y]}$  such that $ (\overrightarrow{G} , y , \overrightarrow{r} , \overrightarrow{\boldsymbol{\Phi}}, \rho '\otimes \chi)$ is a Yu datum such that $\rho _d (\overrightarrow{G} , y , \overrightarrow{r} , \overrightarrow{\boldsymbol{\Phi}}, \rho \otimes \chi)$ is isomorphic to $\Lambda$.
The representation $^{\circ} \lambda  (\overrightarrow{G} , y , \overrightarrow{r} , \overrightarrow{\boldsymbol{\Phi}} ) $ is a $\beta$-extension of $\theta$ by Proposition \ref{circlambabetaext}. Consequently, by \ref{kappakappa}, there exists  a character \begin{center}$\xi': U^0(\fB _{\beta _0})/ U^1 (\fB _{\beta _1})\simeq J^0 (\beta , \fA ) / J^1 (\beta , \fA ) \to \C ^{\times}$ \end{center} of the form $\alpha' \circ \det $ with ${\alpha' : U^0 (\mathfrak{o} _E ) / U^1 (\mathfrak{o} _E ) \to \C ^{\times}}$ and such that $\kappa $ is isomorphic to $ ^{\circ} \lambda \otimes \xi '$. Let $\chi '$ be an extension of $\xi '$ to $E^{\times} U^0 ( \fB _{\beta _0} )= G^0 (F)_{[y]}$. The compact induction of $\rho ' \otimes \chi '$ to $G^0(F)$ is irreducible and supercuspidal, and so $(\overrightarrow{G} , y , \overrightarrow{r} , \overrightarrow{\boldsymbol{\Phi}}, \rho ' \otimes \chi ' )$ is a Yu datum. The representation $^{\circ} \rho _d (\overrightarrow{G} , y , \overrightarrow{r} , \overrightarrow{\boldsymbol{\Phi}}, \rho ' \otimes \chi ' )$ is equal to $ \sigma \otimes \xi ' \otimes ^{\circ} \lambda (\overrightarrow{G} , y , \overrightarrow{r} , \overrightarrow{\boldsymbol{\Phi}} )$. Thus, it is isomorphic to $\sigma \otimes \kappa$. Consequently, $\rho _d (\overrightarrow{G} , y , \overrightarrow{r} , \overrightarrow{\boldsymbol{\Phi}}, \rho ' \otimes \chi ' )$ and $\Lambda$ are two extensions of $\sigma \otimes \kappa$. This implies that there exists a character \begin{center}$\chi '' : E^{\times} J^0 (\beta , \fA ) \to E^{\times} J^0 (\beta , \fA ) / J^0 (\beta , \fA) \simeq G^0(F)_{[y]} / G^0(F)_{y} \to \C ^{\times}$,\end{center} such that $\rho _d (\overrightarrow{G} , y , \overrightarrow{r} , \overrightarrow{\boldsymbol{\Phi}}, \rho ' \otimes \chi ' ) \otimes \chi '' $ is isomorphic to $\Lambda$. Seeing $\chi '' $ as a character of $G^0(F)_{[y]}$, the compact induction  of the representation $\rho ' \otimes \chi ' \otimes \chi ''$ to $G^0(F)$ is irreducible and supercuspidal, and  $\rho _d (\overrightarrow{G} , y , \overrightarrow{r} , \overrightarrow{\boldsymbol{\Phi}}, \rho ' \otimes \chi ' ) \otimes \chi '' $ is isomorphic to $\Lambda$.
 The assertion $(I)$ follows, putting $\rho = \rho ' \otimes \chi ' \otimes \chi ''$.

\item In this case, the representation $\rho _d$ is $\rho$, and there is nothing to prove.

\end{enumerate}

\end{proof}

\begin{rema} \label{remabushab}

In this remark, we briefly explain the relationship between $(Case~ A)$ and $(Case~ B)$. Previously, we have treated separately both cases. Of course, $(Case ~A)$ and $(Case ~B)$ have no fundamental difference. Here, we explain that it is possible to reduce one to the other: $(Case~A)$ is $(Case~B)$ tensored by a character in the following sense. Let $[\fA, n , 0, \beta ] $ be a tame simple stratum and let $[\fA , n , r_i , \beta _i ] $, $0 \leq i \leq s$ be a defining sequence such that $\beta _s \in F$, so that it is $(Case~A)$. By \cite[Appendix, Lemma]{BKa}, one sees that $[\fA , n_1 , 0 ,\beta - \beta _s ]$ is a simple stratum ($n_1= -\nu_{\fA}(\beta - \beta _s$) and that $[\fA , n_1 , r_i , \beta_i - \beta _s ]$, $0 \leq i \leq s-1$ is a defining sequence of it such that $\beta_{s-1} - \beta _s$ is not contained in $F$, so that now $(Case~B)$ appears. Using the stratum $[\fA , n_1 , 0 , \beta - \beta _s ]$ and the defining sequence  $[\fA , n_1 , r_i , \beta_i - \beta _s ]$, $0 \leq i \leq s-1$, by $(Case~ B)$, one can attach the integer $d=s(=(s-1)+1)$ and a tamely ramified twisted Levi sequence $\overrightarrow{G}$.  By \cite[Appendix, Lemma]{BKa}, we have $\mathcal{C}(\fA , 0 , \beta -\beta _s )= \mathcal{C} (\fA , 0 , \beta ) . \chi \circ \det$ where $\chi$ is a certain character. So a character of $\mathcal{C}(\fA , 0 , \beta  )$ gives a character of  $\mathcal{C}(\fA , 0 , \beta -\beta _s )$ that we can factorise  using $(Case~ B)$, so we can define characters $\Phi _0 , \ldots , \Phi _{s-1}$ and 
$\chi \circ \det$ gives a character $\Phi_d$. Consequently, one sees that $(Case~A)$ is $(Case~B)$ tensored by a character.
\end{rema}

\section{Reversing arguments: from Yu's construction to Bushnell-Kutzko's construction} \label{YUBK}

Let $V$ be a finite dimensional $F$-vector space and $G$ be the connected reductive $F$-group scheme $ \underline{ \Aut } _F (V)$. Recall that we have fixed a uniformiser $\pi _F$ and an additive character $\psi$ of $F$ with conductor $\fp _F$. In this section, we start with a Yu datum $(\overrightarrow{G},y,\overrightarrow{R},\rho,\overrightarrow{\boldsymbol{\Phi} })$ in $G$, and associate step-by-step and \guillemotleft explicitly\guillemotright$~$ a Bushnell-Kutzko datum $([\fA,n,0,\beta],\theta,\kappa,\sigma,\Lambda )$ such that $\rho _d (\overrightarrow{G},y,\overrightarrow{R},\rho,\overrightarrow{\boldsymbol{\Phi} })= \Lambda $. Many steps are remarks that the arguments previously explained in this paper for the direction $BK \longrightarrow YU$  can be reversed. We give the details for the other steps.

Using a similar argument as in Remark \ref{remabushab}, we reduce to the case $\boldsymbol{\Phi} _d =1 $. We also assume that $d>0$ because the case $d=0$ presents no difficulty and should be treated separately.
So let us fix a Yu datum $(\overrightarrow{G},y,\overrightarrow{R},\rho,\overrightarrow{\boldsymbol{\Phi} })$ such that $d >0 $ and $\boldsymbol{ \Phi }_d =1 $. Because $\overrightarrow{G}= (G^0,\dots,G^d)$ is an anisotropic tamely ramified twisted Levi sequence, there exists a tower of tamely ramified fields extensions (included in $\End _F(V)$) $E_0 \supset \cdots \supset E_d =F$ such that $G^i = \Res _{E_i /F} \underline{ \Aut } _{E_i} (V)$. Using Broussous-Lemaire, the point $y$ in the building of $G^0$ gives us $\fo _{E_i}$-hereditary orders $\fB _i $ in $B_i:=\End_{E_i}(V)$ for $0 \leq i \leq d $. We notably have $\fB _j \cap \End _{E_i} (V) = \fB _i$ if $ 0 \leq i \leq j \leq d$. We denote $\fB _d $ by $\fA$. We set $s=d-1$. Because the extensions $E_i/F$ are tamely ramified for $0\leq i \leq s $, $\psi _{E_i}:=\psi \circ \Tr_{E_i/F}$ is a character of $E_i$ with conductor $\mathfrak{p}_{E_i}$ (see \cite[1.3.8 (ii)]{BK}). We have the following proposition:

\begin{prop} \label{eximi}For $0 \leq i \leq s $, the following assertions hold:

\begin{enumerate} \item There exists a unique smooth character $\phi _i$ of $E_i ^{\times}$ such that $\boldsymbol{\Phi} _i = \phi _i \circ \det _{B_i}$
\item There exists an element $c_i \in E_i$, which is minimal relatively to the field extension $E_i/E_{i+1}$ and such that the following (non-independent) assertions hold \begin{enumerate}

\item $ \phi _i \mid _{U^{[\frac{- \nu _{E_i}(c_i)}{2}]+1}(\mathfrak{o}_{E_i})}( 1+x)=\psi_{E_i} (c_i x) $

 \item $\boldsymbol{\Phi} _i \mid _{U^{[\frac{- \nu _{\fA}(c_i)}{2}]+1}(\fB _i)}( 1+x)=\psi _{E_i} (c_i ({\det}_{B_i/E_i}(1+x)-1))$

 \item $\boldsymbol{\Phi} _i \mid _{U^{[\frac{- \nu _{\fA}(c_i)}{2}]+1}(\fB _i)}( 1+x)=\psi _{E_i} \circ \Tr _{B_i/E_i}(c_i x)$ 

 \item $\boldsymbol{\Phi} _i \mid _{U^{[\frac{- \nu _{\fA}(c_i)}{2}]+1}(\fB _i)}( 1+x)=\psi  \circ \Tr _{A/F}(c_i x)$ 
   \item $\ord(c_i)=-R_i$ .\end{enumerate}

\end{enumerate}
\end{prop}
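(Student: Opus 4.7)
The plan is to essentially reverse the arguments of Section \ref{sectgecha} and Subsection \ref{hatphiiii}: from the Yu datum I will extract the Bushnell--Kutzko ingredients $\phi_i$ and $c_i$ by inverting the constructions made in the direction $BK \longrightarrow YU$. For part (1), since $G^i(F) = \Aut_{E_i}(V) \simeq GL_{n_i}(E_i)$ with $n_i = \dim_{E_i}(V)$, and since $E_i$ is an infinite field so that $SL_{n_i}(E_i)$ is the commutator subgroup of $GL_{n_i}(E_i)$ (trivially for $n_i = 1$, and by standard arguments for $n_i \geq 2$), every smooth complex character of $G^i(F)$ factors uniquely through the determinant as $\phi_i \circ \det_{B_i}$.

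For part (2), I would first establish (a) via the standard Pontryagin-dual description of smooth characters of $E_i^\times$: if the conductor exponent of $\phi_i$ is $n+1$, there exists an element $c_i \in E_i$ with $\nu_{E_i}(c_i) = -n$, unique modulo $\mathfrak{p}_{E_i}^{-[n/2]}$, such that $\phi_i(1+t) = \psi_{E_i}(c_i t)$ for $t \in \mathfrak{p}_{E_i}^{[n/2]+1}$. This follows from the isomorphism $U^{[n/2]+1}(\fo_{E_i})/U^{n+1}(\fo_{E_i}) \simeq \mathfrak{p}_{E_i}^{[n/2]+1}/\mathfrak{p}_{E_i}^{n+1}$ and the self-duality of this additive quotient via $\psi_{E_i}$. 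To verify (e), $\ord(c_i) = -R_i$, I would match depths: since $\boldsymbol{\Phi}_i = \phi_i \circ \det_{B_i}$ has depth $R_i$ relative to $y$ and $\det_{B_i}$ surjects $G^i(F)_{y,r}$ onto the Moy--Prasad filtration of $E_i^\times$ at level $r$ (surjectivity obtained using diagonal lifts), the conductor exponent of $\phi_i$ equals $n = R_i \, e(E_i \mid F)$, giving $\ord(c_i) = \nu_{E_i}(c_i)/e(E_i \mid F) = -R_i$.

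The remaining items (b), (c), (d) then follow from (a) along the same chain of implications used in the forward direction, read backwards. For (b), given $1+x \in U^{[-\nu_{\fA}(c_i)/2]+1}(\fB_i)$, setting $t = \det_{B_i}(1+x) - 1 \in \mathfrak{p}_{E_i}^{[-\nu_{E_i}(c_i)/2]+1}$ (filtration bookkeeping analogous to Proposition \ref{filaeg}) and applying (a) yields $\boldsymbol{\Phi}_i(1+x) = \phi_i(1+t) = \psi_{E_i}(c_i t)$. For (c), I would expand $\det_{B_i}(1+x) - 1 = \Tr_{B_i/E_i}(x) + \sum_{j\geq 2} e_j(x)$ where $e_j$ is the $j$-th elementary symmetric function, and check that $c_i e_j(x) \in \mathfrak{p}_{E_i}$ for $j \geq 2$, so $\psi_{E_i}(c_i e_j(x)) = 1$, as in the forward direction. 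Finally (d) follows from (c) via $\Tr_{A/F} = \Tr_{E_i/F} \circ \Tr_{B_i/E_i}$ together with the commutativity of $c_i \in E_i$ with elements of $B_i$.

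The last step is minimality of $c_i$ relative to $E_i/E_{i+1}$. From (d), the restriction of $\boldsymbol{\Phi}_i$ to the quotient $G^i(F)_{y,R_i:R_i+}$ is realised in Yu's sense by $X^*_{c_i} \in \Lie^*(Z(G^i))_{-R_i}$ in the notation of Definition \ref{xy}. Since $\boldsymbol{\Phi}_i$ is $G^{i+1}$-generic of depth $R_i$ by the Yu datum axioms, the element $X^*_{c_i}$ is $G^{i+1}$-generic of depth $R_i$, and Theorem \ref{minimalgenericelemenx} then yields minimality of $c_i$ relative to $E_i/E_{i+1}$. I expect the main obstacle to be the depth-matching step in paragraph two: relating the Moy--Prasad depth of $\boldsymbol{\Phi}_i$ on $G^i$ to the classical conductor exponent of $\phi_i$ on $E_i^\times$ through $\det_{B_i}$ demands careful bookkeeping of the filtration normalisations on $G^i$ as a restriction of scalars group, and requires verifying that $\det_{B_i}$ is compatible with, and surjective onto, the expected filtration subgroups in the tamely ramified setting.
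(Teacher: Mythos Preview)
Your approach to parts (1), 2(a), 2(b)--(d), and 2(e) is essentially the paper's: the paper also factors $\boldsymbol{\Phi}_i$ through $\det_{B_i}$ via the derived-subgroup argument, extracts $c_i$ from $\phi_i$ by Pontryagin duality on $U^{[n_0/2]+1}(\fo_{E_i})/U^{n_0+1}(\fo_{E_i})$, and deduces (b)--(d) from (a) by a filtration lemma (the paper packages your ``bookkeeping obstacle'' as an auxiliary Lemma citing \cite[pp.~98,~126]{BK}, which gives exactly the surjectivity of $\det_{B_i}$ onto the correct filtration subgroup and the identity $\psi_{E_i}(c_i(\det(1+x)-1)) = \psi_{E_i}\circ\Tr(c_i x)$).

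There is, however, a genuine gap in your minimality argument. You write: ``Since $\boldsymbol{\Phi}_i$ is $G^{i+1}$-generic of depth $R_i$ by the Yu datum axioms, the element $X^*_{c_i}$ is $G^{i+1}$-generic of depth $R_i$.'' This inference is not automatic. Yu's definition of a generic character says only that the restriction $\boldsymbol{\Phi}_i\mid_{G^i(F)_{y,R_i:R_i+}}$ is realised by \emph{some} element $X^* \in \Lie^*(Z(G^i))_{-R_i}$ satisfying \textbf{GE1}; it does not say that every realising element satisfies \textbf{GE1}. Your $X^*_{c_i}$ and Yu's $X^*$ both realise the same character, hence coincide modulo ${\mathfrak{g}^i}^*(F)_{y,(-R_i)+}$, but a priori that is all.

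The paper closes this gap as follows: the difference $Y^* := X^*_{c_i} - X^*$ lies in ${\mathfrak{g}^i}^*(F)_{y,(-R_i)+} \cap \Lie^*(Z(G^i))$, hence in $\Lie^*(Z(G^i))_{(-R_i)+}$; since $\Lie^*(Z(G^i)) \cong \{X^*_\gamma : \gamma \in E_i\}$, one has $Y^* = X^*_\gamma$ with $\ord(\gamma) > -R_i$. Then the explicit formula of Proposition~\ref{xycomput} gives $\ord(Y^*(H_a)) > -R_i$ for every $a \in \Phi(G^{i+1},T,\overline{F}) \setminus \Phi(G^i,T,\overline{F})$, whence $\ord(X^*_{c_i}(H_a)) = \ord(X^*(H_a)) = -R_i$, so $X^*_{c_i}$ itself is generic. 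Only then does Theorem~\ref{minimalgenericelemenx} apply. You should insert this perturbation argument before invoking Theorem~\ref{minimalgenericelemenx}.
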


\begin{proof} We need the following Lemma.
\begin{lemm} \label{intoproof}Let $\mathrm{K}$ be a non-Archimedean local field and $\psi _{\mathrm{K}}$ be an additive character of $\mathrm{K}$ with conductor $\mathfrak{p}_{\mathrm{K}}$. Let $\fA_{\mathrm{K}}$ be a hereditary order in $\mathrm{A}:=\End _{\mathrm{K}}(\mathrm{V})$, where $\mathrm{V}$ is a finite dimensional vector space over $\mathrm{K}$. Let $c$ be an element in $\mathrm{K} \cap \fP _{\mathrm{K}}^{-n}$ where $n=-\nu_{\fA_{\mathrm{K}}}(c)$. Then

\begin{enumerate} \item $n$ is divisible by $e(\fA_{\mathrm{K}} , \mathfrak{o}_{\mathrm{K}} )$, and $n=e(\fA _{\mathrm{K}}, \mathfrak{o}_{\mathrm{K}} ) n_0$ where $n_0 =- \nu _{{\mathrm{K}}} (c)$
\item ${\det}_{\mathrm{A}/\mathrm{K}} ( U^{[\frac{n}{2}]+1}(\fA_{\mathrm{K}})) = U ^{ [\frac{n_0}{2}]+1} (\mathfrak{o}_{\mathrm{K}})$
\item $
 \psi  _{\mathrm{K}} \circ \Tr _{\mathrm{A}/\mathrm{K}} (cx) 
= \psi _ {\mathrm{K}} ( c ({\det}_{\mathrm{A}/\mathrm{K}} (1+x)-1)) ~~~~~\forall ~1+x \in  U^{[\frac{n}{2}]+1}(\fA _{\mathrm{K}}).
$
\end{enumerate}
\end{lemm}

\begin{proof} The first assertion is a direct consequence of Lemma \ref{valval}. The second assertion is \cite[last line of page 126]{BK}. The third assertion is the formula \cite[line 33 page 98]{BK} with $E=\mathrm{K}$. Moreover, as in \cite[page 129]{BK} we suggest the reader to see \cite{bushandfro} for detailed computations in this area.\end{proof}
Let us prove the proposition. The first assertion is well known: a character of $GL_{E_i}(V)(=\Aut _{E_i}(V))$ factors through $GL_{E_i}(V)/SL_{E_i}(V)$ since $SL_{E_i}(V)$ is the derived group of $GL_{E_i}(V)$. Now, $\det _{B_i}$ provides an isomorphism $GL_{E_i}(V)/SL_{E_i}(V) \overset{\det _{B_i}}{\simeq}{ E_i ^{\times}}$. The character $\phi _i $ is the composition $E_i ^{\times} \overset{\det _{B_i}}{\simeq} GL_{E_i}(V)/SL_{E_i}(V) \overset{\boldsymbol{\Phi} _i}{\to} \C ^{\times}$.
Let us now prove the second assertion. Consider the character $\phi _i$ of the first assertion. There exists $n_0$ such that $\phi _i \mid _{U^{n _0 +1}(\mathfrak{o}_{E_i})} =1$ and $\phi _i \mid _{U^{n _0 }(\mathfrak{o}_{E_i})} \neq1$. Using the well-known fact (see \cite[page 22]{BK}) that the characters of $U^{[{\frac{n_0}{2}}] +1}(\mathfrak{o}_{E_i})/U^{n _0 +1}(\mathfrak{o}_{E_i})$ are in bijection with $\mathfrak{p}_{E_i}^{-n_0} / \mathfrak{p}_{E_i}^{-[\frac{n _0}{2}]}$, there exists an element $c_i \in E_i$ such that $\nu _{E_i} (c_i) = - n _0 $ and such that $\phi _i \mid _{U^{[{\frac{n _0}{2}}] +1 }(\mathfrak{o}_{E_i})}(1+x) = \psi _{E_i} ( c_i x). $ So we found $c_i$, such that the first identity holds.
 Now Lemma \ref{intoproof} (taking $\mathrm{K}=E_i$, $c=c_i$, $\fA _{\mathrm{K}}= \mathfrak{B}_i$) shows that\[{\det} _{B_i/E_i}(U^{[\frac{- \nu _{\fA}(c_i)}{2}]+1}(\fB _i))\subset U^{[{\frac{n _0}{2}}] +1 }(\mathfrak{o}_{E_i}),\] and that the equalities 

 \begin{align*}  
 \boldsymbol{\Phi} _i \mid _{U^{[\frac{- \nu _{\fA}(c_i)}{2}]+1}(\fB _i)}( 1+x)&=\phi _i \circ {\det} _{B_i} (1+x) \\ 
 &= \psi _{E_i} (c_i ({\det}_{B_i/E_i}(1+x)-1)) \\
 &=\psi _{E_i} \circ \Tr _{B_i /E_i} (c_i x) \\
 &=\psi  \circ \Tr _{A/F}(c_i x) ~~~~ {\footnotesize \text{since $\psi \circ \Tr _{A/F} \mid _{B_i} = \psi _{E_i} \circ \Tr _{B_i/ E_i}$}}
\end{align*} 

hold. So it is enough to prove that $c_i$ is minimal relatively to  $E_i/E_{i+1}$. Let us do it. The character $\boldsymbol{\Phi} _i $ is $G^{i+1}$-generic of depth $R_i$.  By definition of depth and the relation between $y$ and $\fB _i $ (Broussous-Lemaire \cite{Brou}), we have $G^i(F)_{y,R_i}=U^{-\nu _{\fA}(c_i)}(\fB _i) $ and $G^i(F)_{y,R_i+}=U^{-\nu _{\fA}(c_i)+1}(\fB _i) $. So we have \[\boldsymbol{\Phi} _i \mid _{G^i(F)_{y,R_i:R_i+}}(1+x)=\psi \circ \Tr _{A/F}(c_ix),\]and so $\boldsymbol{\Phi} _i \mid _{G^i(F)_{y,R_i:R_i+}}$ is realised  by the element $X^*_{c_i}:x \mapsto \Tr_{A/F}(c_ix)\in \Lie ^* (Z(G^i))_{-R_i} $ in the sense of \cite[§5 §9]{YU}. Moreover, because of $\boldsymbol{\Phi} _i$  is generic of depth $R_i$,  $\boldsymbol{\Phi} _i \mid _{G^i(F)_{y,R_i:R_i+}}$ is also realised by an element $X^*$ which is $G^{i+1}$-generic of depth $R_i$. Let us show that this implies that the element  $X^*_{c_i}\in \Lie ^* (Z(G^i))_{-R_i} $ is $G^{i+1}$-generic of depth $R_i$. The elements $X^*$ and $X^*_{c_i}$ both realise $\boldsymbol{\Phi} _i \mid _{G^i(F)_{y,R_i:R_i+}}$, so they are equal modulo $\mathfrak{g}^i(F)_{y,R_i}^{\bullet}={\mathfrak{g}^i}^*(F)_{y,(-R_i)+}$. So we have $X^*_{c_i} -X^* \in {\mathfrak{g}^i}^*(F)_{y,(-R_i)+} \cap \Lie ^* (Z(G^i))_{y,-R_i}$. Thus, $Y^*:=X^*_{c_i} -X^* \in \Lie ^* (Z(G^i))_{y,(-R_i)+}\subset$\footnote{$\Lie ^* (Z(G^i))\overset{\sim}{=}\Hom_F(E_i,F)\overset{\sim}{=}\{X^*_{\gamma}:E_i\to F,x \mapsto \Tr(\gamma x)\mid \gamma \in E_i \} $ }$ \Lie ^* (Z(G^i))$. So there exists $\gamma$ in $E_i$ such that $Y^*=(x\mapsto \gamma x)$ with $\ord (\gamma) > -R_i $. Now $\ord(Y^* (H_a))>- R_i$ for all $a \in\footnote{We fix a $T$} \Phi (G^{i+1},T,\overline{F})\setminus \Phi (G^{i},T,\overline{F})$, by the explicit formula given by Proposition \ref{xycomput}. So for all $a \in \Phi (G^{i+1},T,\overline{F})\setminus \Phi (G^{i},T,\overline{F})$, $\ord(X^*_{c_i}(H_a))=-R_i$, and thus $X^*_{c_i}$ is a generic element of depth $R_i$, in particular $\ord(c_i)=-R_i$. So by Theorem \ref{minimalgenericelemenx}, $c_i$ is minimal relatively to the extension $E_i/E_{i+1}$.\end{proof}
Fix such elements $c_i$ for $ 0 \leq i \leq s$.  Put \begin{align*} &\beta _s = c_s \\
&\beta _{s-1} = c_{s-1} +c_s \\
& ~~~~\vdots \\
&\beta _i = \displaystyle \sum _{k=i}^s c_i  ~~\text{for} ~s\geq i \geq 0\\
& ~~~~\vdots \\
& \beta _{0} = c_0 +c_1 + \ldots + c_s,
\end{align*} 
so that $\beta _i - \beta _{i+1} = c_i $ for $ 0 \leq i \leq s-1 $.
Put also $r_0=0$ and for $1\leq i \leq d$ put $r_i = - \nu _{\fA} (c_{i-1})$. Finally, put $\beta = \beta _0$, $E=E_0$ and $n=r_d$.
\begin{prop} The following assertions hold: \begin{enumerate}
\item For $0 \leq i \leq s$, $[\fA,n,r_i,\beta_i]$ is a simple stratum in $A=\End_F (V)$, moreover $k_0 ( \beta _i , \fA ) =- r_{i+1}$.
\item The sequence $([\fA,n,r_i,\beta _i ] , 0\leq i \leq s )$ is a defining sequence of the simple stratum $[\fA,n,0,\beta]$.
\end{enumerate}
\end{prop}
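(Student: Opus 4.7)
The plan is to establish both assertions by descending induction on $i$ from $i=s$ down to $i=0$, with Proposition \ref{tssprop1} as the engine of the inductive step. The induction hypothesis at level $i+1$ will assert that $[\fA,n,r_{i+1},\beta_{i+1}]$ is a tame simple stratum, that $F[\beta_{i+1}]=E_{i+1}$, and that $k_0(\beta_{i+1},\fA)=-r_{i+2}$. The base case $i=s$ is the minimality case: by Proposition \ref{eximi} the element $c_s=\beta_s$ is minimal relative to $E_s/F$, so $F[\beta_s]=E_s$ and $\nu_{\fA}(\beta_s)=-n$; applying Proposition \ref{minisimplealfalfa} then shows that $[\fA,n,n-1,\beta_s]$ is simple with $k_0(\beta_s,\fA)=-n=-r_{s+1}$, and since $r_s<n$ the stratum $[\fA,n,r_s,\beta_s]$ is also simple. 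Note that in the reduction $\boldsymbol{\Phi}_d=1$ we work under, the chain $E_0\supsetneq\cdots\supsetneq E_s\supsetneq F$ is strict, so $\beta_s\notin F$ and $k_0(\beta_s,\fA)$ is genuinely $-n$, not $-\infty$.

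For the inductive step at level $i$, I will apply Proposition \ref{tssprop1} with the tame simple stratum $[\fA,n,r_{i+1},\beta_{i+1}]$, the subfield $E=E_{i+1}=F[\beta_{i+1}]$, the element $b=c_i$, and $r=r_{i+1}$. Two hypotheses have to be checked. The first, $c_i\in\fQ_{E_{i+1}}^{-r_{i+1}}$, reduces to the numerical identity $\nu_{\fB_{i+1}}(c_i)=\nu_{\fA}(c_i)=-r_{i+1}$, which I expect to be the main obstacle: I will derive it by applying Lemma \ref{valval} to both $\fA$ and $\fB_{i+1}$ with $E=E_i$ (noting that $E_i^{\times}$ normalises both orders, since $E_i^{\times}$ acts on the Broussous--Lemaire lattice function determined by $y$ by mere shifts), combined with the Broussous--Lemaire identity $e(\fB_{i+1}\mid\fo_{E_{i+1}})=e(\fA\mid\of)/e(E_{i+1}\mid F)$; this forces $e(\fB_{i+1}\mid\of)=e(\fA\mid\of)$ and therefore $\nu_{\fB_{i+1}}(c_i)=\nu_{\fA}(c_i)$. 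The second hypothesis, that $[\fB_{i+1},r_{i+1},r_{i+1}-1,c_i]$ is simple, will follow immediately from Proposition \ref{minisimplealfalfa}, using that $c_i$ is minimal relative to $E_i/E_{i+1}$ (Proposition \ref{eximi}) and that $E_{i+1}[c_i]=E_i$.

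With both hypotheses in place, Proposition \ref{tssprop1} will deliver: $[\fA,n,r_{i+1}-1,\beta_i]$ is a simple stratum, $F[\beta_i]=F[\beta_{i+1},c_i]=E_i$, and $k_0(\beta_i,\fA)=-r_{i+1}$ (using $c_i\notin E_{i+1}$, which holds because the chain $E_0\supsetneq\cdots\supsetneq E_s$ is strict). Since $r_i$ and $r_{i+1}$ are integers with $r_i<r_{i+1}$, we have $r_i\leq r_{i+1}-1<-k_0(\beta_i,\fA)$, so $[\fA,n,r_i,\beta_i]$ is itself simple; this closes the induction and provides the tameness, field and $k_0$ information needed at the next level.

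For part 2, I will read off the conditions (i)--(vi) of Corollary \ref{suiteap}: (i), the first equality of (iv), and (v) are packaged in part 1; (ii) is trivial since $\beta_0=\beta$; (iii) is the combination of the strict chain $r_0<\cdots<r_s<r_d=n$ with the strict descent $[E_0:F]>\cdots>[E_s:F]$ of degrees; the equivalence $[\fA,n,r_{i+1},\beta_i]\sim[\fA,n,r_{i+1},\beta_{i+1}]$ in (iv) reduces to $\beta_i-\beta_{i+1}=c_i\in\fP^{-r_{i+1}}$; and (vi) is immediate upon choosing the tame corestriction $s_{i+1}$ to restrict to the identity on $B_{i+1}$ (Proposition \ref{tamecoresprop}(ii), using tameness of $E_{i+1}/F$), since then $s_{i+1}(c_i)=c_i$ and the derived stratum coincides with the simple stratum $[\fB_{i+1},r_{i+1},r_{i+1}-1,c_i]$ already verified. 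In summary, the only non-routine step in the whole argument is the numerical identity $\nu_{\fB_{i+1}}(c_i)=\nu_{\fA}(c_i)$; once that is available, Proposition \ref{tssprop1} together with some bookkeeping carries everything else.
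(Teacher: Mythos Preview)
Your proposal is correct and follows essentially the same approach as the paper: descending induction on $i$ with Proposition~\ref{tssprop1} driving the inductive step, and then a direct verification of conditions (i)--(vi) of Corollary~\ref{suiteap}. The paper's proof is in fact slightly terser than yours; in particular it asserts that $[\fB_i,r_i,r_i-1,c_{i-1}]$ is simple without pausing to check that $\nu_{\fB_i}(c_{i-1})=-r_i$, whereas you correctly identify this valuation identity as the one nontrivial ingredient and supply a clean argument for it via Lemma~\ref{valval} and the period relation $e(\fB_{i+1}\mid\fo_{E_{i+1}})=e(\fA\mid\of)/e(E_{i+1}\mid F)$.
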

\begin{proof}\begin{enumerate}\item Let us prove this by decreasing induction on $i$, \guillemotleft $s$\guillemotright $~$ being the first step. Before starting let us note that because the orders $\fB_0 , \ldots ,\fB _i , \dots ,\fB _d $ come from a point $y $ in the building of $G^0=\underline{\Aut}_{E}(V)$, we have $E^{\times} \subset \mathfrak{K}(\fB_j)$ for all $0\leq j \leq d$. Let us start.
By Proposition \ref{eximi}, $c_s=\beta _s$ is minimal over $F$. Consequently $[\fA,n,n-1,\beta _s]$ is a simple stratum and $k_0(\beta_s,\fA) =-n =\nu_{\fA} (\beta_s) $ by \ref{minisimplealfalfa}. By Lemma \ref{valval}, $\nu_{\fA}(c_j)=\ord(c_j)e(\fA,\of)$ for all $j$, so $-\nu_{\fA}(c_{s-1})< -\nu_{\fA}(c_{s})$ by Proposition \ref{eximi} and axiom of Yu data on real numbers. We deduce that  $r_s=-\nu_{\fA}(c_{s-1})< -\nu_{\fA}(c_{s}) =n = - k_0 (\beta_s , \fA)$. Consequently, $[\fA,n,r_s,\beta_s]$ is simple. Now assume that $[\fA , n , r_{i}, \beta _{i}] $ is simple and that $k_0(\beta _{i} , \fA )= -r_{i+1}$. By Proposition \ref{eximi}, $c_{i-1}$ is minimal relatively to the field extension $E_{i-1}/E_i$ and so $[\fB _i , r_i , r_i -1 , c_{i-1}]$ is simple and $k_0 (c_{i-1},\fB_i)= -r_i$. By Proposition \ref{tssprop1}, we obtain that $[\fA,n,r_{i}-1,c_{i-1 }+\beta _{i}] $ is a simple stratum and that $k_0(\beta _{i-1}, \fA)=-r_i$. Because, as before, $r_{i-1} < r_i $, we deduce that $[\fA , n ,r_{i-1} , \beta _{i-1} ]$ is simple. This ends the proof of the assertion.
\item Let us check the definition. These stratum are simple: we have $0= r_0 < \ldots <r_s <n$, for  $0 \leq i \leq s-1$, we have $r_{i+1}= -k_0 (\beta _i , \fA)$, and for  $0 \leq i \leq s-1$, $[\fA , n  , r_{i+1} , \beta _i ]$ is equivalent to $[\fA , n  , r_{i+1} , \beta _{i+1} ]$ because  $\nu _{\fA} (\beta _i - \beta _{i+1} ) = \nu _{\fA} (c_i) = -r_{i+1}$. We have $k_0 (\beta _s , \fA ) =-n$. The derived stratum $[\fB _i , r_i , r_i -1 , c_{i-1} ]$ is simple for $1 \leq i \leq s $ because $c_{i-1}$ is minimal relatively to $E_{i-1}/E_i$.
\end{enumerate}
\end{proof}
The stratum $[\fA,n,0,\beta]$ obtained in the previous proposition is the first element of the Bushnell-Kutzko datum that we are constructing. 
\begin{prop}We have the following equalities of groups \begin{align*}
&H^1(\beta , \fA) = K^d_+ (\overrightarrow{G},y,\overrightarrow{R})\\
&J^0(\beta , \fA) = {}^{\circ} K^d (\overrightarrow{G},y,\overrightarrow{R})\\
&E^{\times}  J^0 (\beta , \fA) = K^d(\overrightarrow{G},y,\overrightarrow{R})
\end{align*}
\end{prop}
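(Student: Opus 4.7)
The three identities are the reverse reading of Propositions \ref{hunkd} and \ref{rondkdj0}, together with a simple identification at the zeroth level. Since the hereditary orders $\fB_0\subset\cdots\subset\fB_d=\fA$ were extracted from the point $y$ via the Broussous-Lemaire correspondence exactly as in Section \ref{sectgecha}, the full filtration dictionary of Proposition \ref{filbeg} (we are in Case B, the only case needed after the reduction $\boldsymbol{\Phi}_d=1$) remains valid verbatim. By construction $\beta_i-\beta_{i+1}=c_i$ for $0\le i\le s-1$ and $\beta_s=c_s$, so $\mathbf{r}_i=-\nu_{\fA}(c_i)/e(\fA\mid\of)$ holds by Lemma \ref{valval}, and Proposition \ref{filbeg} gives $G^i(F)_{y,\mathbf{r}_i}=U^{-\nu_{\fA}(c_i)}(\fB_{\beta_i})$ together with the analogous identifications at every half-integer level.

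For the first identity, I would apply Proposition \ref{hunb} to expand
\[
H^1(\beta,\fA)=U^1(\fB_{\beta_0})\prod_{i=1}^{d}U^{[-\nu_{\fA}(c_{i-1})/2]+1}(\fB_{\beta_i}),
\]
and translate factor by factor via Proposition \ref{filbeg}(I)(ii), (II)(i), (III)(i) into
\[
G^0(F)_{y,0+}\prod_{i=1}^{d}G^i(F)_{y,\mathbf{s}_{i-1}+}\;=\;K^d_+,
\]
the last equality being Definition \ref{defyugroup}. The second identity $J^0(\beta,\fA)={}^{\circ}K^d$ is obtained by the same method, expanding $\mathfrak{J}(\beta,\fA)$ inductively via Definition \ref{defsimpl} into the product $U^0(\fB_{\beta_0})\prod_{i=1}^{d}U^{[(-\nu_{\fA}(c_{i-1})+1)/2]}(\fB_{\beta_i})$, and then applying the $\mathbf{s}_{i-1}$-parts (rather than $\mathbf{s}_{i-1}+$-parts) of Proposition \ref{filbeg}.

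For the third identity, I would combine the second with Proposition \ref{filbeg}(I)(v), which gives $E^{\times}U^0(\fB_{\beta_0})=G^0(F)_{[y]}$; replacing $G^0(F)_{y}$ by $G^0(F)_{[y]}$ in the expression of ${}^{\circ}K^d$ produces exactly $K^d$ as in Definition \ref{defyugroup}. The only delicate point, rather than an obstacle, is to verify that the chain of orders produced from $y$ by Broussous-Lemaire coincides with the chain $\fB_{\beta_i}=\fA\cap\End_{E_i}(V)$ attached to the stratum $[\fA,n,0,\beta]$ built in this section; but since each $\beta_i$ lies in $E_i$ and each $\fB_i$ is normalised by $E_i^{\times}$ by construction, the two chains are defined by the same lattice data and agree.
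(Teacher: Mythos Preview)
Your proposal is correct and takes essentially the same approach as the paper: the paper's proof simply reads ``The proof of Propositions \ref{hunkd} and \ref{rondkdj0} adapt,'' and you have spelled out precisely that adaptation, including the extra line for the third identity via Proposition \ref{filbeg}(I)(v). Your closing remark about the coincidence of the orders $\fB_i$ from $y$ with the $\fB_{\beta_i}=\fA\cap\End_{E_i}(V)$ is a useful sanity check that the paper leaves implicit.
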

\begin{proof} The proof of Propositions \ref{hunkd} and \ref{rondkdj0} adapt. \end{proof}
 \begin{prop} \label{c2} The character $ \theta : = \displaystyle \prod _{i=0}^d \hat{\boldsymbol{\Phi}}_i$ of \cite[Proposition 4.4]{YU} is a simple character associated to the simple stratum $[\fA,n,0,\beta]$.
 \end{prop}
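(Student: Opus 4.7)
The plan is to induct on $d$, using the recursive definition of simple characters (Definition \ref{carsimpl}). The main computational input is an analogue of Proposition \ref{chapeau} run in the reverse direction: using the decomposition $\mathfrak{g} = \mathfrak{g}^i \oplus \mathfrak{n}^i$ together with the vanishing $\Tr_{A/F}(c_i \mathfrak{n}^i) = 0$ (because $c_i$ lies in the centre of $B_i$ and each isotypic component of $\mathfrak{n}^i$ for $\mathrm{ad}(Z(G^i))$ has nontrivial weight), and Proposition \ref{eximi}(2d), the same calculation gives, for each $0 \leq i \leq s$, the two identities
\begin{enumerate}
\item[(I)] $\hat{\boldsymbol{\Phi}}_i \mid_{H^1(\beta, \fA) \cap B_{\beta_i}} = \phi_i \circ {\det}_{B_{\beta_i}}$, and
\item[(II)] $\hat{\boldsymbol{\Phi}}_i \mid_{H^{m_i+1}(\beta, \fA)}(1+x) = \psi(\Tr_{A/F}(c_i x))$, where $m_i = \max\{[-\nu_\fA(c_i)/2],\, 0\}$.
\end{enumerate}
Here (I) reduces to the formula $\boldsymbol{\Phi}_i = \phi_i \circ {\det}_{B_i}$ after the group identifications of Propositions \ref{huna}--\ref{filbeg}, and (II) is exactly the explicit computation carried out in the proof of Proposition \ref{chapeau} (which never used that one started from a Bushnell--Kutzko datum).

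With (I) and (II) in hand, the base case $d = 1$ (so $\beta = c_0$ is minimal over $F$, and $\theta = \hat{\boldsymbol{\Phi}}_0$) is immediate: the two conditions of Definition \ref{carsimpl}(i) defining $\mathcal{C}(\fA, 0, \beta)$ are precisely (II) and (I) specialised to $i = 0$, with $c_0 = \beta$ and $m_0 = [n/2]$. For the inductive step $d > 1$, I would apply the induction hypothesis to the truncated Yu datum $(G^1 \subset \ldots \subset G^d,\, y,\, (R_1, \ldots, R_d),\, \rho',\, (\boldsymbol{\Phi}_1, \ldots, \boldsymbol{\Phi}_d))$, which by the construction of this section is associated to the simple stratum $[\fA, n, 0, \beta_1]$ with defining sequence $([\fA, n, r_i, \beta_i],\, 1 \leq i \leq s)$ and residual characters $\phi_1, \ldots, \phi_s$. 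By induction, the product $\theta' := \prod_{i=1}^d \hat{\boldsymbol{\Phi}}_i^{(1)}$ built from this shorter datum lies in $\mathcal{C}(\fA, 0, \beta_1)$. Writing $\theta = \hat{\boldsymbol{\Phi}}_0 \cdot \prod_{i \geq 1} \hat{\boldsymbol{\Phi}}_i$ and $m' = [r_1/2]$, I would then verify the three conditions of Definition \ref{carsimpl}(ii) for $\gamma = \beta_1$ and $c = c_0$: condition (a) follows from (I) combined with the identity ${\det}_{B_{\beta_i}}\mid_{B_\beta^\times} = {\det}_{B_\beta} \circ N_{E_0/E_i}$, so every factor of $\theta$ factors through ${\det}_{B_\beta}$; condition (b) (normalisation by $\mathfrak{K}(\fB_\beta) = E^\times U^0(\fB_\beta)$) holds because each $\phi_i \circ {\det}_{B_{\beta_i}}$ is a class function on $B_{\beta_i}^\times$ and $c_i \in E_i \subset E$ is central in $B_i$, so $\psi_{c_i}$ is conjugation-invariant under $E^\times$; and condition (c) follows from (II) at $i = 0$ (which gives $\hat{\boldsymbol{\Phi}}_0\mid_{H^{m'+1}(\beta, \fA)} = \psi_{c_0}$) together with the equality $H^{m'+1}(\beta, \fA) = H^{m'+1}(\beta_1, \fA)$ valid for $m' \geq [-k_0(\beta, \fA)/2]$ (\cite[3.1.9]{BK}) and the fact that the restriction of $\theta' \in \mathcal{C}(\fA, 0, \beta_1)$ to $H^{m'+1}(\beta_1, \fA)$ lies in $\mathcal{C}(\fA, m', \beta_1)$ by the recursive structure of Definition \ref{carsimpl}.

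The main obstacle I anticipate is the precise comparison, at the inductive step, between the character $\hat{\boldsymbol{\Phi}}_i^{(1)}$ built from the truncated Yu datum and the restriction of $\hat{\boldsymbol{\Phi}}_i$ built from the full datum. This requires tracking Yu's construction through the extension from $G(F)_{y, \mathbf{s}_i+}$ to $G^0(F)_{[y]} G^i(F)_0 G(F)_{y, \mathbf{s}_i+}$ and confirming that replacing $G^0$ by $G^1$ in the outer factor alters only the domain of the resulting character, leaving its values on the common subgroup $H^1(\beta, \fA) \subset H^1(\beta_1, \fA)$ unchanged; this ultimately rests on the fact that the construction of $\hat{\boldsymbol{\Phi}}_i$ via the isomorphism (\ref{iso}) depends only on the pair $(G^{i-1}, G^i)$ and on $\boldsymbol{\Phi}_i$. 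A secondary technical point is a uniform treatment of $(Case~A)$ and $(Case~B)$: in the latter the extra trivial character $\hat{\boldsymbol{\Phi}}_d = 1$ contributes nothing to the product, but the group identifications rely on Proposition \ref{filbeg} rather than \ref{filaeg} and must be substituted accordingly throughout the verification.
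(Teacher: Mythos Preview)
Your approach is correct and is essentially the same as the paper's: both proceed by reversing Proposition~\ref{chapeau} to identify each $\hat{\boldsymbol{\Phi}}_i$ with a character $\theta^i$ satisfying the two conditions you label (I) and (II), and then by reversing the inductive proof of Theorem~\ref{facttam} to conclude that $\prod_i \theta^i \in \mathcal{C}(\fA, 0, \beta)$.

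The one organisational difference is that the paper separates these two steps completely. It first establishes $\hat{\boldsymbol{\Phi}}_i = \theta^i$ for all $i$ at once, and only then runs the induction on the length of the defining sequence purely in Bushnell--Kutzko language, with no further reference to Yu's construction. This dissolves the obstacle you anticipate about comparing $\hat{\boldsymbol{\Phi}}_i^{(1)}$ (built from the truncated Yu datum) with the restriction of $\hat{\boldsymbol{\Phi}}_i$: once the $\theta^i$ are pinned down by the explicit formulas (I) and (II) on $H^1(\beta, \fA)$, their restrictions to $H^{m'+1}(\beta, \fA) = H^{m'+1}(\beta_1, \fA)$ are automatically the corresponding factors in a factorisation of some $\theta' \in \mathcal{C}(\fA, m', \beta_1)$ (this is precisely what the proof of Theorem~\ref{facttam} shows, read backwards), and no comparison of truncated versus full Yu data is required. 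Your framing works too, but routing the induction through Yu data creates an extra bookkeeping step that the paper's two-stage organisation avoids.
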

 \begin{proof} The character $ \hat{\boldsymbol{\Phi}}_d$ is equal to $1$, so $\theta = \displaystyle \prod _{i=0}^s \hat{\boldsymbol{\Phi}}_i$. By reversing  Proposition \ref{chapeau}, the character $\hat{\boldsymbol{\Phi}}_i$ is equal to the character $\theta ^i$ where $\theta ^i$ is the character defined by 
 \begin{align*}
 &\bullet \theta ^i \mid _{H^1 (\beta , \fA) \cap B_i} = \phi _i \circ {\det} _{B_i}  \\
 &\bullet \theta ^i \mid _{H^{1}(\beta , \fA ) \cap U^{[\frac{-\nu _{\fA} (c_i)}{2}+1]} (\fA)} = \psi _{c_i}.
 \end{align*}
 Now by reversing the arguments of Theorem \ref{facttam}, we obtain that the character $\theta$ is a simple character associated to the simple stratum $[\fA, n , 0 , \beta ]$.
 \end{proof}
\begin{prop} The representation $\kappa :={}^{\circ} \lambda  (\overrightarrow{G},y,\overrightarrow{R},\overrightarrow{\boldsymbol{\Phi} })$ is a $\beta$-extension of $\theta$.
\end{prop}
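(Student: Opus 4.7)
The plan is to verify the three criteria of Proposition \ref{beta} for $\kappa = {}^{\circ}\lambda$; indeed, since the group identifications $H^1(\beta,\fA) = K^d_+$, $J^0(\beta,\fA) = {}^{\circ}K^d$ and the factorisation $\theta = \prod_{i=0}^d \hat{\boldsymbol{\Phi}}_i$ have just been established for the stratum $[\fA,n,0,\beta]$ and character $\theta$ constructed from the Yu datum, the proof of Proposition \ref{circlambabetaext} applies almost verbatim. The strategy is therefore to repeat that three-step argument, pointing out at each step that the objects produced by reversing Yu's construction match those attached by Bushnell--Kutzko to $(\fA,\beta,\theta)$.

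First, I would show that ${}^{\circ}\lambda$ contains $\theta$. Writing ${}^{\circ}\lambda = {}^{\circ}\kappa_0 \otimes \cdots \otimes {}^{\circ}\kappa_d$ from Proposition \ref{rhortensorproduct}, each ${}^{\circ}\kappa_i$ contains $\hat{\boldsymbol{\Phi}}_i$ by \cite[3.27]{Hamu}, so their tensor product contains $\prod_{i=0}^d \hat{\boldsymbol{\Phi}}_i = \theta$ by Proposition \ref{c2}. Second, I would check that $B^{\times} = G^0(F)$ is contained in $I_{G(F)}({}^{\circ}\lambda)$. Yu's Theorem 14.2 gives $G^0(F) \subset I_{G(F)}(\boldsymbol{\Phi}_i'|_{{}^{\circ}K^i})$ for each $i$, and since ${}^{\circ}\kappa_i$ is an inflation of $\boldsymbol{\Phi}_i'|_{{}^{\circ}K^i}$ (Definition \ref{kappaijijiji}), the intertwining lifts and hence $G^0(F)$ intertwines the tensor product.

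Third, I would establish the dimension formula $\dim({}^{\circ}\lambda) = [J^1(\beta,\fA):H^1(\beta,\fA)]^{1/2}$. Since $\boldsymbol{\Phi}_d = 1$, the representation ${}^{\circ}\kappa_d$ is one-dimensional; for $0 \leq i \leq d-1$, Proposition \ref{dimensionphiprim} gives $\dim({}^{\circ}\kappa_i) = [J^{i+1}:J^{i+1}_+]^{1/2}$. Using the identifications $J^1(\beta,\fA) = G^0(F)_{y,0+}J^1\cdots J^d$ and $H^1(\beta,\fA) = G^0(F)_{y,0+}J^1_+\cdots J^d_+$ (which follow by the same reasoning as Propositions \ref{hunkd} and \ref{rondkdj0}, now read in reverse), one reduces to proving the index identity $\prod_{i=1}^d [J^i:J^i_+] = [J^1\cdots J^d : J^1_+\cdots J^d_+]$; this is carried out by the induction on $d$ in Proposition \ref{circlambabetaext}, using the general fact that for a normal subgroup $H \trianglelefteq G$ and a subgroup $G'$, the sets $G/HG'$ and $(G/H)/(G'/(G'\cap H))$ are $G$-equivariantly isomorphic.

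There is no genuine obstacle: every substantive computation — the containment, the intertwining, the dimension count — was done in the forward direction in Proposition \ref{circlambabetaext}, and the preceding propositions of this section have put the $YU \to BK$ setting into exactly the combinatorial shape required. The only thing to be careful about is bookkeeping: one must verify that the filtration identifications (Propositions \ref{filaeg} and \ref{filbeg}) hold with the $\fA$, $\beta_i$ and $c_i$ produced here from the Yu datum, which is automatic from Broussous--Lemaire and the construction of $\fA$ from the point $y$, and that the factorisation of $\theta$ we now use is the one coming from Proposition \ref{c2} rather than from Theorem \ref{facttam}. Once these matchings are recorded, the statement follows.
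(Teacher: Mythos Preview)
Your proposal is correct and takes essentially the same approach as the paper: the paper's own proof simply states that the argument of Proposition \ref{circlambabetaext} goes through verbatim, using the criterion of Proposition \ref{beta}, which is precisely the three-step verification (containment of $\theta$, intertwining by $G^0(F)=B^{\times}$, dimension count) that you have spelled out. Your only addition is making explicit the bookkeeping that the relevant group identifications and the factorisation of $\theta$ now come from the reverse-direction propositions of Section \ref{YUBK}, which is exactly the care the paper implicitly assumes.
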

\begin{proof} The Proposition is proved as Proposition \ref{circlambabetaext}, using Proposition \ref{beta}.
\end{proof}
 \begin{prop} There exists an irreducible cuspidal representation $\sigma$ of $U^0 (\fB _0) / U^1 (\fB _0 ) $ and an extension $\Lambda$ of $\kappa \otimes \sigma $ to $E^{\times}J^0 (\beta , \fA ) $ such that $\Lambda = \rho _d (\overrightarrow{G},y,\overrightarrow{R},\rho,\overrightarrow{\boldsymbol{\Phi} })$.
 \end{prop}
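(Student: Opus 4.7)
The strategy is to read the last piece of a Bushnell-Kutzko datum straight out of the Yu datum, using the fact already established earlier in the excerpt that $\kappa = {}^{\circ}\lambda(\overrightarrow{G},y,\overrightarrow{R},\overrightarrow{\boldsymbol{\Phi}})$ is a $\beta$-extension of $\theta$ and that ${}^{\circ}K^d = J^0(\beta,\fA)$, $K^d = E^{\times}J^0(\beta,\fA)$. The plan is to define $\sigma$ as (the factorisation through $U^0(\fB_0)/U^1(\fB_0)$ of) $\rho|_{{}^{\circ}K^0}$, to put $\Lambda := \rho_d(\overrightarrow{G},y,\overrightarrow{R},\rho,\overrightarrow{\boldsymbol{\Phi}})$, and then to check both (a)~that $\sigma$ is irreducible and cuspidal, and (b)~that $\Lambda\mid_{J^0(\beta,\fA)} \simeq \kappa\otimes\sigma$.

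For (a), I would invoke the depth-zero part of the Yu datum: by hypothesis $\rho$ is a smooth representation of $K^0 = G^0(F)_{[y]}$ with $\rho\mid_{G^0(F)_{y,0+}}=1$ and $\mathrm{c\text{-}ind}_{K^0}^{G^0(F)}(\rho)$ irreducible supercuspidal. Since $G^0(F)\simeq GL_m(E)$, the classification of depth-zero supercuspidals for $GL$ (equivalently, Proposition~\ref{Cara} applied on $G^0(F)$ combined with \cite[Theorem~8.4.1]{BK} applied to $G^0$) forces the $\mathfrak{o}_E$-order $\fB_0$ to be a maximal (principal) order and forces $\rho\mid_{G^0(F)_y}$, after inflation from $G^0(F)_y/G^0(F)_{y,0+}\simeq U^0(\fB_0)/U^1(\fB_0)$, to be an irreducible cuspidal representation of that finite reductive quotient of the required product-of-$GL$ form. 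That representation is the desired $\sigma$.

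For (b), I would use the Hakim-Murnaghan tensor decomposition recalled in Proposition~\ref{rhortensorproduct}: $\rho_d = \kappa_{-1}\otimes\kappa_0\otimes\cdots\otimes\kappa_d$ with $\kappa_{-1} = \inf_{K^0}^{K^d}(\rho)$. Restricting to ${}^{\circ}K^d = J^0(\beta,\fA)$ yields
\[
\Lambda\mid_{J^0(\beta,\fA)} \;=\; {}^{\circ}\kappa_{-1}\otimes {}^{\circ}\kappa_0\otimes\cdots\otimes {}^{\circ}\kappa_d \;=\; {}^{\circ}\kappa_{-1}\otimes\kappa.
\]
It then remains to identify ${}^{\circ}\kappa_{-1}$ with the inflation of $\sigma$ from $J^0(\beta,\fA)/J^1(\beta,\fA)\simeq U^0(\fB_0)/U^1(\fB_0)$. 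This is essentially Lemma~\ref{lemmhamuinfid}: since $\rho$ is trivial on $K^0_+ = U^1(\fB_0)$, its successive inflations $\inf^{K^i}_{K^{i-1}}$ are trivial on each $J^i$ and on $K^i_+$, hence ${}^{\circ}\kappa_{-1}$ is trivial on ${}^{\circ}K^d\cap J^1\cdots J^d = J^1(\beta,\fA)$ and its restriction to ${}^{\circ}K^0 = U^0(\fB_0)$ recovers the original $\rho|_{U^0(\fB_0)}$, namely $\sigma$ viewed on the quotient. So $\Lambda$ is an extension of $\kappa\otimes\sigma$ to $E^{\times}J^0(\beta,\fA)=K^d$, as required.

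The main obstacle is item (a): one has to argue that the abstract axiom \emph{`$\mathrm{c\text{-}ind}_{K^0}^{G^0(F)}\rho$ is irreducible supercuspidal'} forces both the maximality of $\fB_0$ and the cuspidal/tensor-product shape of $\sigma$. This is where the depth-zero case of Bushnell-Kutzko (or equivalently Moy-Prasad) is genuinely needed; all the remaining identifications are essentially bookkeeping on filtrations and inflations already set up in Sections~\ref{yu}, \ref{tameca}, \ref{sectgecha} and in the previous propositions of this section, which together guarantee the compatibility of the two formalisms once $\sigma$ has been correctly extracted from $\rho$.
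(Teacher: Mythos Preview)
Your proposal is correct and follows exactly the same approach as the paper: the paper's proof is one line, simply declaring $\sigma = \rho\mid_{G^0(F)_{y,0}=\fB_0^{\times}}$ and $\Lambda = \rho_d(\overrightarrow{G},y,\overrightarrow{R},\rho,\overrightarrow{\boldsymbol{\Phi}})$, while you supply the verifications (cuspidality of $\sigma$ via the depth-zero theory for $GL$, and $\Lambda\mid_{J^0(\beta,\fA)}\simeq\kappa\otimes\sigma$ via the Hakim--Murnaghan decomposition and the inflation description of $\kappa_{-1}$) that the paper leaves implicit. Your remark that the Yu axiom on $\rho$ forces maximality of $\fB_0$ is not strictly needed for the proposition as stated but is exactly what is needed for the subsequent Theorem~\ref{theo2} to produce a genuine Bushnell--Kutzko datum.
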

 \begin{proof}  It is enough to take $\sigma = \rho \mid _{G^0(F) _{y,0}= \fB _0 ^{\times}}$ and $\Lambda = \rho _d (\overrightarrow{G},y,\overrightarrow{R},\rho,\overrightarrow{\boldsymbol{\Phi} })$
 \end{proof}
  Let us sum up every thing at this point.
  \begin{theo} \label{theo2} Let $(\overrightarrow{G},y,\overrightarrow{R},\rho,\overrightarrow{\boldsymbol{\Phi} })$ be a Yu datum in $G= \underline{\Aut} _F (V)$ such that $d>0$ and $\boldsymbol{\Phi}_d=1$. Let $E_0 , \ldots , E_d$ be the fields attached to $\overrightarrow{G}$ as before. Fix $c_i $ a minimal element relatively to $E_i /E_{i+1} $ as in Proposition \ref{eximi} for $0\leq i \leq s$. Put $\beta = \displaystyle \sum _{k=0}^s c_i $ and $n=-\nu _{\fA}(c_s)(=-\nu _{\fA}(\beta))$. Then $[\fA , n , 0 , \beta ] $ is a simple stratum and the character\footnote{of \cite[Proposition 4.4]{YU}} $\theta := \displaystyle \prod _{i=0} ^d \displaystyle  \hat{\boldsymbol{\Phi}}_i $ is a simple character attached to the stratum $[\fA , n , 0 , \beta ] $. The representation\footnote{see end of Section \ref{yu}} $\kappa :={}^{\circ} \lambda  (\overrightarrow{G},y,\overrightarrow{R},\overrightarrow{\boldsymbol{\Phi} })$ is a $\beta$-extension of $\theta$. There exists an irreducible cuspidal representation $\sigma$ of $U^0 (\fB _0) / U^1 (\fB _0 ) $ and an extension $\Lambda$ of $\kappa \otimes \sigma $ to $E^{\times}J^0 (\beta , \fA ) $ such that $\Lambda = \rho _d (\overrightarrow{G},y,\overrightarrow{R},\rho,\overrightarrow{\boldsymbol{\Phi} })$.
 \end{theo}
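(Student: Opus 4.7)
The plan is to assemble the theorem from the propositions already established in Section \ref{YUBK}. Structurally, one first extracts algebraic data from the Yu datum, then constructs the stratum and verifies it is simple via a defining sequence, and finally identifies $\theta$, $\kappa$, $\sigma$, $\Lambda$ one after the other.

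First, since $\overrightarrow{G}$ is an anisotropic tame twisted Levi sequence in $\underline{\Aut}_F(V)$, it is of the shape $G^i = \Res_{E_i/F}\underline{\Aut}_{E_i}(V)$ for a decreasing tower $E_0\supset\cdots\supset E_d=F$ of tamely ramified extensions, and the Broussous--Lemaire dictionary (Propositions \ref{filaeg} and \ref{filbeg}) turns the point $y$ into a compatible system of hereditary orders $\fB_i\subset\End_{E_i}(V)$ with $\fA=\fB_d$. Proposition \ref{eximi} then produces, for each $0\le i\le s$, a character $\phi_i$ of $E_i^{\times}$ and an element $c_i\in E_i$ minimal relative to $E_i/E_{i+1}$ with $\ord(c_i)=-R_i$.

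Next I would set $\beta_i=\sum_{k\ge i}c_k$, $r_0=0$, and $r_i=-\nu_\fA(c_{i-1})$ for $i\ge 1$, and prove by decreasing induction on $i$ that $[\fA,n,r_i,\beta_i]$ is simple with $k_0(\beta_i,\fA)=-r_{i+1}$. The base case $i=s$ uses that $c_s=\beta_s$ is minimal over $F$ (Proposition \ref{eximi} with $E_{s+1}=F$), so Proposition \ref{minisimplealfalfa} gives $k_0(\beta_s,\fA)=-n$; the inequality $R_{s-1}<R_s$ from the Yu datum (and Lemma \ref{valval}) yields $r_s<n$, so the truncated stratum is simple. The inductive step is the heart of the argument: minimality of $c_{i-1}$ relative to $E_{i-1}/E_i$ makes $[\fB_i,r_i,r_i-1,c_{i-1}]$ simple in $\End_{E_i}(V)$, and Proposition \ref{tssprop1} then guarantees that adding $c_{i-1}$ to $\beta_i$ gives a simple stratum $[\fA,n,r_i-1,\beta_{i-1}]$ with $k_0(\beta_{i-1},\fA)=-r_i$. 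Taking $i=0$ yields the required simple stratum $[\fA,n,0,\beta]$, and the family $\{[\fA,n,r_i,\beta_i]\}$ meets all axioms of Corollary \ref{suiteap}, so it is a defining sequence.

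For the simple character assertion, I would run the analysis of Sections \ref{tameca} and \ref{sectgecha} in reverse. Proposition \ref{chapeau} (whose proof does not use the direction $BK\to YU$) computes $\hat{\boldsymbol{\Phi}}_i$ explicitly as the character $\theta^i$ of $H^1(\beta,\fA)$ specified by the two conditions of Theorem \ref{facttam}. Running the inductive definition of simple characters (Definition \ref{carsimpl}) along the defining sequence, these two conditions are exactly what is needed to place $\theta=\prod_i\hat{\boldsymbol{\Phi}}_i$ in $\mathcal{C}(\fA,0,\beta)$. Once $\theta$ is identified as a simple character, the fact that $\kappa={}^{\circ}\lambda$ is a $\beta$-extension of $\theta$ follows exactly as in Proposition \ref{circlambabetaext}: Yu's construction ensures $\kappa$ contains $\theta$, Yu's Theorem $14.2$ gives $B^{\times}=G^0(F)\subset I_G(\kappa)$, and the telescoping index computation $\prod_i[J^i:J^i_+]=[J^1(\beta,\fA):H^1(\beta,\fA)]$ supplies the dimension, so Proposition \ref{beta} applies.

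Finally, put $\sigma=\rho\mid_{G^0(F)_{y,0}}=\rho\mid_{\fB_0^{\times}}$, which is an irreducible cuspidal representation of $U^0(\fB_0)/U^1(\fB_0)$ because $\rho$ is trivial on $G^0(F)_{y,0+}$ and $\cind_{K^0}^{G^0(F)}\rho$ is irreducible supercuspidal, and take $\Lambda=\rho_d(\overrightarrow{G},y,\overrightarrow{R},\rho,\overrightarrow{\boldsymbol{\Phi}})$. By Proposition \ref{rhortensorproduct} and the group identifications $J^0(\beta,\fA)={}^{\circ}K^d$ and $E^{\times}J^0(\beta,\fA)=K^d$, the representation $\Lambda$ restricts to $\kappa\otimes\sigma$ on $J^0(\beta,\fA)$, so it is the required extension. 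The only genuinely delicate point in the whole argument is the decreasing induction building the defining sequence, because it is where the tame valuation identity, the strict monotonicity of the Yu depths, and Proposition \ref{tssprop1} must be combined carefully; everything else is a bookkeeping reversal of the $BK\to YU$ analysis.
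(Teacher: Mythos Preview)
Your proposal is correct and follows essentially the same approach as the paper: the paper's own proof of this theorem is the single sentence ``This is the content of this section,'' and the content of that section is precisely the chain of propositions you have reproduced (extract the $E_i$ and $c_i$ via Proposition~\ref{eximi}, build the defining sequence by decreasing induction using Proposition~\ref{tssprop1}, reverse Proposition~\ref{chapeau} to identify $\theta$ as a simple character, repeat the argument of Proposition~\ref{circlambabetaext} for $\kappa$, and take $\sigma=\rho\mid_{\fB_0^\times}$, $\Lambda=\rho_d$). Your write-up is in fact more explicit than the paper's, but the logical skeleton is identical.
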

 \begin{proof} This is the content of this section.
 \end{proof}

 \begin{rema}\label{r2} As we have explained at the beginning, we have reduced to the case $d>0$ , $\boldsymbol{\Phi} _d =1$. Let us explain briefly the final statement for other cases. In the case $d \geq 0 $, $\boldsymbol{\Phi} _d \neq 1$, we can proceed similarly as before. We put $s=d$ and we factorise each $\boldsymbol{\Phi } _i $ and introduce $c_i$ as before for $0 \leq i \leq d-1$. Moreover, we factorise $\boldsymbol{\Phi} _d = \phi _d  \circ \det _{F} $ and we introduce an element $c_d \in F$ using Lemma \ref{intoproof}. Now we put $\beta = \displaystyle \sum _{k=0}^d c_i $ and the rest of the statement is valid. In the case $d=0 , \boldsymbol{\Phi} _0 = 1 $, we easily attach a type $(b)$ Bushnell-Kutzko datum.

 \end{rema}
 \newpage
 \section{Consequences and remarks}

 Let us state a natural corollary of our comparison.

 \begin{coro} \label{cororor}Let $V$ be a finite dimensional $F$-vector space. Let $\mathrm{G}$ be $\underline{\Aut}_F(V)~/F$ and $G= \mathrm{G}( F)=\mathrm{Aut}_F (V)$. \begin{enumerate} \item Let \[\text{TameType}_{\text{BK }} := \left\{ (E^{\times } J^0 , \Lambda) ~\Bigg| \begin{matrix} (E^{\times } J^0 , \Lambda) \text{ is a tame Bushnell-}\\ \text{ Kutzko maximal extended simple } \\ \text{type in } G \text{ as in Definition \ref{tamebkdatum}} \end{matrix} \right\}\] and

 \[\text{GType}_{\text{YU }} := \left\{ (K^d , \rho _d) ~\Bigg| \begin{matrix} (K^d ,\rho_d) \text{ is a tame Yu  }\\ \text{ extended type for } \mathrm{G}\\ \text{as in Definition \ref{yuextended}} \end{matrix} \right\}.\] 
  The following equality holds  \[ \text{TameType}_{\text{BK }} =\text{GType}_{\text{YU}} ,\]  here  $(E^{\times } J^0 ,\Lambda) = (K^d , \rho _d )$ means $E^{\times} J^0 = K^d$ and $\Lambda \simeq \rho_d$.

  \item Let \[\text{TameCar}_{\text{BK }} := \left\{  \theta ~\Bigg| \begin{matrix}  \theta \text{ is a tame Bushnell-Kutzko}\\ \text{ maximal simple character in } G\\ \text{as in Definition \ref{tamebkdatum}} \end{matrix} \right\}\] and

 \[\text{GCar}_{\text{YU }} := \left\{  \theta _d ~\Bigg| \begin{matrix} \theta_d \text{ is a character  } \\ \text{ constructed by Yu for } \mathrm{ G} \\ \text{as in Construction \ref{yuconstrutru}} \end{matrix} \right\}.\] 
  The following equality holds \[ \text{TameCar}_{\text{BK }} =\text{GCar}_{\text{YU}} .\]

  \end{enumerate}
 \end{coro}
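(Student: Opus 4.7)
The plan is to deduce Corollary \ref{cororor} directly from the two main comparison theorems already proved, namely Theorem \ref{bigtheofin} for the direction $BK \longrightarrow YU$ and Theorem \ref{theo2} for the reverse direction $YU \longrightarrow BK$, plus the supporting propositions identifying the underlying compact-open subgroups.

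For part (1), the inclusion $\text{TameType}_{\text{BK}} \subseteq \text{GType}_{\text{YU}}$ is read off from Theorem \ref{bigtheofin}: given a tame maximal extended simple type $(E^{\times}J^0, \Lambda)$, we complete $\Lambda$ to a full tame Bushnell--Kutzko datum (type $(a)$ or $(b)$), choose a defining sequence as in Section \ref{tameca}, factorise the underlying simple character as in Corollary \ref{tamcoa} or \ref{tamcob}, and apply (I) or (II) of Theorem \ref{bigtheofin} to produce a Yu datum whose $\rho_d$ is isomorphic to $\Lambda$. The equality of groups $E^{\times}J^0(\beta, \fA) = K^d$ is then Proposition \ref{rondkdj0} combined with part (I)(v) of Propositions \ref{filaeg} and \ref{filbeg}, which identifies the $G^0(F)_{[y]}$ factor with $F[\beta]^{\times}U^0(\fB_{\beta_0})$. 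For the reverse inclusion $\text{GType}_{\text{YU}} \subseteq \text{TameType}_{\text{BK}}$, we use Theorem \ref{theo2}: after the standard reduction (via the character-twist trick explained in Remark \ref{remabushab} and Remark \ref{r2}) to the main case $d > 0$, $\boldsymbol{\Phi}_d = 1$, Theorem \ref{theo2} produces the required tame simple stratum $[\fA,n,0,\beta]$, simple character $\theta$, $\beta$-extension $\kappa$, cuspidal $\sigma$, and extension $\Lambda$ with $\Lambda \simeq \rho_d$. The residual cases ($\boldsymbol{\Phi}_d \neq 1$ with $d \geq 0$, and $d=0$ with $\boldsymbol{\Phi}_0 = 1$) are treated by the variant constructions in Remark \ref{r2}, yielding type $(a)$ and type $(b)$ Bushnell--Kutzko data respectively.

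For part (2), the same argument specialises: in the direction $BK \longrightarrow YU$ the character $\theta$ coincides with $\prod_i \hat{\boldsymbol{\Phi}}_i$ by Proposition \ref{chapeau}, and in the direction $YU \longrightarrow BK$ the product $\prod_i \hat{\boldsymbol{\Phi}}_i$ is shown in Proposition \ref{c2} to be a tame Bushnell--Kutzko simple character of the stratum produced by Theorem \ref{theo2}. Hence the two sets of characters coincide.

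The corollary is therefore a pure bookkeeping consequence of results already established, and no genuinely new argument is needed; the one step that requires mild vigilance is matching the compact-open subgroups $E^{\times}J^0 = K^d$ and, for the character statement, tracking that the identification in Propositions \ref{chapeau} and \ref{c2} really is an equality (not merely up to conjugation) in both cases. This is immediate from the explicit formulas for $\hat{\boldsymbol{\Phi}}_i$ and $\theta^i$ given in Section \ref{tameca} and Section \ref{sectgecha}.
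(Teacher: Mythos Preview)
Your proposal is correct and follows essentially the same approach as the paper: the paper's proof simply cites Theorem \ref{bigtheofin}, Theorem \ref{theo2} and Remark \ref{r2} for part (1), and Proposition \ref{chapeau}, Proposition \ref{c2} and Remark \ref{r2} for part (2). You have supplied somewhat more detail (the explicit matching of groups via Proposition \ref{rondkdj0} and Propositions \ref{filaeg}/\ref{filbeg}, and the reduction to the case $\boldsymbol{\Phi}_d=1$ via Remark \ref{remabushab}), but the skeleton of the argument is identical.
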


 Of course, our comparison gives many  more than equalities of Corollary \ref{cororor}: we have explicitly compared all steps of both constructions.
 .
 \begin{proof} \begin{enumerate} \item This is a direct consequence of Theorem \ref{bigtheofin} , Theorem \ref{theo2} and Remark \ref{r2}.

 \item This is a direct consequence of Proposition \ref{chapeau} , Proposition \ref{c2} and Remark \ref{r2}

 \end{enumerate}

 \end{proof}

 \begin{rema}(Terminology) We saw in this article that the irreducible supercuspidal representations of $GL_N(F)$ associated with the tame simple strata can also be obtained through Yu's construction. Reciprocally we have seen that Yu's $GL_N$ representations can be obtained from tame simple strata through Bushnell-Kutzko's construction. Yu called the representations that  he constructed "tame representations." Bushnell-Henniart called the representation associated to tame simple strata "essentially tame representations" (note that we have avoided the word "essentially" in our terminology for related objects (types, data, characters) in this article), and they show that these are the representations $\pi$ such that $p$ does not divide $\frac{N}{t(\pi)}$ where $t(\pi)$ is the torsion number of $\pi$ (see \cite[Lemma page 495]{Esse}). So the representations considered in this article can be called "essentially tame representations" or "tame representations."

\end{rema}

\begin{rema} Decreasing defining sequences on tame $BK$-side reflects axioms of inclusions in Yu's twisted Levi sequence. This phenomenon is correlated to the fact that tame constructions always factorise(e.g.,  Howe's factorisation of admissible characters \cite{HO} \cite{MO}, factorisation  of tame simple characters (Section \ref{tameca} of this article), Yu's construction as tensor product, Hakim-Murnaghan's refactorisation  \cite{Hamu}, Kaletha's Howe's factorisation  \cite{Kaletha})
\end{rema}

\begin{rema} As we have previously explained at the end of \cite{ArnMay}, the comparison $BK \longleftrightarrow YU$ could be extended to other works and constructions. Works \cite{BKa} \cite{Sech}, \cite{Stev}, \cite{blbl}, \cite{SKO}, \cite{vdng} deal with constructions of supercuspidal representations in the spirit and language of $BK$ and have the same global structure. So our comparison should theoretically be adapted following a similar global organisation. Similar comparisons should also theoretically be written for \cite{bksemi}, \cite{goro}, \cite{SteMi}, on one hand, and 
\cite{ykge},  on the other hand.

\end{rema}









\begin{rema}\textbf{(General construction)} \label{UNIF} \cite{MOber}
\begin{enumerate} \item Let $[\fA , n , 0 , \beta]$ be a simple stratum. If we remove the assumption that $F[\beta]/F$ is tame, then the sequence of fields $E_0 , \ldots , E_s$ attached to a defining sequence is not decreasing for $\subset$ in general. It always decreases for $[\bullet : F]$.
\item In a certain sense, we have explained that Bushnell-Kutzko and Yu's constructions are compatible (they are compatible on their common domain of definition). Is there a unified construction $\circledS$ that generalise both of them ?
\item If one tries to obtain $\circledS$ generalising Yu's approach, then one has to remove (among other things) the axiom of inclusions in the twisted Levi sequence by (1) of this remark and by definition of $\overrightarrow{G}$. This implies that one cannot expect a factorable construction \guillemotleft $~\rho _d = \otimes \kappa ^i~$\guillemotright, as in  Yu's work.
In others words, if one tries to obtain $\circledS$ by  combining \cite{BK} and Fintzen's work \cite{fintzen} on Yu's construction, then one has to expect to work with a non-increasing (for $\subset$) sequence of twisted Levi $G^0  , \ldots , G^d $.

\end{enumerate}
\end{rema}

\bibliographystyle{plain}
\bibliography{library}

\end{document}